\def\elsart{0} % Set to 0 for ams article class, 1 for elsevier proprietary style
\newcommand{\newdefinition}{\theoremstyle{definition}\newtheorem}
\newcommand{\newproof}[2]{}
\newcommand{\ead}{\email}
\DeclareMathOperator{\Mod}{Mod}
\DeclareMathOperator{\Nil}{Nil}
\DeclareMathOperator{\Fin}{Fin}
\newcommand{\C}{\mathcal C}
\newcommand{\D}{\mathcal D}
\newcommand{\V}{\mathcal V}
\newcommand{\Z}{\mathbf Z}
\newcommand{\op}{{\operatorname{op}}}
\newcommand{\map}{\operatorname{map}}
\newcommand{\id}{\operatorname{id}}
\newcommand{\pow}[1]{[\![#1]\!]}
\numberwithin{equation}{section}
\newtheorem{lemma}[equation]{Lemma}
\newtheorem{thm} [equation]{Theorem}
\newtheorem{corollary} [equation]{Corollary}
\newtheorem{prop} [equation]{Proposition}
\newdefinition{example}[equation]{Example}
\newdefinition{defn}[equation]{Definition} 
\newdefinition{remark}[equation]{Remark}
\newdefinition{notation}[equation]{Notation}
\newproof{proof}{Proof}
\DeclareMathOperator{\Ext}{Ext}
\DeclareMathOperator{\eq}{eq}
\DeclareMathOperator{\coeq}{coeq}
\DeclareMathOperator{\Hom}{Hom}
\DeclareMathOperator{\tor}{tor}
\newcommand{\colim}{\operatornamewithlimits{colim}}
\newcommand{\RKan}{\operatornamewithlimits{RKan}}
\DeclareMathOperator{\const}{const}
\DeclareMathOperator{\Spec}{Spec}
\def\smashop#1_#2{%
\displaystyle{#1_{%
\hbox to 0pt{\hss$\scriptstyle{#2}$\hss}}\;}}
\DeclareMathOperator{\Set}{Set}
\DeclareMathOperator{\Alg}{Alg}
\DeclareMathOperator{\Coalg}{Coalg}
\DeclareMathOperator{\Ind}{Ind}
\newcommand{\N}{\mathbf{N}}
\DeclareMathOperator{\Pro}{Pro}
\DeclareMathOperator{\Fr}{Fr}
\DeclareRobustCommand{\doubleindex}[1]{%
    {
     \edef\resetfontdimens{\noexpand%
         \fontdimen16\textfont2=\the\fontdimen16\textfont2
         \fontdimen17\textfont2=\the\fontdimen17\textfont2\relax}%
     \fontdimen16\textfont2=2.7pt \fontdimen17\textfont2=2.7pt
     #1
     \resetfontdimens}}
\DeclareMathOperator{\Sym}{Sym}
\def\citep#1#2{\cite[{#1}]{#2}} % LaTeX gets confused by constructions like \begin{thm}[\cite[page 3]{bauer}], use this instead (copied from MIT thmp2e package)
\def\widehat {\mathaccent"03C2 }
\newcommand{\btensor}[2]{\,{}_{#1}\!\otimes_{#2}}
\newcommand{\multtensor}[2]{\mathbin{\sideset{^{#2}}{_{#1}}{\mathop{\boxtimes}}}}
\newcommand{\Sch}[1]{\operatorname{\widehat{Sch}}_{#1}}
\newcommand{\Schf}[1]{\operatorname{\smash{\widehat{\operatorname{Sch}}}}^f_{#1}}
\newcommand{\ModSch}[2]{\sideset{_{#2}}{_{#1}}{\operatorname{Hopf}}}
\newcommand{\fModSch}[2]{\sideset{_{#1}}{_{#2}}{\operatorname{\widehat{\operatorname{Hopf}}}}}
\newcommand{\fModSchf}[2]{\sideset{_{#1}}{_{#2}^f}{\operatorname{\smash{\widehat{\operatorname{Hopf}}}}}}
\newcommand{\fAlgSch}[2]{\sideset{_{#1}}{_{#2}}{\operatorname{\widehat{\Alg}}}}
\newcommand{\fAlgSchf}[2]{\sideset{_{#1}}{_{#2}^f}{\operatorname{\smash{\widehat{\Alg}}}}}
\newcommand{\Bimod}[2]{\sideset{_{#1}}{_{#2}}{\Mod}}
\newcommand{\fBimod}[2]{\sideset{_{#1}}{_{#2}}{\operatorname{\smash{\widehat{\Mod}}}}}
\newcommand{\fBimodf}[2]{\sideset{_{#1}}{_{#2}^f}{\operatorname{\widehat{\Mod}}}}
\newcommand{\indprotensor}{\mathbin{\overline{\otimes}}}
\newcommand{\predual}[1]{{#1}^\vee}
\newcommand{\dual}[1]{{}^{\vee}{#1}}
\newcommand{\tensunit}[2]{\sideset{_{#2}}{_{#1}}{\operatorname{I}}}
\renewcommand{\ell}{\mathpzc{l}}
\newcommand{\opnSm}{\operatorname{Small}}
\newcommand{\Sm}[1]{\opnSm(#1)}
\newcommand{\Lat}{\operatorname{Lat}}
\newcommand{\sm}{\wedge}
\newcommand{\modtensor}[2]{\rtimes}
\newcommand{\modcotensor}[2]{\hom}
\newcommand{\Spf}[1]{\operatorname{Spf}\left({#1}\right)}
\newcommand{\Reg}[1]{\mathcal{O}_{#1}}
\newcommand{\Prim}[1]{\mathcal{P}(#1)}
\newcommand{\Indec}[1]{\mathcal{Q}(#1)}
\newcommand{\Primnoarg}{\mathcal{P}}
\newcommand{\Indecnoarg}{\mathcal{Q}}
\DeclareMathOperator{\Cof}{Cof}
\DeclareMathOperator{\Gr}{Gr}
\DeclareMathOperator{\Top}{Top}
\newcommand{\G}{\mathcal G}
\renewcommand{\S}{\mathbf{S}}
\title{Formal plethories}
\author{Tilman Bauer}
\address{Department of mathematics, Kungliga Tekniska H\"ogskolan\\
Lindstedtsv\"agen 25, 10044 Stockholm, Sweden}
\ead{tilmanb@kth.se}
\date{December 21, 2013}
\begin{document}

\begin{abstract}
Unstable operations in a generalized cohomology theory $E$ give rise to a functor from the category of algebras over $E_*$ to itself which is a colimit of representable functors and a comonoid with respect to composition of such functors. In this paper I set up a framework for studying the algebra of such functors, which I call formal plethories, in the case where $E_*$ is a Pr\"ufer ring. I show that the ``logarithmic'' functors of primitives and indecomposables give linear approximations of formal plethories by bimonoids in the $2$-monoidal category of bimodules over a ring.
\end{abstract}

\if \elsart1
\begin{keyword}
\MSC[2010]{16W99, % rings with additional structure -- other
18D99, % categories with additional structure -- other
18D20, % enriched categories
55S25, % generalized cohomology operations
14L05, % formal groups
13A99 % commutative rings -- other
}
plethory \sep unstable cohomology operations \sep two-monoidal category \sep formal algebra scheme \sep biring \sep Hopf ring
\end{keyword}
\else
\subjclass[2010]{16W99, % rings with additional structure -- other
18D99, % categories with additional structure -- other
18D20, % enriched categories
55S25, % generalized cohomology operations
14L05, % formal groups
13A99 % commutative rings -- other
}
\keywords{plethory, unstable cohomology operations, two-monoidal category, formal algebra scheme, biring, Hopf ring}
\fi
\maketitle

\tableofcontents

\newpage

\section{Introduction}

Let $k$ be a commutative ring. From an algebro-geometric point of view, the category of representable endofunctors of commutative $k$-algebras can be considered as affine schemes over $k$ with a structure of a $k$-algebra on them. Composition of such representable endofunctors constitutes a non-symmetric monoidal structure $\circ$. A \emph{plethory} is such a representable endofunctor $F$ of $k$-algebras which is a comonoid with respect to $\circ$, i.e. which is equipped with natural transformations $F \to \id$ and $F \to F \circ F$ such that coassociativity and counitality conditions are satisfied. The algebra of plethories was first studied by Tall and Wraith \cite{tall-wraith} and then extended by Borger and Wieland \cite{borger-wieland:plethystic}. The aim of this paper is to extend the theory of plethories to the setting of graded \emph{formal} schemes and to study linearizations of them. The motivation for doing this comes from topology. 

\bigskip
Let $E$ be a homotopy commutative ring spectrum representing a cohomology theory $E^*$. For any space $X$, $E^*(X)$ is naturally an algebra over the ring of coefficients $E_*$ of $E$; furthermore, there is an action
\[
E^n(\underline E_m) \times E^m(X) \to E^n(X)
\]
by unstable operations. Here $\underline E_m$ denotes the $m$th space in the $\Omega$-spectrum associated to $E$. The bigraded $E_*$-algebra $E^*(\underline E_*)$ almost qualifies as the representing object of a plethory, but not quite. In order for $E^*(\underline E_*)$ to have the required structure maps (the ring structure on the spectrum of this ring must come from a coaddition and a comultiplication, for instance), one would need a K\"unneth isomorphism $E^*(\underline E_n) \otimes_{E_*} E^*(\underline E_m) \to E^*(\underline E_n \times \underline E_m)$. This happens almost never in cohomology; one would need a condition such as that $E^*(\underline E_n)$ is a finitely presented flat $E_*$-module. A solution to this is to pass to the category of pro-$E_*$-algebras. We denote by $\Alg_{E_*}$ the category of $\Z$-graded commutative algebras over $E_*$. If $X$ is a CW-complex, we define $\hat E^*(X) \in \Pro-\Alg_{E_*}$ to be the system $\{E^*(F)\}_{F \subseteq X}$ indexed by all finite sub-CW-complexes $F$ of $X$. We then assume:
\begin{equation}\label{flatnessassump}
\hat E^*(\underline E_n) \text{ is pro-flat for all $n$.}
\end{equation}
Note that we do not require that $E^*(F)$ be flat for all finite sub-CW-complexes $F \subseteq \underline E_n$, or even for any such $F$. We merely require that $\hat E^*(\underline E_n)$ is pro-isomorphic to a system that consists of flat $E_*$-modules.

This passing to pro-objects gives the theory of plethories a whole new flavor.

\begin{defn} \label{def:fschemefalgscheme}
Denote by $\Set$ the category of sets and by $\Set^\Z$ the category of $\Z$-graded sets.
Let $k$, $l$ be graded commutative rings. A \emph{(graded) formal scheme over $k$} is a functor $\hat X\colon \Alg_k \to \Set^\Z$ on the category of graded commutative $k$-algebras which is a filtered colimit of representable functors. A \emph{formal $l$-algebra scheme over $k$} is a functor $\hat A\colon \Alg_k \to \Alg_l$ whose underlying functor to $\Set^\Z$ is a formal scheme. Denote the category of formal schemes over $k$ by $\Sch{k}$ and of formal $l$-algebra schemes over $k$ by $\fAlgSch{k}{l}$. Denote by $\Schf{k}$ and $\fAlgSchf{k}{l}$ the full subcategories of functors represented by pro-flat $k$-modules.
\end{defn}

For our main theorems, we restrict ourselves to the case where $k$ is a (graded) Pr\"ufer ring, i.~e. a ring where submodules of flat modules are flat. We hope that this restriction can ultimately be removed, but it is necessary at the moment.

\begin{thm}\label{thm:falgschstructure}
Let $k$ be a graded Pr\"ufer ring. Then the category $\fAlgSchf{k}{k}$ is complete and cocomplete and has a monoidal structure $\circ_k$ given by composition of functors. The category $\Schf{k}$ has an action of $\fAlgSchf{k}{k}$.
\end{thm}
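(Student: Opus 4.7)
The plan is to reduce the three assertions to standard properties of Pro-categories and to a careful analysis of how composition interacts with filtered colimits.

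First I would identify $\Sch{k}$ with the Pro-category of $k$-algebras. By Yoneda, a filtered colimit of representables $\rhom{\Alg_k}(A_i,-)$ is the functor associated to the Pro-object $\{A_i\}$, so $\Sch{k}$ is antiequivalent to $\operatorname{Pro}\text{-}\Alg_k$ (or to its subcategory of pro-finitely-presented algebras, which is the version one actually uses for composition). Under this equivalence, $\fAlgSch{k}{k}$ corresponds to co-$k$-algebra objects in Pro-$\Alg_k$, i.e., pro-algebras equipped with internal cooperations (coaddition, comultiplication, counits, \ldots) mirroring the $k$-algebra axioms. Bicompleteness of $\Sch{k}$ follows from the standard fact that $\operatorname{Pro}$ of a category with finite limits is itself complete and cocomplete: limits reduce to reindexed levelwise limits, and filtered colimits are absorbed into the indexing system. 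Completeness and cocompleteness of $\fAlgSch{k}{k}$ then follow by a monadicity argument, co-$k$-algebra structures being equationally defined: limits are created in $\Sch{k}$, and colimits are produced by a reflexive-coequalizer construction.

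Next I would construct the composition monoidal structure. Given $F \in \fAlgSch{k}{k}$ presented as a filtered colimit of representables $F_j = \rhom{\Alg_k}(A_j,-)$, each $A_j$ carrying a biring structure, and $G \in \fAlgSch{k}{k}$, I would define $F \circ G$ as the pointwise filtered colimit of $F_j \circ G$. Each $F_j \circ G$ sends $R$ to $\rhom{\Alg_k}(A_j,G(R))$, and I need this to be a formal scheme in $R$. Since colimits of formal algebra schemes are computed levelwise in $\Alg_k$, this reduces to $\rhom{\Alg_k}(A_j,-)$ commuting with filtered colimits of $k$-algebras, which is automatic for finitely presented $A_j$ and reducible to that case in general by writing $A_j$ as a filtered limit of finitely presented quotients (which in the Pro picture is just a refinement of presentation). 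The biring structure on $A_j$ then transports to a formal algebra scheme structure on $F_j \circ G$, and hence on $F \circ G$.

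The main obstacle is precisely the interaction of composition with filtered colimits in both slots. Composition preserves colimits in the first slot automatically, because colimits of functors are pointwise, but preservation in the second slot is the delicate finiteness condition on representing objects sketched above; this is where I expect the real work to occur. Once clean behavior is established, the unit $\id = \rhom{\Alg_k}(k[x],-)$, associativity, and independence of presentation are formal diagram chases at the level of representables, transported to colimits by continuity.

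Finally, the action of $\fAlgSch{k}{k}$ on $\Sch{k}$ is given by the same formula $F \cdot X := F \circ X$, now with set-valued output. The verification is a direct specialization of the monoidal argument above: one simply drops the requirement that the output carry an algebra structure, and the compatibility with filtered colimits in both slots follows from exactly the same finite-presentation reasoning.
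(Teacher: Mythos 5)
Your overall architecture is the same as the paper's: identify $\Sch{k}$ with $(\Pro-\Alg_k)^\op$, get its bicompleteness from general pro-category facts, lift completeness and cocompleteness to the algebra schemes by monadicity over $\Sch{k}$, and construct $\circ$ by ind-representing the composite functor. But the two steps where you write ``monadicity argument'' and ``reduces to $\Alg_k(A_j,-)$ commuting with filtered colimits'' are exactly where the content lives, and neither goes through as stated. For cocompleteness, your appeal to co-$k$-algebra structures being ``equationally defined'' presupposes a left adjoint to the forgetful functor to $\Sch{k}$, i.e.\ a free formal module/algebra scheme on a formal scheme. On representing objects this is a \emph{cofree} cocommutative Hopf algebra on a pro-algebra, and its existence is the substantive point (Lemma~\ref{cofreehopfalgebra}: the invariants coalgebra $\prod_n (M^{\otimes_k n})^{\Sigma_n}$ formed in $\Pro-\Mod_k$, followed by passage to the maximal Hopf subalgebra); this is precisely the construction that is hard or unavailable for ordinary algebras and only becomes tractable in the pro-setting. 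Your reflexive-coequalizer route to colimits would further require the resulting monad to preserve reflexive coequalizers, which you do not verify and which is not how the paper proceeds: there, coequalizers are built as cointersections of quotient objects using well-poweredness of $\Pro-\Alg_k$ (Theorem~\ref{fmodschbicomplete}).

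The representability of $F_j \circ G$ is the second gap. Writing $G = \colim_i \Alg_k(B_i,-)$, the stages $\Alg_k(B_i,R)$ are only \emph{sets}: the cooperations on $\Reg{G}$ move the index $i$, so the $k$-algebra structure exists only on the colimit $G(R)$, $F_j$ cannot be evaluated at the stages, and the colimit cannot be pulled out of $F_j$ no matter how finitely presented $A_j$ is. (Also, a general $A_j$ is a filtered \emph{colimit} of finitely presented algebras, not a ``filtered limit of finitely presented quotients,'' and a pro-algebra need not admit a reindexing with finitely presented levels.) The mechanism that actually works, and the one the paper uses, is dual to yours: $R \mapsto \Alg_k(A, G(R))$ is a \emph{limit} of ind-representables --- a product of copies of $G$ indexed by the underlying set of $A$, cut down by equalizers encoding additivity and multiplicativity (the simultaneous equalizers of Section~\ref{sec:falgschstructure}) --- combined with Lemma~\ref{basiclimcolimpres}, that all limits of ind-representable functors are ind-representable. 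The essential input is that \emph{infinite} products of formal schemes are again formal schemes, which is supplied by the cotensoring of $\Pro-\Alg_k$ over (ind-)sets (Theorem~\ref{indproenriched}); no finiteness hypothesis on the $A_j$ is needed. Once this is in place, your treatment of units, associativity, and the action on $\Sch{k}$ is fine and matches the paper's.
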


In contrast, the category of ordinary $k$-algebra schemes over $k$ is not complete -- it does not have an initial object, for example.

\begin{defn} \label{def:fplethory}
A \emph{formal plethory} is a comonoid in $\fAlgSchf{k}{k}$ with respect to $\circ_k$. A \emph{comodule} over a formal plethory $\hat P$ is a formal scheme $\hat X \in \Schf{k}$ with a coaction $\hat X \to \hat P \circ_k \hat X$.
\end{defn}

\begin{thm} \label{thm:topplethory}
Let $E$ be a ring spectrum such that $E_*$ is a Pr\"ufer domain and such that \eqref{flatnessassump} holds. Then $\hat E^*(\underline E_n)$ represents a formal plethory, and $\hat E^*(X)$ represents a comodule over this plethory for any space $X$.
\end{thm}

Examples of spectra satisfying these conditions include Morava $K$-theories, integral Morava $K$-theories including complex $K$-theory, and more generally any theories $E$ satisfying \eqref{flatnessassump} such that $E_*$ is a graded PID or Dedekind comain. Theories like $MU$ or $E(n)$ do not satisfy the hypothesis, but Landweber exact theories are treated in \cite{bendersky-curtis-miller,bendersky-thompson}.

Thus formal plethories provide an algebraic framework for studying unstable cohomology operations. Algebraic descriptions of unstable cohomology operations are not new: in \cite{boardman-johnson-wilson}, Boardman, Johnson, and Wilson studied unstable algebras (and unstable modules) in great depth. Their starting point is a comonadic description of unstable algebras: the unstable operations of a cohomology theory $E$ are described by a comonad
\[
U(R) = \mathsf{FAlg}(E^*(\underline E_*),R),
\]
and unstable algebras are coalgebras over this comonad \cite[Chapter 8]{boardman-johnson-wilson}. Here $\mathsf{FAlg}$ \label{falgdefn} denotes the category of complete filtered $E_*$-algebras, where the filtration of $E^*(X)$ for a space $X$ is given by the kernels of the projection maps $E^*(X) \to E^*(F)$ to finite sub-CW-complexes.

It requires work to make such a comonadic description algebraically accessible, i.~e. to represent it as a set with a number of algebraic operations. If one disregards composition of operations as part of the structure on $E^*(\underline E_*)$, one arrives (dually) at the notion of a \emph{Hopf ring} or \emph{coalgebraic ring} \cite{wilson:hopf-rings,ravenel-wilson:morava,ravenel-wilson:hopf-ring-mu}. Boardman, Johnson, and Wilson made an attempt at incorporating the composition structure into the definition of a Hopf ring and called it an \emph{enriched Hopf ring} \cite[Chapter~10]{boardman-johnson-wilson}, but stopped short of describing the full algebraic structure.

For this, one needs the language of \emph{plethories}. Plethories were first introduced in \cite{tall-wraith} and were extensively studied in \cite{borger-wieland:plethystic} from an algebraic point of view. These plethories, however, do not carry filtrations or pro-structures such as needed for topological applications. A framework for dealing with this, similar to our formal plethories, was developed by Stacey and Whitehouse \cite{stacey-whitehouse:hunting} using algebras with a complete filtration instead of pro-objects. They prove a version of Thm.~\ref{thm:topplethory} under different assumptions (namely, requiring $E_*(\underline E_k)$ to be a free $E_*$-module) and with a weaker conclusion (they show that the \emph{profinitely completed} cohomology ring $E^*(X)$ is a comodule over the plethory). This completion process destroys phantom maps \cite[Chapters~3,~4]{boardman:stable}. Thus in situations where $E^*(X)$ contains phantoms (cohomology classes that are zero on any finite subcomplex), our description is capable of describing the unstable operations on them. (Phantoms are seen by the pro-system; they live in $\lim^i \hat E^*(X)$ for some $i>0$.) If $E_*$ is a graded field, there are never any phantoms in $E^*(X)$ \cite[Thm.~4.14]{boardman:stable}, and therefore our comodule category and that of Stacey-Whitehouse are equivalent.

\begin{example}
Let $E=K_{(p)}$ be $p$-local complex $K$-theory. Then $E_*=\Z_{(p)}[u,u^{-1}]$ is a Pr\"ufer domain. 
Since $\underline E_{2n} = \Z \times BU_{(p)}$ and $BU$ can be built from only even cells, $\hat E^*(\underline E_{2n}) = \hat E^*(\Z \times BU)$ is pro-free. Similarly, $\underline E_{2n+1} = U_{(p)}$ is the union of $U(n)_{(p)}$, which have free $E$-cohomology, so also $\hat E^*(\underline E_{2n+1})$ is pro-free. Thus $E$ satisfies \eqref{flatnessassump}. There are many spaces $X$ such that $E^*(X)$ has phantoms. Take for example $X=K(\Z[\frac1p],1)$, whose $p$-localization is $K(\mathbf Q,1)$. Then
\[
X = \operatorname{hocolim}(\S^1 \xrightarrow{p} \S^1 \xrightarrow{p} \cdots)
\]
is a fitration by finite subcomplexes and hence $\hat E^1(X,1) = \{\cdots \to Z_{(p)} \xrightarrow{p} Z_{(p)}\}$. The inverse limit of this system is trivial, so all classes are phantoms with $\tilde E^2(X) \cong \lim^1 \hat E^1(X) \cong \mathbf Q_p/\mathbf Q$.
\end{example}

The importance of having an algebraic description for unstable cohomology operations, including the tricky composition pairing, comes to a great extent from the existence of an unstable Adams-Novikov spectral sequence
\begin{equation}\label{unstableanss}
\Ext_{\G}^s(\hat E^*X,E^*(\S^t)) \Longrightarrow \pi_{t-s}(X\hat{{}_E})
\end{equation}
converging conditionally to the homotopy groups of the unstable $E$-completion of $X$. The $\Ext$ group on the left is the a nonlinear comonad-derived functor of homomorphisms of comodules over a comonad $\G$, a category which is closely related to plethory comodules. Note that if $E_*$ is not a graded field, the plethory module category of \cite{stacey-whitehouse:hunting} will in general be different from ours and may therefore not produce the correct $E^2$-term for the unstable Adams spectral sequence.

Since this $E^2$-term is generally very difficult to compute, it is desirabe to find good linear approximations to the formal plethory represented by $\hat E^*(\underline E_n)$ and its comodules, such that the $\Ext$ computation takes place in an abelian category. For this, it is useful to introduce the concept of a $2$-monoidal category.

\begin{defn}[{\cite{aguiar-mahajan:monoidal-functors}}]
A \emph{$2$-monoidal category} is a category $\C$ with two monoidal structures $(\otimes,I)$ and $(\circ,J)$ and natural transformations
\[
\zeta\colon (A \circ B) \otimes (C \circ D) \to (A \otimes C) \circ (B \otimes D)
\]
and
\[
\Delta_I\colon I \to I \circ I,\quad \mu_J\colon J \otimes J \to J, \quad \iota_J = \epsilon_I\colon I \to J,
\]
satisfying various compatibility conditions explicated in Section~\ref{sec:fringsfplethories}.

A \emph{bilax monoidal functor} $F\colon \C \to \D$ between $2$-monoidal categories is functor which is lax monoidal with respect to $\otimes$, oplax monoidal with respect to $\circ$, and whose lax and oplax monoidal structures satisfy certain compatibility conditions (cf. Section~\ref{sec:fringsfplethories}).
\end{defn}

Loosely speaking, a $2$-monoidal category is the most general setting in which one can define a bimonoid (an object with a multiplication and a comultiplication that are compatible with each other). A bilax monoidal functor is the most general notion of functor which sends bimonoids to bimonoids.

\begin{example}\label{exa:coprodtwomonoidal}
If $(\C,\otimes,I)$ is a cocomplete monoidal category then $(\C,\otimes,I,\sqcup,\emptyset)$ becomes a $2$-monoidal category (the other monoidal structure being the categorical coproduct). In particular, the category of formal algebra schemes is $2$-monoidal with the composition product and the coproduct.
\end{example}
\begin{example}[{\cite[Example~6.18]{aguiar-mahajan:monoidal-functors}}]\label{exa:bimodtwomonoidal}
For graded rings $k$, $l$, let $\Bimod{k}{l}$ be the category of $k$-$l$-bimodules (with a $\Z \times \Z$-grading).
Then the category $\Bimod{k}{k}$ is $2$-monoidal with respect to the two-sided tensor product $\btensor{k}{k}$ and the tensor product $\circ_k$ over $k$ given by $M \circ_k N = M \otimes_k N$, using the right $k$-module structure on $M$ and the left $k$-module structure on $N$.
\end{example}
\begin{example}\label{exa:fbimodtwomonoidal}
Let $k$ be a Pr\"ufer domain. Then the category $\fBimodf{k}{k}$ of flat \emph{formal bimodules}, i.e. ind-representable \emph{additive} functors $\Mod_k \to \Mod_k$ represented by pro-flat $k$-modules, is $2$-monoidal.
\end{example}

The main result of this paper about linearization of formal plethories concerns the functor of primitives $P\colon \Coalg_k^+ \to \Mod_k$ from co-augmented $k$-coalgebras to $k$-modules and the functor of indecomposables $Q\colon \Alg_k^+ \to \Mod_k$ from augmented $k$-algebras to $k$-modules.

\begin{thm} \label{thm:primindecbimodulefunctors}
The functors of primitives and indecomposables extend to bilax monoidal functors
\[
\Primnoarg,\Indecnoarg\colon \fAlgSchf{k}{k} \to \fBimodf{k}{k}.
\]
\end{thm}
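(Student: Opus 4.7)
The plan is to pass to representing formal birings and linearize. Each $F \in \fAlgSch{k}{k}$ is represented by a formal biring $R_F$: a pro-object in $k$-algebras equipped with a coaddition $\Delta_+\colon R_F \to R_F \hat\otimes R_F$ with counit $\epsilon_+\colon R_F \to k$, and a comultiplication $\Delta_\times$ with counit $\epsilon_\times\colon R_F \to k$. I would define
\[
\Primnoarg(F) := P(R_F, \Delta_+, \epsilon_+) \quad \text{and} \quad \Indecnoarg(F) := \ker\epsilon_+/(\ker\epsilon_+)^2,
\]
taking primitives (resp.\ indecomposables) with respect to the additive coalgebra structure (resp.\ the augmentation ideal of the additive counit). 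Each carries two commuting $k$-actions: one from the unit $k \to R_F$ of the $k$-algebra structure on $R_F$, and the other from $\epsilon_\times\colon R_F \to k$ together with the $k$-linearity of the values $F(A)$. Hence $\Primnoarg$ and $\Indecnoarg$ land in $\fBimod{k}{k}$.

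To construct the lax structure with respect to $\circ$, I would use that $F \circ G$ is represented by a substitution of formal birings $R_F$ and $R_G$ in the pro-setting. A primitive $q \in P(R_G)$ together with a primitive $p \in P(R_F)$ can be combined through this substitution to yield a primitive of $R_{F \circ G}$; the resulting pairing descends to the bimodule tensor product, giving a natural transformation
\[
\Primnoarg(F) \btensor{k}{k} \Primnoarg(G) \to \Primnoarg(F \circ G).
\]
The middle $k$-balancing corresponds precisely to the coefficient ring connecting the two stages of the substitution. An analogous chain-rule construction applies to $\Indecnoarg$.

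For the oplax structure with respect to $\sqcup$, the coproduct $F \sqcup G$ is represented by the categorical product of birings $R_F \times R_G$. A direct computation on primitives of a product biring yields $\Primnoarg(F \sqcup G) \cong \Primnoarg(F) \oplus \Primnoarg(G)$, and similarly for $\Indecnoarg$. Composing this decomposition with the natural maps into $\Primnoarg(F) \otimes_k \Primnoarg(G)$ (via the units of $\otimes_k$) gives the required oplax maps. The bilax compatibility axioms of Aguiar--Mahajan are then verified by direct diagram chasing on representing birings, tracking the interchange $\zeta\colon (A \circ B) \sqcup (C \circ D) \to (A \sqcup C) \circ (B \sqcup D)$ and the unit transformations $\Delta_I, \mu_J, \iota_J$ through the constructions.

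The main obstacle is the lax structure for composition: constructing and understanding substitution of formal birings in the pro-setting, and showing that the resulting pairing genuinely factors through the $\btensor{k}{k}$-balanced tensor product, constitutes the conceptual heart of the proof. Once this natural transformation is in place, the oplax $\sqcup$-structure follows from the clean direct-sum decomposition, and the hexagonal compatibility diagrams reduce to routine bookkeeping on the representing birings.
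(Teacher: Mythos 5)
Your overall architecture (work on representing objects, equip $P$ and $Q$ with two $k$-actions, build comparison maps for both products, check the Aguiar--Mahajan axioms) matches the paper's, and you correctly identify compatibility with composition as the crux. But the structure maps you propose go in the wrong directions. In the convention used here, the $2$-monoidal structure on $\fAlgSch{k}{k}$ is $(\otimes_k,\circ_k)$ with $\otimes_k$ the categorical coproduct (your $\sqcup$) in the \emph{first} slot and composition in the \emph{second}; a bilax functor must be \emph{lax} with respect to $\otimes_k$ and \emph{oplax} with respect to $\circ_k$. You construct a lax map for $\circ$ and an oplax (direct-sum) splitting for $\sqcup$ --- exactly reversed. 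The direction is not cosmetic: a formal plethory is a $\circ$-comonoid, and only an oplax $\circ$-structure $\Prim{F\circ G}\to\Prim{F}\circ\Prim{G}$ lets you push its comultiplication forward to a comultiplication on $\Prim{F}$. For $\Indecnoarg$ the damage is repairable because the $\circ$-comparison is in fact a strict isomorphism, $\Indec{\hom(R,F)}\cong\hom_k(Q(R),\Indec{F})$ (Cor.~\ref{Qodotmonoidal}); but for $\Primnoarg$ the comparison $\Prim{\hom(R,F)}\to\hom_k(P(R),\Prim{F})$ is genuinely one-way and not invertible (take $R=k[z]$ over a ring of positive characteristic: the source is $\Prim{F}$ but $P(k[z])$ is strictly larger than $k$), so a ``combine two primitives into a primitive of the composite'' pairing is not the required datum and cannot be turned into it.

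The $\sqcup$-side also contains an error: the coproduct in $\fAlgSch{k}{k}$ is \emph{not} represented by the product of pro-algebras $\Reg{F}\times\Reg{G}$ (that represents the disjoint union of the underlying formal schemes); since the coproduct of commutative $k$-algebras is their tensor product, $F\sqcup G$ is the tensor product of formal algebra schemes, represented by $\Reg{F}\multtensor{k}{k}\Reg{G}$. Hence $\Prim{F\sqcup G}$ does not split as $\Prim{F}\oplus\Prim{G}$; the correct comparison maps are the lax map $\Indec{F}\otimes_k\Indec{G}\to\Indec{F\otimes_k G}$ (Cor.~\ref{Qopmonoidal}) and the oplax map $\Prim{F\otimes_k G}\to\Prim{F}\otimes_k\Prim{G}$ into the Sweedler product, an isomorphism only under pro-flatness hypotheses (Prop.~\ref{Pmonoidal}); your ``compose with the units of $\otimes_k$'' step has no candidate map, since the unit of $\otimes_k$ in $\fBimod{k}{k}$ is $\hom_\Z(k,k)$ and nothing maps it to $\Prim{G}$. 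The missing idea that makes all of this tractable in the paper is the pair of adjunctions $\Fr\dashv\Indecnoarg$ and $\Primnoarg\dashv\Cof$, with $\Fr=\Spf{\Gamma}$ the cofree Hopf algebra and $\Cof=\Spf{\Sym}$: every comparison map is obtained by adjunction from strict or lax monoidality of $\Gamma$ and $\Sym$ (Lemmas~\ref{gammamonoidal}, \ref{Symhatodotmonoidal}, \ref{Qhatodotmonoidal}), and bilaxness is transferred across the adjunctions by Prop.~\ref{bilaxcondition}. Without that scaffolding, verifying the hexagon \eqref{bilaxcompat} ``by direct diagram chasing on representing birings'' is not routine bookkeeping.
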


This means that a formal plethory $\hat P$ gives rise to two bimonoids $\Prim{\hat P}$, $\Indec{\hat P}$. Their module categories are approximations to the category of $\hat P$-comodules and can be used to study the $E_2$-term of \eqref{unstableanss}, but I will defer these topological applications and computations to a forthcoming paper.

Finally, if one desires to leave the worlds of pro-categories, one can do so after dualization because of the following theorem.

\begin{thm}\label{thm:dualization}
Let $k$ be a Pr\"ufer domain. Then the full subcategory of $\fBimod{k}{k}$ consisting of pro-finitely generated free $k$-modules is equivalent, as a $2$-monoidal category, to the full subcategory of right $k$-flat modules in $\Bimod{k}{k}$ (cf. Example~\ref{exa:bimodtwomonoidal}).
\end{thm}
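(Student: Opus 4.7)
The plan is to construct an explicit dualization functor $D\colon \fBimod{k}{k}' \to \Bimod{k}{k}$ and verify it is an equivalence of $2$-monoidal categories. First, write any $F \in \fBimod{k}{k}'$ as $F = \colim_i \Hom_k(N_i,-)$, where $\{N_i\}$ is a pro-system of $k$-bimodules with each $N_i$ finitely generated and free as a $k$-module, and set $D(F) := \colim_i N_i^\vee$ with $N_i^\vee = \Hom_k(N_i,k)$ carrying the transposed bimodule structure. Since the dual of a f.g.\ free bimodule is again f.g.\ free on the opposite side, hence right-$k$-flat, and right-$k$-flatness is preserved under filtered colimits, $D(F)$ lies in the full subcategory of right-$k$-flat bimodules in $\Bimod{k}{k}$.

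For the inverse, I would use the hypothesis that $k \otimes k$ is flat over $k$ to run a Lazard-style argument: any right-$k$-flat $k$-bimodule, viewed as a right-flat $(k \otimes k)$-module, can be written as a filtered colimit of finitely generated free $k$-bimodules. Dualizing each term and assembling the cofiltered system yields the pro-system defining the candidate inverse $D^{-1}$. The identities $D \circ D^{-1} \cong \id$ and $D^{-1} \circ D \cong \id$ reduce, after unraveling, to the canonical isomorphism $(M^\vee)^\vee \cong M$ for f.g.\ free $M$, together with the classical anti-equivalence between the pro-category of f.g.\ free bimodules and the ind-category of their duals.

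To promote $D$ to a $2$-monoidal equivalence, I would check compatibility on the representable building blocks. For f.g.\ free bimodules $M, N$ one has natural isomorphisms
\[
(M \btensor{k}{k} N)^\vee \cong M^\vee \btensor{k}{k} N^\vee, \qquad (M \otimes_k N)^\vee \cong M^\vee \otimes_k N^\vee,
\]
the second of which relies on the flatness of $k \otimes k$ over $k$ to commute dualization with $\otimes_k$ while respecting both bimodule actions simultaneously. Tracing the two monoidal structures of Example~\ref{exa:fbimodtwomonoidal} on the functor side through these isomorphisms identifies them, pairwise, with $\btensor{k}{k}$ and $\otimes_k$ on $\Bimod{k}{k}$; extending by filtered colimits upgrades $D$ to a strong bimonoidal functor on the subcategories at hand. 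Because both monoidal structures and their units on each side are constructed from the same tensor--hom adjunctions, the interchange $\zeta$ and the remaining $2$-monoidal coherence data transport across $D$ automatically.

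The main obstacle I anticipate is the careful bookkeeping of the two bimodule actions under dualization: showing that the pair of monoidal products on $\fBimod{k}{k}$, which a priori live at the level of additive endofunctors of $\Mod_k$, really matches, as a pair, the pair $(\btensor{k}{k}, \otimes_k)$ on $\Bimod{k}{k}$ after taking duals. This is precisely the point at which the flatness of $k \otimes k$ over $k$ is indispensable: without it, one of the displayed dualization isomorphisms fails, and $D$ would collapse to a merely lax $2$-monoidal functor rather than an equivalence.
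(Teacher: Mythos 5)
Your proposal follows essentially the same route as the paper: the paper's $\predual{(-)}$ is exactly your $D$ (it is $F(k)=\colim_i N_i^\vee$), its inverse $\dual{(-)}$ is obtained by dualizing the finitely generated free pieces of a Lazard--Govorov presentation, and the $2$-monoidal compatibility is likewise verified on finitely generated free generators and extended by (co)filtered (co)limits. One small correction: the hypothesis that $k\otimes k$ is flat over $k$ is not what makes the binary comparison $(M\otimes_k N)^\vee\cong M^\vee\otimes_k N^\vee$ work for finitely generated free $M,N$ (that holds unconditionally); it is needed so that the \emph{units} of the second monoidal structures --- $k\otimes_K k$ in $\Bimod{k}{k}$ and $\hom_k(k\otimes k,k)$ in $\fBimod{k}{k}$ --- actually lie in the right-flat, resp.\ pro-finitely-generated-free, subcategories, which is what makes the unit comparison map $k\otimes_K k\to\hom_k(\hom_k(k\otimes k,k),k)$ an isomorphism.
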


\begin{corollary}
Let $E$ be a multiplicative homology theory such that $E_*$ is a Pr\"ufer domain and $E_*(\underline E_n)$ is a flat $E_*$-modules for all $n$. Then $PE_*\underline E_*$ is a bimonoid in $\Bimod{k}{k}$, and for any pointed space $X$, $PE_*(X)$ is a comodule over it.
\end{corollary}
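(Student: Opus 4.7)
The plan is to chain the three main theorems of the paper, descending from topology through formal bimodules to ordinary bimodules. First, the flatness hypothesis on $K_*(\underline K_n)$, combined with a K\"unneth/universal coefficient argument applied to each finite sub-CW-complex, ensures that assumption \eqref{flatnessassump} holds. Example \ref{exa:topplethory} then produces a formal plethory $F$ represented by $K^*(\underline K_*) \in \fAlgSch{K_*}{K_*}$, together with a comodule represented by $K^*(X)$ for each pointed space $X$.

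Next, I apply the bilax monoidal functor $\Indecnoarg \colon \fAlgSch{K_*}{K_*} \to \fBimod{K_*}{K_*}$ from Theorem \ref{thm:primindecbimodulefunctors}. Since bilax monoidal functors carry bimonoids to bimonoids and comodules to comodules, $\Indec{F}$ is a bimonoid in the 2-monoidal category $\fBimod{K_*}{K_*}$, and $\Indec{K^*(X)}$ is a comodule over it. To descend to ordinary bimodules, I invoke Theorem \ref{thm:dualization}: the flatness of $K_*(\underline K_n)$ and of $PK_*(\underline K_n)$, together with the hypothesis that $K_* \otimes K_*$ is flat over $K_*$, places $\Indec{F}$ and $\Indec{K^*(X)}$ in the pro-finitely generated free subcategory $\fBimod{K_*}{K_*}'$. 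The 2-monoidal equivalence of Theorem \ref{thm:dualization} transports the bimonoid and its comodule into the full subcategory of right $K_*$-flat objects in $\Bimod{K_*}{K_*}$. Finally, the classical duality $Q(A)^\vee \cong P(A^\vee)$ identifies the image of $\Indec{F}$ with $PK_*(\underline K_*)$, where the coalgebra structure on the target is the canonical one induced by the diagonal of $\underline K_*$, and similarly $\Indec{K^*(X)}^\vee \cong PK_*(X)$.

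The principal technical point is to check that this final identification respects the bimonoid and comodule structures rather than merely the underlying bimodules. Concretely, one must trace the bilax structure maps of $\Indecnoarg$ through the 2-monoidal equivalence of Theorem \ref{thm:dualization} and verify that the resulting algebra structure on $PK_*(\underline K_*)$ agrees with the one arising from composition of unstable operations on the space level, while the resulting coalgebra structure agrees with the diagonal-induced one. Given the functoriality built into Theorem \ref{thm:primindecbimodulefunctors} and the naturality of the equivalence of Theorem \ref{thm:dualization}, this verification is essentially a diagram chase, and once it is in place the bimonoid axioms and the coaction diagrams for $PK_*(X)$ transfer formally.
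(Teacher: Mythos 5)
Your proposal is correct and follows essentially the same route as the paper: flatness plus the universal coefficient theorem gives condition \eqref{flatnessassump}, Example~\ref{exa:topplethory} gives the formal plethory, the bilax functor $\Indecnoarg$ of Theorem~\ref{thm:primindecbimodulefunctors} produces the bimonoid $QK^*(\underline K_*)$, and the $2$-monoidal equivalence of Theorem~\ref{thm:dualization} (applicable by the flatness of $PK_*(\underline K_n)$ and $K_*\otimes K_*$) transports it to $PK_*(\underline K_*)$ in $\Bimod{k}{k}$, with the comodule statement following similarly. The only minor difference is that the paper explicitly invokes the Lazard--Govorov theorem to pass from flatness of $K_*(\underline K_n)$ to ind-finite generation, and it treats your final "diagram chase" as automatic from the fact that the equivalence of Theorem~\ref{thm:dualization} is an equivalence \emph{of $2$-monoidal categories}.
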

\begin{proof}
By the flatness condition and the Lazard-Govorov theorem \cite{lazard:platitude,govorov:flat}, $\{E_*(F)\}_{F \subseteq \underline E_n}$, where $F$ runs through all finite sub-CW-complexes of $\underline E_n$, is ind-finitely generated free. By the universal coefficient theorem \cite[Thm 4.14]{boardman:stable}, this implies that condition \eqref{flatnessassump} is also satisfied. Thus $\hat E^*(\underline E_*)$ represents a formal plethory by Theorem~\ref{thm:topplethory}, and applying the functor $\Indecnoarg$ gives a bimonoid $Q\hat E^*(\underline E_*)$ by Thm.~\ref{thm:primindecbimodulefunctors}. Under the additional flatness assumption on $PE_*(\underline E_*)$, $Q\hat E^*(\underline E_*)$ is in fact pro-finitely generated free, and Thm.~\ref{thm:dualization} yields that $PE_*(\underline E_*)$ is a bimonoid in $\Bimod{k}{k}$. With a similar reasoning, $PE_*(X)$ is a comodule over this bimonoid.
\end{proof}

\subsection{Outline of the paper}
In Section~\ref{sec:colimofrep}, we set up notation and terminology to deal with the kind of monoidal categories that come up in this context, and with pro-objects and ind-representable functors. In Sections~\ref{sec:formalbimod}--\ref{sec:falgschstructure}, we study formal bimodules and formal plethories and the structure of their categories and prove the first part of Thm.~\ref{thm:falgschstructure}. In Section~\ref{sec:fringsfplethories}, we recall the definition of $2$-monoidal categories and functors between them, define the objects of the title of this paper, along with their linearizations, formal rings and bimonoid, and prove the second part of Thm.~\ref{thm:falgschstructure}. Thm.~\ref{thm:topplethory} is proved in Subsection~\ref{sec:cohopnfalgsch} (Cor.~\ref{cor:cohomologyalgsch}) and Thm.~\ref{thm:topplethorypf}. The long Section~\ref{section:primindec} is devoted to the study of the linearizing functors of primitives and indecomposables and culminates in a proof of Thm.~\ref{thm:primindecbimodulefunctors}. Section~\ref{section:dualization} deals with dualization and the proof of Thm.~\ref{thm:dualization}. The final, short Section~\ref{section:anss} connects the algebra of formal plethories to the unstable Adams-Novikov spectral sequence. There are two appendices: in Appendix~\ref{colimitsapp}, we review some background on ind- and pro-categories and how the indexing categories can be simplified. This is needed for Appendix~\ref{indproapp}, which contains an exposition of how enrichments of categories lift to pro- and ind-categories.

\subsection*{Acknowledgements}
I am grateful to the anonymous referee for pointing out a serious error in an earlier version of the free module scheme construction, and to David Rydh for teaching me about the divided power construction for non-flat rings which one might use to generalize the free module scheme, but which did not make it into this paper.

\section*{List of notations} \addcontentsline{toc}{section}{List of notations}

\begin{list}{}{\labelwidth 8ex \leftmargin 10ex \labelsep 2ex \itemsep 2pt \parsep 0pt}
  \item [{$\boxtimes $}] generic monoidal structure of a $2$-algebra, Def.~\ref {def:twoalgebra}
  \item [{$\circ_k$}] monoidal structure of $\fAlgSch{k}{k}$ by composition of functors (Thm.~\ref{thm:falgschstructure}); tensor product of a $m$-$k$-bimodule with a $k$-$l$-bimodule over $k$ (Ex.~\ref{exa:bimodtwobimod})
  \item [$(\otimes,I)$] generic symmetric monoidal structure, Not.~\ref{not:inthoms}
  \item [$\otimes_l$] regular (graded) tensor product over $l$; tensor product of formal bimodules (Def.~\ref{def:sweedlerprod}); tensor product of formal module schemes (Def.~\ref{def:fmodschtensor})
  \item[$\btensor{k}{l}$] tensor product of two $k$-$l$-bimodules over $k \otimes l$, Ex.~\ref{exa:bimodtwomonoidal}
  \item [{$\indprotensor$}] left $2$-module structure (tensoring) of $\C$ over $\V$, Not.~\ref{not:inthoms}; in particular of $\Pro-\Alg_k^\Z$ over $\Set^\Z$ (page~\pageref{proalgsettensor})
  \item [{$\indprotensor_\Z$}] left $2$-module structure of $\Pro-\Mod_k^\Z$ over $\Mod_\Z^\Z$, \eqref{modtwobimodule}
  \item [$\indprotensor_l$] left $2$-module structure of $\fBimod{k}{l}$ over $\Mod_l$, Lemma~\ref{fbimodtwomodule}; left $2$-module structure of $\fModSch{k}{l}$ over $\Mod_l$, Lemma~\ref{fmodschztensoradj}, page~\pageref{fmodschtensoring}
  \item[$(\circ,J)$] second monoidal structure (besides $(\otimes,I)$) in a $2$-monoidal category, Def.~\ref{def:twomonoidalcat}
  \item[$\circ_l$] composition monoidal structure, Def.~\ref{def:circprod}
%%%% numbers
  \item[$\hat 0$] terminal formal module or algebra scheme, Ex.~\ref{exa:terminalfmodsch}
  
%%%% A
  \item [$\hat A$] formal algebra scheme, Def.~\ref{def:fschemefalgscheme}
  \item [$\hat A_+$] augmented algebra scheme, \eqref{ahatplus}
  \item [{$\Alg_k$}] category of graded commutative $k$-algebras, Def.~\ref{def:fschemefalgscheme}
  \item [{$\Alg_k^+$}] category of augmented $k$-algebras
  \item [{$\fAlgSch {k}{l}$, $\fAlgSchf {k}{l}$}] category of (flat) formal $l$-algebra schemes over $k$, Def.~\ref {def:fschemefalgscheme}
  \item [{$\alpha $}] structure map of a morphism of $2$-bimodules, Def.~\ref {def:morphtwobimod}

%%%% B
  \item [{$\beta $}] structure map of a morphism of $2$-bimodules, Def.~\ref {def:morphtwobimod}

%%%% C
  \item [{$\C $}] generic category, Def.~\ref {def:tworingtwomod}
  \item [{$\C(X,Y)$}] enrichment of $\C$ over $\V$, Not.~\ref{not:inthoms}
  \item[$\Cof$] cofree formal module scheme on a bimodule, page~\pageref{cofdefn}
  
%%%% D
  \item[$\Delta_I$] comonoid structure on $I$ in a $2$-monoidal category, Def.~\ref{def:twomonoidalcat}
%%%% E
\item[$E$, $F$] ring spectra
\item [{$\underline E_m$}] $m$th space in the $\Omega$-spectrum associated to a spectrum $E$
\item [$\epsilon_I$] counit of the comonoid structure on $I$ in a $2$-monoidal category, equal to $\iota_J$, Def.~\ref{def:twomonoidalcat}
%%%% F 
  \item[$\mathfrak{F}_l$] category of all functors $\Alg_k \to \Mod_l$, Lemma~\ref{constantfmod}; category of all additive functors $\Mod_k \to \Mod_l$, Lemma~\ref{freebimodadjunction}
  \item[$\Fr_\N$] free formal commutative monoid scheme on a formal scheme, \eqref{diag:cofree}
  \item[$\Fr_\Z^\N$] Grothendieck construction on a formal monoid scheme, \eqref{diag:cofree}
  \item[$\Fr_\Z$] free abelian group on a set; free formal abelian group on a formal scheme, Thm.~\ref{cofreehopfalgebra}
  \item[$\Fr_l(X)$] free $l$-module on a set $X$; free formal $l$-module scheme on a formal scheme $X$, Cor.~\ref{fmodschstructure}
%%%% G
%%%% H
  \item [{$\hom$}] right $2$-module structure (cotensoring) of $\C$ over $\V$, Not.~\ref{not:inthoms}
  \item [$\hom_l$] right $2$-module structure of $\fBimod{k}{l}$ over $\Mod_l$, Lemma~\ref{fbimodtwomodule}; right module structure of $\fModSch{k}{l}$ over $\Mod_l$, Lemma~\ref{fmodschztensoradj}
  \item [$\hom_l^\times$] pairing of $\Alg_l$ with $\fAlgSch{k}{l}$, Section~\ref{sec:falgschstructure}
  \item [{$\hom_\Z$}] right $2$-module structure of $\Pro-\Mod_k^\Z$ over $\Mod_\Z^\Z$, \eqref{modtwobimodule}
  \item[$\fModSch{k}{l}, \fModSchf{k}{l}$] category of (flat) formal $l$-module schemes over $k$, Def.~\ref{def:fmodsch}
  \item[$\fModSch{k}{\N}$] category of formal monoid schemes over $k$, \eqref{diag:cofree}
%%%% I
  \item [$\tensunit{M}{N}$, $\tensunit{k}{l}$] formal bimodule defined by $M \indprotensor_\Z \Spf{N}$, Ex.~\ref{exa:bimodhom}
  \item [$\tensunit{k}{M}$, $\tensunit{k}{l}$] discrete formal module scheme, Def.~\ref{def:tensunit}
  \item [{$\Ind-\C$}] ind-category of a category $\C$, App.~\ref{colimitsapp}
  \item [$\iota_J$] unit of the monoid structure on $J$ in a $2$-monoidal category, equal to $\epsilon_I$, Def.~\ref{def:twomonoidalcat}

%%%% J
  \item [$J_k$] identity functor $\Mod_k \to \Mod_k$, considered as a formal bimodule (Ex.~\ref{exa:circunitmod}); identity functor $\Alg_k \to \Alg_k$, considered as a formal algebra scheme (Ex.~\ref{exa:circunit}); unit for the $\circ_k$-product (Lemma~\ref{circmonoidal})
%%%% K
  \item [{$k$, $l$, $m$}] graded commutative rings; $k$ often a Pr\"ufer domain
  
%%%% L
%%%% M
  \item [$\hat M$, $\hat N$] formal module schemes (Def.~\ref{def:fmodsch}) or formal bimodules (Def.~\ref{def:formalbimodule})
  \item[$\Mod_\Z$] ungraded abelian groups
  \item [{$\Mod_k$}] (graded) modules over a graded commutative ring $k$
  \item [{$\Mod_k^f$}] flat (graded) $k$-modules
  \item[$\ModSch{l}{\Z}$] category of Hopf algebras over $l$, Prop.~\ref{Podotmonoidal}
  \item [{$\Bimod{k}{l}$}] category of ($\Z\times\Z$-graded) $k$-$l$-bimodules, Ex.~\ref{exa:bimodtwomonoidal}, Ex.~\ref{exa:bimodtwobimod}
  \item [{$\fBimod {k}{l}$,$\fBimodf {k}{l}$}] category of (flat) formal $k$-$l$-bimodules, Def.~\ref {def:formalbimodule}
  \item [{$\mu$, $\mu'$}] unit multiplication maps in a $2$-algebra, Lemma~\ref{tensorjuggle}
  \item[$\mu_\times$] multiplication map of a formal algebra scheme, Lemma~\ref{comultintotensor}
  \item[$\mu_J$] monoid structure on $J$ in a $2$-monoidal category, Def.~\ref{def:twomonoidalcat}
%%%% N
%%%% O
  \item [{$\Reg{F}$}] pro-$k$-module associated to a formal bimodule (page~\pageref{regspfmod}), pro-$k$-algebra associated to a formal scheme (page~\pageref{regspfalg})

%%%% P

  \item [{$P$}] functor of primitives, Def.~\ref{def:prim}
  \item [{$\Primnoarg$}] functor of primitives from $\fAlgSch{k}{k}$ to $\fBimod{k}{k}$, Def.~\ref{def:prim}
  \item [{$\Pro-\C$}] pro-category of a category $\C$, App.~\ref{colimitsapp}
  \item [{$\phi $, $\phi _0$}] structure maps of a lax morphism, Def.~\ref {def:laxmorphism}
  \item [{$\psi $, $\psi _0$}] structure maps of a colax morphism, Def.~\ref {def:laxmorphism}
 
  %%%% Q
  \item [{$Q$}] functor of indecomposables, Def.~\ref{def:indec}
  \item [{$\Indecnoarg$}] functor of indecomposables from $\fModSch{k}{l}$ to $\fBimod{k}{l}$, \eqref{fmodschQ}
 
 %%%% R
 %%%% S
  \item [{$\Sch{k}$, $\Schf{k}$}] category of (flat) formal schemes over $k$, Def.~\ref {def:fschemefalgscheme}
  \item [{$\Set $}] category of sets, Def.~\ref {def:fschemefalgscheme}
  \item [{$\Set^\Z$}] category of $\Z$-graded sets, Def.~\ref{def:fschemefalgscheme}, Ex.~\ref{exa:twomodoverset}
  \item [{$\opnSm$}] functor sending an object to the directed system of small subobjects, Lemma~\ref{basiclimcolimadj}
  \item [{$\operatorname{Spf}$}] formal bimodule associated to a pro-$k$-module (page~\pageref{regspfmod}), formal  scheme associated to a  pro-$k$-algebra (page~\pageref{regspfalg})
  \item [$\Sym$] symmetric algebra on a module, page~\pageref{cofdefn}
  \item [$\Sigma$] shift functor, Ex.~\ref{exa:twomodoverset}

%%%% T
%%%% U
  \item[$U^\times$] forgetful functor $\fAlgSch{k}{l} \to \fModSch{k}{l}$, page~\ref{forgetalgmod}, Lemma~\ref{odotleftfree}
  \item[$U^l$] forgetful functor $\fModSch{k}{l} \to \Sch{k}$, Thm.~\ref{cofreehopfalgebra}, Cor.~\ref{fmodschstructure}
  \item[$U^\N$] forgetful functor $\fModSch{k}{\N} \to \Sch{k}$, \eqref{diag:cofree}
  \item[$U^\Z_\N$] forgetful functor $\fModSch{k}{\Z} \to \fModSch{k}{\N}$, \eqref{diag:cofree}
  \item[$U^{l'}_l$] forgetful functor $\fModSch{k}{l'} \to \fModSch{k}{l}$, Cor.~\ref{fmodschzinduction}, Cor.~\ref{fmodschinduction}
%%%% V
  \item [{$(\V ,\otimes ,I)$}] A generic closed symmetric monoidal category or $2$-ring, Def.~\ref {def:tworingtwomod}
  \item [{$\V(X,Y)$}] internal hom object of the $2$-ring $\V$, Not.~\ref{not:inthoms}
  
%%%% W
%%%% X
\item [$\hat X$, $\hat Y$] formal schemes, Def.~\ref{def:fschemefalgscheme}
%%%% Y
%%%% Z
\item [{$\zeta$, $\zeta'$}] tensor juggle maps in a $2$-algebra, Lemma~\ref{tensorjuggle}; structure map in a $2$-monoidal category, Def.~\ref{def:twomonoidalcat}

\end{list}

\section{Functors which are filtered colimits of representable functors} \label{sec:colimofrep}
 
In this section we will set up some category-theoretic terminology to talk about enrichments, monoidal structures, and ind-representable functors.
  
\subsection{Two-algebra}
 
 Throughout this paper, we deal with closed symmetric monoidal categories, categories enriched over them, possibly tensored and cotensored, and categories that have a monoidal structure on top of that. To keep language under control, I will give these notions $2$-names that I find quite mnemonic.
 
\begin{defn}\label{def:tworingtwomod}
A \emph{$2$-ring} is a bicomplete closed symmetric monoidal category. A category $\C$ enriched over a $2$-ring $(\V,\otimes,I)$ is a \emph{left $2$-module} if it is tensored over $\V$, a \emph{right $2$-module} if it is cotensored over $\V$, and a \emph{$2$-bimodule} if it is both.
\end{defn}

\begin{notation}\label{not:inthoms}
We will typically denote the internal hom object of a $2$-ring by $\V(X,Y)$ and the enrichment of a $2$-module by $\C(X,Y)$. We will use the symbol $\otimes$ for the symmetric monoidal structure of $\V$, $\indprotensor$ for the left module structure of $\C$ over $\V$, and $\hom(-,-)$ for the right module structure of $\C$ over $\V$.
\end{notation}

\begin{remark} \label{oppositetwobimod}
If $\V$ is a $2$-ring and $\C$ is a left $2$-module over $\V$ then the opposite category $\C^\op$ is a right $2$-module over $\V$ and vice versa.
\end{remark}

\begin{example}
Any category with arbitrary coproducts (coproducts) is a left (right) $2$-module over the category of sets: the left or right $2$-module structures are given by 
\[
S \indprotensor X = \coprod_{s \in S} X \quad \text{and}\quad \hom(S,X) = \prod_{s \in S} X.
\]
\end{example}

\begin{example}\label{exa:twomodoverset}
The category $\Set^\Z$ of $\Z$-graded sets is a $2$-ring with enrichment
\[
\Set^\Z(X,Y)(n) = \prod_{i \in \Z} \Set(X(i),Y(i+n))
\]
and symmetric monoidal structure
\[
(X \times Y)(n) = \coprod_{i+j=n} X(i) \times Y(j).
\]
The unit object is the singleton in degree $0$.

A left (right) $2$-module over $\Set^\Z$ is a precisely a category $\C$ with coproducts (products) and a $\Z$-action on objects by a shift functor $\Sigma^n\colon \C \to \C$ ($n \in \Z$). The $\Z$-grading on its morphism sets is determined by the shift functor: $\C(X,Y)(n) = \C_0(X,\Sigma^n Y)$, where the right hand side denotes the unenriched homomorphism sets.
\end{example}

\begin{defn}\label{def:morphtwobimod}
A \emph{morphism of $2$-bimodules} $F\colon\C \to \D$ over a fixed $2$-ring $\V$ is an enriched functor $\C \to \D$. This implies the existence of canonical morphisms $\alpha\colon L \indprotensor F(X) \to F(L \indprotensor X)$ and $\beta\colon F(\hom(L,X)) \to \hom(L,F(X))$ given by the adjoints of
\[
L \to \C(X,L \indprotensor X) \xrightarrow{F} \D(F(X),F(L \indprotensor X))
\]
and
\[
L \to \C(\hom(L,X),X) \xrightarrow{F} \D(F(\hom(L,X)),F(X)),
\]
respectively. We call $F$ \emph{left strict} if $\alpha$ is a natural isomorphism, and \emph{right strict} if $\beta$ is a natural isomorphism.
\end{defn}

\begin{example}
Let $\C$, $\D$ be two categories with all coproducts and products and a $\Z$-action by a shift functor $\Sigma^n$. By Example~\ref{exa:twomodoverset}, this is equivalent with being a $2$-bimodule over $\Set^\Z$. Then a functor $F\colon \C \to \D$ is a morphism of $2$-bimodules iff $F$ commutes with the shift functor, i.e. if the maps $\alpha$ and $\beta$ give mutually inverse maps between $\Sigma^nF(X)$ and $F(\Sigma^n X)$.
\end{example}

\begin{defn}\label{def:twoalgebra}
A \emph{$2$-algebra} $\C$ over a $2$-ring $(\V,\otimes,I_\V)$ is a $2$-bimodule $\C$ with a  monoidal structure $(\boxtimes,I_\C)$ such that the functors $- \boxtimes X$ and $X \boxtimes -\colon \C \to \C$ are enriched functors for all $X \in \C$.
\end{defn}

Although $\C \times \C$ is a $\V$-category by the diagonal enrichment, we do not require the functor $\boxtimes\colon \C \times \C \to \C$ to be thus enriched.

To make the structure maps more explicit, the enrichment gives
\begin{align}
\text{a natural map $\alpha\colon L \indprotensor (X \boxtimes Y) \to (L \indprotensor X) \boxtimes Y$} \label{alphamap}\\
\text{a natural map $\beta\colon \hom(L,X) \boxtimes Y \to \hom(L,X \boxtimes Y)$} \label{betamap}
\end{align}

Note that a $2$-algebra, even if it is symmetric, is not required to be closed monoidal ($\boxtimes$ need not have a right adjoint). Note also that the definition of a $2$-algebra is symmetric: if $\C$ is a $2$-algebra over $\V$ then so is $\C^\op$.

\begin{example}\label{twoalgebraoverset}
A $2$-algebra over the category of (ungraded) sets is simply a monoidal category with all coproducts and products. A $2$-algebra over $\Set^\Z$ is a $\Z$-graded category with all (co)products and a monoidal structure $\boxtimes$ which is equivariant under grading shifts in either variable.
\end{example}

\begin{example}\label{exa:bimodtwobimod}
Let $k$ be a (not necessarily commutative) ring, $\V$ the category of abelian groups, and $\C=\Bimod{k}{k}$ the category of $k$-bimodules. Then $\C$ is a $2$-bimodule over $\V$, and the tensor product $\circ_k$ of Ex.~\ref{exa:bimodtwomonoidal} makes $\C$ into a $2$-algebra.
\end{example}

\begin{lemma}\label{tensorjuggle}
Let $(\C,\boxtimes,I_\C)$ be a $2$-algebra over $(\V,\otimes,I_\V)$.
Let $X, Y \in \C$ and $K,L \in \V$. There are natural maps
\[
\mu\colon I_\V \indprotensor I_\C \to I_\C, \quad \zeta\colon (K \otimes L) \indprotensor (M \boxtimes N) \to (K \indprotensor M) \boxtimes (L \indprotensor N)
\]
and
\[
\mu'\colon I_\C \to \hom(I_\V,I_\C), \quad \zeta'\colon \hom(K,M) \boxtimes \hom(L,N) \to \hom(K \otimes L, M \boxtimes N)
\]
which make $\indprotensor$ and $\hom$ monoidal functors $\V \times \C \to \C$.
\end{lemma}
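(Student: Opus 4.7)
The plan is to construct the four maps $\mu,\zeta,\mu',\zeta'$ directly from the tensored/cotensored enrichment of $\C$ over $\V$ together with the $\V$-enrichment of $\boxtimes$ in each slot separately, and then to verify the pentagon and unit triangle axioms of a lax monoidal functor.

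For the unit maps, I use that any tensored and cotensored $\V$-enrichment comes equipped with canonical isomorphisms $I \indprotensor X \cong X \cong \hom(I,X)$. Specializing $X = J$ yields $\mu\colon I \indprotensor J \to J$ and $\mu'\colon J \to \hom(I,J)$.

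For $\zeta$, the enrichment of $-\boxtimes N$ gives the map $\alpha$ of \eqref{alphamap}, and the enrichment of $M \boxtimes -$ gives, by the same construction, a mirror map $\alpha'\colon L \indprotensor (M \boxtimes N) \to M \boxtimes (L \indprotensor N)$. Using the canonical isomorphism $(K \otimes L) \indprotensor X \cong K \indprotensor (L \indprotensor X)$ that is part of the tensored structure, I would define $\zeta$ to be the composite
\[
(K \otimes L) \indprotensor (M \boxtimes N) \cong K \indprotensor (L \indprotensor (M \boxtimes N)) \xrightarrow{K \indprotensor \alpha'} K \indprotensor (M \boxtimes (L \indprotensor N)) \xrightarrow{\alpha} (K \indprotensor M) \boxtimes (L \indprotensor N).
\]
Dually, combining the mirror maps $\beta,\beta'$ with $\hom(K \otimes L, X) \cong \hom(L, \hom(K, X))$ produces $\zeta'$.

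To finish, I must verify the pentagon and the two triangle axioms for $\indprotensor\colon \V \times \C \to \C$ (and analogously for $\hom$). The pentagon for $\zeta$ unwinds into a diagram indexed by the parenthesizations of $(K_1 \otimes K_2 \otimes K_3) \indprotensor (M_1 \boxtimes M_2 \boxtimes M_3)$; it commutes because (i) each of $\alpha$ and $\alpha'$ satisfies its own pentagon against the tensored action, which is the coherence of the underlying enriched functor, and (ii) $\alpha$ applied at one $\boxtimes$-slot commutes with $\alpha'$ applied at the other slot, which is the separate functoriality of $\boxtimes$ in its two variables. The unit triangles reduce to the normalization of the enrichment unit.

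This verification involves no idea beyond the two separate enrichments of $\boxtimes$; the expected main obstacle is the bookkeeping of associators and unitors when the pentagon is written out explicitly. A tidier packaging is to invoke the folklore principle that any $\V$-enriched functor $F\colon \C \to \D$ between tensored $\V$-categories lifts canonically to a lax monoidal functor $\V \times \C \to \D$, then apply it to each of the two one-variable slices of $\boxtimes$ and amalgamate.
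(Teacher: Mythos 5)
Your construction is essentially identical to the paper's: $\mu$ is the canonical unit map of the tensored structure (adjoint to $I \to \C(J,J)$ classifying $\id_J$), $\zeta$ is the same composite $(K \otimes L) \indprotensor (M \boxtimes N) \cong K \indprotensor (L \indprotensor (M \boxtimes N)) \to K \indprotensor (M \boxtimes (L \indprotensor N)) \to (K \indprotensor M) \boxtimes (L \indprotensor N)$ built from the two slot-wise enrichment maps, and $\zeta'$ is obtained by duality (the paper phrases this as passing to the $2$-algebra $\C^\op$). Your extra care in distinguishing $\alpha$ from $\alpha'$ and in sketching the coherence checks only makes explicit what the paper leaves to the reader.
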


\begin{proof}
The map $\mu$ is adjoint to the map $I_\V \to \D(I_\C,I_\C)$ classifying the unit map of $\D$. The map $\zeta$ is the composite
\begin{align}\label{zetadef}
(K \otimes L) \indprotensor (M \boxtimes N) \cong & K \indprotensor (L \indprotensor (M \boxtimes N))\\
\xrightarrow{K \indprotensor \alpha} & K \indprotensor (M \boxtimes (L \indprotensor N))
\xrightarrow{\alpha}   (K \indprotensor M) \boxtimes (L \indprotensor N)). \notag
\end{align}
The assertion about $\zeta'$ follows from passing to the $2$-algebra $\C^\op$. 
\end{proof}

\begin{defn}\label{def:laxmorphism}
A \emph{lax morphism of $2$-algebras} $F\colon \C \to \D$ over a fixed $2$-ring $\V$ is a morphism of $2$-modules with a natural transformation $\phi\colon F(X) \boxtimes F(Y) \to F(X \boxtimes Y)$ and a morphism $\phi_0\colon I_\D \to F(I_\C)$ which make $F$ into a lax monoidal functor, and which is compatible with the enrichment in the sense that the following diagrams commute:
\[
\begin{tikzpicture}
	\matrix (m) [matrix of math nodes, row sep=2em, column sep=3em]
	{
		I_\V & \C(I_\C,I_\C) & \D(F(I_\C),F(I_\C))\\
		\D(I_\D,I_\D) & \D(I_\D,F(I_\C))\\
	};
	\path[->,font=\scriptsize]
	(m-1-1)	edge node[auto]{$\id_{I_\D}$} (m-2-1)
			edge node[auto]{$\id_{I_\C}$} (m-1-2)
	(m-1-2)	edge node[auto]{$F$}	(m-1-3)
	(m-2-1)	edgenode[auto]{$(\phi_0)_*$} (m-2-2)
	(m-1-3)	edge node[auto]{$\phi_0^*$} (m-2-2);
\end{tikzpicture}
\]
{\small
\[
\begin{tikzpicture}
	\matrix (m) [matrix of math nodes, row sep=2em, column sep=2.0em]
	{
		\C(X,X') & \D(FX,FX') & \D(FX \boxtimes FY, FX' \boxtimes FY)\\
		\C(X \boxtimes Y, X' \boxtimes Y) & \D(F(X \boxtimes Y),F(X' \boxtimes Y)) & \D(FX \boxtimes FY, F(X' \boxtimes Y)).\\
	};
	\path[->,font=\scriptsize]
	(m-1-1)	edge node[auto]{$- \boxtimes Y$} (m-2-1)
			edge node[auto]{$F$} (m-1-2)
	(m-1-2)	edge node[auto]{$- \boxtimes FY$}	(m-1-3)
	(m-2-1)	edgenode[auto]{$F$} (m-2-2)
	(m-2-2)	edge node[auto]{$\phi^*$} (m-2-3)
	(m-1-3)	edge node[auto]{$\phi_*$} (m-2-3);
\end{tikzpicture}
\]
}
Similarly, the previous diagram with appliction of $Y \boxtimes -$ and $FY \boxtimes -$ instead of $- \boxtimes Y$ and $- \boxtimes FY$ is required to commute.

An \emph{oplax morphism of $2$-algebras} is a morphism $F\colon \C \to \D$ such that $F^\op\colon \C^\op \to \D^\op$ is a lax morphism of $2$-algebras. More explicitly, it is a morphism of $2$-modules with a natural transformation $\psi\colon F(X \boxtimes Y) \to F(X) \boxtimes (Y)$ and a morphism $\psi_0\colon F(I_\C) \to I_\D$ making $F$ into an oplax monoidal functor, and which is compatible with the enrichment in a similar sense to the above.

A \emph{strict morphism of $2$-algebras} is a morphism $F$ as above which is both lax and oplax with $\phi=\psi^{-1}$ and $\phi_0 = \psi_0^{-1}$.
\end{defn}

It will be useful later to express the conditions for being a lax/oplax morphism of $2$-algebras in terms of the maps $\mu$, $\zeta$ of Lemma~\ref{tensorjuggle} and $\alpha$, $\beta$ from Def.~\ref{def:morphtwobimod}.

\begin{lemma} \label{laxmorphproperties}
Let $F\colon \C \to \D$ be a lax morphism of $2$-algebras. Then the following diagrams commute:
\[
\begin{tikzpicture}
	\matrix (m) [matrix of math nodes, row sep=2em, column sep=3em]
	{
		I_\V \indprotensor I_\D & I_\V \indprotensor F(I_\C) & F(I_\V \indprotensor I_\C)\\
		I_\D & F(I_\C)\\
	};
	\path[->,font=\scriptsize]
	(m-1-1)	edge node[auto]{$\mu$} (m-2-1)
			edge node[auto]{$\id \indprotensor \phi_0$} (m-1-2)
	(m-1-2)	edge node[auto]{$\alpha$}	(m-1-3)
	(m-2-1)	edgenode[auto]{$\phi_0$} (m-2-2)
	(m-1-3)	edge node[auto]{$F(\mu)$} (m-2-2);
\end{tikzpicture}
\]
{\small
\[
\begin{tikzpicture}
	\matrix (m) [matrix of math nodes, row sep=2em, column sep=1.75em]
	{
		(K \otimes L) \indprotensor (F(X) \boxtimes F(Y)) & (K \otimes L) \indprotensor F(X \boxtimes Y) & F((K \otimes L) \indprotensor (X \boxtimes Y))\\
		(K \indprotensor F(X)) \boxtimes (L \indprotensor F(Y)) & F(K \indprotensor X) \boxtimes F(L \indprotensor Y) & F((K \indprotensor X) \boxtimes (L \indprotensor Y))\\
	};
	\path[->,font=\scriptsize]
	(m-1-1)	edge node[auto]{$\zeta_\D$} (m-2-1)
			edge node[auto]{$\id \indprotensor \phi$} (m-1-2)
	(m-1-2)	edge node[auto]{$\alpha$}	(m-1-3)
	(m-2-1)	edgenode[auto]{$\alpha \boxtimes \alpha$} (m-2-2)
	(m-2-2)	edge node[auto]{$\phi$} (m-2-3)
	(m-1-3)	edge node[auto]{$F(\zeta_\C)$} (m-2-3);
\end{tikzpicture}
\]
}
\end{lemma}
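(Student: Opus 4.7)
The plan is to reduce both squares to the compatibility axioms in the definition of a lax morphism of $2$-algebras by adjunction chases and by unfolding the maps $\zeta$ and $\mu$ via their defining formulae.

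For the first square, both composites $\phi_0 \circ \mu_\D$ and $F(\mu_\C) \circ \alpha \circ (\id \indprotensor \phi_0)$ are maps $I \indprotensor J_\D \to F(J_\C)$, hence correspond via the tensor--hom adjunction $\D(I \indprotensor X, Y) \cong \V(I, \D(X,Y))$ to maps $I \to \D(J_\D, F(J_\C))$ in $\V$. Using that $\mu_\D$ is adjoint to $\id_{J_\D}$, that $\alpha$ is by definition adjoint to the image of the adjoint of $\id_{I \indprotensor J_\C}$ under $F \colon \C(J_\C, I \indprotensor J_\C) \to \D(F(J_\C), F(I \indprotensor J_\C))$, and that $\mu_\C$ is adjoint to $\id_{J_\C}$, a short computation shows that these two adjoints are precisely the two composites appearing in the first compatibility square in the definition of a lax morphism of $2$-algebras. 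Hence they coincide.

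For the second square, I decompose $\zeta_\D$ and $F(\zeta_\C)$ via (\ref{zetadef}) as composites of the canonical isomorphism $(K \otimes L) \indprotensor Z \cong K \indprotensor (L \indprotensor Z)$, an $\alpha$ in the right slot of $\boxtimes$, and an $\alpha$ in the left slot. This decomposes the hexagonal square into two sub-squares stacked on top of each other, one involving a single $\alpha$ in the left slot of $\boxtimes$ and one in the right slot, each glued together by $\phi$'s on the appropriate inner faces. Each such sub-square, by the same type of adjunction chase as in the first part, is precisely the content of the second compatibility axiom (in its enriched form) applied to $- \boxtimes Y$ and its symmetric version for $X \boxtimes -$, respectively. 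Pasting the two commuting sub-squares yields commutativity of the full hexagon.

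The principal obstacle is organizational: keeping track of which slots of $\boxtimes$ and which variables in the compatibility axioms are being used, and correctly tracing maps through the two layers of adjunction inherent in the definition of $\alpha$ (which is the adjoint of $F$ applied to the adjoint of an identity). There are no conceptual difficulties; the whole argument is a formal diagram chase.
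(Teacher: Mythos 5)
Your proposal is correct and matches the paper's intent: the paper gives no written proof, dismissing the lemma (and its three analogues) as "standard exercises in adjunctions," and your argument is precisely that exercise carried out — transposing each square under the tensor--hom adjunction to recover the two enrichment-compatibility diagrams from the definition of a lax morphism of $2$-algebras, using the decomposition of $\zeta$ from \eqref{zetadef} for the second square. The only point to watch in writing it out fully is the naturality of $\alpha$ in the passive slot of $\boxtimes$ when pasting your two sub-squares, but this is part of the organizational bookkeeping you already flag.
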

We leave the formulation of the analogous other three assertions (for oplax morphisms, and for $\zeta'$ and $\beta$, and $\hom$ for lax and oplax morphisms) to the reader, along with the proofs, which are standard exercises in adjunctions. \qed

\subsection{Ind-representable functors}

For any category $\C$, denote by $\Ind-\C$ its ind-category, whose objects are diagrams $I \to \C$ with $I$ a small filtering category, and by $\Pro-\C$ its pro-category, whose objects are diagrams $J \to \C$ with $J$ a small cofiltering category. See App.~\ref{colimitsapp} for the definition of morphisms, along with possible simplifications on the type of indexing categories we need to allow.

We collect some easy limit and colimit preservation properties and adjoints in the following lemma.
\begin{lemma} \label{basiclimcolimadj}
Let $\V$ be a $2$-ring (Def.~\ref{def:tworingtwomod}) with a set of small generators.
\begin{enumerate}
\item Filtered colimits commute with finite limits in $\V$. \label{vlimcolimcommute}
\item Let $\opnSm\colon \V \to \Ind-\V$ be the functor that sends an object $V \in \V$ to the ind-object consisting of all small subobjects of $\V$. Then there are adjunctions 
\[
\opnSm \vdash (\colim\colon \Ind-\V \to \V) \vdash (\V \hookrightarrow \Ind-V)
\]
In particular, the inclusion functor $\V \to \Ind-\V$ commutes with limits, the colimit functor $\Ind-\V \to \V$ commutes with all limits and colimits, and $\opnSm$ commutes with all colimits. \label{vadjunctions}
\item If $\C$ is a $\V$-$2$-bimodule then the inclusion functor $\C \to \Pro-\C$ commutes with finite limits. \label{cfinlim}
\end{enumerate}
\end{lemma}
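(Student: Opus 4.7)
The plan is to handle the three parts in order: (\ref{vlimcolimcommute}) reduces to the analogous commutation in $\Set$ via the small generators, (\ref{vadjunctions}) is a direct computation using the hom-formula in $\Ind-\V$ together with the small-subobject presentation of each object, and (\ref{cfinlim}) follows by dualization from a standard property of ind-completions.

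For (\ref{vlimcolimcommute}), given a bifunctor $F\colon I \times J \to \V$ with $I$ filtered and $J$ finite, I would apply $\V(G,-)$ for each small generator $G$ to the canonical comparison map $\colim_I \lim_J F \to \lim_J \colim_I F$. The functor $\V(G,-)$ preserves limits because it is representable and preserves filtered colimits because $G$ is small, so both sides are transported to the analogous expressions in $\Set$, where the commutation is classical. Since the small generators jointly reflect isomorphisms, the comparison map in $\V$ is itself an isomorphism.

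For (\ref{vadjunctions}) I would construct the two adjunctions by hand. The adjunction $\colim \dashv (\V \hookrightarrow \Ind-\V)$ is immediate from the hom-formula in an ind-category:
\[
(\Ind-\V)\bigl((X_i)_i, V\bigr) = \lim_i \V(X_i, V) = \V\bigl(\colim_i X_i, V\bigr),
\]
with $V$ regarded as a constant ind-object. For $\opnSm \dashv \colim$, the key input is that having a set of small generators forces every $V \in \V$ to be the canonical filtered colimit of its small subobjects; writing $\opnSm(V)=(X_i)_i$ for this system, I would compute
\[
(\Ind-\V)\bigl(\opnSm(V), (Y_j)_j\bigr) = \lim_i \colim_j \V(X_i, Y_j) = \lim_i \V\bigl(X_i, \colim_j Y_j\bigr) = \V\bigl(V, \colim_j Y_j\bigr),
\]
using smallness of the $X_i$ at the middle step and the colimit presentation of $V$ at the last. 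The preservation properties asserted in the statement are then formal consequences of the two adjunctions: left adjoints preserve colimits, right adjoints preserve limits, and $\colim$ enjoys both properties simultaneously.

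For part (\ref{cfinlim}) I would pass to opposites and reduce to the equivalent statement that the inclusion $\C^\op \hookrightarrow \Ind-\C^\op$ preserves finite colimits, using the identification $\Pro-\C = (\Ind-\C^\op)^\op$. This is a standard feature of ind-completions, proved by reindexing a finite diagram of representable ind-objects over a single common filtered category and observing that the colimit in $\Ind-\C^\op$ is then represented by the finite colimit taken in $\C^\op$; the $\V$-$2$-bimodule hypothesis enters only to ensure that $\C$ admits the relevant finite limits. The main obstacle I anticipate across the three parts is bookkeeping the iterated colimit and limit in (\ref{vadjunctions}): the middle equality there is essentially an instance of the $\Set$-valued commutation of part (\ref{vlimcolimcommute}) applied to the hom-sets $\V(X_i, Y_j)$, and one must confirm that ``small'' here really does mean compact in the sense required for this interchange.
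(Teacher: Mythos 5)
Your proposal is correct and follows essentially the same route as the paper: part (\ref{vlimcolimcommute}) by testing against the small generators and reducing to the classical interchange in $\Set$, part (\ref{vadjunctions}) by the hom-formula in $\Ind-\V$ together with the presentation of each object as the filtered colimit of its small subobjects (your displayed computation is the paper's, read in the opposite direction), and part (\ref{cfinlim}) as the standard fact about pro-categories, which the paper simply cites from Artin--Mazur. The only differences are cosmetic: you spell out the second adjunction and sketch the reindexing argument for (\ref{cfinlim}) where the paper leaves these implicit.
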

\begin{proof}
\eqref{vlimcolimcommute} holds because $\V$ has a set of small generators. Indeed, if $J$ is a filtered category, $F$ is a finite category, and $X\colon J \times F \to \V$ is a functor then
\[
\V(S,\colim_J \lim_F X) = \colim_J \lim_F \V(S,X) = \lim_F \colim_J \V(S,X) = \V(S,\lim_F \colim_J X) 
\]
for each small generator $S$.

The first adjunction of \eqref{vadjunctions} also uses this fact: let $X\colon I \to \V$ be an element of $\Ind-\V$ and $Y \in \V$. Then
\begin{align*}
\C(Y,\colim_i X(i)) = & \C(\colim \Sm{Y},\colim X(i)) = \lim_{K < Y} \C(K,\colim X(i))\\
\underset{K \text{ small}} = & \lim_{K < Y} \colim_i \C(K,X(i)) = \Ind-\C(\Sm{Y},X),
\end{align*}
where $K$ runs through all small subobjects of $Y$.
Finally, \eqref{cfinlim} is a standard fact for pro-categories \cite{artin-mazur:etale}.
\end{proof}

The following result is proved in Appendix~\ref{indproapp}:
\begin{thm} \label{indproenriched}
If $\V$ is a $2$-ring then so is $\Ind-\V$. If $\C$ is a $2$-bimodule over $\V$ then $\Pro-\C$ is a $2$-bimodule over $\Ind-\V$. If $\C$ is a $2$-algebra over $\V$ then $\Pro-\C$ is also a $2$-algebra over $\Ind-\V$. \qed
\end{thm}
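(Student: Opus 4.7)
My plan is to extend each structure on $\V$ and $\C$ to their ind/pro-categories via formulas that operate levelwise on representing diagrams, and then verify that the resulting operations satisfy the required axioms by reducing to the corresponding axioms in $\V$ and $\C$.

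For the claim that $\Ind-\V$ is a 2-ring: bicompleteness is standard. Cocompleteness holds because $\V$ is cocomplete and $\Ind-\V$ formally adjoins filtered colimits. Completeness follows by reindexing small diagrams to a common filtering category (App.~\ref{colimitsapp}) and using that finite limits commute with filtered colimits in $\V$ by Lemma~\ref{basiclimcolimadj}\eqref{vlimcolimcommute}. The symmetric monoidal product is defined by $(\{X_i\}_{i \in I}) \otimes (\{Y_j\}_{j \in J}) = \{X_i \otimes Y_j\}_{(i,j) \in I \times J}$ on representing diagrams, with $I \times J$ again filtered; associativity, symmetry, and unitality are inherited from $\V$. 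The internal hom is characterized by the formula $[X, Y] = \lim_i \operatorname{colim}_j [X_i, Y_j]$, computed in the complete category $\Ind-\V$; the tensor-hom adjunction $\Ind-\V(Z \otimes X, Y) \cong \Ind-\V(Z, [X, Y])$ then reduces to the closedness of $\V$, after rewriting both sides in terms of the defining (co)limits of morphism sets in $\Ind-\V$.

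For $\Pro-\C$ as a 2-bimodule over $\Ind-\V$: the hom-enrichment is defined by the analogous formula $\Pro-\C(X, Y) = \lim_j \operatorname{colim}_i \C(X_i, Y_j)$, landing in $\Ind-\V$. The left tensor action $L \indprotensor X$ for $L \in \Ind-\V$ and $X \in \Pro-\C$ is defined levelwise on representing diagrams on the appropriate product indexing category, using that the action of $\V$ on $\C$ preserves colimits in the left variable. The cotensor $\hom(L, X)$ is defined analogously. The required enriched-tensored-cotensored adjunctions then reduce to those of $\C$ over $\V$ by standard (co)limit juggling. Bicompleteness of $\Pro-\C$ is standard. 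Finally, if $\C$ is a 2-algebra over $\V$ with monoidal product $\boxtimes$, this extends to $\Pro-\C$ via $(\{X_i\}) \boxtimes (\{Y_j\}) = \{X_i \boxtimes Y_j\}$, and the enriched functoriality of $X \boxtimes (-)$ and $(-) \boxtimes X$ on $\Pro-\C$ over $\Ind-\V$ is inherited levelwise from the enriched functoriality of $\boxtimes$ on $\C$ over $\V$.

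The main obstacle will be verifying that these levelwise constructions are well-defined, i.e., independent of the choice of representing diagram up to the equivalences defining morphisms in $\Ind-\V$ and $\Pro-\C$. This demands careful bookkeeping: one must track the variance of each argument, exploit the simplifications of indexing categories from App.~\ref{colimitsapp}, and check that each structure map respects the defining (co)limit equivalences between parallel ind/pro-morphisms. Once well-definedness is in place, the monoidal and enrichment axioms follow from the corresponding axioms in $\V$ and $\C$ by routine (co)limit manipulations.
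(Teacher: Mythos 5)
Your high-level plan coincides with the paper's: extend each structure to the ind/pro-categories and push the axioms down to $\V$ and $\C$. The constructions where both arguments have the same variance really are levelwise on a product of indexing categories ($\otimes$ on $\Ind-\V$, $\boxtimes$ on $\Pro-\C$, and the cotensor $\hom(L,M)$, since $\hom$ turns the colimit defining $L$ into a limit). The gap is in the mixed-variance operations, which carry essentially all of the content of the theorem. The tensoring $L \indprotensor M$ of $L = \{L(j)\}_{j\in J} \in \Ind-\V$ against $M = \{M(i)\}_{i \in I}\in \Pro-\C$ cannot be defined levelwise on any product indexing category: the output must be a pro-object, but $L$ has to enter through a colimit. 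Test case: $\V=\C=\Set$ with $\indprotensor = \times$ and $M$ constant; the required adjunction forces $L\indprotensor M \cong (\colim_j L(j)) \times M$ as a constant pro-set, whereas the product-indexed levelwise object $\{L(j)\times M\}_{j \in J^{\op}}$ is a genuinely non-constant pro-set computing a limit over $J^{\op}$ instead. The paper's definition indexes $L\indprotensor M$ by the lattice of monotonic maps $\alpha\colon J^{\op}\to I$ and places the coend $\int^{j}L(j)\otimes M(\alpha(j))$ at $\alpha$; the same device (ends and coends indexed by function lattices, legitimized by Lemma~\ref{latrepresents} and compared via Lemma~\ref{functorialize}) is what makes the internal hom of $\Ind-\V$ and the $\Ind-\V$-valued enrichment $\underline{\Ind}(M,N)$ of $\Pro-\C$ into actual objects of the right categories. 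Your $\lim\colim$ formulas for the homs are consistent with this, but they only characterize the objects; the adjunction verifications (Lemma~\ref{indproenrichment}) go through the explicit lattice-indexed presentations and do not follow from a levelwise product-indexed tensor.

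Two smaller points. Completeness of $\Ind-\V$ does not follow from reindexing plus commuting finite limits with filtered colimits---that argument only produces finite limits; the paper cites \cite{isaksen:limits-colimits} for full completeness. And well-definedness independent of the representing diagram is not where the difficulty sits: one fixes canonical noetherian/artinian lattice representatives once and for all (Appendix~\ref{colimitsapp}), after which the issue is the shape of the indexing category for the mixed constructions, not the choice of representative.
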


\begin{corollary} \label{vproenriched}
Let $\V$ be a $2$-ring with a set of small generators and $\C$ be a $2$-bimodule ($2$-algebra) over $\V$. Then also $\Pro-\C$ is a $2$-bimodule ($2$-algebra) over $\V$.
\end{corollary}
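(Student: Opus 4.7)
The plan is to restrict the $\Ind-\V$-structure on $\Pro-\C$ provided by Theorem~\ref{indproenriched} along the adjoint triple $\opnSm \dashv \colim \dashv \iota$ from Lemma~\ref{basiclimcolimadj}(\ref{vadjunctions}). The key preliminary I would verify is that both $\colim\colon \Ind-\V \to \V$ and $\opnSm\colon \V \to \Ind-\V$ are strong symmetric monoidal: $\colim$ because the tensor on $\Ind-\V$ is defined diagram-wise and $\otimes$ in $\V$ preserves filtered colimits in each variable (being a closed bifunctor), and $\opnSm$ because the small-generators hypothesis ensures that every small subobject of $K \otimes L$ factors through $K' \otimes L'$ for suitable small $K' \le K$ and $L' \le L$, yielding a cofinal---hence invertible---map $\opnSm K \otimes \opnSm L \to \opnSm(K \otimes L)$ in $\Ind-\V$.

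With this in hand, I would define the $\V$-enrichment by $\Pro-\C_\V(X,Y) := \colim \Pro-\C(X,Y)$, with composition inherited via the strong monoidality of $\colim$, and set
\[
L \indprotensor X := \opnSm(L) \indprotensor X, \qquad \hom(L,X) := \hom(\opnSm L, X),
\]
using the $\Ind-\V$-bimodule structure on the right-hand side. The enriched tensor adjunction $\Pro-\C_\V(L \indprotensor X, Y) \cong [L, \Pro-\C_\V(X,Y)]_\V$ would be checked by testing against arbitrary $K \in \V$: via the $\Ind-\V$-tensor/hom adjunction, $\opnSm \dashv \colim$, and the strong monoidality of $\opnSm$, both sides compute $\Ind-\V(\opnSm(K \otimes L), \Pro-\C(X,Y))$, and the result follows by Yoneda. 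The cotensor adjunction is dual, and bicompleteness of $\Pro-\C$ is already part of its $\Ind-\V$-structure, so this makes $\Pro-\C$ into a $\V$-$2$-bimodule.

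For the $2$-algebra case, the internal product $(\boxtimes, J)$ is provided by Theorem~\ref{indproenriched}, and the $\V$-versions of the structural maps $\alpha$, $\beta$ of \eqref{alphamap}--\eqref{betamap} are obtained by substituting $\opnSm L$ for the $\Ind-\V$-parameter; the associativity and coherence diagrams transfer directly using the strong monoidality of $\opnSm$. The main obstacle is the strong monoidality of $\opnSm$ itself: this is the only place where the small-generators hypothesis genuinely enters, and it requires a concrete subobject-factorization argument in $\V$. All remaining verifications are formal adjunction manipulations of the kind exhibited in Lemma~\ref{laxmorphproperties}.
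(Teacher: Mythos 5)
Your construction is essentially the paper's: the $\V$-enrichment is $\colim$ applied to the $\Ind-\V$-enrichment of Theorem~\ref{indproenriched}, and the (co)tensoring is $L \indprotensor X = \Sm{L} \indprotensor X$, $\hom(L,X) = \hom(\Sm{L},X)$, with the adjunctions of Lemma~\ref{basiclimcolimadj}\eqref{vadjunctions} doing the work. The one place you diverge is in making strong monoidality of $\opnSm$ the linchpin. That step is both unnecessary and the weakest link in your write-up: a small subobject $S \leq K \otimes L$ is contained in the \emph{image} of $K' \otimes L'$ for suitable small $K' \leq K$, $L' \leq L$, but an actual morphism $S \to K' \otimes L'$ in $\V$ --- which is what a map $\Sm{K \otimes L} \to \Sm{K} \otimes \Sm{L}$ in $\Ind-\V$ requires --- need not exist, since $\otimes$ need not preserve monomorphisms, so your cofinality argument does not obviously go through. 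Fortunately you never need it: once the adjunction $\Pro-\C(L \indprotensor X, Y) \cong \V(L,\Pro-\C(X,Y))$ is established (which follows from $\opnSm \dashv \colim$ and the $\Ind-\V$-tensor adjunction exactly as you compute), the coherence $(K \otimes L) \indprotensor X \cong K \indprotensor (L \indprotensor X)$ is forced by Yoneda and the closedness of $\V$, with no comparison of $\Sm{K}\otimes\Sm{L}$ against $\Sm{K \otimes L}$ required; the same remark disposes of the $2$-algebra coherences. Note also that the small-generators hypothesis already enters in establishing $\opnSm \dashv \colim$ itself, not only at the point you single out. With that adjustment your argument coincides with the paper's.
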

\begin{proof}
The enrichment of $\Pro-\C$ over $\Ind-\V$ becomes an enrichment over $\V$ by passing to the colimit. The left $2$-module structure is given by
\[
L \indprotensor X = \Sm{L} \indprotensor X,
\]
where the left hand symbol $\indprotensor$ is being defined and the right hand symbol is the left $2$-module structure from Thm.~\ref{indproenriched}. Lemma~\ref{basiclimcolimadj}\eqref{vadjunctions} shows that this is indeed a left $2$-module structure. The right $2$-module structure must therefore be defined as 
\[
\hom(L,X) = \hom(\Sm{L},X). 
\]
\end{proof}

In our situation (of a $2$-ring $\V$ with a set of small generators), we thus have an enrichment over $\V$ and one over $\Ind-\V$, but we will use the $\V$-enrichment much more often. The notation $\Pro-\C(X,Y)$ will always refer to the $\V$-enrichment.

\bigskip
Recall that a $\V-$functor $F\colon \C \to \V$ is called \emph{representable} if there is a (necessarily unique) object $A \in \C$ such that $F(X) = \map(A,X) \in \V$ for all $X$.

\begin{defn} \label{def:indrepfunctor}
Let $\C$ be a $2$-bimodule over $\V$. A $\V$-functor $F\colon \C \to \V$ is called \emph{ind-represented} by $A \in \Pro-\C$ if $F = \Pro-\C(A,\iota(-))$ for some $A \in \Pro-\C$, where $\iota\colon \C \hookrightarrow \Pro-\C$ denotes the inclusion as constant pro-objects.
\end{defn}

An ind-representable functor is the same as an ordinary representable $\V$-functor $F'\colon \Pro-\C \to \V$. Indeed, a representable functor $F'$ gives rise to an ind-representable functor $F=F'\circ \iota$. On the other hand, since any representable functor commutes with all limits and any object $X\colon I \to \C$ in $\Pro-\C$ is the $I$-limit in $\Pro-\C$ of the diagram $X \colon I \to \C \hookrightarrow \Pro-\C$, $F'$ is uniquely determined by its images on constant pro-objects. In conjunction with the enriched Yoneda Lemma \cite{kelly:enriched}, this also shows that the ind-representing object $A$ of an ind-representable functor $F$ is uniquely determined by $F$.

%
% WE MIGHT NOT NEED THIS:
%
%\begin{lemma}\label{basiclimcolimpres}
%The category of ind-representable functors $\C \to \V$ has all limits and finite colimits and the inclusion into the category of all functors preserves and reflects them.
%\end{lemma}
%\begin{proof}
%Since the category of ind-representable functors $\C \to \V$ is equivalent to $(\Pro-\C)^\op$ by assigning to an ind-representable functor its ind-representing object, the statement about limits is the universal property of the colimit. The existence of finite (in fact all) colimits in $(\Pro-\C)^\op$ follows from the completeness of $\C$ and \cite{isaksen:limits-colimits}, and the preservation and reflection is a consequence of the fact that finite limits in $\Pro-\C$ can be computed levelwise.
%\end{proof}

%Infinite colimits in the category of ind-representable functors generally differ from the colimit in the category of all functors.

\section{Formal bimodules} \label{sec:formalbimod}

Let $k$ and $l$ be graded commutative rings. We denote by $\Mod_k$ the category of graded right $k$-modules. 

To motivate this chapter, consider one of the following examples:
\begin{itemize}
\item the set $M_p^q$ of all linear maps $l_{-q} \to k_p$, or
\item for two ring spectra $E$, $F$ with coefficients $E_* = k$, $F_*=l$, the cohomology $M_p^q=E^{p-q}(F)$.
\end{itemize}
In both cases, $M$ is a $k$-$l$-bimodule, i.~e. $M \in \Bimod{k}{l}$, graded in such a way that 
\[
M_p^q  \otimes k_{p'} \to M_{p+p'}^q \quad \text{and} \quad l_{q'} \otimes M_p^q \to M_p^{q+q'}.
\]
This bimodule can also be thought of as a representable linear functor $\hat M\colon \Mod_k \to \Mod_l$. Namely, the functor given by $\hat M(X)_q = \Mod_k(M^q,X) \in \Mod_\Z$ obtains a right $l$-module structure from the left $l$-structure on $M$. Conversely, if $M$ is a $k$-module representing a functor into $l$-modules, consider the map
\[
l \xrightarrow{\id \otimes \eta} l \otimes \Mod_k(M,M) \to \Mod_k(M,M),
\]
where $\eta\colon \Z \to \Mod_k(M,M)$ maps $1$ to the identity map. The adjoint of this map gives a left module structure on $M$.

\bigskip
The examples above have one additional piece of structure. In the first case, we can cofilter $M$ by restricting a map to some given finitely presented subgroup of $l$; in the second case, we have a canonical cofiltration of $M$ by $E^*(F')$, where $F'$ is a finite sub-CW-spectrum of $F$. This results in a diagram $I \to \Mod_k^\Z$; the values are $k$-modules but in general not $l$-modules individually. The aim of this section is to give an abstract definition of the category of such objects (called \emph{formal bimodules}) and study its structure.

\begin{defn}\label{def:formalbimodule}
Let $k$, $l$ be graded commutative rings. Consider $\Mod_k$ and $\Mod_l$ as $2$-bimodules over $\Mod_\Z^\Z$, the category of $\Z$-graded abelian groups. A \emph{formal $k$-$l$-bimodule} is an ind-representable functor (Def.~\ref{def:indrepfunctor}) $\hat M\colon \Mod_k \to \Mod_l$. Denote the category of formal bimodules by $\fBimod{k}{l}$ and the full subcategory of those represented by pro-flat $k$-modules by $\fBimodf{k}{l}$.
\end{defn}

We will now study the structure given by formal bimodules explicitly.

\begin{lemma}
For any formal $k$-$l$-bimodule $\hat M$, there is a unique
\[
M = \{M_p^q(i)\}_{i \in I, \; p,q \in \Z} \in \Pro-\Mod_k^\Z
\]
such that
\[
\hat M(N)_q = \Pro-\Mod_k(M^q,N) \in \Mod_\Z \quad \text{for $N \in \Mod_k$}
\]
with the structure of a left $l$-module, i.e. a ring map
\[
\mu\colon l \to \Pro-\Mod_k^\Z(M,M) \in \Mod_\Z^\Z.
\]
\end{lemma}

If $\hat M$ is a formal bimodule, we will denote the associated $M \in \Pro-\Mod_k^\Z$ by $\Reg{\hat M}$, and conversely we will write $\hat M = \Spf{M}$. Here we are borrowing notation from the nonlinear situation of formal schemes discussed in the next section. \label{regspfmod}

Note that there are proper inclusions
\[
\Pro-\Bimod{k}{l} \subsetneq \{\text{$l$-module objects in $\Pro-\Mod_k$}\} \subsetneq \fBimod{k}{l}
\]
An object in all three categories is given by a diagram $\{M(i)\}_{i \in I}$ in $\Mod_k^\Z$, but:
\begin{itemize}
\item In bimodules, compatible maps $l_{q'} \otimes M_p^q(i) \to M_p^{q+q'}(i)$ are required;
\item in $l$-module objects in $\Pro-\Mod_k$, certain maps $l_{q'} \otimes M_p^q(i) \to M_p^{q+q'}(j)$ are required;
\item in $\fBimod{k}{l}$, maps $F \otimes M(i) \to M(j)$ are required for each finitely presented subgroup of $l$, and $j$ may depend on $F$.
\end{itemize}

We want to spell out the last point in the following description, which is a direct result of Theorem~\ref{indproenriched}. The $2$-bimodule structure of $\Pro-\Mod_k^\Z$ over $\Mod_\Z^\Z$ given by Corollary~\ref{vproenriched} turn into a $2$-bimodule structure
on the opposite category (Rem.~\ref{oppositetwobimod})
\begin{equation} \label{modtwobimodule}
\hom_\Z\colon (\Mod_\Z)^\op \times \fBimod{k}{\Z} \to \fBimod{k}{\Z}; \quad \indprotensor_{\Z}\colon \Mod_\Z \times \fBimod{k}{\Z} \to \fBimod{k}{\Z}.
\end{equation}
 
\begin{corollary} \label{unitmapadjmodules}
The category $\fBimod{k}{l}$ is equivalent to the category of algebras over the monad $l \indprotensor_\Z$ on $\fBimod{k}{\Z}$ and to the category of coalgebras over the comonad $\hom_\Z(l,-)$ on $\fBimod{k}{\Z}$.
\end{corollary}

\begin{lemma} \label{flatbimodcocomplete}
The category $\fBimod{k}{\Z}$ is bicomplete, the subcategory $\fBimodf{k}{\Z}$ has all filtered colimits and coproducts, and the inclusion functor $i\colon \fBimodf{k}{\Z} \to \fBimod{k}{\Z}$ preserves them. If $k$ is a graded Pr\"ufer domain then $\fBimodf{k}{\Z}$ is bicomplete and the inclusion functor preserves all colimits.
\end{lemma}
\begin{proof}
The category $\fBimod{k}{\Z} \cong (\Pro-\Mod_k^\Z)^\op$ is bicomplete since $\Mod_k^\Z$ is. Furthermore, finite products of flat modules are flat, so the inclusion $i$ preserves finite coproducts. By definition, $\Pro-\C$ has all cofiltered limits for any category $\C$, so $\fBimodf{k}{\Z}$ has filtered colimits and $i$ preserves them as well.
Now let $k$ be a graded Pr\"ufer domain. This is equivalent to either of the following:
\begin{enumerate}
\item Submodules of flat $k$-modules are flat. \label{submoduleflat}
\item A $k$-module is flat iff it is torsion free. \label{flattorsionfree}
\end{enumerate}
Examples of Pr\"ufer rings are given by fields, PIDs, and Dedekind domains. In this case, equalizers of flat modules are flat by \eqref{submoduleflat} and hence the category of flat $k$-modules has all finite limits, and $i$ preserves them. Thus $i$ preserves all colimits. It even has a right adjoint given by the functor on $\Mod_k^\Z$, sending a $k$-module $M$ to the module $M/\tor(M)$, which is flat by \eqref{flattorsionfree}, and extending it to $\Pro-\Alg_k^\Z$. Thus $\fBimodf{k}{\Z}$ also has limits, given by computing the limit in $\fBimod{k}{\Z}$ and applying the right adjoint.
\end{proof}

\begin{example} \label{exa:bimodhom}
Let $M \in \Mod_l$ and $N \in \Mod_{k}$. Then $\tensunit{M}{N} = M \indprotensor_\Z \Spf{N} \in \fBimod{k}{l}$. The left $l$-action is given, using Corollary~\ref{unitmapadjmodules}, by
\[
l \indprotensor_\Z (M \indprotensor_\Z \Spf{N}) \cong (l \otimes M) \indprotensor_\Z \Spf{N} \xrightarrow{\mu \otimes \id} M \indprotensor_\Z \Spf{N}.
\]
\end{example}

\begin{example} \label{exa:circunitmod}
The identity functor $J_k \colon \Mod_k \to \Mod_k$ is represented by the (non-formal) bimodule $\Reg{J_k} = k \in \Bimod{k}{k}$.
\end{example}

\section{Formal algebra and module schemes}

In the previous section, we took $\Mod_\Z$ as a base category and defined $k$-modules and formal bimodules by adding structure. We can make this more general and also lift the additive structure from sets. To motivate this, we modify the two examples from the previous section. Let $k$, $l$ be graded commutative rings. Consider either
\begin{itemize}
\item for an $M \in \Mod_l$, the set $A_p^q = \hom(M,k)$ of all maps $M_{-q} \to k_p$, or
\item For two ring spectra $E$, $F$ with coefficients $E_* = k$, $F_* = l$, and an $F$-module spectrum $M$ (there is no harm in taking $F$ to be the sphere spectrum), the bigraded set $A_p^q = E^p(\underline M_q)$, the $E$-cohomology of the spaces constituting the $\Omega$-spectrum of $M$.
\end{itemize}
In both cases, $A$ is a $k$-algebra. The addition and $l$-linear structure on $M$ produce additional structure on $A$, but it is not quite a Hopf algebra: in the first case, $\hom(M\times M,k) \neq \hom(M,k) \otimes_k \hom(M,k)$ unless $M$ is finite; in the second case, the same problem occurs as the failure to have a K\"unneth isomorphism in cohomology. The aim of this chapter is to give a definition and study the properties of objects like these. If $M$ additionally has a multiplication (such as if $M=l$ in the first case and $M=F$ in the second), there is even more structure, which we will analyze.
 
\bigskip

We recall the definition of formal schemes (Def.~\ref{def:fschemefalgscheme}):

\begin{defn}
Let $\Sch{k}$ denote the category of affine formal schemes over $k$, i.~e. ind-representable functors $\Alg_k \to \Set^\Z$ (cf. Def.~\ref{def:indrepfunctor}). That is, $\Sch{k} = (\Pro-\Alg_k^\Z)^\op$. Denote by $\Schf{k}$ the full subcategory of functors that can be ind-represented by \emph{flat} $k$-algebras.
\end{defn}
The reader should be aware that the usual definition of an affine formal scheme requires it to be the completion of an ordinary scheme at a subscheme, which is more restrictive than our definition.

The following lemma is analogous to Lemma~\ref{flatbimodcocomplete}:
\begin{lemma} \label{flatschcocomplete}
The subcategory $\Schf{k}$ has all filtered colimits and coproducts, and the inclusion functor preserves them. If $k$ is a graded Pr\"ufer domain then $\Schf{k}$ is bicomplete and the inclusion functor preserves all colimits. \qed
\end{lemma}

\begin{lemma} \label{flatschproducts}
The subcategory $\Schf{k}$ has all products, and the inclusion functor into $\Sch{k}$ preserves them.
\end{lemma}
\begin{proof}
Let $\{A_i\}$ be a family of pro-$k$-algebras indexed by some set $I$, and let $A_i\colon J_i \to \Alg_k^\Z$ be a presentation with a cofiltered category $J_i$. Then $J = \prod_{i \in I} J_i$ is also cofiltered, and the functor
\[
A\colon J \to \Alg_k^\Z, \quad A((j_i)_{i \in I}) = \coprod_{i \in I} A_{j_i}
\]
is a presentation of $\coprod_{i \in I} A_i \in \Pro-\Alg_k^\Z$. This is clearly flat if each $A_{j_i}$ is.
\end{proof}

\begin{defn} \label{def:fmodsch}
Let $k$, $l$ be graded commutative rings. Consider $\Alg_k$ and $\Mod_l$ as $2$-bimodules over $\Set^\Z$. A \emph{formal $l$-module scheme} over $k$ is an ind-representable functor
\[
\hat M\colon \Alg_k \to \Mod_{l}.
\]
Denote by $\fModSch{k}{l}$ the category of formal module schemes and by $\fModSchf{k}{l}$ the full subcategory of those whose underlying formal schemes are in $\Schf{k}$.
\end{defn}

This is an $l$-enriched version of commutative formal group schemes. A formal $\Z$-module scheme is precisely a commutative formal group scheme over~$k$. This definition of a formal module scheme generalizes the notion of \emph{formal $A$-modules} \cite[Chapter 21]{hazewinkel:formal-groups}, which is the special case where $k$ is an $l$-algebra.

\begin{defn}
A \emph{formal $l$-algebra scheme} over $k$ is an ind-representable functor
\[
\hat A\colon \Alg_k \to \Alg_{l}.
\]
Denote by $\fAlgSch{k}{l}$ the category of formal algebra schemes and by $\fAlgSchf{k}{l}$ the full subcategory of those whose underlying formal schemes are in $\Schf{k}$.
\end{defn}

This is a pro-version of what is called a $k$-$l$-biring in \cite{tall-wraith,borger-wieland:plethystic}, but that terminology suggests a similarity with bialgebras, which is something completely different, so we will stick to our terminology.

We will now study the structure given by formal module schemes and formal algebra schemes explicitly. Note that there is a forgetful functor $U^\times\colon \fAlgSch{k}{l} \to \fModSch{k}{l}$, which means that the object representing a formal algebra scheme is equal to the object representing the underlying formal module scheme, but has more structure. \label{forgetalgmod}

\begin{lemma} \label{coringdescription}
For any formal $l$-module scheme $\hat M$ over $k$, there is a unique
\[
A = \{A^q_p(i)\}_{i \in I, p,q \in \Z} \in \Pro-\Alg_k^\Z
\]
such that
\[
\hat M(R)_q = \Pro-\Alg_k(A^q,R) \in \Set \quad \text{for any $R \in \Alg_k$}.
\]
This pro-algebra $A$ comes with the structure of a co-$l$-module, i.e. with maps
\begin{align*}
\psi_+\colon & A^q \to A^{q} \otimes_k A^{q} \in \Pro-\Alg_k &  \text{(coaddition)}\\
\epsilon_0\colon& A^q \to k \in \Pro-\Alg_k &  \text{(cozero)}
\intertext{as well as an additive and multiplicative map}
\lambda\colon & l \to \Pro-\Alg_k^\Z( A_*^*,A_*^*) \in \Set^\Z &  \text{($l$-linear structure)}.
\end{align*}
These maps are such that $\epsilon_0$ is the counit for $\psi_+$, $\lambda_0 = \eta \circ \epsilon_0$,  $\lambda_{-1}$ is the antipode for $\psi_+$, and such that $\psi_+$ is associative and (graded) commutative. Furthermore, $\lambda$ takes values in the sub-graded set of pro-algebra maps that commute with $\psi_+$ and $\epsilon_0$.

A formal $l$-algebra scheme $\hat A$ consists of the same data and in addition an associative, (graded) commutative pro-$k$-algebra map
\begin{align*}
\psi_\times \colon& A_p^q \to \prod_{q'+q''=q} A^{q'} \otimes_k A^{q''} \in \Pro-\Alg_k & \quad \text{(comultiplication)}\\
\intertext{as well as an additive and multiplicative map extending $\epsilon_0$:}
\epsilon\colon&  l_q \to \Pro-\Alg_k( A^q,k) \in \Set & \quad{(unit)}
\end{align*}
such that $\lambda_a$ is comultiplication with $\epsilon_a$, $\epsilon_1$ the counit for $\psi_\times$, and such that $\psi_\times$ distributes over $\psi_+$.
\end{lemma}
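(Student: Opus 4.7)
The plan is to extract the pro-$k$-algebra $A$ by observing that the underlying set-valued functor of $F$ is ind-representable, and then to use the Yoneda lemma to translate each piece of the $l$-module (respectively $l$-algebra) structure on the target into a dual cooperation on $A$.

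First I would observe that composing $F$ with the forgetful functor $\Mod_l \to \Set$ (respectively $\Alg_l \to \Set$) again yields an ind-representable functor, since filtered colimits are preserved in either case. The enriched Yoneda lemma in the form used to define ind-representability then produces a unique $A \in \Pro-\Alg_k$ with $F(R) = \Pro-\Alg_k(A,R)$ for $R \in \Alg_k$. The bigrading on $A$ comes from the $\Z$-grading on source and target, made explicit via the $\Set^\Z$-enrichment of Example~\ref{twomodoverset}: one index records the internal grading and the other the target shift needed to detect a morphism, which is why $\psi_+$ preserves one grading while $\psi_\times$ splits both.

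Next I would extract each cooperation by Yoneda. Products in $\Sch{k} = (\Pro-\Alg_k)^\op$ correspond to coproducts in $\Pro-\Alg_k$, i.e.\ to tensor products $\otimes_k$, so the addition $F(R) \times F(R) \to F(R)$ and the zero $\{*\} \to F(R)$ are represented by pro-$k$-algebra maps $\psi_+\colon A \to A \otimes_k A$ and $\epsilon_0\colon A \to k$. For the scalar action, fixing $a \in l$ yields a natural self-map $\lambda_a$ of $F$, hence by Yoneda a pro-algebra endomorphism of $A$; letting $a$ vary assembles to $\lambda\colon l \to \Pro-\Alg_k(A,A)$. Each $l$-module axiom (associativity and graded commutativity of addition, counit property of $\epsilon_0$, identification of the antipode with $\lambda_{-1}$ and of scalar multiplication by zero with $\eta \circ \epsilon_0$, $K$-additivity and multiplicativity of $\lambda$) is encoded by a commutative diagram of natural transformations whose vertices are iterated products of $F(R)$, the terminal object, or $F(R)$ itself; each such diagram translates verbatim into the required property of $\psi_+$, $\epsilon_0$ and $\lambda$ on $A$.

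For formal $l$-algebra schemes I would repeat the same procedure with the additional multiplication $F(R) \times F(R) \to F(R)$ and unit $l \to F(R)$, obtaining $\psi_\times\colon A \to A \otimes_k A$ and $\epsilon\colon l \to \Pro-\Alg_k(A,k)$ with $\epsilon_0$ as before. The remaining axioms---counitality and associativity/commutativity of $\psi_\times$, distributivity of $\psi_\times$ over $\psi_+$, and the identification of $\lambda_a$ with multiplication by $\epsilon_a$---translate by Yoneda from the corresponding $l$-algebra axioms on the values $F(R)$.

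The most delicate step I expect is the correct enriched interpretation of $\lambda$ and $\epsilon$: their source $l$ does not itself represent a formal scheme, so the natural transformation $l \times F(-) \to F(-)$ must be interpreted via the tensoring of $\Pro-\Mod_k$ over $\Mod_K$ supplied by Corollary~\ref{vproenriched}, which is precisely the content of Corollary~\ref{unitmapadjmodules}. This forces $\lambda$ and $\epsilon$ to be recorded as maps into the $\Mod_K$-enrichment $\Pro-\Alg_k(A,-)$ rather than as single pro-algebra maps out of $l \otimes_k A$. Once this distinction is absorbed, every remaining compatibility reduces to a routine Yoneda translation.
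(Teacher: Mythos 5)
Your proposal is correct and matches the paper's reasoning, which is in fact left implicit: the paper states the lemma as a direct Yoneda unwinding of the ind-representability of $F$ and of the $l$-module/$l$-algebra structure on its values, and the one genuinely delicate point you flag --- that $\lambda$ and $\epsilon$ cannot be phrased as maps on representing objects and must instead be recorded in adjoint form via Corollary~\ref{vproenriched}, as in Corollary~\ref{unitmapadjmodules} --- is exactly the point the paper singles out in the paragraph following the lemma.
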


As before for modules, we write $A = \Reg{\hat M}$ and $\hat M = \Spf{A}$ if $\hat M$ is ind-represented by $A$, thinking of $A$ as the ring of functions on the formal scheme $\hat M$ and of $\hat M$ as the formal spectrum of the pro-algebra $A$. \label{regspfalg}

Since the constant functor $F(R) = l$ is not ind-representable, we cannot phrase the $l$-module and unit data as a map on representing objects. However, just as in Corollary~\ref{unitmapadjmodules}, it follows from Cor.~\ref{vproenriched} that we can describe $\lambda$ and $\epsilon$ in adjoint form as maps $l \indprotensor A \to A$ and $l \indprotensor A \to k$, respectively, satisfying additional properties, where $\indprotensor$ denotes the left $2$-module structure of $\Pro-\Alg_k^\Z$ over $\Set^\Z$. \label{proalgsettensor}

\begin{example} \label{exa:terminalfmodsch}
The terminal example of an $l$-algebra or module scheme $\hat 0$ over $k$ is the trivial $l$-algebra (or module) $\hat 0(A) = 0$. Here $\Reg{\hat 0} = k$, $\psi_+$ and $\psi_\times$ are the identity, and $\epsilon_\lambda = \id_k$ for all $\lambda \in l$.
\end{example}

Although the initial $l$-algebra is clearly $l$ itself, it is not obvious what the initial $l$-algebra scheme might be, if it exists. The following construction gives the somewhat surprising answer.

\begin{defn} \label{def:tensunit}
Let $M \in \Mod_{l}$. Let $\tensunit{k}{M} = \Spf{A} \in \fModSchf{k}{M}$ be defined, using the right $2$-module structure of $\Pro-\Alg_k$ over $\Set^\Z$, by
\[
A^q = \hom(M_q,k) \in \Pro-\Alg_k^\Z.
\]
Its structure is given as follows: if $+\colon M_q(i) \times M_q(i) \to M_q(j)$ is a component of the addition map on $M$, it gives rise to a coaddition
\[
\psi_+\colon \hom(M(j),k) \to \hom(M(i) \times M(i),k) \to \hom(M(i),k) \otimes_k \hom(M(i),k).
\]
If $0 \in M(i)$, the cozero is given by
\[
\epsilon_0\colon \hom(M(i),k) \to \hom(\{0\},k) = k.
\]
The $l$-module structure $l \indprotensor \hom(M,k) \to \hom(M,k)$ is the adjoint of
\[
(l \times M) \indprotensor \hom(M,k) \xrightarrow{\mu \otimes \id} M \indprotensor \hom(M,k) \xrightarrow{\text{eval}} k.
\]
\end{defn}

Note that this last evaluation map is adjoint to a map of $l$-modules
\[
e_k\colon M \to \Pro-\Alg_k(\hom(M,k),k)
\]
which can be extended to a map
\begin{equation}\label{eamap}
e_A\colon M \to \Pro-\Alg_k(\hom(M,k),A) = \tensunit{k}{M}(A) 
\end{equation}
by the unique $k$-algebra map $k \to A$.

\begin{lemma} \label{constantfmod}
Let $M$ be an $l$-module. Then the formal module scheme $\tensunit{k}{M}$ is the best possible representable approximation to the constant functor $\underline M$ with value $M$. More precisely, let $\mathfrak{F}_l$ denote the category of all functors $\Alg_k \to \Mod_l$. Then for any $\hat N \in \fModSch{k}{l}$, the map $e$ of \eqref{eamap} induces an isomorphism
\[
\fModSch{k}{l}(\tensunit{k}{M},\hat N) \xrightarrow{e^*} \mathfrak{F}_l(\underline M,\hat N).
\]
\end{lemma}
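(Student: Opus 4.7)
The plan is to identify both sides of the proposed bijection with the $l$-module Hom set $\Mod_l(M, F(k))$, and then to verify that $e^*$ implements the resulting identification.

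For the right-hand side, I would use that $k$ is the initial object in $\Alg_k$. A natural transformation $\phi\colon \underline M \to F$ is then determined by its component at $k$: for any $A \in \Alg_k$ with unit $\eta_A\colon k \to A$, naturality forces $\phi_A = F(\eta_A)\circ\phi_k$ (since $\underline M(\eta_A)=\id$), and conversely any $l$-linear map $\phi_k\colon M\to F(k)$ extends via this formula to a natural transformation. Thus $\mathfrak{F}(\underline M, F) \cong \Mod_l(M, F(k))$.

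For the left-hand side, I would unwind the definition using Lemma~\ref{coringdescription}: a morphism $\Spf{\hom(M,k)} \to F$ in $\fModSch{k}{l}$ is a pro-$k$-algebra map $f\colon \Reg{F} \to \hom(M,k)$ preserving $\psi_+$, $\epsilon_0$, and the $l$-action. Since $\hom(M,-)$ is the right $2$-module structure over $\Set$, the cotensor adjunction supplies a natural bijection
\[
\Pro-\Alg_k(\Reg{F},\hom(M,k)) \;\cong\; \Set(M,\Pro-\Alg_k(\Reg{F},k)) \;=\; \Set(M,F(k)),
\]
sending $f$ to $g$ with $g(m) = \mathrm{eval}_m\circ f$. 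Using the identification $\hom(M,k)\otimes_k\hom(M,k)\cong\hom(M\times M,k)$ (the cotensor turns set products into tensor products of pro-algebras), preservation of $\psi_+$ translates into additivity of $g$ (recalling that the abelian group structure on $F(k)$ is the one induced by $\psi_+^F$), preservation of $\epsilon_0$ into $g(0)=0$, and preservation of the $l$-action into $l$-linearity of $g$. This restricts the Set-level bijection to $\fModSch{k}{l}(\Spf{\hom(M,k)},F)\cong\Mod_l(M,F(k))$.

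Composing the two identifications yields the desired bijection, and unwinding definitions confirms it coincides with $e^*$: for $\phi$ corresponding to $f$, the $k$-component of $e^*\phi$ sends $m$ to $e_k(m)\circ f = \mathrm{eval}_m\circ f = g(m)$. The nontrivial step is the middle one — translating preservation of the co-operations through the $\Set$-cotensor adjunction — which is a routine but mildly tedious adjunction calculation turning on the universal property of the counit $\mathrm{eval}\colon M \indprotensor \hom(M,k) \to k$ from the right $2$-module structure of $\Pro-\Alg_k$ over $\Set$.
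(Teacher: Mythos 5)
Your proposal is correct and follows essentially the same route as the paper's proof: evaluate at the initial object $k$ to reduce both sides to $\Mod_l(M,F(k))$, then pass through the $\Set$-cotensor adjunction for $\hom(M,k)$ and check that the co-operations correspond to $l$-linearity. You spell out the translation of $\psi_+$, $\epsilon_0$, and the $l$-action in more detail than the paper (which compresses this into ``since the original map was a map of $l$-modules''), but the underlying argument is the same.
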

\begin{proof}
Let $f\colon M \to \hat N(k) = \Pro-\Alg_k(\Reg{\hat N},k)$ be the evaluation at the pro-$k$-algebra $k$ of a natural transformation in $\mathfrak F_l(\underline M,F)$. This map is adjoint to a map of pro-algebras $M \indprotensor \Reg{\hat N} \to k$, which in turn is adjoint to a map of pro-algebras $\Reg{\hat N} \to \hom(M,k)$. Since the original map was a map of $l$-modules, this map represents a map of $l$-module schemes. 
\end{proof}

\begin{example}[The initial formal algebra scheme]
The functor $\tensunit{k}{l}$ is a formal $l$-algebra scheme over $k$. We have already seen that it is a formal module scheme, and the comultiplication occurs in the same way. The unit is given by the evaluation map $l \indprotensor \hom(l,k) \to k$.

\begin{lemma}\label{initialfalgsch}
The formal algebra scheme $\tensunit{k}{l}$ is the initial object in $\fAlgSch{k}{l}$.
\end{lemma}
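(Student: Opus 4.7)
\medskip

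\textbf{Proof proposal.} The plan is to reduce the statement to Lemma~\ref{constantfmod} together with the fact that $l$ is the initial object in $\Alg_l$. Fix an arbitrary $F \in \fAlgSch{k}{l}$. For each $A \in \Alg_k$, the $l$-algebra $F(A)$ has a unique $l$-algebra map $u_A\colon l \to F(A)$, namely its unit; these assemble into a natural transformation $u\colon \underline{l} \to F$ of functors $\Alg_k \to \Alg_l$, and by forgetting structure into a natural transformation of functors to $\Mod_l$. Applying Lemma~\ref{constantfmod} with $M = l$, this natural transformation corresponds to a unique morphism $\tilde u\colon \Spf{\hom(l,k)} \to UF$ in $\fModSch{k}{l}$, where $U$ is the forgetful functor to formal module schemes.

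The first thing to verify is that $\tilde u$ in fact lifts to a morphism in $\fAlgSch{k}{l}$, i.e.\ that each $\tilde u_A\colon \Spf{\hom(l,k)}(A) \to F(A)$ is a map of $l$-algebras, not merely of $l$-modules. By construction $\tilde u_A$ fits into a triangle $\tilde u_A \circ e_A = u_A$, where $e_A\colon l \to \Spf{\hom(l,k)}(A)$ is the map~\eqref{eamap} for $M=l$. Unwinding the definition of the unit $\epsilon$ of $\Spf{\hom(l,k)}$ given in Lemma~\ref{coringdescription}, one sees that $e_A$ is precisely the unit map of $\Spf{\hom(l,k)}(A)$ as an $l$-algebra; in particular $e_A$ is itself an $l$-algebra map. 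Multiplicativity of $\tilde u_A$ then has to be checked on representing objects, i.e.\ one verifies that the pro-$k$-algebra map $\Reg{F} \to \hom(l,k)$ adjoint to $u$ respects the coring structure. This is the part I expect to be the main obstacle, but it is a direct diagram chase: the comultiplication $\psi_\times$ on $\hom(l,k)$ is induced by the multiplication $l \otimes l \to l$ via the adjunction $\hom(-,k) \dashv (-)\indprotensor k$, and the corresponding dual statement on $F(A)$ follows from the fact that $u_A$ is an $l$-algebra map.

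For uniqueness, let $\phi\colon \Spf{\hom(l,k)} \to F$ be any morphism in $\fAlgSch{k}{l}$. On $A$-points, $\phi_A$ is an $l$-algebra map, so $\phi_A \circ e_A\colon l \to F(A)$ is an $l$-algebra map out of $l$, which must equal the unique such map $u_A$ by initiality of $l$ in $\Alg_l$. Hence the underlying morphism of formal module schemes $U\phi$ realizes the natural transformation $u$ under the correspondence of Lemma~\ref{constantfmod}, so $U\phi = \tilde u$ and therefore $\phi = \tilde u$.

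Combining the two steps shows that $\tilde u$ is the unique morphism $\Spf{\hom(l,k)} \to F$ in $\fAlgSch{k}{l}$, proving that $\Spf{\hom(l,k)}$ is initial.
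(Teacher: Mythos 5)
Your proof is correct and is essentially the paper's argument in expanded form: the paper's one-line proof takes the unit datum $l \indprotensor \Reg{F} \to k$ of $F$ and adjoints it across the cotensor adjunction to get the unique map $\Reg{F} \to \hom(l,k)$, which is exactly the map you construct, routed through Lemma~\ref{constantfmod} (itself proved by that same adjunction) together with initiality of $l$ in $\Alg_l$. Your version usefully makes explicit the uniqueness step and the need to check that the module-scheme map respects the multiplicative structure, a verification the paper also leaves implicit.
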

\begin{proof}
The adjoint of the unit map $l \indprotensor \Reg{F} \to k$ for a formal algebra scheme $F$ according to Lemma~\ref{vproenriched} gives the unique map $\Reg{F} \to \hom(l,k)$.
\end{proof}

The functor $\tensunit{k}{l}$ can be described explicitly. It assigns to a $k$-algebra $A$ the set of all $l$-tuples of complete idempotent orthogonal elements of $A$, i.e. tuples $(a_i)_{i \in l}$ with $a_i=0$ for almost all $i \in l$, $\sum_{i} a_i = 1$, $a_ia_j = 0$ if $i\neq j$ and $a_i^2=a_i$. The addition and multiplication are defined by
\[
((a_i)_{i \in l} + (b_j)_{j \in l})_m = \sum_{i+j = m} a_ib_j, \quad ((a_i)_{i \in l} (b_j)_{j \in l})_m = \sum_{ij = m} a_ib_j.
\]
The reader can check that this indeed defines a complete set of idempotent orthogonals if $(a_i)$ and $(b_j)$ are so.

If the algebra $A$ has no zero divisors, i.e. no nontrivial orthogonal elements, then an $l$-tuple of elements as above has to be of the form $\delta_i$, where $(\delta_i)_j = \delta_{ij}$ ($i,\ j \in l$). Thus in this case, $\tensunit{k}{l}(A) = l$, independently of $A$. Thus for these $A$, the map $e_A$ of \eqref{eamap} is an isomorphism.
\end{example}

\begin{example}[The identity functor] \label{exa:circunit}
The identity functor $J_k\colon \Alg_k \to \Alg_k$ is represented by the bigraded $k$-algebra
\[
(\Reg{J_k})_p =k[e_p].
\]
Here $e_p$ has bidegree $(p,p)$, and $k[e_p]$ denotes the free graded commutative algebra on $e_p$, i.e. polynomial if $p$ is even and exterior if $p$ is odd. The coaddition is given by $\psi_+(e_p) = e_p \otimes 1 + 1 \otimes e_p$, and the comultiplication
\[
(\Reg{J_k})_p^p \to \prod_{p'+p''=p} (\Reg{J_k})_{p'}^{p'} \otimes (\Reg{J_k})_{p''}^{p''}
\]
is given by $\psi_\times(e_p) = e_{p'} \otimes e_{p''}$. The unit map is the canonical isomorphism $k \leftrightarrows \Alg_k(k[e_0],k)$.
\end{example}

\begin{example}[The completion-at-zero functor]
The functor $\Nil(X)$ is a \emph{non-unital} $k$-algebra scheme over $k$ represented by the pro-$k$-algebra scheme
\[
(\Reg{\Nil})_p = k\pow{e_p} =_{\text{def}} \left\{k[e_p]/(e_p^n)\right\}_n
\]
with the structure induced by the canonical map $\Reg{J_k} \to \Reg{\Nil}$. Obviously this formal $k$-module scheme cannot have a unital multiplication since the unit element in a ring is never nilpotent. In an algebro-geometric picture, a nontrivial (formal) ring scheme always needs at least two geometric points, $0$ and $1$, whereas the kind of formal groups that appear in topology as cohomology rings of connected spaces only have one geometric point, given by the augmentation ideal.
\end{example}

\begin{example}[The formal completion functor]
The lack of a unit of the previous example can be remedied in much the same way as for ordinary algebras, namely, by taking a direct product with a copy of the base ring. We define the functor
\[
\hat A = \tensunit{k}{k} \times \Nil \colon \Alg_k \to \Alg_k
\]
Obviously, this functor is represented by $\hom(k,k) \otimes_k k\pow{e_p}$. The addition is defined componentwise, whereas the multiplication is defined as follows. Since $\Nil$ is a formal $k$-module scheme, it has a ring map
\[
\mu\colon\hom(k,k) \xrightarrow{\epsilon_0} k \to \Pro-\Alg_k(\Nil,\Nil).
\]
Using this module structure of $\Nil$ over $\Ind(\hom(k,k),-)$, we define the multiplication by
\[
(\lambda_1,x_1) (\lambda_2,x_2) = (\lambda_1\lambda_2,\mu(\lambda_1,x_2)+\mu(\lambda_2,x_1)+x_1x_2).
\]
The unit for the functor $F$ is given by
\[
(\epsilon,\epsilon_0)\colon k \to \Pro-\Alg_k(\hom(k,k),k) \times \Pro-\Alg_k(k\pow{e_p},k).
\]
\end{example}

\begin{example}[The divided power algebra]
Another example of a non-unital $\Z$-algebra scheme over $k$ is given by the divided power algebra. Let $H = \bigoplus_{i=0}^\infty k\langle x_i\rangle$ denote the divided polynomial algebra on a generator $x_1$ in some degree $d$, i.e. the Hopf algebra with
\begin{eqnarray*}
x_p x_q &=& \binom{p+q}{p} x_{p+q}\\
\psi_+(x_p) &=& \sum_{p'+p''=p} x_{p'} \otimes x_{p''}\\
\end{eqnarray*}
Let $H(n)$ denote the quotient algebra $H/(x_{n+1},x_{n+2},\dots)$. Then $\Gamma_p^0 = \{H(n)\}_{n \geq 0}$ and $\Gamma_p^q = 0$ for $q \neq 0$ represents a formal $\Z$-module scheme over $k$. For a generalized construction along these lines, see Section~\ref{sec:fmodschstructure}. This can be given the structure of a \emph{non-unital} algebra scheme by defining $\psi_\times(x_p) = p! (x_p \otimes x_p)$.
\end{example}

\begin{example}[The $\Lambda$-algebra] Let $k$ be a graded commutative ring and $\Lambda = k\pow{c_1,c_2,\dots}$ the power series pro-algebra with $c_i$ in bidegree $(0,2di)$ for some $d \in \Z$. We think of $c_n$ as the $n$th symmetric polynomial in $x_1,x_2,\dots$ (with $c_0=1$). We then define a formal $\Z$-algebra scheme structure on $\Spf{\Lambda}$ by
\[
\prod_{i,j=1}^\infty (1+ t(x_i \otimes 1))(1 + t(1 \otimes x_j)) = \sum_{n=0}^\infty \psi_+(c_n)t^n \in (\Lambda \otimes_k \Lambda) \pow{t}
\]
and
\[
\prod_{i,j=1}^\infty (1+ t(x_i \otimes x_j)) = \sum_{n=0}^\infty \psi_\times(c_n)t^n  \in (\Lambda \otimes_k \Lambda) \pow{t}
\]
For $a \in \Z$, the unit is given by
\[
(1+t)^a = \sum_{n=0}^\infty \epsilon_a(c_n) t^n,\quad \text{or,}\quad  \epsilon_a(n) = \binom{a}{n}.
\]
The polynomial version of this construction represents the functor which associates to a $k$-algebra its ring of big Witt vectors \cite[Chapter 17.2]{hazewinkel:formal-groups}. From a topological point of view, this formal ring scheme is isomorphic with $\Spf{K^0(BU)}$, with the addition and multiplication induced by the maps $BU \times BU \to BU$ classifying direct sums resp. tensor products of vector bundles.

For the sake of concreteness, the coaddition in $\Lambda$ is easily described:
\[
\psi_+(c_p) = \sum_{p'+p''=p} c_{p'} \otimes c_{p''},
\]
whereas the comultiplication does not have a handy closed formula:
\begin{eqnarray*}
\psi_\times(c_1) &=& c_1 \otimes c_1\\
\psi_\times(c_2) &=& c_1^2 \otimes c_2 + c_2 \otimes c_1^2 - 2 c_2 \otimes c_2\\
\psi_\times(c_3) &=& c_1^3 \otimes c_3 + c_3 \otimes c_1^3 - 3 c_3 \otimes c_1c_2 - 3 c_1 c_2 \otimes c_3 + c_1 c_2 \otimes c_1 c_2 \quad \text{etc.}
\end{eqnarray*}
\end{example}

\begin{remark}[The forgetful functor]
An object $\hat M \in \fModSch{k}{l}$ is represented by a pro-$k$-algebra $A = \Reg{\hat M}$ with additional structure, which in particular equips $A$ with a comultiplication
\begin{equation}\label{rightmodule}
l \xrightarrow{\lambda} \Pro-\Alg_k^\Z(A,A) \xrightarrow{\text{forget}} \Pro-\Mod_k^\Z(A,A).
\end{equation}
At first glance one might think that this equips us with a forgetful functor $\fModSch{k}{l} \to \fBimod{k}{l}$. This is not true since \eqref{rightmodule} is not a map of abelian groups in general. There are interesting functors from formal module schemes to bimodules (defined in Section~\ref{section:primindec}), but the forgetful functor is not one of them.
\end{remark}

\subsection{Operations in cohomology theories give formal algebra schemes}\label{sec:cohopnfalgsch}

Let $E$ be a multiplicative cohomology theory represented by an $\Omega$-spectrum $\{\underline E_n\}_{n \in \Z}$. We (re-)define $E$-cohomology as a functor $\hat E^*\colon CW \to \Pro-\Alg_{E_*}$ on CW-complexes given by $\hat E^*(X) = \{E^*(K)\}_K$, where $K$ runs through all finite sub-CW-complexes of $X$. This functor carries somewhat different information that the classical $E$-cohomology of $X$ in that there is a short exact Milnor sequence
\[
0 \to \lim\nolimits^1 \hat E^{*-1}(X) \to E^*(X) \to  \lim \hat E^*(X)  \to 0.
\]
Both ends of this sequence are functors of $\hat E^*(X)$, but the extension class might not be. On the other hand, the $\hat E$-terms cannot be recovered as a functor of $E^*(X)$. 

\begin{lemma}\label{ehatcohomologytheory}
The functor $\hat E^*$ is an Eilenberg-Steenrod cohomology theory with additivity axiom on the category of CW-complexes.
\end{lemma}
Note that, by lack of exactness, the $\lim$- and $\lim\nolimits^1$-terms are not. The target category $\Pro-\Mod_{E_*}$ is an abelian category, thus the axioms make sense.
\begin{proof}
Given a map $f\colon X \to Y$ of CW-complexes, to define the induced map $f^*\colon \hat E^*(Y) \to \hat E^*(X)$, we must produce for every finite sub-CW-complex $K$ of $X$ a finite sub-CW-complex $L$ of $Y$ and a map $E^*(L) \to E^*(K)$ in a compatible way. We choose $L=f(K)$ and $f^*$ as the induced map of the restriction $f|_K$. For homotopy invariance, let $H\colon [0,1] \times X \to Y$ be a cellular homotopy between $f$ and $g$. Given $K \subseteq X$ finite, choose $L = H([0,1] \times K) \subseteq Y$, which is a finite sub-CW complex. Then $H|_K$ gives a homotopy between $f|_K$ and $g|_K$.

To show the long exact sequence axiom, let $X \xrightarrow{i} Y \xrightarrow{j} Y/X$ be a cofibration sequence. It suffices to show that
{\small
\[
\Pro-\Mod_{E_*}(\hat E^*(X),Q) \to \Pro-\Mod_{E_*}(\hat E^*(Y),Q) \to \Pro-\Mod_{E_*}(\hat E^*(Y/X),Q)
\]
}
is exact for every injective $Q \in \Mod_{E_*}$. Thus we need to see that
{\small
\[
\colim_{K \subseteq X} \Mod_{E_*}(E^*(K),Q) \to \colim_{L \subseteq Y} \Mod_{E_*}(E^*(L),Q) \to \colim_{M \subseteq Y/X} \Mod_{E_*}(E^*(M),Q)
\]
}
is exact. Assume that $f\colon E^*(L) \to Q$, $L \subseteq Y$, is a map such that there is an $M \subseteq Y/X$ with $j(L) \subseteq M$ such that $f\circ j^* = 0\colon E^*(M) \to Q$. Choose a finite super-CW complex $L' \supseteq L$ such that $j(L') = M$. This is possible since $j$ is a surjective map of CW-complexes, thus for every cell $\sigma$ in $M-j(L)$ we can add an arbitrary cell of $j^{-1}(\sigma)$ to $L$. Then we have a cofiber sequence $X \cap L' \to L' \to M$. This means that the map $f\colon E^*(L) \to Q$ might not lift to $E^*(X \cap L)$, but the composite $E^*(L') \to E^*(L) \to Q$ will. This proves exactness. The suspension and additivity axioms are immediate.
\end{proof}

\begin{corollary} \label{proflatkunneth}
Assume $X$ is a space such that $\hat E^*(X)$ is pro-flat, i.e. it is pro-isomorphic to a filtered system of flat $E_*$-modules (not necessarily of the form $E^*(K)$ for a sub-CW-complex $K \subseteq X$). Then the K\"unneth map
\[
\hat E^*(X) \otimes_{E_*} \hat E^*(Y) \to \hat E^*(X \times Y)
\]
is an isomorphism.
\end{corollary}
\begin{proof}
If $\hat E^*(X)$ is pro-flat then the functor $\hat E^*(X) \otimes_{E_*} -$ is exact, thus $Y \to \hat E^*(X) \otimes_{E_*} \hat E^*(Y)$ is a cohomology theory, as is $Y \mapsto \hat E^*(X \times Y)$. Since the K\"unneth morphism is an isomorphism on $Y=\{*\}$, it is an isomorphism for any $Y$.
\end{proof}

\begin{corollary} \label{cor:cohomologyalgsch}
Let $E$, $F$ be homotopy commutative $\Omega$-ring spectra. Assume that for every $n \in \Z$, the $n$-th space of $F$, $\underline F_n$, is such that $\hat E^*(\underline F_n)$ is pro-flat. Then $\hat A = \Spf{\hat E^*(\underline F_*)} \in \fAlgSchf{F_*}{E_*}$ is a flat formal $F_*$-algebra scheme over $E_*$.
\end{corollary}
\begin{proof}
The functor $\hat A$ is clearly a formal scheme over $E_*$, and it acquires the structure of a formal $F_*$-algebra scheme by means of the maps
\[
\underline F_n \times \underline F_n \xrightarrow{\text{loop structure}} \underline F_n \quad \text{and} \quad 
\underline F_p \times \underline F_q \xrightarrow{\text{multiplication}} \underline F_{p+q}.
\]
By Cor.~\ref{proflatkunneth}, these maps translate to coalgebra structures on $\hat E^*(\underline F_*)$. The unit map $\epsilon \colon F_* \to \Pro-\Alg_{E_*}(\hat E^*\underline F_*,E_*)$ is induced by application of $E^*$ to an element of $F_* = \pi_* F = [\mathbf{S}^0,\underline F_{-n}]$.
\end{proof}

\section{The structure of the category of formal bimodules} \label{sec:fbimodstructure}

In this section we will study in more detail the algebraic structure of $\fBimod{k}{l}$, the category of formal bimodules, and its subcategory $\fBimodf{k}{l}$ of flat formal bimodules. The main points are that these categories are $2$-bimodules (Def.~\ref{def:tworingtwomod}) over $\Mod_l$ (Lemma~\ref{fbimodtwomodule}) and the existence of an approximation to the objectwise tensor product of modules over $l$, making it into a $2$-algebra (Def.~\ref{def:twoalgebra}, Thm.~\ref{bimodtwoalgebra}). This $2$-algebra structure is half of the $2$-monoidal structure (Def.~\ref{def:twomonoidalcat}) on the category of formal bimodules constructed in Section~\ref{sec:fringsfplethories}; the other monoidal structure will be cooked up from the $\Mod_l$-$2$-module structure defined in this section.

Recall from Cor.~\ref{vproenriched} that $\Pro-\Mod_k$ is a $2$-bimodule over $\Mod_\Z$.

\begin{lemma} \label{fbimodtwomodule}
The category $\fBimod{k}{l}$ is a bicomplete $2$-bimodule over $\Mod_{l}$. If $k$ is a Pr\"ufer domain then also $\fBimodf{k}{l}$ is a bicomplete $2$-bimodule over $\Mod_l$.
\end{lemma}

We will denote the left $2$-module structure by $\indprotensor_l$ and the right $2$-module structure by $\hom_l$. 

\begin{proof}
Let $\hat M$, $\hat N \colon \Mod_k \to \Mod_l$ be objects of $\fBimod{k}{l}$. Then the enrichment is given by the $l$-module of $l$-linear natural transformation $\hat M \to \hat N$, i.e. by the equalizer
\[
\fBimod{k}{l}(\hat M,\hat N) \to \fBimod{k}{\Z}(\hat M, \hat N)\rightrightarrows \fBimod{k}{\Z}(l \indprotensor_\Z \hat M,\hat N),
\]
where the two maps are given by the map $l \indprotensor \hat M \to \hat M$ of Cor.~\ref{unitmapadjmodules} and by
\[
\fBimod{k}{\Z}(\hat M,\hat N) \xrightarrow{l \indprotensor_\Z -}  \fBimod{k}{\Z}(l \indprotensor_\Z \hat M, l \indprotensor_\Z \hat N)
 \xrightarrow{\text{Cor.~\ref{unitmapadjmodules}}} \fBimod{k}{\Z}(l \indprotensor_\Z \hat M, \hat N).
\]
By the commutativity of $l$, this is again an $l$-module.

For $M \in \Mod_l$, the right $2$-module structure $\hom_l(M,\hat N)$ is given objectwise: $\hom_l(M,\hat N)(X) = \Mod_l(M,\hat N(X))$. To see that this is representable, we indentify it with the equalizer in $\fBimod{k}{\Z}$ (which exists by Lemma~\ref{flatbimodcocomplete})
\begin{equation}\label{fbimodrightmodrep}
\hom_l(M,\hat N) \to \hom_\Z(M,\hat N) \rightrightarrows \hom_\Z(l \otimes M,\hat N).
\end{equation}
Here the first map is the multiplication $l \otimes M \to M$ and the second induced by adjunction by the coaction $\hat N \to \hom_\Z(l,\hat N)$ of Cor.~\ref{unitmapadjmodules}.

If $k$ is Pr\"ufer and $\hat M$ flat, we instead take the equalizer \eqref{fbimodrightmodrep} in the category $\fBimodf{k}{\Z}$.

The left $2$-module structure $M \indprotensor_l \hat N$ is given by the coequalizer in $\fBimod{k}{\Z}$
\begin{equation} \label{fbimodleftmodrep}
(M \otimes l) \indprotensor_\Z \hat N \rightrightarrows M \indprotensor_\Z \hat N \to M \indprotensor_l \hat N. 
\end{equation}
This is flat if $\hat N$ is and $k$ is Pr\"ufer.
\end{proof}

The left $2$-module structure approximates the functor $X \mapsto M \otimes_{l} \hat N(X)$ in the following sense.  Clearly $X \mapsto M \otimes_{l} \hat N(X)$ is not representable in general because it is not right exact unless $M$ is flat. 

\begin{lemma} \label{freebimodadjunction}
Denote by $\mathfrak F_l$ the category of all additive functors $\Mod_k \to \Mod_l$, representable or not. For $\hat M, \hat N \in \fBimod{k}{l}$ and $M \in \Mod_{l}$, there is a natural adjunction isomorphism
\[
\mathfrak{F}_l(M \otimes_{l} \hat M(-), \hat N) \cong \fBimod{k}{l}(M \indprotensor_l \hat M,\hat N)
\]
\end{lemma}
\begin{proof}
\begin{align*}
\mathfrak{F}_l(M \otimes_l \hat M(-),\hat N) \cong & \mathfrak{F}_l(\hat M,\Mod_l(M,\hat N(-)))\\ = &
\fBimod{k}{l}(\hat M, \hom_l(M,\hat N)) \cong \fBimod{k}{l}(M \indprotensor_l \hat M,\hat N). 
\end{align*}
\end{proof}

By Cor.~\ref{vproenriched}, $\Pro-\Mod_k^\Z$ is a $2$-algebra (Def.~\ref{def:twoalgebra}) over $\Mod_\Z^\Z$ with tensor product $M \otimes_k N$ for $M$, $N \in \Pro-\Mod_k^\Z$, and thus $\fBimod{k}{\Z}$ is a $2$-algebra over $\Mod_\Z^\Z$; we denote the tensor product there by $\otimes_\Z$. This can be extended to $\fBimod{k}{l}$ for any $k$-algebra $l$:
\begin{defn} \label{def:sweedlerprod}
Let $\hat M$, $\hat N \in \fBimod{k}{l}$. Define their \emph{tensor product} as the coequalizer
\begin{equation}\label{sweedlercoequalizer}
 l \indprotensor_\Z (\hat M \otimes_\Z \hat N) \rightrightarrows \hat M \otimes_\Z \hat N \to \hat M \otimes_l \hat N
\end{equation}
where the two maps are given by
\[
l \indprotensor_\Z (\hat M \otimes_\Z \hat N) \xrightarrow[\eqref{alphamap}]{\alpha} (l \indprotensor_\Z \hat M) \otimes_\Z \hat N \xrightarrow{\mu \otimes \id} \hat M \otimes_\Z \hat N
\]
and the analogous map for $N$. The representing object of $\hat M \otimes_l \hat N$ is a variant of the \emph{Sweedler product} \cite{sweedler:simple-algebras}. Note that if $k$ is Pr\"ufer then $\hat M \otimes_l \hat N$ is flat if both $\hat M$ and $\hat N$ are.

The $l$-action on $\hat M \otimes_l \hat N$ is given by either of the composites of \eqref{sweedlercoequalizer}, which actually factor through $l \indprotensor_\Z (\hat M \otimes_l \hat N) \to \hat M \otimes_l \hat N$ because the two possible composites from $l \indprotensor_\Z (l \indprotensor_\Z (\hat M \otimes_l \hat N))$ coincide.
\end{defn}
One should think about the Sweedler product as the submodule of $\Reg{\hat M} \otimes_k \Reg{\hat N}$ where the $l$-actions on the left and the right factor agree.

%
%%%%%     Removed Cor. 5.4 from first submission     %%%%%%%
%
%As a corollary of Lemma~\ref{freebimodadjunction} we get the following result:
%\begin{corollary}
%Let $\hat M$, $\hat N \in \fBimod{k}{l}$. Let $\hat P \in \fBimod{k}{l \otimes l}$ be the functor $\Spf{\Reg{\hat M} \otimes_k  \Reg{\hat N}}$. Then $\hat M \otimes_k \hat N$ approximates the functor $\tilde P$ given by $\tilde P(M) = \coeq(l \otimes \hat P(M) \rightrightarrows\hat  P(M))$, where the two maps are given by the two $l$-actions on $\hat P(M)$. That is, for $\hat H \in \fBimod{k}{l}$, there is an adjunction isomorphism
%\[
%\mathfrak F(\tilde P,\hat H) \cong \fBimod{k}{l}(\hat M \otimes_l \hat N,\hat H).
%\]
%\end{corollary}

The tensor product equips $\fBimod{k}{l}$ with a symmetric monoidal structure (with unit $\tensunit{k}{l}$, cf. Ex.~\ref{exa:bimodhom}). This symmetric structure is compatible with the enrichment:
\begin{thm}\label{bimodtwoalgebra}
The symmetric monoidal categories $(\fBimod{k}{l},\otimes_l,\tensunit{k}{l})$ and (for $k$ Pr\"ufer) its subcategory $\fBimodf{k}{l}$ are symmetric $2$-algebras over $(\Mod_l,\otimes_{l},l)$. 
\end{thm}
\begin{proof}
We have already seen the $2$-bimodule structure in Lemma~\ref{fbimodtwomodule} and that $\otimes_l$ is a symmetric monoidal structure. It remains to show that the functor $\hat M \mapsto \hat M \otimes_l \hat N$ is enriched over $\Mod_l$, i.e. that the $\Z$-linear map
\[
\fBimod{k}{l}(\hat M,\hat M') \to \fBimod{k}{l}(\hat M \otimes_l \hat N, \hat M' \otimes_l \hat N)
\]
is in fact $l$-linear. But this is true by the construction of the $l$-action on $\hat M \otimes_l \hat N$ and $\hat M' \otimes_l \hat N$, where we are free to choose the map induced by the $l$-action on $\hat M$ and $\hat M'$.
\end{proof}

\section{The structure of the category of formal module schemes} \label{sec:fmodschstructure}

Similarly to the previous section, the category of (flat) formal module schemes has various enrichments and a tensor product which will be constructed in this section. These will be used to define the monoidal structure on the category of formal algebra schemes which is the basis for the definition of a formal plethory (Def.~\ref{def:formalplethory}); the tensor product constructed here is the main technical property to show that the category of formal algebra schemes has coproducts.

Throughout this and the following chapters, we will often assume that $k$ is a graded Pr\"ufer domain (such that submodules of flat modules are flat). 

\begin{lemma} \label{fmodschlimcolim}
\begin{enumerate}
\item The category $\fModSch{k}{l}$ has all limits, and the forgetful functor to $\Sch{k}$ preserves them. \label{fmodschlimits}
\item The subcategory $\fModSchf{k}{l}$ has cofiltered limits and arbitrary products, and the inclusion into $\fModSch{k}{l}$ preserves them. \label{fmodschfproducts}
\item The categories $\fModSch{k}{l}$ and $\fModSchf{k}{l}$ have filtered colimits, and the forgetful functor to $\Sch{k}$ preserves them. \label{fmodschfilteredcolimits} 
\item The categories $\fModSch{k}{l}$ and $\fModSchf{k}{l}$ have arbitrary coproducts, and they agree. \label{fmodschcoprod}
\end{enumerate}
\end{lemma}
\begin{proof}
For \eqref{fmodschlimits}, note that if $\hat M_i$ is a diagram of formal $l$-module schemes over $k$ then its limit in $\Sch{k}$ is given pointwise: $(\lim^{\Sch{k}} M_i)(A) = \lim^{\Set^\Z} M_i(A)$. Since every $M_i(A)$ is an $l$-module, so is $\lim^{\Set^\Z} M_i(A) = \lim^{\Mod_l} M_i(A)$.

For \eqref{fmodschfproducts}, note that the limit constructed in \eqref{fmodschlimits} is represented by the colimit of representing objects, which is not necessarily flat. But coproducts and filtered colimits of flat objects are.

For \eqref{fmodschfilteredcolimits}, let $A\colon I \to \Pro-\Alg_k^\Z$ be a cofiltered diagram of objects representing formal $l$-module schemes. By \cite[Thm.~1.5]{adamek-rosicky:presentable-accessible} and \cite[Thm.~3.3]{isaksen:limits-colimits}, we may assume that $I$ is cofinite and that $A$ has a level representation
\[
A \colon I \times J \to \Alg_k^\Z
\]
where $J$ is some fixed cofiltered category. Then $A$, understood as indexed by the cofiltered category $I \times J$, is the limit in $\Sch{k}$. The $l$-module structure maps $\psi_+$, $\epsilon_0$, and $\lambda$ of Lemma~\ref{coringdescription} lift to $A$, showing that $A$ is in fact also the limit in $\fModSch{k}{l}$. From this description it is also evident that flatness is preserved.

Since finite coproducts and products agree in the additive categories $\fModSch{k}{l}$ and $\fModSchf{k}{l}$, and since infinite coproducts are filtered colimits of finite coproducts, \eqref{fmodschcoprod} follows from \eqref{fmodschfilteredcolimits}.
\end{proof}

By Example~\ref{exa:twomodoverset}, a category with (co)products and a shift functor $\Sigma^n$ is a $2$-bimodule over $\Set^\Z$, and a functor between two such categories is a functor of $2$-bimodules if it commutes with shifts. The category $\fModSch{k}{\Z}$ has a shift functor given by $(\Sigma^n \hat M)(R) = \Sigma^n(\hat M(R))$, using the shift on $\Mod_\Z$, and the forgetful functor obviously commutes with this shift. Thus we have:
\begin{corollary} \label{fmodschbimodule}
The category $\fModSch{k}{l}$ is a $2$-bimodule over $\Set^\Z$, and the subcategory $\fModSchf{k}{l}$ is a left and right strict $2$-sub-bimodule. The forgetful functor to $\Sch{k}$ resp. $\Schf{k}$ is a functor of $2$-bimodules.
\end{corollary}

\subsection{Free/cofree adjunctions for formal module schemes and enrichments}

\begin{thm}\label{cofreehopfalgebra}
Let $k$ be a Pr\"ufer domain. Then the forgetful functor $U^\Z\colon \fModSchf{k}{\Z} \to \Schf{k}$ has a left adjoint $\Fr$. For $\hat X \in \Schf{k}$, $\Fr(\hat X)$ is free in the larger category $\fModSch{k}{\Z}$, i.~e.
\[
\Sch{k}(\hat X,\hat M) \cong \fModSch{k}{\Z}(\Fr(\hat X),\hat M) \quad \text{for all } \hat M \in \fModSch{k}{\Z}.
\]
\end{thm}
The Pr\"ufer condition is not used for the construction of $\Fr_\N$.
\begin{proof}
The proof will proceed in two steps, constructing left adjoints $\Fr^\N$ and $\Fr^\Z$ for each forgetful functor in the factorization
\begin{equation}\label{diag:cofree}
\begin{matrix}
\begin{tikzpicture}
	\matrix (m) [matrix of math nodes, row sep=4em, column sep=5em, text height=1.5ex, text depth=0.25ex]
	{
		\fModSchf{k}{\Z} \\
		\fModSchf{k}{\N} & \Schf{k} ,	\\
	};
	\path[->,font=\scriptsize]
	(m-1-1)		edge node[auto]{$U^\Z_\N$} (m-2-1)
	([xshift=+1.41mm]m-1-1.south east)		edge node[auto]{$U^\Z$} ([xshift=+1.41mm]m-2-2.north west)
	([xshift=-1.41mm]m-2-2.north west)	edge[dashed] node[auto]{$\Fr_\Z$} ([xshift=-1.41mm]m-1-1.south east)
	([yshift=+1mm]m-2-1.east)		edge node[auto]{$U^\N$} ([yshift=+1mm]m-2-2.west)
	([yshift=-1mm]m-2-2.west)	edge[dashed] node[auto]{$\Fr_\N$} ([yshift=-1mm]m-2-1.east);
\end{tikzpicture}
\end{matrix}
\end{equation}
where $\fModSchf{k}{\N}$ denotes the category of flat formal graded commutative monoid schemes. Both adjoints will be free in the larger categories of not necessarily flat formal schemes. We will call the left adjoints of the level of representing objects (where, of course, they are right adjoints) $\Gamma^\Z$ and $\Gamma_\N$, respectively.

The functor $\Gamma^\Z$ should be the cofree cocommutative cogroup object in pro-algebras. In the setting of algebras instead of pro-algebras, an explicit description of the cofree bialgebra on an algebra is quite hard (see \cite[Chapter VI]{sweedler:hopf-algebras}, \cite{peterson-taft} for fields, \cite{takeuchi:commutative-cocommutative} for characteristic $p$, \cite{fox:cofree} for the general noncocommutative case) unless one restricts to \emph{conilpotent} coalgebras \cite{newman-radford:cofree}, and the antipode is yet another problem solved over fields in \cite{agore:cofree-hopf}. More recently, Porst \cite{porst:limits-and-colimits, porst:universal-constructions,porst:subcategories} showed the existence of cofree (cocommutative) Hopf algebras over any ring with adjoint functor theorem methods; unfortunately these proofs do not generalize easily because they require a generator for the category of algebras. The category $\Pro-\Alg_k$ does not have a generator. However, the situation for pro-algebras is simpler in other respects and resembles the conilpotent case. Note that the inclusion $\Alg_k \to \Pro-\Alg_k$ wil \emph{not} send the cofree construction to the cofree construction!

\subsubsection*{Construction of $\Fr_\N$ and $\Gamma^\N$}

Since as a right adjoint, $\Gamma^\N$ has to commute with limits, it suffices to construct $\Gamma^\N(A)$ for a constant flat pro-algebra $A \in \Alg_k^\Z$. For a flat module $M \in \Mod_k^\Z$, denote by $\Gamma^{\N}(M) = \Gamma_k^{\N}(M)$ the coalgebra in $\Pro-\Mod_k^\Z$
\[
\Gamma_k^{\N}(M) = \prod_{n=0}^\infty \Gamma_k^{(n)}(M) = \prod_{n=0}^{\infty} \left(M^{\otimes_k n}\right)^{\Sigma_n},
\]
where the product is taken in the category $\Pro-\Mod_k^\Z$ and the symmetric group $\Sigma_n$ acts by signed permutation of the tensor factors. Since $M$ is flat, so is $(M^{\otimes_k n})^{\Sigma_n}$, and thus we have an isomorphism
\[
\Gamma^{\N}(M) \otimes_k \Gamma^{\N}(M) \cong \prod_{m,n=0}^{\infty} \left(M^{\otimes_k m}\right)^{\Sigma_m} \otimes_k \left(M^{\otimes_k n}\right)^{\Sigma_n} \cong \prod_{m,n=0}^{\infty} \left(M^{\otimes_k m+n}\right)^{\Sigma_m \times\Sigma_n}.
\]
The canonical restriction maps
\[
\Gamma^{(m+n)}(M) = \left(M^{\otimes_k m+n}\right)^{\Sigma_{m+n}} \to \left(M^{\otimes_k m+n}\right)^{\Sigma_m \times\Sigma_n} \cong \Gamma^{(m)}(M) \otimes_k \Gamma^{(n)}(M)
\]
induce a cocommutative comultiplication $\psi\colon \Gamma^{\N}(M) \to \Gamma^{\N}(M) \otimes_k \Gamma^{\N}(M)$.
Furthermore, the projection $\epsilon_0\colon \Gamma^{\N}(M) \to \Gamma^{(0)}(M) = k$ is a cozero. There is also a natural transformation $\pi\colon \Gamma^{\N}(M) \to M$ given by projection to the $\Gamma^{(1)}$-factor.
Note that due to the failure of the tensor product to commute with infinite products, $\lim \Gamma^{\N}(M)$ is not a coalgebra. 

To see that this is the cofree cocommutative coalgebra, let $C$ be a pro-$k$-module with a cocommutative comultiplication $\psi\colon C \to C \otimes C$. Given a coalgebra map $C \to \Gamma_{\N}(M)$, composing with $\pi$ gives a $k$-module map $C \to M$. Conversely, given a $k$-module map $f\colon C \to M$, define a map $\tilde f = (\tilde f)_n\colon C \to \Gamma_{\N}(M) = \prod_n \Gamma_{\N}^n(M)$ by $(\tilde f)_n = (f \otimes \cdots \otimes f)\circ \psi^n$, where $\psi^n\colon C \to \left(C^{\otimes_k n}\right)^{\Sigma_n}$ denotes the $(n-1)$-fold comultiplication of $C$. It is clear that this gives an adjunction isomorphism.

If $A$ is an algebra in $\Alg_k^\Z$ then there is a map
\[
\mu'\colon \Gamma_{\N}(A) \otimes_k \Gamma_{\N}(A) \to A; \quad \mu'(x \otimes y) = \pi(x)\pi(y)
\]
which, by the universal property of $\Gamma_{\N}(A)$ as the cofree coalgebra, lifts to a unique coalgebra map
\[
\mu\colon \Gamma_{\N}(A) \otimes_k \Gamma_{\N}(A) \to \Gamma_{\N}(A).
\]
This makes $\Gamma_{\N}(A)$ a bialgebra (or, more precisely, an object representing a formal commutative monoid scheme) and $\pi$ a $k$-algebra map. It is easy to see that it is the cofree object on the algebra $A$.

This cofree bialgebra does in general not have an antipode. Note that the category of formal groups is a full subcategory of the category of formal monoids: an inverse, if it exists, is unique, and any formal monoid map between formal groups automatically respects inverses (much like a monoid map between groups is a group map).

% \begin{lemma}
% For flat $A \in \Mod_k$ and any $B \in \Alg_k$, we have a natural isomorphism
% \[
% \Gamma_k^{(n)}(A) \otimes_k B \cong \Gamma_B^{(n)}(A \otimes_k B).
% \]
% \end{lemma}
% \begin{proof}
% If $A$ is flat over $k$ then $A \otimes_k B$ is also flat over $B$. In this case, $\Gamma_k^{(n)}$ is isomorphic with the divided power construction of \cite{...}, which is invariant under base change.
% \end{proof}

\subsubsection*{Construction of $\Fr_\Z$ and $\Gamma^\Z$}

To construct $\Gamma_\Z^\N$, we want to mimick the Grothendieck construction for ordinary monoids on the level of formal schemes. Given an ordinary commutative monoid $G$, its Grothendieck construction $\Gr(G)$ can be described as the following coequalizer in sets:
\[
G \times G \times G \rightrightarrows G \times G \to \Gr(G),
\]
where the one map is given by $(g,h,k) \mapsto (g,h)$ and the other by $(g,h,k) \mapsto (g+k,h+k)$. This set coequalizer is a coequalizer in the category of monoids at the same time.

Correspondingly, let $A$ be a flat $k$-algebra and define $\Gamma_k^\Z(A)$ as the equalizer in $\Pro-\Alg_k^\Z$:
\begin{equation}\label{gammazequalizer}
\Gamma_k^\Z(A) \to \Gamma_k^\N(A) \otimes_k \Gamma_k^\N(A) \rightrightarrows \Gamma_k^\N(A) \otimes_k \Gamma_k^\N (A) \otimes_k \Gamma_k^\N (A)
\end{equation}
with the one map given by $\id \otimes \id \otimes 1$ and the other by the composite
\[
\left(\Gamma^\N A\right)^{\otimes_k 2} \xrightarrow{\psi \otimes \psi} \left(\Gamma^\N A\right)^{\otimes_k 4} \xrightarrow{\id \otimes \text{twist} \otimes \id} \left(\Gamma^\N A\right)^{\otimes_k 4} \xrightarrow{\id \otimes \id \otimes \mu} \left(\Gamma^\N A\right)^{\otimes_k 3}.
\]

\begin{lemma}
If $k$ is a Pr\"ufer domain then $\Gamma^\Z_k$ is a functor from $\Schf{k}$ to $\fModSchf{k}{\Z}$.
\end{lemma}
\begin{proof}
Since $\Gamma^\Z_k$ is a submodule of $\Gamma^\N_k \otimes \Gamma^\N_k$ and $\Gamma^\N_k(A)$ is flat if $A$ is, $\Gamma^\Z_k(A)$ is flat whenever $A$ is by the assumption that $k$ is a Pr\"ufer domain. In fact, using this flatness, $\Gamma^\Z_k$ is a sub-bialgebra of $\Gamma^\N_k \otimes_k \Gamma^\N_k$ by the equalizer inclusion. Since the diagram
\[
\begin{tikzpicture}
	\matrix (m) [matrix of math nodes, row sep=2em, column sep=2em, text height=1.5ex, text depth=0.25ex]
	{
		\Gamma^\Z(A) & \Gamma^\N A \otimes_k \Gamma^\N A & \Gamma^\N A \otimes_k \Gamma^\N A \otimes_k \Gamma^\N A\\
		\Gamma_\Z(A) & \Gamma^\N A \otimes_k \Gamma^\N A & \Gamma^\N A \otimes_k \Gamma^\N A \otimes_k \Gamma^\N A\\
	};
	\path[->,font=\scriptsize]
	(m-1-1)	edge (m-1-2)
	(m-2-1)	edge (m-2-2)
	([yshift=-1mm]m-1-2.east)		edge ([yshift=-1mm]m-1-3.west)
	([yshift=+1mm]m-1-2.east)	edge ([yshift=+1mm]m-1-3.west)
	([yshift=-1mm]m-2-2.east)		edge ([yshift=-1mm]m-2-3.west)
	([yshift=+1mm]m-2-2.east)	edge ([yshift=+1mm]m-2-3.west)
	(m-1-2)	edge node[auto]{twist}(m-2-2)
	(m-1-3)	edge node[auto]{twist $\otimes_k \id$} (m-2-3)
	(m-1-1) edge[dashed] node[auto]{$\epsilon_{-1}$} (m-2-1);
\end{tikzpicture}
\]
commutes, the twist map restricts to an antipode $\epsilon_{-1}$. 
\end{proof}

There is a natural transformation of algebras
\[
\pi\colon \Gamma^\Z_k A \to \Gamma^\N_k A \otimes_k \Gamma^\N_k A \xrightarrow{\pi \otimes \epsilon_0} A
\]
which we need to verify to be universal. Thus, given a pro-$k$-algebra $B$ with $\Spf{B} \in \fModSch{k}{\Z}$ and a map $f\colon B \to A$, we need to produce a unique lift to $\Gamma^\Z(A)$. By the previous subsection, $f$ lifts uniquely to a map of bialgebras $\tilde f\colon B \to \Gamma^\N_k(A)$. A lift to $\Gamma^\Z_k(A)$ is then given by
\[
B \xrightarrow{\psi} B \otimes_k B \xrightarrow{\id \otimes \epsilon_{-1}} B \otimes_k B \xrightarrow{\tilde f \otimes \tilde f} \Gamma^\N_k A \otimes_k \Gamma^\N_k A,
\]
and one verifies readily that this takes values in $\Gamma^\Z_k(A)$ by observing that
\[
B \xrightarrow{\psi} B \otimes_k B \xrightarrow{\id \otimes \epsilon_{-1}} B \otimes_k B \rightrightarrows B \otimes_k B \otimes_k B
\]
is a fork. To see that this lift is unique, note that $\otimes_k$ is not only the coproduct, but also the product in the category of bialgebras. Two maps $f,\;g\colon B \rightrightarrows \Gamma^\Z(A)$ can only be different if the composites $B \rightrightarrows \Gamma^\Z_k A \hookrightarrow \Gamma^\N_k A \otimes_k \Gamma^\N_k A \xrightarrow{\pi_i} \Gamma^\N_k A$ are different for at least one of the projections $\pi_1$, $\pi_2$. If they are different for $\pi_1=\id \otimes \epsilon_0$ then also $p \circ f \neq p \circ g$, and we are done. In the other case, $f \circ \epsilon_{-1} \neq g \circ \epsilon_{-1}$, and since $\epsilon_{-1}$ is an isomorphism, we again obtain that $p \circ f \neq p \circ g$.

This concludes the proof of Thm.~\ref{cofreehopfalgebra}.
\end{proof}

\subsubsection*{Monadicity of $\fModSch{k}{\Z}$}

Recall that $\mathfrak{F}_\Z$ is the category of all functors $\Alg_k \to \Mod_{\Z}$. Let $\Fr=\Fr_\Z\colon \Set \to \Mod_{\Z}$ also denote the free $\Z$-module functor. The following lemma says that $\Fr(\hat X)$ is the best representable approximation to the functor $R \mapsto \Fr_\Z(\hat X(R))$ for a flat formal scheme $\hat X \in \Schf{k}$.
\begin{corollary}\label{fmodschfreeadj}
Let $k$ be a Pr\"ufer domain. Then for $\hat X \in \Schf{k}$, $\hat N \in \fModSch{k}{\Z}$, there is a natural isomorphism
\[
\mathfrak{F}_\Z(\Fr \circ \hat X,\hat N) \cong \fModSch{k}{\Z}(\Fr(\hat X),\hat N).
\]
\end{corollary}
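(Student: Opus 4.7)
The plan is to compose the adjunction of Lemma~\ref{cofreehopfalgebra} with the elementary free-forgetful adjunction $\Fr \dashv V$ between $\Set$ and $\Mod_{\Z}$, applied pointwise in $R \in \Alg_k$. Write $\mathfrak{F}'$ for the category of all functors $\Alg_k \to \Set$ (the $\Set$-valued analogue of $\mathfrak F$).

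First I would observe that the underlying set-valued functor of $V(G) \in \Sch{k}$ is precisely $V \circ G$, where on the right $V$ denotes the forgetful functor $\Mod_{\Z} \to \Set$. Since $\Sch{k}$ sits as a full subcategory of $\mathfrak{F}'$ (morphisms in either are natural transformations), this identifies
\[
\mathfrak{F}'(F, V \circ G) = \Sch{k}(F, V(G)),
\]
and Lemma~\ref{cofreehopfalgebra} then gives $\Sch{k}(F, V(G)) \cong \fModSch{k}{\Z}(\Fr(F), G)$.

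Second I would use the pointwise $\Fr \dashv V$ adjunction between $\Set$ and $\Mod_{\Z}$. For each $R \in \Alg_k$ there is a natural bijection
\[
\Mod_{\Z}(\Fr(F(R)), G(R)) \cong \Set(F(R), V(G(R))),
\]
natural in $R$ because $\Fr$, $V$, and their unit and counit are all functorial. Assembling these bijections in $R$ yields
\[
\mathfrak{F}(\Fr \circ F, G) \cong \mathfrak{F}'(F, V \circ G),
\]
and chaining with the previous step gives the desired isomorphism.

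There is no real difficulty here; all of the nontrivial content has been packaged into Lemma~\ref{cofreehopfalgebra}, and the remainder is bookkeeping about composing two elementary adjunctions. The only thing worth verifying is that the composite isomorphism is induced by precomposition with the natural transformation $\Fr \circ F \to \Fr(F)$ obtained by applying the set-level $\Fr$ to the unit $F \to V\Fr(F)$ of Lemma~\ref{cofreehopfalgebra} and then the set-level counit $\Fr V \to \id$; this is a routine naturality check that I would defer.
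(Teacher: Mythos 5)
Your proposal is correct and follows the paper's own argument exactly: the paper's one-line proof $\mathfrak F(\Fr \circ F,G) \cong \Sch{k}(F,G) \cong \fModSch{k}{\Z}(\Fr(F),G)$ is precisely your pointwise $\Fr \dashv V$ adjunction (together with the fullness of $\Sch{k}$ in the $\Set$-valued functor category) chained with Lemma~\ref{cofreehopfalgebra}. You have merely unpacked the two steps that the paper leaves implicit.
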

\begin{proof}
$\mathfrak F_\Z(\Fr \circ \hat X,\hat N) \cong \Sch{k}(\hat X,\hat N) \underset{\text{Thm.~\ref{cofreehopfalgebra}}}\cong \fModSch{k}{\Z}(\Fr(\hat X),\hat N).$
\end{proof}

\begin{corollary} \label{constantzfmod}
For a Pr\"ufer domain $k$ and $\hat 0 = \Spf{k}$, we have $\Fr(\hat 0) = \tensunit{k}{\Z}$.
\end{corollary}
\begin{proof}
Both functors are characterized as best approximations to the constant functor $\underline \Z$, hence equal. More precisely, note that $\hat 0 \in \Schf{k}$ is the constant functor with value the singleton, thus for any $\hat N \in \fModSch{k}{\Z}$ we have
\[
\fModSch{k}{\Z}(\Fr(\hat 0),\hat N) \underset{\text{Cor.~\ref{fmodschfreeadj}}}\cong \mathfrak F_\Z(\underline \Z,\hat N) \underset{\text{Lemma~\ref{constantfmod}}}\cong \fModSch{k}{\Z}(\tensunit{k}{\Z},\hat N).
\]
\end{proof}

%\begin{lemma}\label{coeqreflection}
%The forgetful functor $U^\Z\colon \fModSch{k}{l} \to \Sch{k}$ creates $\Sch{k}$-split coequalizers.
%\end{lemma}

%This means: Let $f,\;g\colon \hat M \to \hat N \in \fModSch{k}{l}$ be two maps of formal module schemes with coequalizer $\epsilon\colon U^\Z(\hat N) \to \hat Q$ in $\Sch{k}$. Given right inverses $s\colon U^\Z(\hat N) \to U^\Z(\hat M)$ of $f,g$ in $\Sch{k}$ as well as $t\colon \hat Q \to  U^\Z(\hat N)$ of $\epsilon$, then $\hat Q$ carries the structure of a flat formal $l$-module scheme and is, with this structure, the coequalizer in $\fModSch{k}{l}$.
%\begin{proof}
%Since $\Reg{\hat Q} \to \Reg{\hat N} \rightrightarrows \Reg{\hat M}$ is a \emph{split} equalizer in $\Pro-\Alg_k^\Z$, it is in particular an \emph{absolute} equalizer, and tensoring with any other pro-$k$-algebra will preserve this property. Thus the comultiplication on $\Reg{\hat Q}$ takes values in
%\[
%\ker(\Reg{\hat N \times \hat N} \to \Reg{\hat M\times \hat M}) = \Reg{\hat Q \times \hat N} \times_{\Reg{\hat N\times \hat N}} \Reg{\hat N \times \hat Q} = \Reg{\hat Q} \otimes_k \Reg{\hat Q},
%\]
%where the last identity holds again because $\hat M \to \hat Q$ splits.
%\end{proof}

\begin{lemma}\label{coeqreflection}
For a Pr\"ufer domain $k$, the forgetful functor $U^\Z\colon \fModSchf{k}{\Z} \to \Schf{k}$ creates coequalizers. In particular, $\fModSchf{k}{l}$ has all colimits.
\end{lemma}
This means: Let $f,\;g\colon \hat M \to \hat N \in \fModSchf{k}{\Z}$ be two maps of flat formal module schemes with coequalizer $\epsilon\colon U^\Z(\hat N) \to \hat Q$ in $\Schf{k}$. Then $\hat Q$ carries the structure of a flat formal $\Z$-module scheme and is, with this structure, the coequalizer in $\fModSchf{k}{\Z}$.
\begin{proof}
The equalizer $\Reg{\hat Q} \to \Reg{\hat N} \rightrightarrows \Reg{\hat M}$ is pro-flat by Lemma~\ref{flatschcocomplete}, and by the same observation, the comultiplication on $\Reg{\hat Q}$ takes values in $\Reg{\hat Q} \otimes_k \Reg{\hat Q}.$ Since $\fModSchf{k}{\Z}$ has coequalizers by the above and coproducts by Lemma~\ref{fmodschlimcolim}\eqref{fmodschcoprod}, it has all colimits.
\end{proof}

As a result of Duskin's formulation of the Beck monadicity theorem \cite{duskin:beck}, see also \cite[Theorem 4.4.4]{borceux:voltwo}, Thm.~\ref{cofreehopfalgebra}, Lemma \ref{coeqreflection}, and Lemma~\ref{flatschcocomplete} imply
\begin{corollary} \label{fmodschcomonadic}
For a Pr\"ufer domain $k$, the forgetful functor $U^\Z$ is monadic, i.~e. the category $\fModSchf{k}{\Z}$ is equivalent to the category of algebras in $\Schf{k}$ for the monad $U^\Z \circ \Fr_\Z$. In particular, it is bicomplete.\qed
\end{corollary}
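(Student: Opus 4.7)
The plan is to invoke Beck's monadicity theorem in the form due to Duskin (as cited in the corollary and available in Borceux), which characterizes monadic functors as right adjoints that create coequalizers of split pairs. The entire proof then reduces to checking the two hypotheses of this theorem for the forgetful functor $V\colon \fModSch{k}{\Z} \to \Sch{k}$, both of which have already been arranged in the preceding two lemmas.

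Concretely, I would first observe that the left adjoint hypothesis is precisely the content of Lemma~\ref{cofreehopfalgebra}, which produces the free cocommutative Hopf algebra functor $\Fr$. Next, the creation-of-split-coequalizers hypothesis is precisely Lemma~\ref{coeqreflection}: given a parallel pair in $\fModSch{k}{\Z}$ whose image under $V$ admits a splitting of both the pair and the coequalizer in $\Sch{k}$, that coequalizer lifts uniquely to $\fModSch{k}{\Z}$ and is the coequalizer there. One then forms the induced monad $T = V\circ \Fr$ on $\Sch{k}$ together with the Eilenberg--Moore comparison functor $K\colon \fModSch{k}{\Z} \to \Sch{k}^T$, and Duskin's theorem asserts that these hypotheses force $K$ to be an equivalence of categories. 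The shorthand ``the monad $\Fr$'' in the statement should be read as naming this monad $T$.

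There is no real obstacle to overcome here: the preceding two lemmas were engineered exactly to supply the two bullet points of the monadicity theorem, so the proof of the corollary is a one-line citation. The only point worth flagging for the reader is the matter of formulation—older versions of Beck's theorem require reflexive or absolute coequalizers rather than $V$-split ones—and the use of Duskin's formulation (respectively Borceux Thm.~4.4.4) is what makes the statement of Lemma~\ref{coeqreflection} exactly the hypothesis we need, with no further verification required.
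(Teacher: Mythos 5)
Your proposal matches the paper's own argument exactly: the corollary is deduced by citing Duskin's formulation of Beck's monadicity theorem (Borceux, Thm.~4.4.4), with the left-adjoint hypothesis supplied by Lemma~\ref{cofreehopfalgebra} and the creation of $\Sch{k}$-split coequalizers by Lemma~\ref{coeqreflection}. Your remark that ``the monad $\Fr$'' should be read as the induced monad $V\circ\Fr$ is a correct and harmless clarification.
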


\begin{corollary} \label{fmodschcolimadj}
Let $k$ be Pr\"ufer and $\hat M\colon I \to \fModSchf{k}{\Z}$ be a diagram of monoidal formal module schemes with colimit $\colim \hat M$. Denote by $\colim\nolimits^{\mathfrak{F}} \hat M$ the colimit of $\hat M$ as a diagram of functors in $\mathfrak{F}_\Z$. Then for any $\hat N \in \fModSch{k}{\Z}$, we have a natural isomorphism
\[
\mathfrak{F}_\Z(\colim\nolimits^{\mathfrak{F}} \hat M, \hat N) \cong \fModSch{k}{\Z}(\colim \hat M,\hat N).
\]
\end{corollary}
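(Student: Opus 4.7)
The plan is to establish the isomorphism by chaining together three identifications, each of which is essentially a universal property or follows from full-faithfulness of an embedding.

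First I would apply the defining universal property of colimits in the functor category $\mathfrak F$. Since $\colim^{\mathfrak F} F$ is, by construction, the pointwise colimit of the $F(i)$ as functors $\Alg_k \to \Mod_\Z$, we have
\[
\mathfrak F(\colim\nolimits^{\mathfrak F} F,\, G) \;\cong\; \lim_i \mathfrak F(F(i),\, G).
\]

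Next I would argue that the inclusion $\fModSch{k}{\Z} \hookrightarrow \mathfrak F$ is fully faithful, so that $\mathfrak F(F(i), G) = \fModSch{k}{\Z}(F(i), G)$ term-by-term. This is essentially a tautology: objects on both sides are functors $\Alg_k \to \Mod_\Z$, and because the target category $\Mod_\Z$ already has $\Z$-linear homomorphisms as its morphisms, every natural transformation in $\mathfrak F$ is automatically componentwise $\Z$-linear, and in particular is a morphism of formal $\Z$-module schemes.

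Finally I would invoke Theorem~\ref{fmodschbicomplete} to know that the colimit $\colim F$ exists inside $\fModSch{k}{\Z}$, and apply its universal property:
\[
\lim_i \fModSch{k}{\Z}(F(i),\, G) \;\cong\; \fModSch{k}{\Z}(\colim F,\, G).
\]
Composing these three isomorphisms yields the desired natural bijection.

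There is no serious obstacle to overcome: the content of the corollary is a formal consequence of the fact that $\fModSch{k}{\Z}$ sits as a full, cocomplete subcategory of $\mathfrak F$. The only thing worth emphasizing is that $\colim^{\mathfrak F} F$ will in general \emph{not} lie in $\fModSch{k}{\Z}$ — this is why the statement is interesting at all — but both it and $\colim F$ corepresent the same functor on $\fModSch{k}{\Z}$, namely ``natural transformations to $G$ in $\mathfrak F$.'' The corollary thus says that $\colim F$ is the best ind-representable approximation to the functor-level colimit $\colim^{\mathfrak F} F$, in exact analogy with Lemma~\ref{constantfmod} and Corollary~\ref{fmodschfreeadj}.
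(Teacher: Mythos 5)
Your proposal is correct and is essentially identical to the paper's proof, which is the same three-step chain: the universal property of the pointwise colimit in $\mathfrak F$, the identification $\mathfrak F(F(i),G)=\fModSch{k}{\Z}(F(i),G)$ (morphisms of formal module schemes are by definition just natural transformations of $\Mod_{\Z}$-valued functors), and the universal property of $\colim F$ in $\fModSch{k}{\Z}$. Your closing remark that $\colim F$ is the best ind-representable approximation to $\colim^{\mathfrak F}F$ is exactly the intended reading.
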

\begin{proof}
\begin{multline*}
\mathfrak{F}_\Z(\colim\nolimits^{\mathfrak{F}} \hat M,\hat N) \cong \lim \mathfrak{F}_\Z(\hat M,\hat N) \\
\underset{\text{Cor.~\ref{fmodschfreeadj}}} = \lim \fModSch{k}{\Z}(\hat M,\hat N) = \fModSch{k}{\Z}(\colim \hat M,\hat N).
\end{multline*}
\end{proof}

\subsubsection*{Free $l$-module schemes}

The morphisms in $\fModSch{k}{\Z}$, i.e. the graded set of natural transformations of $\Mod_{\Z}$-valued functors, obviously form a graded $\Z$-module, and if $\hat M \in \fModSch{k}{\Z}$ and $\hat N \in \fModSch{k}{l}$ then $\fModSch{k}{\Z}(\hat M,\hat N)$ is naturally an $l$-module.

For $M \in \Mod_l$ and $\hat M \in \fModSch{k}{l}$, let $\overline\hom_{l}(M,\hat M)$ denote the functor $\Alg_k \to \Mod_{\Z}$ which sends $R$ to $\Hom_{l}(M,\hat M(R))$. Thus $\overline\hom_{l}(M,\hat M)$ can be expressed, using the right $2$-module structure of Cor.~\ref{fmodschbimodule} and the completeness from Lemma~\ref{fmodschlimcolim}\eqref{fmodschlimits}, as the simultaneous equalizer of

\begin{equation} \label{homadditive}
\hom(M,\hat M) \rightrightarrows \hom(M \times M,\hat M)
\end{equation}
and
\begin{equation} \label{homlinear}
\hom(M,\hat M) \rightrightarrows \hom(M \times l,\hat M),
\end{equation}
where the maps in \eqref{homadditive} are induced by the addition on $M$ and by
\[
\hom(M,\hat M) \xrightarrow{\Delta} \hom(M,\hat M)^2 \to \hom(M^2, \hat M^2) \xrightarrow{\hom(M^2,+)} \hom(M^2,\hat M),
\]
and the maps in \eqref{homlinear} are the map induced by the action $l \times M \to M$ and by
\[
\hom(M,\hat M) \to \hom(M \times l, \hat M \times l) \xrightarrow{\hom(M \times l, \cdot)} \hom(M \times l,\hat M).
\]

If $\hat M \in \fModSchf{k}{l}$ and $M \in \Mod_l$ are both flat then so is $\hom_l(M,\hat M)$. To see this, write $M = \colim F_i$ as a filtered colimit of finitely generated free $l$-modules \cite{govorov:flat,lazard:platitude}. Since $\overline\hom_l(-,\hat M)$ sends colimits to limits, $\overline\hom_l(l,\hat M) \cong M$, and by Lemma~\ref{fmodschlimcolim}\eqref{fmodschfproducts}, $\overline\hom_l(M,\hat M) \in \fModSchf{k}{\Z}$.

If $k$ is Pr\"ufer and $M$ is not flat (but $\hat M$ is), we can take the equalizer $\hom_l(M,\hat M)$ of \eqref{homadditive} and \eqref{homlinear} in $\fModSchf{k}{l}$, which is bicomplete by Cor.~\ref{fmodschcomonadic}. Then $\hom_l(M,-) = \overline \hom_l(M,-)$ if $M$ is flat. We summarize:
\begin{lemma}\label{fmodschztensoradj}
Let $k$ be Pr\"ufer, $\hat N \in \fModSchf{k}{\Z}$, $M \in \Mod_l$, and $\hat M \in \fModSchf{k}{l}$. Then $\hom_l(M,\hat M) \in \fModSchf{k}{\Z}$ and we have adjunctions
\[
\fModSchf{k}{\Z}(\hat N,\hom_{l}(M,\hat M)) \cong \mathfrak{F}_{l}(\hat N(-) \otimes_{\Z} M,\hat M) \cong \Mod_{l}(M,\fModSch{k}{\Z}(\hat N,\hat M)).
\]
\end{lemma}
The functor $\hat N(-) \otimes_{\Z} M$ in the middle is not representable unless $M$ is flat. But there is an optimal approximation by a representable functor:
\begin{lemma}\label{fmodschztensor}
Let $k$ be a Pr\"ufer domain. Then there is a functor 
\[
\indprotensor_{\Z}\colon \fModSchf{k}{\Z} \times \Mod_{l} \to \fModSchf{k}{l}
\]
with a natural transformation of $l$-modules
\[
(\hat N \indprotensor_{\Z} M)(R) \to \hat N(R) \otimes_{\Z} M
\]
which produces an adjunction isomorphism for $\hat M \in \fModSch{k}{l}$:
\[
\fModSch{k}{l}(\hat N  \indprotensor_{ \Z} M,\hat M) \cong \mathfrak{F}_{l}(\hat N(-) \otimes_{ \Z} M,\hat M).
\]
\end{lemma}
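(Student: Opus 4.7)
The plan is to construct $G \indprotensor_{\Z} M$ as the best representable approximation, in $\fModSch{k}{l}$, of the (generally non-representable) functor $R \mapsto G(R) \otimes_\Z M$, paralleling the construction of $M \indprotensor_l F$ for formal bimodules in Theorem~\ref{bimodtwoalgebra}. Concretely, I would apply Freyd's adjoint functor theorem to produce a representing object in $\fModSch{k}{l}$ for the covariant functor
\[
F \;\longmapsto\; \mathfrak{F}_l\bigl(G(-)\otimes_\Z M,\, F\bigr)\colon\; \fModSch{k}{l} \longrightarrow \Set.
\]

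First I would show this functor preserves all limits. By the right-hand adjunction in \eqref{fmodschztensoradj}, it is naturally isomorphic to
$F \mapsto \Mod_l(M, \fModSch{k}{\Z}(G, UF))$, where $U\colon \fModSch{k}{l} \to \fModSch{k}{\Z}$ forgets the $l$-action. The functor $U$ preserves limits, since limits in $\fModSch{k}{l}$ can be computed exactly as in Theorem~\ref{fmodschbicomplete}, whose proof extends verbatim from $l = \Z$ to arbitrary $l$; the two hom-functors preserve limits trivially, and hence so does the composite.

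Next I would verify the solution-set condition. The same extension of Theorem~\ref{fmodschbicomplete} gives that $\fModSch{k}{l}$ is bicomplete and well-powered, the latter because $\Pro-\Mod_k$ is well-powered by \cite{artin-mazur:etale}. Any natural transformation $G(-) \otimes_\Z M \to F$ is determined after restriction to a small set of generators of $M$ and factors through a sub-pro-algebra of $\Reg{F}$ whose size is bounded independently of the transformation; the collection of such sub-pro-algebras forms a set, providing the required solution set. Freyd's theorem then produces $G \indprotensor_\Z M \in \fModSch{k}{l}$ representing the functor. Functoriality in both $G$ and $M$ is immediate from the universal property, and the asserted natural transformation between $G(R) \otimes_\Z M$ and $(G \indprotensor_\Z M)(R)$ is the image of $\id_{G \indprotensor_\Z M}$ under the representing isomorphism with $F = G \indprotensor_\Z M$.

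The main obstacle is verifying the solution-set condition in this pro-category setting with sufficient rigor; an alternative, more hands-on approach would be to fix a free presentation $l^{(J)} \to l^{(I)} \to M \to 0$ and build $G \indprotensor_\Z M$ as the corresponding coequalizer of coproducts of ``free on $G$'' objects in $\fModSch{k}{l}$. This avoids the adjoint functor theorem but requires first producing a free functor left adjoint to $U$, whose existence rests on monadicity arguments analogous to Corollary~\ref{fmodschcomonadic}.
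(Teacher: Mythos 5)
Your strategy (corepresent $F \mapsto \mathfrak{F}_l(G(-)\otimes_\Z M,F)$ via the adjoint functor theorem) is legitimate in outline, but as written it has two genuine gaps. First, a circularity: you need $\fModSch{k}{l}$ to be complete and well-powered for \emph{arbitrary} $l$, and you justify this by saying the proof of Theorem~\ref{fmodschbicomplete} ``extends verbatim.'' It does not: that proof rests on monadicity of $\fModSch{k}{\Z}$ over $\Sch{k}$, which in turn rests on the cofree construction $\Gamma$ of Lemma~\ref{cofreehopfalgebra} -- all of this is carried out only for $l=\Z$. In the paper, bicompleteness of $\fModSch{k}{l}$ for general $l$ is Corollary~\ref{fmodschstructure}, which is deduced from Corollary~\ref{fmodschzinduction}, which is deduced from the very lemma you are proving. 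Your fallback route has the same problem: the ``free functor left adjoint to $U$'' is exactly $- \indprotensor_{\Z} l$, i.e.\ an instance of the statement. (Completeness of $\fModSch{k}{l}$ can in fact be shown directly, since limits are created on underlying formal schemes, but that argument needs to be supplied rather than cited.) Second, the solution-set condition is asserted, not proved. A subfunctor of $F$ through which $\phi$ factors corresponds to a \emph{quotient} pro-algebra of $\Reg{F}$, not a sub-pro-algebra, and what Freyd's theorem requires is a single \emph{set} of objects, with cardinality bounds uniform in $F$, not merely a set of subobjects of each fixed $\Reg{F}$; nothing in your sketch delivers this, and you concede as much.

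The paper avoids both issues by an explicit construction entirely inside the already-established bicomplete, $\Set$-tensored category $\fModSch{k}{\Z}$: one writes $N \otimes_{\Z} M$ for an abelian group $N$ as the coequalizer of the two maps $N \otimes (M \times M) \rightrightarrows N \otimes M$ of set-indexed copowers (one induced by addition on $M$, the other by the diagonal on $N$ followed by addition), and then defines $G \indprotensor_{\Z} M$ as the corresponding coequalizer $\coeq(G \indprotensor (M\times M) \rightrightarrows G \indprotensor M)$ using the $\Set$-tensoring of Corollary~\ref{fmodschenriched} and the coequalizers of Theorem~\ref{fmodschbicomplete}. The adjunction isomorphism then follows formally from Corollary~\ref{fmodschcolimadj}, which says colimits in $\fModSch{k}{\Z}$ corepresent the corresponding colimits in $\mathfrak{F}$. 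If you want to salvage your approach, you should either carry out this explicit construction or first establish completeness and a generating/accessibility statement for $\fModSch{k}{l}$ independently of the lemma.
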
 
\begin{proof}
Let $N$ be a $\Z$-module. If $S \indprotensor N$ denotes the left module structure of $\Mod_{\Z}$ over sets, we can express the tensor product $M \otimes_{\Z} N$ as a coequalizer
\[
(M \times M) \indprotensor N  \rightrightarrows M \indprotensor N \to M \otimes_{\Z} N,
\]
where one map is the addition on $M$ and the other map is given by
\[
 (M ^2) \indprotensor N \xrightarrow{\id^2 \indprotensor \Delta}  (M^2)\indprotensor  (N^2)  \xrightarrow[\text{Lemma~\ref{tensorjuggle}}]{\zeta} (M \indprotensor N)^2 \xrightarrow{+} M \indprotensor N
\]
Modelled by this, we use Lemma~\ref{coeqreflection} to define in the category $\fModSchf{k}{\Z}$:
\[
M \indprotensor_\Z \hat N  = \coeq( (M \times M) \indprotensor \hat N \rightrightarrows M \indprotensor \hat N)
\]
The claim then follows from Corollary~\ref{fmodschcolimadj}.
\end{proof}

\begin{corollary} \label{fmodschzinduction}
Let $k$ be Pr\"ufer and $l$ any graded commutative ring. Then the forgetful functor $U^l_\Z\colon \fModSchf{k}{l} \to \fModSchf{k}{\Z}$ has a left and a right adjoint.
\end{corollary}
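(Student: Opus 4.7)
The plan is to construct both adjoints explicitly as induction and coinduction, in analogy with the classical restriction-of-scalars adjunctions between $\Mod_l$ and $\Mod_\Z$, and then verify the hom-set isomorphisms pointwise using the machinery already developed in this section.

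For the left adjoint $\mathrm{Ind}^l_\Z$, I would set $\mathrm{Ind}^l_\Z(G) = G \indprotensor_\Z l$, where $l$ is regarded as a left $l$-module via its multiplication. Applying Lemma~\ref{fmodschztensor} with $M = l$ yields
\[
\fModSch{k}{l}(G \indprotensor_\Z l, F) \cong \mathfrak{F}_l(G(-) \otimes_\Z l, F),
\]
and composing with the classical extension-of-scalars adjunction $\Hom_l(N \otimes_\Z l, X) \cong \Hom_\Z(N, X)$ applied pointwise at each $R \in \Alg_k$ identifies the right-hand side with natural transformations of $\Z$-module-valued functors $G \to \Res^l_\Z F$, i.e.\ with $\fModSch{k}{\Z}(G, \Res^l_\Z F)$, since $\fModSch{k}{\Z}$ is a full subcategory of $\mathfrak{F}_\Z$.

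For the right adjoint I would set $\mathrm{Coind}^l_\Z(G) = \hom_\Z(l, G)$, the functor sending $R$ to $\Hom_\Z(l, G(R))$ with the $l$-module structure inherited from the left regular action of $l$ on itself. The construction of $\hom_l(M, F)$ preceding Lemma~\ref{fmodschztensor}, applied with $\Z$ in place of $l$ and $M = l$, presents this as a simultaneous equalizer of ind-representable functors $\hom(l, G) \rightrightarrows \hom(l \times l, G)$, and the extra $l$-action upgrades it to a formal $l$-module scheme. The adjunction
\[
\fModSch{k}{l}(F, \mathrm{Coind}^l_\Z G) \cong \fModSch{k}{\Z}(\Res^l_\Z F, G)
\]
then follows pointwise from the classical coinduction isomorphism $\Hom_l(A, \Hom_\Z(l, B)) \cong \Hom_\Z(A, B)$.

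The main technical point to verify is that $\mathrm{Coind}^l_\Z(G)$ really lies in $\fModSch{k}{l}$, i.e.\ that the constructed functor is ind-representable and that the $l$-action is through morphisms of formal module schemes; both facts follow directly from the general recipe for $\hom_l(M, F)$ recalled above. Naturality of both adjunction isomorphisms in the remaining variables is then a routine consequence of the naturality of the pointwise module-theoretic identities used above.
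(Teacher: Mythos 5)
Your proposal is correct and takes essentially the same route as the paper: the left adjoint is $G \indprotensor_\Z l$ via Lemma~\ref{fmodschztensor} together with the identification $\Res^l_\Z = \hom_l(l,-)$ (your pointwise extension-of-scalars adjunction), and the right adjoint is $\hom_\Z(l,G)$ with the $l$-action coming from left multiplication of $l$ on itself. Your added remarks on ind-representability of the coinduced functor just make explicit what the paper leaves implicit.
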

\begin{proof}
For $\hat N \in \fModSchf{k}{\Z}$, the left adjoint is given by $l \indprotensor_{\Z} \hat N$. This follows from Lemma~\ref{fmodschztensoradj} and Lemma~\ref{fmodschztensor} along with the observation that $U^l_\Z = \hom_l(l,-)$. 

The right adjoint is given by $\hom_{\Z}(l,\hat N)$, which becomes a formal $l$-module scheme by the left action of $l$ on itself by multiplication.
\end{proof}

\begin{corollary}\label{fmodschstructure}
For $k$ Pr\"ufer, the forgetful functor $U^l\colon \fModSchf{k}{l} \to \Schf{k}$ has a left adjoint $\Fr_{l}$. For $\hat M \in \Schf{k}$, $\hat N \in \fModSch{k}{l}$, there is a natural adjunction isomorphism
\[
\mathfrak{F}_l(\Fr_{l} \circ \hat M,\hat N) \cong \fModSch{k}{l}(\Fr_{l}(\hat M),\hat N).
\]
The category $\fModSchf{k}{l}$ is bicomplete and equivalent to the category of algebras for the monad $\Fr_{l}$.
\end{corollary}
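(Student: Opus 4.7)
The strategy is to bootstrap everything from the $\Z$-case using the scalar extension/restriction adjunction of Corollary~\ref{fmodschzinduction}. Writing $V^l_k \colon \fModSch{k}{l} \to \Sch{k}$ for the forgetful functor and $\Res^l_\Z$ for the restriction of scalars, we have the factorization $V^l_k = V^\Z_k \circ \Res^l_\Z$. Since both $V^\Z_k$ and $\Res^l_\Z$ admit left adjoints (by Lemma~\ref{cofreehopfalgebra} and Corollary~\ref{fmodschzinduction} respectively), I would \emph{define}
\[
\Fr_l = (-\indprotensor_\Z l) \circ \Fr \colon \Sch{k} \to \fModSch{k}{l}.
\]
This gives the left adjoint of $V^l_k$ for free.

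The asserted isomorphism $\mathfrak F(\Fr_l \circ F, G) \cong \fModSch{k}{l}(\Fr_l(F), G)$ I would prove by concatenating three known adjunctions. First, applying Lemma~\ref{fmodschztensor} pointwise (together with the adjunction~\eqref{fmodschztensoradj}) identifies $\mathfrak F(\Fr(F(-)) \otimes_\Z l, G)$ with $\mathfrak F_\Z(\Fr \circ F, \Res^l_\Z G)$; Corollary~\ref{fmodschfreeadj} rewrites this as $\fModSch{k}{\Z}(\Fr(F), \Res^l_\Z G)$; and finally Corollary~\ref{fmodschzinduction} converts it to $\fModSch{k}{l}(\Fr(F) \indprotensor_\Z l, G) = \fModSch{k}{l}(\Fr_l(F), G)$. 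The naturality check at each step is routine since each isomorphism is constructed from a unit/counit.

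For completeness, cocompleteness, and monadicity, the idea is to run the same arguments as in the $\Z$-case, which I believe go through verbatim once one replaces $\Z$ by $l$. Concretely, I would first extend Lemma~\ref{coeqreflection} to $\fModSch{k}{l}$: the proof only uses that the coequalizer datum in $\Sch{k}$ is a split (hence absolute) coequalizer of pro-$k$-algebras, which is preserved by $\otimes_k$, and the $l$-linearity condition passes through this absolute coequalizer in exactly the same way as the coaddition does. Combined with the adjunction $\Fr_l \dashv V^l_k$ just established, and the trivial fact that $V^l_k$ reflects isomorphisms, Beck's monadicity theorem yields that $\fModSch{k}{l}$ is equivalent to the category of $\Fr_l$-algebras in $\Sch{k}$. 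Completeness then follows because $\Sch{k}$ is complete and categories of algebras over a monad inherit limits from the base; cocompleteness follows from the standard argument (\cite[Prop.~4.3.4]{borceux:voltwo}) once coequalizers are constructed, which I would do as in Theorem~\ref{fmodschbicomplete} using the well-poweredness of $\Pro\text{-}\Alg_k$ established there.

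The only genuine subtlety — and the step I expect to be the main obstacle to check carefully — is the $l$-version of Lemma~\ref{coeqreflection}, because one must verify that the $l$-action map $\lambda\colon l \to \Pro\text{-}\Alg_k(\Reg{G},\Reg{G})$ descends through the reflexive coequalizer to an action on $\Reg{Q}$ that is still compatible with the coaddition and comultiplication. The absolute-coequalizer argument handles the comultiplicative part as in the proof already given; what requires attention is showing that $\lambda$ carries the equalizer $\Reg{Q}$ into itself, which should follow from the fact that the maps $f,g\colon \Reg{G} \to \Reg{F}$ are $l$-equivariant and that the section $s$, though not $l$-equivariant, is irrelevant to checking $l$-equivariance of the induced map on $\Reg{Q}$.
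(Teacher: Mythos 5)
Your proposal is correct and matches the paper's proof in its two main steps: the left adjoint is obtained by composing the adjoints from Lemma~\ref{cofreehopfalgebra} and Corollary~\ref{fmodschzinduction} through the factorization $\fModSch{k}{l} \to \fModSch{k}{\Z} \to \Sch{k}$, and the adjunction isomorphism is assembled from Corollary~\ref{fmodschfreeadj} and Lemma~\ref{fmodschztensor} exactly as you describe. The only divergence is in the last part. For monadicity the paper does not re-examine the split-coequalizer argument at all: it observes that $\Res^l_\Z$ has both a left and a right adjoint (Cor.~\ref{fmodschzinduction}), hence preserves all limits and colimits and creates $\fModSch{k}{\Z}$-split coequalizers, so monadicity of the composite follows from that of $V^\Z_k$; and for cocompleteness it invokes Lemma~\ref{vcommuteswithcoflimits} (preservation of filtered colimits) together with \cite[Prop.~4.3.6]{borceux:voltwo}, rather than reconstructing coequalizers via the well-poweredness argument of Theorem~\ref{fmodschbicomplete}. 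Your route is also valid, and the ``main obstacle'' you flag is smaller than you think: Lemma~\ref{coeqreflection} is already stated and proved in the paper for a general (even noncommutative) $l$, so the descent of the $l$-action through the split coequalizer is available off the shelf; your sketch of why $\lambda$ descends (the parallel maps are $l$-equivariant and the coequalizer is absolute) is the right justification for the part of that proof the paper leaves implicit. The paper's version buys a shorter argument by exploiting the two-sided adjoint; yours is more self-contained but duplicates work already done in the $\Z$-case.
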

\begin{proof}
The forgetful functor factors as $\fModSchf{k}{l} \xrightarrow{U^l_\Z} \fModSchf{k}{\Z} \xrightarrow{U^\Z} \Schf{k}$ and both have left adjoints by Cor.~\ref{fmodschzinduction} and Thm.~\ref{cofreehopfalgebra}. The adjunction follows from Cor.~\ref{fmodschfreeadj} and Lemma~\ref{fmodschztensor}. An $l$-module scheme structure on a $\Z$-module scheme $\hat M \in \fModSchf{k}{\Z}$ is given by an action map $\hat M(R) \otimes_\Z l \to \hat M(R)$, which by Lemma~\ref{fmodschztensor} is the same as an action map $\hat M \indprotensor_\Z l \to \hat M$, which in turn is the same as an action of the monad $\Fr^\Z_l = - \otimes_\Z l$. Combined with Cor.~\ref{fmodschcomonadic}, this shows that $\fModSchf{k}{l}$ is equivalent to the category of $\Fr_l$-algebras.

By Lemma~\ref{fmodschlimcolim}\eqref{fmodschfilteredcolimits}, $U^l$ commutes with filtered colimits, and by Lemma~\ref{flatschcocomplete}, $\Schf{k}$ is bicomplete. Therefore \cite[Prop.~4.3.6]{borceux:voltwo}, $\fModSch{k}{l}$ is bicomplete.
\end{proof}

For $k$ Pr\"ufer, the functor of Lemma~\ref{fmodschztensoradj} extends to bimodules:
\begin{equation}\label{rightlfmodsch}
\hom_{l}\colon \Bimod{l}{l'} \times \fModSchf{k}{l} \to \fModSchf{k}{l'}.
\end{equation}
by the functoriality in the $l$-module variable. Similarly, there is a tensor-type functor
\begin{equation}\label{leftlfmodsch}
\indprotensor_{l'}\colon \Bimod{l'}{l} \times \fModSchf{k}{l'} \to \fModSchf{k}{l}
\end{equation}
given by the coequalizer in $\fModSchf{k}{l'}$
\[
(l' \otimes_\Z M) \indprotensor_{\Z} \hat N \rightrightarrows M \indprotensor_{\Z}  \hat N \to M \indprotensor_{l'} \hat N.
\]
\label{fmodschtensoring}
These are of particular interest if there is a ring map $\alpha\colon l \to l'$ and $l'$ is considered an $l'-l$-bimodule by means of right and left multiplication.
\begin{corollary} \label{fmodschinduction}
Let $k$ be Pr\"ufer and $l \to l'$ be a map of graded commutative rings. Then $l' \indprotensor_{l} -$ is left adjoint and $\hom_{l}(l',-)$ is right adjoint to the restriction functor $U_{l}^{l'}\colon \fModSchf{k}{l'} \to \fModSchf{k}{l}$. \qed
\end{corollary}

For $l=l'$, we obtain thus from \eqref{rightlfmodsch} and \eqref{leftlfmodsch}:
\begin{corollary}\label{modellenrichment}
For $k$ Pr\"ufer and any $l$, the category $\fModSchf{k}{l}$ is a $2$-bimodule over $\Mod_{l}$. \qed
\end{corollary}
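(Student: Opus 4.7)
The plan is to assemble the $2$-bimodule structure from the ingredients constructed in the preceding corollaries. Completeness and cocompleteness of $\fModSch{k}{l}$ are already given by Cor.~\ref{fmodschstructure}, so three things remain: an enrichment of $\fModSch{k}{l}$ over $\Mod_l$, a tensor functor $\indprotensor_l$, and a cotensor functor $\hom_l$, related by the standard tensor/cotensor adjunctions.

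First I would dispatch the enrichment. For $F, G \in \fModSch{k}{l}$, the set of natural transformations $\fModSch{k}{l}(F,G)$ acquires an $l$-module structure pointwise from the $l$-module structure on $G(R)$ for each $R \in \Alg_k$: addition is the pointwise sum, and $\lambda \in l$ acts on $\alpha\colon F \to G$ by $(\lambda \cdot \alpha)(R) = \lambda \cdot \alpha(R)$. Commutativity of $l$ is used exactly here, to ensure that scalar multiplication by $\lambda$ commutes with the $l$-linear structure maps of $G(R)$ and so preserves the natural-transformation property.

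Next, for the tensor and cotensor, I take the functors $\indprotensor_l$ and $\hom_l$ from the corollary preceding this one, specialized to the case $l' = l$. Because $l$ is commutative, every $l$-module $M$ can be regarded as an $l$-$l$-bimodule, so these functors restrict to operations $\Mod_l \times \fModSch{k}{l} \to \fModSch{k}{l}$. The adjunction
\[
\fModSch{k}{l}(M \indprotensor_l F, G) \;\cong\; \Mod_l(M, \fModSch{k}{l}(F, G)) \;\cong\; \fModSch{k}{l}(F, \hom_l(M, G))
\]
for $M \in \Mod_l$ and $F,G \in \fModSch{k}{l}$ then follows by combining the $\Z$-linear adjunction from Lemma~\ref{fmodschztensor} with the universal properties of the defining coequalizer for $\indprotensor_l$ and simultaneous equalizer for $\hom_l$: the $l$-linearity condition imposed on a $\Z$-linear natural transformation on either side of the adjunction corresponds to the same coequalizer/equalizer relation.

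The main obstacle is checking that the adjunction isomorphisms are $l$-linear with respect to the enrichment, not merely bijections of underlying sets, and that the unit/associativity coherence for a $\Mod_l$-module structure holds. The first is a direct comparison: the $l$-action on each of the three sets above is induced from the same action on $G$, and since the bijections are defined by pointwise evaluation they intertwine these actions. The coherence axioms then reduce, by the enriched Yoneda lemma applied to the $\Mod_l$-valued hom, to the corresponding coherence axioms for tensor and cotensor in $\Mod_l$ itself, which hold because $\Mod_l$ is a $2$-ring.
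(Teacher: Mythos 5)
Your proposal is correct and follows essentially the same route as the paper: the paper records this corollary with no written proof precisely because it is the specialization $l'=l$ of the bimodule-extended functors $\hom_l$ and $\indprotensor_l$ constructed just before it, combined with the pointwise $l$-module enrichment on natural transformations (using commutativity of $l$) and the adjunctions of Lemma~\ref{fmodschztensor} and \eqref{fmodschztensoradj}. Your assembly of these ingredients, including the reduction of the $l$-linear adjunction to the $\Z$-linear one via the defining coequalizer and equalizer, is exactly the intended argument.
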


\subsection{Tensor products of formal module schemes} \label{subsec:modschtensor}

Our next objective is to construct the tensor product $\hat M \otimes_{l} \hat N$ of two formal $l$-module schemes in $\fModSchf{k}{l}$. There are two equivalent characterizations of what this tensor product is supposed to accomplish.

\begin{defn} \label{def:fmodschtensor}
For $\hat M$, $\hat N \in \fModSch{k}{l}$, an object $\hat M \otimes_l \hat N$ together with an $l$-bilinear natural transformation $\hat M \times \hat N \to \hat M \otimes_{l} \hat N$ is called a \emph{tensor product of formal module schemes} if the following two equivalent conditions hold for all $\hat H \in \fModSch{k}{l}$:
\begin{enumerate}
\item There is an adjunction $\fModSch{k}{l}(\hat M \otimes_{l} \hat N,\hat H) \cong \mathfrak{F}_l(\hat M(-) \otimes_{l} \hat N(-),\hat H)$, where $\hat M(-)\otimes_{l} \hat N(-)$ is the objectwise tensor product of $l$-modules. \label{charonetensor}
\item Any $l$-bilinear natural transformation of formal module schemes $\hat M \times \hat N \to \hat H$ factors uniquely through a linear morphism $\hat M \otimes_{l} \hat N \to \hat H$. \label{chartwotensor}
\end{enumerate}
\end{defn}

This concept is a dual and $l$-module enhanced version of the tensor product of bicommutative Hopf algebras \cite{hunton-turner:coalgebraic,goerss:hopf-rings}. It is immediate from this definition that if a tensor product exists then it will be unique.

\begin{thm} \label{tensorsexist}
If $k$ is a Pr\"ufer domain then the tensor product of flat formal $l$-module schemes over $k$ exists.
\end{thm}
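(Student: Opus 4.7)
The plan is to realize $F \otimes_l G$ as a coequalizer in $\fModSch{k}{l}$ starting from the free formal $l$-module scheme $\Fr_l(F \times G)$ on the product in $\Sch{k}$ of the underlying formal schemes; successive coequalizers will cut down the natural transformations out of $\Fr_l(F\times G)$ to the $l$-bilinear ones. By Cor.~\ref{fmodschstructure}, $\fModSch{k}{l}$ is cocomplete, the forgetful functor $V$ admits a left adjoint $\Fr_l$, and the displayed adjunction identifies $\fModSch{k}{l}(\Fr_l(E),H)$ with the set $\mathfrak{F}(E,V(H))$ of all natural transformations of set-valued functors for $E \in \Sch{k}$. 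Taking $E = F\times G$ (a product that exists in the cocomplete category $\Sch{k}$) identifies morphisms out of $\Fr_l(F\times G)$ with arbitrary natural transformations $F\times G \to H$, so that imposing bilinearity on the $H$-side will produce exactly the universal object we want.

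Next I would impose additivity in the first variable by considering the parallel pair
\[
\Fr_l(F\times F\times G) \rightrightarrows \Fr_l(F\times G),
\]
whose first leg is $\Fr_l$ applied to the $\Sch{k}$-morphism $F\times F\times G \to F\times G$ given by the coaddition of $F$ in the first two factors, and whose second leg corresponds, under $\Fr_l \dashv V$, to the $\Sch{k}$-morphism $F\times F\times G \to V\Fr_l(F\times G)$ constructed by composing the projections $\pi_{13},\pi_{23}\colon F\times F\times G \to F\times G$ with the unit $\eta$ and then summing the results inside the $l$-module scheme $\Fr_l(F\times G)$ (the addition is a $\Sch{k}$-morphism precisely because $\Fr_l(F\times G)$ lies in $\fModSch{k}{l}$). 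Analogous pairs out of $\Fr_l(F\times G\times G)$ enforce additivity in the second variable, and pairs out of $\Fr_l(l\times F\times G)$ and $\Fr_l(F\times l\times G)$ enforce $l$-scaling in each variable. Let $F\otimes_l G$ be the simultaneous coequalizer of these pairs in $\fModSch{k}{l}$, which exists by cocompleteness.

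To verify characterization~\ref{chartwotensor}, I apply $\fModSch{k}{l}(-,H)$ to obtain the simultaneous equalizer of the induced parallel maps of sets; transporting along $\Fr_l\dashv V$ turns each pair into a comparison of the form $\mathfrak{F}(F\times G,H) \rightrightarrows \mathfrak{F}(F\times F\times G,H)$ (and its analogues), whose equalizer conditions unwind to the identities
\[
\phi(f_1+f_2,g)=\phi(f_1,g)+\phi(f_2,g), \quad \phi(f,g_1+g_2)=\phi(f,g_1)+\phi(f,g_2),
\]
\[
\phi(\lambda f,g)=\lambda\phi(f,g)=\phi(f,\lambda g)
\]
that characterise $l$-bilinear natural transformations $F\times G \to H$. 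Characterisation~\ref{charonetensor} is then immediate from the pointwise universal property of $F(R)\otimes_l G(R)$ in $\Mod_l$. The only real point of care is that the ``additive'' leg of each parallel pair is genuinely a morphism in $\fModSch{k}{l}$, but this is forced by the formal $l$-module scheme structure on $\Fr_l(F\times G)$, so the obstacle is essentially organisational: the construction is driven entirely by the $\Fr_l\dashv V$ adjunction and the cocompleteness of $\fModSch{k}{l}$ already established.
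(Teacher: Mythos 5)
Your construction is correct and follows essentially the same route as the paper: both realize $F \otimes_l G$ as a coequalizer of free formal module schemes on $F \times G$ inside the cocomplete category $\fModSch{k}{l}$, using the adjunction $\Fr_l \dashv V$ to identify maps out of the coequalizer with exactly the $l$-bilinear natural transformations. The only difference is organizational: the paper packages all the bilinearity relations at once through the single parallel pair $\Fr_{l}(\Fr_{l}(F,G)) \rightrightarrows \Fr_{l}(F \times G)$ built from the pushout $\Fr_l(F,G)$, whereas you impose additivity and $l$-scaling in each variable by separate parallel pairs and take a simultaneous coequalizer.
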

\begin{proof}
First consider the case $l=\Z$. Since $\fModSchf{k}{\Z}$ is the category of abelian group objects in the cocomplete category $\Schf{k}$ which has all products, and since the forgetful functor $U^\Z\colon \fModSchf{k}{\Z} \to \Schf{k}$ has a left adjoint by Thm.~\ref{cofreehopfalgebra}, \cite[Prop.~5.5]{goerss:hopf-rings} implies that $\otimes_\Z$ exists.

Explicitly, $\hat M \otimes_\Z \hat N$ is given as the simultaneous coequalizer of
\begin{equation}\label{ztensorcoeq}
\Fr_\Z(\hat M^2 \times \hat N) \rightrightarrows \Fr_\Z(\hat M \times \hat N) \leftleftarrows \Fr_\Z(\hat M \times \hat N^2),
\end{equation}
where the maps on the left are given by addition on $M^2$ and by the adjoint of
\[
\hat M^2 \times \hat N \xrightarrow{\id \times \Delta} \hat M^2 \times \hat N^2 \xrightarrow{\text{shuffle}} (\hat M \times \hat N)^2 \xrightarrow{\eta} \Fr(\hat M \times \hat N)^2 \xrightarrow{+}  \Fr(\hat M \times \hat N);
\]
similarly on the right hand side.

For arbitrary $l$, define $\hat M \otimes_l \hat N$ as the coequalizer
\begin{equation}\label{tensorcoeq}
l \indprotensor_{\Z} (\hat M \otimes_{\Z} \hat N) \rightrightarrows \hat M \otimes_{\Z} \hat N \to \hat M \otimes_{l} \hat N \in \fModSchf{k}{\Z}.
\end{equation}
The two maps are given by \eqref{alphamap} and the $l$-action on $\hat M$ and $\hat N$, respectively. The common composite of these maps factors through $l \indprotensor_\Z (\hat M \otimes_l \hat N)$ and therefore equips $\hat M \otimes_l \hat N$ with the structure of a formal $l$-module scheme. To see that it satisfies the universal property \eqref{charonetensor}, let $H \in \fModSch{k}{l}$ and first consider the equalizers
\begin{align*}
\fModSch{k}{l}(\hat M \otimes_l \hat N, \hat H) \to \fModSch{k}{\Z}(\hat M \otimes_l \hat N,\hat H) \rightrightarrows \hom_\Z(l,\fModSch{k}{\Z}(\hat M \otimes_l \hat N, \hat H))\\
\mathfrak F_l(\hat M(-) \otimes_l \hat N(-)) \to \mathfrak F_\Z(\hat M(-) \otimes_l \hat N(-),\hat H) \rightrightarrows \hom_\Z(l,\mathfrak F_\Z(\hat M(-) \otimes_l \hat N(-), \hat H)
\end{align*}
to conclude that it suffices to construct a natural isomorphism $\fModSch{k}{\Z}(\hat M \otimes_l \hat N,\hat H) \to  \mathfrak F_\Z(\hat M(-) \otimes_l \hat N(-),\hat H)$. For this, consider the two equalizers
\[
\begin{tikzpicture}
	\matrix (m) [matrix of math nodes, row sep=1.5em, column sep=.5em, text height=1.5ex, text depth=0.25ex]
	{
		\fModSch{k}{\Z}( \hat M \otimes_{l} \hat N,\hat H)  & \fModSch{k}{\Z} (\hat M \otimes_{\Z} \hat N,\hat H)& \fModSch{k}{\Z} (l \indprotensor_{\Z}(\hat M \otimes_{\Z} \hat N), \hat H)\\
		\mathfrak F(\hat M(-) \otimes_l \hat N(-),\hat H) & \mathfrak F(\hat M(-) \otimes_\Z \hat N(-),\hat H) & \mathfrak F(l \otimes_\Z \hat M(-) \otimes \hat N(-),\hat H)\\
	};
	\path[->,font=\scriptsize]
	(m-1-1)	edge (m-1-2)
			edge[dashed] (m-2-1)
	(m-2-1)	edge (m-2-2)
	([yshift=-1mm]m-1-2.east)		edge ([yshift=-1mm]m-1-3.west)
	([yshift=+1mm]m-1-2.east)	edge ([yshift=+1mm]m-1-3.west)
	([yshift=-1mm]m-2-2.east)		edge ([yshift=-1mm]m-2-3.west)
	([yshift=+1mm]m-2-2.east)	edge ([yshift=+1mm]m-2-3.west)
	(m-1-2)	edge (m-2-2)
	(m-1-3)	edge (m-2-3);
\end{tikzpicture}
\]
The two solid vertical arrows are isomorphisms by the $l=\Z$ case and thus the induced map on the left is as well.
\end{proof}

\begin{lemma} \label{tensoroffreemodsch}
Let $k$ be Pr\"ufer and $\hat X$, $\hat Y \in \Schf{k}$. Then
\[
\Fr_{l}(\hat X) \otimes_{l} \Fr_{l}(\hat Y) \cong \Fr_{l}(\hat X \times \hat Y).
\]
\end{lemma}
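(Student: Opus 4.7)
The plan is to show that both sides corepresent the same functor on $\fModSch{k}{l}$ and then invoke Yoneda. For the right hand side, Corollary~\ref{fmodschstructure} provides the free/forgetful adjunction isomorphism
$$
\fModSch{k}{l}(\Fr_l(X \times Y), H) \cong \Sch{k}(X \times Y, V(H))
$$
for any $H \in \fModSch{k}{l}$. For the left hand side, Theorem~\ref{tensorsexist}~\eqref{chartwotensor} identifies morphisms $\Fr_l(X) \otimes_l \Fr_l(Y) \to H$ with $l$-bilinear natural transformations $\Fr_l(X) \times \Fr_l(Y) \to H$.

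It then remains to exhibit a natural bijection between $l$-bilinear natural transformations $\Fr_l(X) \times \Fr_l(Y) \to H$ and morphisms $X \times Y \to V(H)$ in $\Sch{k}$. In one direction, restrict a bilinear transformation along the unit map $\eta_X \times \eta_Y \colon X \times Y \to V(\Fr_l(X)) \times V(\Fr_l(Y))$ to obtain a morphism of formal schemes into $V(H)$. For the converse, given $\psi \colon X \times Y \to V(H)$, one applies the free/forgetful adjunction in each variable: fixing a generic point of the second factor, the map $X \to V(H)$ lifts to an $l$-linear morphism $\Fr_l(X) \to H$, and these assemble naturally in the second variable to yield an $l$-bilinear transformation $\Fr_l(X) \times \Fr_l(Y) \to H$.

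The main obstacle is making the ``assemble naturally in the second variable'' step precise, since the intermediate object $\Fr_l(X) \times Y$ is not itself a formal $l$-module scheme in a single variable. A cleaner alternative is to invoke the explicit construction of the tensor product from the proof of Theorem~\ref{tensorsexist}: when $F = \Fr_l(X)$ and $G = \Fr_l(Y)$, one can verify that the pushout~\eqref{pushoutfree} and subsequent coequalizer~\eqref{tensorproductconstruction} collapse, using the commutation of $\Fr_l$ with colimits and the cancellation $\Fr_l \circ V \circ \Fr_l = \Fr_l$ on $\Sch{k}$ coming from the fact that the counit of $\Fr_l \dashv V$ yields a split idempotent on such colimits. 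The resulting simplified colimit is manifestly $\Fr_l(X \times Y)$. In essence, the statement is the formal-scheme incarnation of the elementary identity $\Fr_l(S) \otimes_l \Fr_l(T) \cong \Fr_l(S \times T)$ for sets $S$ and $T$, transported through the representable-approximation adjunction of Corollary~\ref{fmodschstructure}.
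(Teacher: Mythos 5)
Your ``cleaner alternative'' is in substance the paper's own proof, but the justification you give for the collapse is not correct as stated: $\Fr_{l} \circ V \circ \Fr_{l}$ is \emph{not} equal to $\Fr_{l}$. The counit gives a map $\Fr_{l}(V(\Fr_{l}(X))) \to \Fr_{l}(X)$ which is a split epimorphism (split by $\Fr_{l}(\eta_X)$), not an isomorphism, so there is no ``cancellation'' to invoke. What actually makes the coequalizer \eqref{tensorproductconstruction} collapse for $F = \Fr_{l}(X)$, $G = \Fr_{l}(Y)$ is that it becomes a \emph{split} coequalizer: the unit maps give $X \times Y \to \Fr_{l}(X) \times \Fr_{l}(Y)$, and the two canonical maps $\Fr_{l}(X) \times \Fr_{l}(Y) \rightrightarrows \Fr_{l}(\Fr_{l}(X),\Fr_{l}(Y))$ into the pushout \eqref{pushoutfree} (one through each leg, via the units on $\Fr_{l}(X)$ and $\Fr_{l}(Y)$ respectively) supply, after applying $\Fr_{l}$ once more, the contracting data that identifies the coequalizer with $\Fr_{l}(X \times Y)$. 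Exhibiting that splitting is the entire content of the paper's argument and is the one concrete step your write-up replaces with an assertion.

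Your first route (showing both sides corepresent the same functor) is fine in outline, but as you yourself note, the bijection between $l$-bilinear transformations $\Fr_{l}(X) \times \Fr_{l}(Y) \to H$ and maps $X \times Y \to V(H)$ is precisely where all the work lies: ``fixing a generic point of the second factor'' is not a meaningful operation on a formal scheme, and the one-variable-at-a-time extension founders on the fact that $\Fr_{l}(X) \times Y$ carries no $l$-module structure, so the intermediate universal property you would need is not available. As written, that route does not close; the split-coequalizer argument is the one to carry out.
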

\begin{proof}
Immediate from the characterization \eqref{chartwotensor} of Def.~\ref{def:fmodschtensor}.
\end{proof}

We will denote the representing object of $\hat M \otimes_l \hat N$ by $\Reg{\hat M} \multtensor{k}{l} \Reg{\hat N} =_{\text{def}} \Reg{\hat M \otimes_l \hat N}$. Using Example~\ref{twoalgebraoverset}, we thus summarize:

\begin{lemma}\label{hopftwoalgoverindset}
If $k$ is Pr\"ufer then the symmetric monoidal category $(\fModSchf{k}{l},\otimes_{l},\tensunit{k}{l})$ is a $2$-algebra over $\Set^\Z$.  \qed
\end{lemma}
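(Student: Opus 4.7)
The lemma asserts that $(\fModSch{k}{l},\otimes_l,\hom(l,k))$ is a $2$-algebra over $\Set$. By Example~\ref{twoalgebraoverset}, a $2$-algebra over ungraded sets is just a bicomplete monoidal category, so, given that $\fModSch{k}{l}$ is already known to be bicomplete by Cor.~\ref{fmodschstructure}, the task reduces to producing the symmetric monoidal structure given by $\otimes_l$ and the unit $\hom(l,k)$, together with the associator, symmetry, and unitor, and verifying the coherence axioms.

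The plan for the associator and symmetry is to bootstrap from the universal property in Thm.~\ref{tensorsexist}\eqref{charonetensor}. For any $H \in \fModSch{k}{l}$, iterating the adjunction gives
\[
\fModSch{k}{l}\bigl((F_1 \otimes_l F_2) \otimes_l F_3, H\bigr) \cong \mathfrak F\bigl(F_1(-) \otimes_l F_2(-) \otimes_l F_3(-), H\bigr),
\]
and the same for the opposite bracketing. Since the objectwise tensor of $l$-modules is strictly associative and symmetric, Yoneda (on $\fModSch{k}{l}$) produces the associator and braiding, and the pentagon, triangle, and hexagon identities reduce to the corresponding (trivial) identities for the pointwise tensor product of $l$-modules.

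The more delicate point is the unit. By Lemma~\ref{constantfmod}, $\hom(l,k)$ is the best representable approximation to the constant functor $\underline l$, with canonical $l$-linear comparison map $e\colon \underline l \to \Spf{\hom(l,k)}$. Composing $\underline l \times F \xrightarrow{e\times\id} \hom(l,k)\times F$ with the $l$-bilinear map $\hom(l,k)\times F \to \hom(l,k) \otimes_l F$ yields an $l$-bilinear natural transformation out of $\underline l \times F$, which by the action of $l$ on $F$ is determined by its restriction at $1\in l$ to a natural transformation $F \to \hom(l,k) \otimes_l F$. To see this is an isomorphism I would chase adjunctions: for any $H$,
\[
\fModSch{k}{l}(\hom(l,k) \otimes_l F, H) \cong \mathfrak F(\hom(l,k)(-)\otimes_l F(-), H),
\]
and by universality of $\hom(l,k)$ an $l$-bilinear natural transformation on the right is determined by the composite with $e\times\id$, hence by its restriction to $\{1\}\times F$ — a map $F \to H$ in $\fModSch{k}{l}$. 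Yoneda then yields the desired isomorphism, and triangle compatibility with the associator is a routine check.

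The principal obstacle is precisely this unit identification: the issue is that $\hom(l,k)(R)$ is not literally $l$ (it consists of complete orthogonal systems of idempotents of $R$), so $\hom(l,k)\otimes_l F \cong F$ cannot be seen pointwise and must be extracted from the universal property of representability combined with Lemma~\ref{constantfmod}. Once this is in place, all remaining axioms are formal consequences of the analogous properties of the pointwise tensor product.
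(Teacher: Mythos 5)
Your overall architecture is the one the paper intends: the lemma carries a \qed because it is meant as a summary of Cor.~\ref{fmodschstructure} (bicompleteness), Thm.~\ref{tensorsexist} (the universal property of $\otimes_l$, from which associativity, symmetry and coherence are inherited from the objectwise tensor product exactly as you argue), and Example~\ref{twoalgebraoverset}. Two small remarks: since $\Set$ here is graded, Example~\ref{twoalgebraoverset} also asks for shift-equivariance of $\otimes_l$ in each variable, which is immediate from the universal property but worth a sentence; and the paper records Lemma~\ref{tensoroffreemodsch} immediately before this statement precisely because it is the intended input for the unit axiom.

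The genuine gap is in your unit argument. You claim that ``by universality of $\hom(l,k)$'' an $l$-bilinear natural transformation $\Spf{\hom(l,k)}(-)\times F(-)\to H(-)$ is determined by its composite with $e\times\id$. Lemma~\ref{constantfmod} does not give this: it concerns \emph{linear} natural transformations out of $\underline{M}$ into an \emph{ind-representable} target, and its proof works by evaluating at the initial algebra $k$ and adjointing over the representing object. After currying, your bilinear transformation becomes a map into the functor $R\mapsto\Mod_l(F(R),H(R))$, which is not known to lie in $\fModSch{k}{l}$, so the lemma does not apply; nor can one evaluate at $k$ alone, since $F$ is not constant. The statement you need is true, but it requires a separate argument: either (a) identify $\Spf{\hom(l,k)}\cong\Fr_l(\Spf{k})$ (combine Cor.~\ref{constantzfmod} with $\Fr_l=\Fr(l\indprotensor_\Z-)$), deduce the unit isomorphism on free objects from Lemma~\ref{tensoroffreemodsch} with $X=\Spf{k}$ terminal, and extend to all $F$ via the reflexive-coequalizer presentation $\Fr_l V\Fr_l VF\rightrightarrows\Fr_l VF\to F$ coming from monadicity (Cor.~\ref{fmodschcomonadic}, Cor.~\ref{fmodschstructure}), after checking that $-\otimes_l G$ preserves reflexive coequalizers; or (b) argue directly that a point $c\in\Spf{\hom(l,k)}(R)$ is a finite complete orthogonal system of idempotents, hence splits $R$ as a finite product of algebras on each factor of which $c$ lies in the image of $e$, and use that ind-representable functors preserve finite products of algebras together with naturality and bilinearity of $\phi$ to reduce to the image of $e\times\id$. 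Either repair closes the gap; as written, the appeal to Lemma~\ref{constantfmod} does not.
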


\begin{thm} \label{hopftwoalgebra}
For $k$ Pr\"ufer and any $l$, the symmetric monoidal category $(\fModSchf{k}{l},\otimes_{l},\tensunit{k}{l})$ is a $2$-algebra over $(\Mod_l,\otimes_{l},l)$. 
\end{thm}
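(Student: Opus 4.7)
The plan is to bootstrap off the two structures already in hand: by Lemma~\ref{hopftwoalgoverindset}, $(\fModSch{k}{l}, \otimes_l, \hom(l,k))$ is a symmetric $2$-algebra over $\Set$; by Cor.~\ref{modellenrichment}, $\fModSch{k}{l}$ is a $2$-bimodule over $\Mod_l$. What remains is compatibility, namely that for every fixed $G$, the functors $- \otimes_l G$ and $G \otimes_l -$ are enriched over $\Mod_l$. This is completely analogous to the proof of Thm.~\ref{bimodtwoalgebra}.

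By symmetry of $\otimes_l$, it suffices to treat $- \otimes_l G$. Since the underlying $\Set$-valued natural transformation assignment is already known from Lemma~\ref{hopftwoalgoverindset}, one only needs to verify that the resulting $\Z$-linear map
\[
\fModSch{k}{l}(F, F') \to \fModSch{k}{l}(F \otimes_l G, F' \otimes_l G)
\]
is in fact $l$-linear. Recall that the $l$-module structure on $\fModSch{k}{l}(F,F')$ (and similarly on the target) is given by post-composition with the $l$-action on $F'$ coming from the formal $l$-module scheme structure.

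The key observation concerns the coequalizer \eqref{tensorproductconstruction} defining $F \otimes_l G$: the $l$-action on the representing object may be induced from the $l$-action on either tensor factor, and the two choices are identified precisely by the coequalizer relation (this is what distinguishes $\otimes_l$ from the plain pushout). Choosing the action on $F' \otimes_l G$ to be induced by the action on $F'$, the diagram
\[
\begin{tikzpicture}
	\matrix (m) [matrix of math nodes, row sep=2em, column sep=3em]
	{
		F \otimes_l G & F' \otimes_l G\\
		F \otimes_l G & F' \otimes_l G\\
	};
	\path[->,font=\scriptsize]
	(m-1-1) edge node[auto]{$\phi \otimes_l \id_G$} (m-1-2)
	(m-1-2) edge node[auto]{$\lambda \cdot$} (m-2-2)
	(m-1-1) edge node[auto]{$\lambda \cdot$} (m-2-1)
	(m-2-1) edge node[auto]{$(\lambda \cdot \phi) \otimes_l \id_G$} (m-2-2);
\end{tikzpicture}
\]
commutes for every $\phi \colon F \to F'$ and $\lambda \in l$ by naturality of the construction, which is exactly the required $l$-linearity.

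The main (minor) obstacle is merely verifying that the $l$-action descends coherently to the coequalizer and that the two descriptions (via $F$ and via $G$) agree on $F \otimes_l G$; this is an immediate consequence of the construction in the proof of Thm.~\ref{tensorsexist}, where the coequalizer is engineered to identify exactly these two potential actions. No additional hypotheses are needed, so the theorem follows.
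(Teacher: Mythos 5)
Your proof is correct and rests on the same underlying fact as the paper's, namely that the $l$-linearity of $\phi \mapsto \phi \otimes_l \id_G$ comes down to the $l$-bilinearity of the universal map $F' \times G \to F' \otimes_l G$. The only real difference is which of the two equivalent characterizations of $\otimes_l$ you invoke: the paper factors the map through the functor category $\mathfrak{F}_l$ and the objectwise tensor product $F(-) \otimes_l G(-)$, using the adjunction of characterization~\eqref{charonetensor}, so that every arrow in the chain is manifestly an $l$-module map; you instead work with the coequalizer presentation \eqref{tensorproductconstruction} and characterization~\eqref{chartwotensor}, checking the identity $(\lambda\phi)\otimes_l \id_G = \lambda(\phi \otimes_l \id_G)$ after precomposition with the universal bilinear map. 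Both are legitimate; the paper's route is slightly slicker because it never has to discuss how the $l$-action descends to the coequalizer, whereas your version makes that descent (which is indeed built into the construction in Thm.~\ref{tensorsexist}) an explicit verification step. One small caveat of phrasing: the $l$-action on $F' \otimes_l G$ is not something one ``chooses'' to induce from one factor --- it is part of the structure of $F' \otimes_l G$ as an object of $\fModSch{k}{l}$ --- but the identification $\lambda(f'\otimes g) = (\lambda f')\otimes g = f'\otimes(\lambda g)$ that you rely on is exactly what the coequalizer enforces, so the argument stands.
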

\begin{proof}

We need to see that the map in $\Set$
\[
\fModSch{k}{l}(\hat M,\hat M') \xrightarrow{- \otimes_l \hat N} \fModSch{k}{l}(\hat M \otimes_l \hat N, \hat M' \otimes_l \hat N)
\]
is in fact a map in $\Mod_l$. To see this, note that it factors as a map
\begin{align*}
\fModSch{k}{l}(\hat M,\hat M') = & \mathfrak{F}_l(\hat M,\hat M')\\
\xrightarrow{\otimes_l \hat N(-)} & \mathfrak{F}_l(\hat M(-) \otimes_l \hat N(-), \hat M'(-) \otimes_l \hat N(-))\\
\to & \mathfrak{F}_l(\hat M(-) \otimes_l \hat N(-),\hat M' \otimes_l \hat N)\\
\xrightarrow[\cong]{\text{Def.~\ref{def:fmodschtensor}\eqref{charonetensor}}} & \fModSch{k}{l}(\hat M \otimes_l \hat N, \hat M' \otimes_l \hat N).
\end{align*}
All maps in this diagram are $l$-module maps.
\end{proof}

\section{The structure of the category of formal algebra schemes} \label{sec:falgschstructure}

The category of formal algebra schemes behaves differently from the category of module schemes or bimodules in that it does not have an enrichment over $\Mod_l$ or $\Alg_l$. It does not fit into our framework of $2$-algebras or $2$-modules. We are already lacking a $2$-ring structure on the category $\Alg_l$.

As an immediate consequence of the tensor product construction (Subsection~\ref{subsec:modschtensor}) we obtain:
\begin{corollary} \label{comultintotensor}
Let $k$ be a Pr\"ufer ring and $l$ any ring, and $\hat A \in \fAlgSchf{k}{l}$. Then the multiplication $\hat A(-) \times \hat A(-) \to \hat A(-)$ can be uniquely extended to a map of formal $l$-module schemes
\[
\mu_\times \colon \hat A \otimes_l \hat A \to \hat A.
\]
This map is in fact a map of formal $l$-algebra schemes (by the commutativity of the multiplication of $\hat A$). \qed
\end{corollary}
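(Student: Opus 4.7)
The first assertion is a direct application of characterization~\eqref{chartwotensor} of the tensor product given in the definition preceding Thm.~\ref{tensorsexist}. Since $F(R)$ is an $l$-algebra for every $k$-algebra $R$, its multiplication $m_R\colon F(R)\times F(R)\to F(R)$ is $l$-bilinear, so $m$ is an $l$-bilinear natural transformation between formal $l$-module schemes. Hence by the universal property there is a unique morphism $\mu_\times\colon F\otimes_l F\to F$ in $\fModSch{k}{l}$ through which $m$ factors.

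To upgrade this to the algebra-scheme level, the next step is to equip $F\otimes_l F$ with a natural structure of a formal $l$-algebra scheme. The intended multiplication is the componentwise one, $(a_1\otimes b_1)\cdot(a_2\otimes b_2)=(a_1a_2)\otimes(b_1b_2)$; it is $l$-bilinear and well-defined on $F(R)\otimes_l F(R)$ precisely because commutativity of $F(R)$ licenses the swap $b_1a_2\leftrightarrow a_2 b_1$ needed to assemble it from two copies of $m_R$. Associativity, unit, distributivity, and commutativity of this multiplication then reduce mechanically to the corresponding identities in $F(R)$, so $F(R)\otimes_l F(R)$ is a commutative $l$-algebra. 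Naturality in $R$ and ind-representability follow by once more invoking the universal property of $\otimes_l$ (characterization~\eqref{chartwotensor}) applied to the $l$-bilinear natural transformation $(F\otimes_l F)\times(F\otimes_l F)\to F\otimes_l F$ constructed this way; the representing pro-$k$-algebra is $\Reg{F}\multtensor{k}{l}\Reg{F}$, with the extra algebra structure transported along this description.

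Finally, checking that $\mu_\times$ is a morphism in $\fAlgSch{k}{l}$ with respect to this algebra structure comes down, objectwise at each $R$, to the identity
\[
(a_1a_2)(b_1b_2)=(a_1b_1)(a_2b_2)\quad\text{in }F(R),
\]
which is exactly the commutativity of the multiplication of $F(R)$; the unit and additive compatibilities are automatic from the fact that $\mu_\times$ was constructed in $\fModSch{k}{l}$ from the ring multiplication of $F$. The step I expect to require the most care is not the final verification but the middle one -- writing the componentwise multiplication on $F\otimes_l F$ as a bona fide morphism of formal module schemes and checking it is independent of the presentation of elements -- so that the uniqueness part of characterization~\eqref{chartwotensor} can be applied and one does not have to manipulate ``elements'' of the pro-representing object directly.
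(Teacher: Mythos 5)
Your proposal is correct and matches the paper's (essentially unwritten) argument: the corollary is declared an immediate consequence of the universal property of the tensor product from Subsection~\ref{subsec:modschtensor}, with the parenthetical about commutativity standing in for exactly the verification you carry out in your final paragraph. The one point to phrase with care, which you already flag yourself, is that the algebra structure on $F\otimes_l F$ should be obtained from $\mu_\times$ together with the symmetry of $\otimes_l$ (the usual fact that the tensor product of commutative monoids in a symmetric monoidal category is a commutative monoid) rather than literally ``componentwise on elements,'' since $(F\otimes_l F)(R)$ is only the representable approximation to $F(R)\otimes_l F(R)$.
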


The $2$-module structure on $\fModSchf{k}{l}$ over $\Mod_{l}^f$ does not descend to a $2$-module structure on $\fAlgSchf{k}{l}$. The only remnant of it is a cotensor-like functor which pairs an algebra and an algebra scheme and gives a formal scheme. For this, let $\hat A \in \fAlgSchf{k}{l}$ and let $R \in \Alg_{l}$. Recall from Lemma~\ref{fmodschztensoradj} the construction of a right $2$-module structure $\hom_l(M,\hat N)$ for an $l$-module $M$ and $\hat N \in \fModSchf{k}{l}$. For an $l$-flat ring $R$, define $\hom_l^\times(R,\hat A)$ to be the functor which sends a $k$-algebra $T$ to the set of algebra maps $R \to \hat A(T)$. We can extend this definition for non-flat $R$, by defining $\hom_l^\times(R,\hat A)$
as the simultaneous equalizer in $\Schf{k}$ of
\begin{equation} \label{algtensoradd}
\hom_{l}(R,\hat A) \rightrightarrows \hom_{l}(R\times R, \hat A)
\end{equation}
and
\begin{equation} \label{algtensorlin}
\hom_{l}(R,\hat A) \rightrightarrows \hat A,
\end{equation}
where the maps in \eqref{algtensoradd} are given by the multiplication on $R$ and
\[
\hom_{l}(R,\hat A) \xrightarrow{\Delta} \hom_{l}(R,\hat A)^2 \xrightarrow[\text{Lemma~\ref{tensorjuggle}}]{\zeta'} \hom_{l}(R^2,\hat A^2) \xrightarrow{\hom_{l}(R^2,\cdot)} \hom_{l}(R^2,\hat A),
\]
respectively, and the maps in \eqref{algtensorlin} are given by evaluation at $1 \in R$ and the constant map with value $1 \in \hat A(T)$ for all $T$.

\section{Formal coalgebras and formal plethories} \label{sec:fringsfplethories}

We have seen in the previous sections that the categories $\fBimod{k}{l}$, $\fModSchf{k}{l}$, and $\fAlgSchf{k}{l}$ are symmetric monoidal categories with respect to the tensor product of formal bimodules resp. formal module schemes. On $\fAlgSchf{k}{l}$, this tensor product is actually the categorical coproduct; on $\fModSchf{k}{l}$, it is not. Furthermore, we have various $2$-bimodule structures over $\Mod_l$. The aim of this section is to construct a second monoidal structure $\circ$ on the categories $\fBimod{k}{k}$ and $\fAlgSchf{k}{k}$ which corresponds to composition of ind-representable functors. These monoidal structures have an interesting compatibility with the tensor product monoidal structure.

\subsection[2-monoidal categories]{$2$-monoidal categories}

The situation of a category with two compatible monoidal structures has been studied, although not with our examples in mind \cite{aguiar-mahajan:monoidal-functors,vallette:manin-products,joyal-street:braided-tensor}, and they are known as \emph{$2$-monoidal categories}. As always with higher categorical concepts, there is much leeway in the definitions as to what level of strictness one wants to require; Aguiar and Mahajan's definition of a $2$-monoidal category \cite[Section 6]{aguiar-mahajan:monoidal-functors} is the laxest in the literature and fits our application, although in our case more strictness assumptions could be made.

\begin{defn}[Aguiar-Mahajan] \label{def:twomonoidalcat}
A \emph{$2$-monoidal category} is a category $\C$ with two monoidal structures $(\otimes,I)$ and $(\circ,J)$ with natural transformations
\[
\zeta\colon (A \circ B) \otimes (C \circ D) \to (A \otimes C) \circ (B \otimes D)
\]
and
\[
\Delta_I\colon I \to I \circ I,\quad \mu_J\colon J \otimes J \to J, \quad \iota_J = \epsilon_I\colon I \to J,
\]
such that:
\begin{enumerate}
\item the functor $\circ$ is a lax monoidal functor with respect to $\otimes$, the structure maps being given by $\zeta$ and $\mu_J$;
\item the functor $\otimes$ is an oplax monoidal functor with respect to $\circ$, the structure maps being given by $\zeta$ and $\Delta_I$;
\item $(J,\mu_J,\iota_J)$ is a $\otimes$-monoid;
\item $(I,\Delta_I,\epsilon_I)$ is a $\circ$-comonoid.
\end{enumerate}
\end{defn}

A $2$-monoidal category is the most general categorical setup where a bialgebra can be defined, although in this context it is more common to call it a bimonoid.

\begin{defn}[Aguiar-Mahajan]
A \emph{bimonoid} in a $2$-monoidal category $\C$ as above is an object $H$ with a structure of a monoid in $(\C,\otimes,I)$ and a structure of a comonoid in $(\C,\circ,J)$ satisfying the compatibility condition that the monoid structure maps are comonoid maps and the comonoid structure maps are monoid maps.
\end{defn}
To make sense of the compatibility condition, notice that if $H$ is an $\otimes$-monoid with multiplication $\mu$ and unit $\iota$, then so is $H \circ H$ by virtue of the maps
\[
(H \circ H) \otimes (H \circ H) \xrightarrow{\zeta} (H \otimes H) \circ (H \otimes H) \xrightarrow{\mu \circ \mu} H \circ H
\]
and
\[
I \xrightarrow{\Delta_I} I \circ I \xrightarrow{\iota \circ \iota} H \circ H;
\]
similarly if $H$ is a $\circ$-comonoid with comultiplication $\Delta$ and counit $\epsilon$ then so is $H \otimes H$ by virtue of the maps
\[
H \otimes H \xrightarrow{\Delta \otimes \Delta} (H \circ H) \otimes (H \circ H) \xrightarrow{\zeta} (H \otimes H) \circ (H \otimes H)
\]
and
\[
H \otimes H \xrightarrow{\epsilon \otimes \epsilon} J \otimes J \xrightarrow{\mu_J} J.
\]

Finally, we define a suitable notion of a functor between $2$-monoidal categories in order to map bimonoids to bimonoids:
\begin{defn}[Aguiar-Mahajan]
A \emph{bilax monoidal functor} $F\colon \C \to \D$ between $2$-monoidal categories is a functor which is lax monoidal with respect to $\otimes$ and oplax monoidal with respect to $\circ$ and whose lax structure $\phi$ and oplax structure $\psi$ (Def.~\ref{def:laxmorphism}) are compatible in the sense that the following diagrams commute:
\begin{equation}\label{biunitality}
\begin{matrix}
\begin{tikzpicture}
	\matrix (m) [matrix of math nodes, row sep=2em, column sep=2em, text height=1.5ex, text depth=0.25ex]
	{
		I_\D &		J_\D \\
		F(I_\C) & 	F(J_\C)\\
	};
	\path[->,font=\scriptsize]
	(m-1-1)		edge node[auto]{$\iota_J$}				(m-1-2)
				edge node[auto]{$\phi_0$}				(m-2-1)
	(m-2-1)		edge node[auto]{$F(\iota_J)$}				(m-2-2)
	(m-2-2)		edge node[auto]{$\psi_0$}					(m-1-2);
\end{tikzpicture}
\end{matrix}\end{equation}
\begin{equation}\label{leftunitality}\begin{matrix}
\begin{tikzpicture}
	\matrix (m) [matrix of math nodes, row sep=2em, column sep=1.2em, text height=1.5ex, text depth=0.25ex]
	{
		I_\D &			F(I_\C) & 	F(I_\C \circ I_\C) \\
		I_\D \circ I_\D &		&	F(I_\C) \circ F(I_\C)  \\
	};
	\path[->,font=\scriptsize]
	(m-1-1)		edge node[auto]{$\phi_0$}				(m-1-2)
				edge node[auto]{$\Delta_I$}				(m-2-1)
	(m-1-2)		edge node[auto]{$F(\Delta_I)$}			(m-1-3)
	(m-1-3)		edge node[auto]{$\psi$}					(m-2-3)
	(m-2-1)		edge node[auto]{$\phi_0 \circ \phi_0$}	(m-2-3);
\end{tikzpicture}
\end{matrix}\end{equation}
\begin{equation}\label{rightunitality}\begin{matrix}
\begin{tikzpicture}
	\matrix (m) [matrix of math nodes, row sep=2em, column sep=1.2em, text height=1.5ex, text depth=0.25ex]
	{
		F(J_\C) \otimes F(J_\C) &		&	J_\D \otimes J_\D\\
		F(J_\C \otimes J_\C) &		F(J_\C) &	J_\D \\
	};
	\path[->,font=\scriptsize]
	(m-1-1)		edge node[auto]{$\psi_0 \otimes \psi_0$}	(m-1-3)
				edge node[auto]{$\phi$}					(m-2-1)
	(m-1-3)		edge node[auto]{$\mu_J$}					(m-2-3)
	(m-2-1)		edge node[auto]{$F(\mu_J)$}				(m-2-2)
	(m-2-2)		edge node[auto]{$\psi_0$}					(m-2-3);
\end{tikzpicture}
\end{matrix}
\end{equation}
\begin{equation}\label{bilaxcompat}
\begin{matrix}
\begin{tikzpicture}[xscale=3.0, yscale = 1.7]
	\node(m-1-2) at (0,1) {$(FA \circ FB) \otimes (FC \circ FD)$};
	\node(m-2-1) at (-.866,.33) {$F(A \circ B) \otimes F(C \circ D) $};
	\node(m-3-1) at (-.866,-.33) {$F((A \circ B) \otimes (C \circ D))$};
	\node(m-4-2) at (0,-1) {$F((A \otimes C) \circ (B \otimes D))$};
	\node(m-3-3) at (.866,-.33) {$F(A \otimes C) \circ F(B \otimes D)$};
	\node(m-2-3) at (.866,.33) {$(FA \otimes FC) \circ (FB \otimes FD)$};

	\path[->,font=\scriptsize]
	(m-2-1)		edge node[auto]{$\psi \otimes \psi$}		(m-1-2)
				edge node[auto]{$\phi$}					(m-3-1)
	(m-1-2)		edge node[auto]{$\zeta$}					(m-2-3)
	(m-2-3)		edge node[auto]{$\phi \circ \phi$}			(m-3-3)
	(m-3-1)		edge node[auto]{$F(\zeta)$}				(m-4-2)
	(m-4-2)		edge node[auto]{$\psi$}					(m-3-3);
\end{tikzpicture}
\end{matrix}
\end{equation}

\end{defn}

\begin{prop}[{\cite[Cor.~6.53]{aguiar-mahajan:monoidal-functors}}] \label{bilaxfunctorspreservebimonoids}
Bilax monoidal functors preserve bimonoids and morphisms between them.\qed
\end{prop}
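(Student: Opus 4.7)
The plan is to unfold the definition of a bimonoid in $\D$ and check each piece. Given a bimonoid $H$ in $\C$ with multiplication $\mu$, unit $\iota$, comultiplication $\Delta$, counit $\epsilon$, the candidate structure on $F(H)$ consists of
\[
F(H) \otimes F(H) \xrightarrow{\phi} F(H \otimes H) \xrightarrow{F(\mu)} F(H), \qquad I_\D \xrightarrow{\phi_0} F(I_\C) \xrightarrow{F(\iota)} F(H)
\]
for the $\otimes$-monoid, and
\[
F(H) \xrightarrow{F(\Delta)} F(H \circ H) \xrightarrow{\psi} F(H) \circ F(H), \qquad F(H) \xrightarrow{F(\epsilon)} F(J_\C) \xrightarrow{\psi_0} J_\D
\]
for the $\circ$-comonoid. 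That these satisfy the associativity/unit axioms for $\otimes$ is the standard fact that lax monoidal functors preserve monoids; dually, the coassociativity/counit axioms for $\circ$ follow from $F$ being oplax monoidal with respect to $\circ$. Neither step requires the bilax compatibilities.

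What remains is the bimonoid compatibility: the $\otimes$-monoid structure maps on $F(H)$ must be $\circ$-comonoid maps (and vice versa). I would check this by a single diagram chase showing that
\[
(F(H) \otimes F(H)) \xrightarrow{F(\mu)\circ\phi} F(H) \xrightarrow{\psi \circ F(\Delta)} F(H) \circ F(H)
\]
agrees with the composite going the other way around, namely
\[
F(H)\otimes F(H) \xrightarrow{(\psi F\Delta)\otimes(\psi F\Delta)} (F(H) \circ F(H)) \otimes (F(H) \circ F(H)) \xrightarrow{\zeta} (F(H) \otimes F(H)) \circ (F(H) \otimes F(H))
\]
followed by $(F(\mu)\circ F(\mu)) \circ (\phi \circ \phi)$. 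The diagram decomposes into three kinds of tiles: naturality squares for $\phi$ and $\psi$, one instance of the bilax compatibility square (\ref{bilaxcompat}) applied at $A=B=C=D=H$, and one application of the bimonoid compatibility for $H$ inside $\C$ (i.e. $F$ applied to the identity $\Delta\circ\mu = (\mu \circ \mu)\circ \zeta_\C \circ (\Delta \otimes \Delta)$). The unit/counit compatibilities are handled analogously by the two squares in (\ref{bilaxunitality}) together with the corresponding identities for $H$.

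For morphisms, if $f\colon H \to H'$ is a bimonoid map in $\C$, then naturality of $\phi$, $\phi_0$, $\psi$, and $\psi_0$ with respect to $f$ shows immediately that $F(f)$ intertwines the monoid and comonoid structures defined above, so $F(f)$ is a bimonoid map in $\D$. The main obstacle is purely bookkeeping: the central compatibility diagram is large, but every tile is one of the three types listed, and the pasting is forced once the correct decomposition is chosen. No new ideas beyond those packaged in (\ref{bilaxunitality}) and (\ref{bilaxcompat}) are needed, so I would present the argument as an annotated diagram and refer to \cite{aguiar-mahajan:monoidal-functors} for the detailed check.
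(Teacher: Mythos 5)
Your argument is correct and is precisely the standard proof of this fact; the paper itself gives no proof but delegates it to the cited reference (Aguiar--Mahajan, Cor.~6.53), whose argument is exactly the decomposition you describe: lax/oplax structures transport the monoid/comonoid data, and the compatibility hexagon for $F(H)$ is tiled by naturality of $\phi$ and $\psi$, one instance of \eqref{bilaxcompat}, and $F$ applied to the compatibility identity for $H$. The only nitpick is that \eqref{bilaxunitality} contains three unit diagrams (for $\iota_J$, $\Delta_I$, and $\mu_J$), matching the three unit/counit compatibilities of a bimonoid, not two.
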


\begin{prop}\label{bilaxcondition}
Let $F\colon \C \to \D$, $G \colon \D \to \C$ be functors between $2$-monoidal categories. Assume
\begin{enumerate}
\item $F$ is left adjoint to $G$;
\item $F$ is strictly monoidal with respect to $(\otimes_\C,I_\C)$ and $(\otimes_\D,I_\D)$; \label{fstrictmon}
\item $G$ is strictly monoidal with respect to $(\circ_\D,J_\D)$ and $(\circ_\C,J_\C)$. \label{gstrictmon}
\end{enumerate}
Then $F$ is bilax if and only if $G$ is.
\end{prop}
\begin{proof}
Let $\eta\colon \id _\C\to G \circ F$ denote the unit and $\epsilon\colon F \circ G \to \id_\D$ the counit of the adjunction.

Given \eqref{gstrictmon}, $F$ is oplax monoidal with respect to $\circ$ by the adjoints $F(J_\C) \to J_\D$ and $F(A \circ_\C B) \to F(A) \circ_\D F(B)$ of $\psi_0^{-1}\colon J_\C \to G(J_\D)$ and
\[
A \circ B \xrightarrow{\eta \circ \eta} G(F(A)) \circ G(F(B)) \xrightarrow{\psi^{-1}} G(F(A) \circ F(B)),
\]
respectively. Similarly $G$ is lax monoidal with respect to $\otimes$ using \eqref{fstrictmon}.

Under the given conditions, the following composites are identities:
\begin{align*}
I \xrightarrow[\cong]{(\phi_F)_0} F(I) \xrightarrow{F((\phi_G)_0)} F(G(I)) \xrightarrow{\epsilon} I \\
J \xrightarrow{\eta} G(F(J)) \xrightarrow{G((\psi_F)_0)} G(J) \xrightarrow[\cong]{(\psi_G)_0} J.
\end{align*}
If $G$ is bilax then \eqref{biunitality} commutes for $F$ because it can be factored as:
\[
\begin{tikzpicture}
	\matrix (m) [matrix of math nodes, row sep=2em, column sep=6em, text height=1.5ex, text depth=0.25ex]
	{
		I_\D & 		I_\D & 			J_\D\\
		F(I_\C) &	F(G(I_\D))\\
		F(J_\C) &	F(G(J_\D))\\
	};
	\path[->,font=\scriptsize]
	(m-1-1)		edge[-,double,double equal sign distance]	(m-1-2)
				edge node[auto]{$(\phi_F)_0$}			(m-2-1)
	(m-1-2)		edge node[auto]{$\iota_J$}				(m-1-3)
	(m-2-1)		edge node[auto]{$F((\phi_G)_0)$}			(m-2-2)
				edge node[auto]{$F(\iota_J)$}				(m-3-1)
	(m-2-2)		edge node[auto]{$\epsilon$}				(m-1-2)
				edge node[auto,swap]{$F(G(\iota_J))$}		(m-3-2)
	(m-3-1)		edge node[auto]{$F((\psi_G)_0^{-1})$}		(m-3-2)
	(m-3-2)		edge node[auto]{$\epsilon$}				(m-1-3);
\end{tikzpicture}
\]
The lower rectangle commutes because $G$ was assumed to be bilax monoidal. The other diagrams follow by similar exercises in adjunctions.
\end{proof}

\subsection[The 2-monoidal categories of formal algebra schemes and formal bimodules]{The $2$-monoidal categories of formal algebra schemes and formal bimodules}

Throughout the rest of this section, we assume that $k$ is a graded Pr\"ufer domain and $l$ is an arbitrary graded commutative ring. We will now supply the $2$-algebras $\fBimod{k}{k}$ and $\fAlgSchf{k}{k}$ with a new monoidal structures $\circ$, making them into $2$-monoidal categories.

\begin{defn} \label{def:circprod}
Consider the following setups:
\begin{enumerate}
\item $A \in \Pro-\Mod_{l}$ and $\hat Y \in \fBimod{k}{l}$.
\item $A \in \Pro-\Mod_{l}$ and $\hat Y \in \fBimodf{k}{l}$.
\item $A \in \Pro-\Mod_{l}$ and $\hat Y \in \fModSchf{k}{l}$.
\item $A \in \Pro-\Alg_{l}$ and $\hat Y \in \fAlgSchf{k}{l}$.
\end{enumerate}
In each of these cases, define a functor $\circ_{l}$ by 
\[
\Spf{A} \circ_{l} \hat Y =\colim_i \hom(A(i),\hat Y),
\]
where the colimit is taken in $\fBimod{k}{\Z}$, $\fBimodf{k}{\Z}$, and $\Schf{k}$ in the last two cases, respectively. Here $\hom$ denotes $\hom_l$ in the first three cases (Lemma~\ref{fbimodtwomodule} and \eqref{rightlfmodsch}) and $\hom_l^\times$ (Section~\ref{sec:falgschstructure}) in the last case.
\end{defn}

As a consequence of the various $2$-module structures exhibited in Sections~\ref{sec:fbimodstructure}, \ref{sec:fmodschstructure}, and~\ref{sec:falgschstructure}, we obtain:

\begin{corollary}\label{tensoradjunctions}
We have natural isomorphisms $(\hat M \circ_l \hat N)(T) \cong \hat M(\hat N(T))$ where:
\begin{enumerate}
\item \label{bimodtensoradjunction}
$\hat M \in \fBimod{l}{\Z}$, $\hat N \in \fBimod{k}{l}$, and $T \in \Mod_k$;
\item \label{flatbimodtensoradjunction}
$\hat M \in \fBimod{l}{\Z}$, $\hat N \in \fBimodf{k}{l}$, and $T \in \Mod_k^f$;
\item  \label{modschtensoradjunction}
$\hat M \in \fBimod{l}{\Z}$, $\hat N \in \fModSchf{k}{l}$, and $T \in \Alg_k^f$;
\item \label{algschtensoradjunction}
$\hat M \in \Pro-\Alg_{l}$, $\hat N \in \fAlgSchf{k}{l}$, and $T \in \Alg_k^f$.
\end{enumerate}
\end{corollary}
\begin{proof}
For the proof of \eqref{bimodtensoradjunction}, 
\begin{multline*}
(\hat M \circ_l \hat N)(T) = \colim_i \hom_l(\Reg{\hat M}(i),\hat N)(T)\\
\underset{\eqref{fbimodrightmodrep}}= \colim_i \Mod_l(\Reg{\hat M}(i),\hat N(T)) = \hat M (\hat N(T)).
\end{multline*}
For \eqref{flatbimodtensoradjunction}--\eqref{algschtensoradjunction}, the proofs are formally the same.
\end{proof}

If $k$, $l$, $m$ are rings (Pr\"ufer where required) then by Cor.~\ref{tensoradjunctions}, additional structure on $M$ gives rise to additional structure in the target: the product $\circ_l$ extends to products
\begin{eqnarray}
\circ_{l}\colon & \fBimod{l}{m} \times \fBimod{k}{l} &\to \fBimod{k}{m} \label{bimodcirc}\\
\circ_{l}\colon & \fBimodf{l}{m} \times \fBimodf{k}{l} &\to \fBimodf{k}{m} \label{bimodfcirc}\\
\circ_{l}\colon & \fBimod{l}{m} \times \fModSchf{k}{l} &\to \fModSchf{k}{m} \label{modhopfcirc}\\
\circ_{l}\colon & \fModSchf{l}{m} \times \fAlgSchf{k}{l} &\to \fModSchf{k}{m} \label{modalgcirc}\\
\circ_{l}\colon & \fAlgSchf{l}{m} \times \fAlgSchf{k}{l} &\to \fAlgSchf{k}{m} \label{algalgcirc}.
\end{eqnarray}
All of these products represent compositions of functors. Therefore all of these products are associative in the appropriate sense, i.e. \eqref{bimodcirc}, \eqref{bimodfcirc}, \eqref{algalgcirc} are associative, \eqref{modhopfcirc} is a tensoring over the \eqref{bimodcirc}, and \eqref{modalgcirc} is a tensoring over \eqref{algalgcirc}.

When $k=l=m$, \eqref{bimodcirc}, \eqref{bimodfcirc}, and \eqref{algalgcirc} define monoidal structures on $\fBimod{k}{k}$, $\fBimodf{k}{k}$, and $\fAlgSchf{k}{k}$, respectively, but they are neither symmetric nor closed. The identity functors $J_k \in \fBimodf{k}{k}$ (Ex.~\ref{exa:circunitmod}) and $J_k \in \fAlgSchf{k}{k}$ (Ex.~\ref{exa:circunit}) are units for $\circ_k$. We summarize:

\begin{lemma} \label{circmonoidal}
The category $(\fBimod{k}{k},\circ_k,J_k)$ and its subcategory $\fBimodf{k}{k}$ are $2$-algebra over $\Mod_k$. The category  $(\fAlgSchf{k}{k},\circ_k,J_k)$ is a $2$-algebra over $\Set^\Z$. Finally, the category $\fModSchf{k}{k}$ is tensored over $\fBimod{k}{k}$ and $\fAlgSchf{k}{k}$. \qed
\end{lemma}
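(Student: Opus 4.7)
The plan is to recognize that $\circ_k$ is simply a representing-object level incarnation of composition of functors, and then derive all the algebraic content from associativity and unitality of functor composition together with the universal properties packaged in Cor.~\ref{tensoradjunctions}.

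First, I would observe that for $G = \Spf{M} \in \fBimod{k}{k}$ and $F \in \fBimod{k}{k}$, the adjunction in Cor.~\ref{tensoradjunctions}\eqref{bimodtensoradjunction} reads
\[
\Pro-\Mod_k(\Reg{G \circ_k F}, N) \cong \Pro-\Mod_k(M,F(N)) = G(F(N)),
\]
so $G \circ_k F$ represents the composite functor $G \circ F\colon \Mod_k \to \Mod_k$. Analogously, the adjunction in Cor.~\ref{tensoradjunctions}\eqref{algschtensoradjunction} shows that on $\fAlgSch{k}{k}$ the product $\circ_k$ represents composition of functors $\Alg_k \to \Alg_k$, and the mixed cases do the same for the actions on $\fModSch{k}{k}$. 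Since composition of functors is strictly associative with the identity functor as a strict unit, the associators and unitors of $\circ_k$ are the ones induced by these identifications, so $(\fBimod{k}{k},\circ_k,\id)$ and $(\fAlgSch{k}{k},\circ_k,\id)$ acquire monoidal structures, and $\fModSch{k}{k}$ inherits a left action of each of these monoidal categories. The coherence diagrams hold automatically because composition of functors satisfies them on the nose; what makes this rigorous is the Yoneda-style uniqueness of representing objects, which pins down the associator and unitor uniquely.

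Next, I need to verify the enrichment/tensoring condition of Definition of a $2$-algebra: for each fixed $F$, the functors $-\circ_k F$ and $F \circ_k -$ must be enriched. For $\fAlgSch{k}{k}$ the enrichment is over $\Set$ (by Example~\ref{twoalgebraoverset}), so this amounts to functoriality in each variable, which is immediate. For $\fBimod{k}{k}$ the enrichment is over $\Mod_k$; an arrow $\alpha\colon G \to G'$ in $\fBimod{k}{k}$ is a $k$-linear natural transformation of the underlying functors, and both right and left whiskering by $F$ send it to $\alpha \circ F$, respectively $F \circ \alpha$, which remain $k$-linear pointwise. Hence the induced maps
\[
\fBimod{k}{k}(G,G') \to \fBimod{k}{k}(G \circ_k F, G' \circ_k F), \quad \fBimod{k}{k}(G,G') \to \fBimod{k}{k}(F \circ_k G, F \circ_k G')
\]
are $k$-linear. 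Finally, for the tensorings on $\fModSch{k}{k}$ it suffices to exhibit associator isomorphisms $(H \circ_k G) \circ_k F \cong H \circ_k (G \circ_k F)$ and unit isomorphisms $\id \circ_k F \cong F$ in $\fModSch{k}{k}$, compatible with those of the acting monoidal categories; again these are forced by representability once one interprets each instance as composition of functors on the appropriate source category.

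The main obstacle is not any conceptual depth but bookkeeping: one must check that the various coherences (pentagon, triangle, associator of the action against the associator of the acting monoidal structure) agree under the four different forms of $\circ_k$ listed before the lemma. This is handled uniformly by appealing to Cor.~\ref{tensoradjunctions} to translate each diagram into a diagram of functors, where it commutes strictly, and then using the Yoneda lemma on $\Pro-\Mod_k$ or $\Pro-\Alg_k$ to transport the strict equality back to a canonical isomorphism of representing objects. No additional compatibility beyond what is stated in Cor.~\ref{tensoradjunctions} is needed, which is why the proof in the paper is likely indicated by a $\qed$ alone.
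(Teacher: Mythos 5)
Your proposal is correct and follows essentially the same route as the paper: the \qed in the source refers back to the preceding discussion, which identifies each $\circ_l$ as representing composition of functors (via the adjunctions of Cor.~\ref{tensoradjunctions}) and deduces associativity, unitality of the identity functor, and the module structures from that, exactly as you do. Your explicit check that whiskering by $F$ preserves $k$-linearity of natural transformations is a detail the paper leaves implicit but is consistent with its argument.
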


We can finally define the object of the title of this paper.
\begin{defn}\label{def:formalplethory}
A \emph{formal $k$-coalgebra} is a comonoid in $(\fBimod{k}{k},\circ_k,J_k)$. A \emph{formal plethory} is a comonoid in $(\fAlgSchf{k}{k}, \circ_k,J_k)$.
\end{defn}

\begin{prop}
The category $(\fAlgSchf{k}{k},\otimes_k,\circ_k)$ is a $2$-monoidal category, and the forgetful functor from bimonoids in $\fAlgSchf{k}{k}$ to formal plethories is an equivalence of categories.
\end{prop}
\begin{proof}
This is straightforward because $\otimes_k$ is the categorical coproduct, and any monoidal category is automatically $2$-monoidal with respect to the categorical coproduct \cite[Example~6.19]{aguiar-mahajan:monoidal-functors}, cf. Example~\ref{exa:coprodtwomonoidal}.
Moreover, a monoid structure with respect to the categorical coproduct always exists uniquely (cf.~\cite[Example~6.42]{aguiar-mahajan:monoidal-functors}).
\end{proof}

The rest of this section is devoted to proving the following theorem:

\begin{thm}
The category $(\fBimod{k}{k},\otimes_k,\circ_k)$ and its subcategory $\fBimodf{k}{k}$ are $2$-mo\-noi\-dal.
\end{thm}

In Section~\ref{section:primindec}, we will construct bilax monoidal functors between $\fAlgSchf{k}{k}$, $\fBimod{k}{k}$, and $\fBimodf{k}{k}$.

\begin{proof}
To define $\zeta$, consider more generally the functor $\circ_l\colon \fBimod{l}{m} \times \fBimod{k}{l} \to \fBimod{k}{m}$ and first assume that $m=\Z$. Let $\hat M_1,\; \hat N_1 \in \fBimod{l}{\Z}$ and $\hat M_2,\;\hat N_2 \in \fBimod{k}{l}$. Let $\Reg{\hat M_1}\colon I \to \Mod_l$, $\Reg{\hat N_1}\colon J \to \Mod_l$ be representations of the corresponding pro-$l$-modules. Then we have a map
\begin{align*}
\zeta\colon (\hat M_1 \circ_{l} \hat M_2) \otimes_\Z (\hat N_1 \circ_l \hat N_2) & = \colim_{i\in I, j\in J} \hom_l(\Reg{\hat M_1}(i),\hat M_2) \otimes_\Z \hom(\Reg{\hat N_1}(j),\hat N_2)\\
\xrightarrow[\text{Lemmas~\ref{bimodtwoalgebra}, \ref{tensorjuggle}}]{\zeta'} & \colim_{i,j} \hom(\Reg{\hat M_1}(i) \otimes_l \Reg{\hat N_1}(j), \hat M_2 \otimes_l \hat N_2)\\
= & (\hat M_1 \otimes_{\Z} \hat N_1) \circ_l (\hat M_2 \otimes_{l} \hat N_2).
\end{align*}

The general case follows from writing $\hat M \otimes_l \hat N$ as the coequalizer \eqref{sweedlercoequalizer}.

The remaining structure maps are easier:
\begin{itemize}
\item $\iota_J = \epsilon_I \colon \tensunit{k}{k} = k \indprotensor_\Z \Spf{k} \to \Spf{k}$ is given by the bimodule structure map of Cor.~\ref{unitmapadjmodules}.
\item $\mu_J\colon J_k \otimes_k J_k \to J_k$: this is the isomorphism given by \eqref{sweedlercoequalizer}.
\item $\Delta_I\colon \tensunit{k}{k} \to \tensunit k k \circ \tensunit k k$: this map is defined by 
\begin{multline*}
k \indprotensor_\Z \Spf{k} \to (k \indprotensor_\Z (k \indprotensor_\Z \Spf{k} \cong 
(k \otimes k) \indprotensor_\Z (\Spf{k} \circ_k \Spf{k})\\
\xrightarrow[\text{Lemma~\ref{tensorjuggle}}]{\zeta'} (k \indprotensor_\Z \Spf{k}) \circ_k (k \indprotensor_\Z \Spf{k}).
\end{multline*}
The first map is induced by compositions of $\Z$-linear morphisms $k \to k$.
\end{itemize}
That $\zeta$ is compatible with the associativity isomorphisms of $\circ$ and $\otimes_k$ follows easily from Lemma~\ref{tensorjuggle} and the definition. We will verify the unitality conditions required in a $2$-monoidal category:
\begin{enumerate}
\item \textbf{$J_k$ is a monoid with respect to $\otimes$:} Since $\mu_J$ is an isomorphism, associativity follows from that of $k$. For unitality, observe that
\[
J_k \cong \tensunit{k}{k} \otimes_k J_k \xrightarrow{\iota_J \otimes_k \id} J_k \otimes_k J_k \cong J_k
\]
is the identity map as well.
\item \textbf{$I$ is a comonoid with respect to $\circ$:} The associativity is immediate from the definition of $\Delta_I$ and the counitality follows from the fact that composition with the identity map $k \to k$ is the identity.
\end{enumerate}
\end{proof}

We can now give a proof of Theorem~\ref{thm:topplethory}:

\begin{thm}\label{thm:topplethorypf}
Let $E$ be a cohomology theory such that $E_*$ is a Pr\"ufer ring and $\hat E^*\underline E_n$ is pro-flat for all $n$. Then $\hat P_E = \Spf{E^*\underline E_*}$ is a formal plethory, and for any space $X$, $\hat C_E = \Spf{\hat E^*(X)}$ is a comodule over it in the sense that there is a coassociative and counital action $\hat C_E \to \hat C_E \circ_{E_*} \hat P_E$.
\end{thm}
\begin{proof}
By Cor.~\ref{cor:cohomologyalgsch}, $\hat P_E$ is a formal $E_*$-algebra scheme over $E_*$. If $X$ is a finite CW-complex then there is a natural map
\[
\hat E^*(X) = [X,E] \xrightarrow{\hat E^*} \Pro-\Alg_{E_*}(\hat E^*(E),\hat E^*(X)),
\]
By Cor.~\ref{tensoradjunctions}\eqref{algschtensoradjunction}, this gives rise to a morphism in $\Sch{E_*}$
\[
\Spec E^*(X) \to \Spec E^*(X) \circ_{E_*} \hat P_E,
\]
and passing to colimits gives the desired map $\hat C_E \to \hat C_E \circ_{E_*} \hat P_E$. Applying this to $X=\underline E_n$ yields the plethory structure on $\hat P_E$. The coassociativity and counitality conditions are immediate.
\end{proof}

\section{Primitives and indecomposables}\label{section:primindec}

For any Hopf algebra $A$ over $k$, there are two particularly important $k$-modules: the module of primitive elements $PA = \{ a \in A \mid \psi(a) = a \otimes 1 + 1 \otimes a\}$ and the module of indecomposable elements $QA = A_+ / (A_+)^2$, where $A_+$ denotes the augmentation ideal with respect to the counit $\epsilon\colon A \to k$. The aim of this long section is to generalize these to linearization functors $\Primnoarg$ and $\Indecnoarg$ for formal plethories. I will show that they are bilax monoidal functors from the category of formal algebra schemes to the category of formal bimodules (Cor.~\ref{Pbilaxmonoidal} and \ref{Qbilaxmonoidal}; Thm.~\ref{thm:primindecbimodulefunctors}) and thus map formal plethories to bimonoids. The proof of this proceeds in three steps, each having a subsection devoted to it: the algebraic structure of the functors $\Primnoarg$ and $\Indecnoarg$ on formal module schemes (\ref{subsec:prim} and \ref{subsec:indec}), the additional structure if these functors are applied to formal algebra schemes (\ref{subsec:pqfalgsch}), and finally, the compatibility with the composition monoidal structure (\ref{subsec:pqfpleth}).

Throughout this section, we will assume that $k$ is a Pr\"ufer ring and $l$ is an arbitrary ring.

\subsection{Indecomposables} \label{subsec:indec}

Let $\Alg_k^+$ be the category of augmented graded commutative unital $k$-algebras, i.e. algebras $A$ with an algebra morphism $A \xrightarrow{\epsilon} k$ such that the composite with the algebra unit $k \xrightarrow{\eta} A \xrightarrow{\epsilon} k$ is the identity. Passing to pro-categories, the inclusion
\[
\Pro-(\Alg_k^+) \to (\Pro-\Alg_k)^+
\]
is an equivalence, so we will omit the parentheses. We denote its opposite category, the category of pointed formal schemes, by $\Sch{k}^+$. Note that a formal module scheme is automatically pointed.

For $M \in \Pro-\Mod_k$, let $k\oplus M \in \Pro-\Alg_k^+$ be the square-zero extension, i.e. the algebra where $\eta\colon k \hookrightarrow k\oplus M$ is a unit map and the multiplication is given by decreeing that $M^2=0$. The augmentation is given by the projection $\epsilon\colon k \oplus M \to k$.
\begin{defn} \label{def:indec}
The left adjoint of $k \oplus -\colon \Pro-\Mod_k \to \Pro-\Alg_k^+$ is denoted by $A \mapsto Q(A)$, the module of indecomposables of the augmented pro-algebra $A$.
\end{defn}
If $A_+ = \ker(\epsilon\colon A \to k)$ is the augmentation ideal, then $QA \cong A_+/(A_+)^2$, in particular, the left adjoint exists and is defined levelwise. If $\hat M \in \fModSch{k}{l}$ is a formal module scheme then it induces a new functor
\[
\hat M_+ = \Pro-\Alg_k^+(\Reg{\hat M},-)\colon \Alg_k^+ \to \Mod_l.
\]
The left hand side of
\[
\hat M_+(k \oplus M) = \Pro-\Alg_k^+(\Reg{\hat M},k\oplus M) \cong \Pro-\Mod_k(Q(\Reg{\hat M}),M)
\]
is thus in $\Mod_l$, thus $Q(\Reg{\hat M})$ actually represents an object of $\fBimod{k}{l}$ and $Q$ yields a functor
\begin{equation}\label{fmodschQ}
\overline\Indecnoarg = \Spf{Q}\colon \fModSch{k}{l} \to \fBimod{k}{l}.
\end{equation}
Even if $\hat M \in \fModSchf{k}{l}$ is flat, $\Spf{Q}$ might not be. However, also the square-$0$-extension morphism $\fBimodf{k}{\Z} \to (\Schf{k})^+$ has a right adjoint, which extends to
\begin{equation}\label{fmodschfQ}
\Indecnoarg = \Spf{Q}\colon \fModSchf{k}{l} \to \fBimodf{k}{l},
\end{equation}
given by composing \eqref{fmodschQ} with the left adjoint of $\fBimodf{k}{l} \to \fBimod{k}{l}$ (dividing out by torsion).

The functor $\Indecnoarg$ is also a right adjoint, as we will see now. Recall from Cor.~\ref{fmodschstructure} that the forgetful functor $U^l\colon \fModSchf{k}{l} \to \Schf{k}$ has a left adjoint $\Fr_l$. There is also a pointed version, i.e. a left adjoint $\Fr_l^+ = \Spf{\Gamma_l^+}$ of $U^l_+\colon \fModSchf{k}{l} \to (\Schf{k})^+$, given by $\Fr_l^+(\hat M) = \Spf{\Gamma_{l}((\Reg{\hat M})_+)}$. In particular, $\Gamma_{l}^+(k \oplus M) = \Gamma_{l}(M)$ as $k$-coalgebras. 

For $\hat M=\Spf{M} \in \fBimodf{k}{l}$ with associated square-zero formal algebra scheme $\hat M' = \Spf{k \oplus \Reg{M}} \in \Schf{k}$ we obtain a ring map
\[
l \xrightarrow{\mu} \fBimod{k}{\Z}(\hat M,\hat M) \xrightarrow{k\oplus -} \Sch{k}^+(\hat M',\hat M') \xrightarrow{\Fr_\Z^+} \fModSch{k}{\Z}(\Fr_\Z(\hat M),\Fr_\Z(\hat M))
\]
In this way, $\Fr(\hat M) = \Fr_\Z(\hat M)$ obtains an $l$-module scheme structure. We obtain an adjunction
\[
\fModSch{k}{l}(\Fr(\hat M),\hat N) \cong \fBimod{k}{l}(\hat M,\Indec{\hat N}) \quad (\hat M \in \fBimodf{k}{l},\; \hat N \in \fModSchf{k}{l})
\]

\begin{lemma} \label{freelcharacterization}
For $\hat M \in \fBimodf{k}{\Z}$, we have $\Fr_{l}(\hat M) \cong \Fr(l \indprotensor_{\Z} \hat M)$ (cf. Lemma~\ref{fbimodtwomodule}).
\end{lemma}
\begin{proof}
Let $\hat N \in \fModSch{k}{l}$. Then we have
\begin{align*}
\fModSch{k}{l}(\Fr_{l}(\hat M),\hat N) \cong & \fBimod{k}{\Z}(\hat M,\Indec{\hat N})\\
 \cong & {\fBimod{k}{l}}(l \indprotensor_\Z \hat M,\Indec{\hat N}) \cong \fModSch{k}{l}(\Fr(l \indprotensor_\Z \hat M),\hat N).
\end{align*}
\end{proof}

\begin{lemma} \label{qgammatwomodmorph}
The functor $\Indecnoarg\colon \fModSchf{k}{l} \to \fBimodf{k}{l}$ is a right strict morphism of $2$-bimodules over $\Mod_{l}$. Its adjoint $\Fr\colon \fBimodf{k}{l} \to \fModSchf{k}{l}$ is a left strict morphism of $2$-bimodules over $\Mod_{l}$.

More explicitly, for $M \in \Mod_{l}$, $\hat M \in \fBimodf{k}{l}$, and $\hat N \in \fModSch{k}{l}$, the maps
\begin{align*}
\beta\colon \Indec{\hom_l(M,\hat N)} \to & \hom_l(M,\Indec{\hat N})  & \text{(cf. \eqref{betamap})}\\
\intertext{and}
\alpha\colon M \indprotensor_l \Fr(\hat M) \to & \Fr(M \indprotensor_l \hat M) & \text{(cf. \eqref{alphamap})}
\end{align*}
are isomorphisms.
\end{lemma}
\begin{proof} Let $\hat N \in \fBimodf{k}{l}$ be a test object. Then
\begin{multline*}
\fBimod{k}{l}(\hat N,\Indec{\hom_l(M,\hat M)}) \cong  {\fModSch{k}{l}}(\Fr(\hat N),\hom_l(M,\hat M)) \\
\cong  \Mod_{l}(M,\fModSch{k}{l}(\Fr(\hat N),\hat M))
\cong \Mod_{l}(M,\fBimod{k}{l}(\hat N,\Indec{\hat M})) \\
\cong \fBimod{k}{l}(\hat N,\hom(M,\Indec{\hat M})).
\end{multline*}
The statement about $\Fr$ follows by adjointness.
\end{proof}

The functor $\Indecnoarg$ is not strict with respect to the tensor product $\otimes_l$ of formal module schemes (Thm.~\ref{hopftwoalgebra}). This can already be seen by observing that for the unit $\tensunit{k}{l} = \Spf{\hom(l,k)}$ of $\otimes_l$,
\begin{equation}\label{QIisnull}
Q(\hom(l,k)) = (\prod_{\lambda \in l-\{0\}} k) / (\prod_{\lambda \in l-\{0\}} k)^2 = 0,
\end{equation}
which is different from the unit $\tensunit{k}{l} = \Spf{\hom_\Z(l,k)}$ of $\fBimod{k}{l}$.

\begin{lemma} \label{gammamonoidal}
The functor $\Fr\colon \fBimod{k}{l} \to \fModSch{k}{l}$ is a strict morphism of $2$-algebras, i.e. there are natural isomorphisms
\[
\phi_0\colon \Fr(\tensunit{k}{l}) \to \tensunit{k}{l} \quad \text{and} \quad \phi\colon \Fr(\hat M \otimes_l \hat N) \to \Fr(\hat M) \otimes_l \Fr(\hat N)
\]
making $\Fr$ into a strict monoidal functor.
\end{lemma}
\begin{proof}
This follows from Cor.~\ref{constantzfmod}, Lemma~\ref{freelcharacterization}, and Lemma~\ref{tensoroffreemodsch}.
\end{proof}

\begin{corollary} \label{Qopmonoidal}
The functor $\Indecnoarg\colon \fModSch{k}{l} \to \fBimod{k}{l}$ is a lax morphism of $2$-algebras.
\end{corollary}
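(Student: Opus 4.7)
The plan is to derive the lax structure on $\Indecnoarg$ from the strict monoidal structure on its left adjoint $\Fr$ by the standard doctrinal-adjunction yoga (Kelly's mates), and then check that this structure is compatible with the $\Mod_l$-enrichment, using that $\Indecnoarg$ is already right strict as a morphism of $2$-modules.

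First I would construct the lax structure maps. Since $\Fr \dashv \Indecnoarg$ (with unit $\eta$ and counit $\epsilon$) and $\Fr$ is strict monoidal by Lemma~\ref{gammamonoidal}, define
\[
\phi_0\colon \hom_\Z(l,k) \to \Indec{\hom(l,k)}
\]
as the adjoint of the strict isomorphism $\Fr(\hom_\Z(l,k)) \xrightarrow{\cong} \hom(l,k)$, and define
\[
\phi\colon \Indec{F} \otimes_l \Indec{G} \to \Indec{F \otimes_l G}
\]
as the adjoint of the composite
\[
\Fr(\Indec{F} \otimes_l \Indec{G}) \xrightarrow{\cong} \Fr(\Indec{F}) \otimes_l \Fr(\Indec{G}) \xrightarrow{\epsilon \otimes_l \epsilon} F \otimes_l G,
\]
where the first isomorphism is the strict monoidal structure of $\Fr$. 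Associativity, unitality and the interchange with the symmetry are then the standard mate-calculus verifications: each axiom for $\phi,\phi_0$ is adjoint to the corresponding (trivially satisfied) strict axiom for $\Fr$, so they reduce to a diagram chase using naturality of $\eta,\epsilon$ and the triangle identities.

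Next I would verify that $(\phi,\phi_0)$ is compatible with the enrichment, i.e.\ the two commutative diagrams in the definition of a lax morphism of $2$-algebras. Here I exploit that $\Indecnoarg$ is a right strict morphism of $2$-modules (Lemma~\ref{qgammatwomodmorph}): the cotensor structure map $\beta\colon \Indec{\hom_l(M,F)} \to \hom_l(M,\Indec{F})$ is an isomorphism, and dually $\alpha\colon M \indprotensor_l \Fr(B) \to \Fr(M \indprotensor_l B)$ is an isomorphism. Translating the required compatibility diagrams through the adjunction $\Fr \dashv \Indecnoarg$, each diagram becomes a statement about the strict monoidal functor $\Fr$ and the left strict $2$-module map $\alpha$, which is automatic. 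Equivalently, the $l$-linearity of $\phi$ on hom-objects follows because every arrow appearing in the construction of $\phi$ is built from $l$-linear natural transformations (the strict monoidal structure of $\Fr$ is $l$-linear by Lemma~\ref{gammamonoidal}, and $\epsilon$ is a morphism in $\fModSch{k}{l}$).

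The main obstacle I expect is bookkeeping rather than depth: the verification of the enrichment-compatibility diagrams requires carefully tracking the mate of the map $\alpha \boxtimes \alpha$ appearing in Lemma~\ref{laxmorphproperties} across the adjunction, and then identifying it with the map built from $\phi$ and $\beta$ on the $\Indecnoarg$ side. This is a routine but delicate exercise in adjunctions, analogous in flavor to Proposition~\ref{bilaxcondition}, and I would formalize it by noting that the construction of $(\phi,\phi_0)$ from the strict structure on $\Fr$ is functorial in the $2$-module and $2$-algebra data, so the coherence carries over automatically.
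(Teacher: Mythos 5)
Your proposal is correct and follows essentially the same route as the paper: the paper also obtains the lax structure as the mate, under the adjunction $\Fr \dashv \Indecnoarg$ (written on representing objects as $Q \dashv \Gamma$), of the strict monoidal structure of $\Fr$ from Lemma~\ref{gammamonoidal}, with $\phi$ adjoint to $\Fr(\Indec{F}\otimes_l\Indec{G}) \cong \Fr\Indec{F}\otimes_l\Fr\Indec{G} \xrightarrow{\epsilon\otimes\epsilon} F\otimes_l G$ and the unit map forced (indeed zero, since $Q(\hom(l,k))=0$). The coherence and enrichment compatibility are likewise left to the standard doctrinal-adjunction verification in the paper.
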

\begin{proof}
We need to produce natural transformations
\[
\phi_0\colon \Indec{\tensunit{k}{l}} \to \tensunit{k}{l} \quad \text{and} \quad \phi\colon \Indec{\hat M} \otimes_l \Indec{\hat N} \to \Indec{\hat M \otimes_l \hat N}.
\]
making $\Indecnoarg$ into a lax monoidal functor, and satisfying the compatibility relation for the enrichments.
Since $\Indecnoarg{\tensunit{k}{l}}=0$ by \eqref{QIisnull}, $\phi_0$ is the zero map.

The map $\phi$ comes from the fact that $\Indecnoarg$ is right adjoint to $\Fr$. Explicitly, it is given as the adjoint of
\[
\Fr(\Indec{\hat M} \otimes_l \Indec{\hat N}) \xrightarrow[\text{Lemma~\ref{gammamonoidal}}]{\phi} \Fr\Indec{\hat M} \otimes_l \Fr\Indec{\hat N} \xrightarrow{\epsilon_{\hat M} \otimes \epsilon_{\hat N}} \hat M \otimes_l \hat N.
\]
\end{proof}
\subsection{Primitives}\label{subsec:prim}

The situation for primitives is almost, but not quite dual to that of indecomposables. Whereas for indecomposables, we considered a pair of adjoint functors $(\Fr,\Indecnoarg)$ where $\Indecnoarg$ was defined levelwise, there will be an adoint pair $(\Primnoarg,\Cof)$ of functors where $\Cof$ is defined levelwise. \label{cofdefn}

A functor $B\colon \Mod_k \to \Mod_l$ in $\fBimod{k}{l}$ can be propagated to a functor
\[
\Cof(B)\colon \Alg_k \to \Mod_l \quad \text{ in }\fModSch{k}{l}
\]
by forgetting the $k$-algebra structure. To see that this is representable, note that the forgetful functor $\Pro-\Alg_k \to \Pro-\Mod_k$ has a left adjoint given levelwise by $\Sym\colon \Pro-\Mod_k \to \Pro-\Alg_k$, the free commutative algebra on a pro-$k$-module. Explicitly, for $M \in \Mod_k$, $\Sym(M)$ is the $k$-algebra $\bigoplus_{i=0}^\infty \Sym^i(M)$, where $\Sym^i(M) = M^{\otimes_k i}/(m_1 \otimes m_2 - m_2 \otimes m_1)$. The adjunction
\begin{equation}\label{symadjunction}
\Pro-\Alg_k(\Sym(M),R) \cong \Pro-\Mod_k(M,R)
\end{equation}
shows that it indeed represents $\Cof(B)$ if $M = \Reg{B}$. Note that $\Sym(M)$ is flat if $M$ is: this is true for free modules, any flat module is a filtered colimit of free modules \cite{lazard:platitude,govorov:flat}, and filtered colimits of flat modules are flat.

This algebra has a cozero $\Sym(M) \to \Sym^0(M)=k$ and a coaddition given by decreeing that all elements of $M = \Sym^1(M)$ are primitive. Furthermore, if $M = \Reg{B}$ for a formal bimodule $B \in \fBimod{k}{l}$, then it has an $l$-action defined by
\[
l \to \Pro-\Mod_k(M,M) \to \Pro-\Alg_k(\Sym(M),\Sym(M)),
\]
which makes the formal $l$-module scheme structure over $k$ explicit. Note that $\Sym(M)$ is in fact a pro-Hopf algebra. Summarizing, we have constructed an enriched functor
\[
\overline\Cof\colon \fBimod{k}{l} \to \fModSch{k}{l} \quad \text{and} \quad \Cof\colon \fBimodf{k}{l} \to \fModSchf{k}{l}.
\]
\begin{defn} \label{def:prim}
The left adjoint of $\Cof$ is denoted by $\Primnoarg = \Reg{P}$, the formal bimodule of primitives of a formal module scheme.
\end{defn}
Of course, one could define the primitives $P(C)$ for a pointed coalgebra $C$, but we will make no use of that. Dually to the situation for indecomposables, we can define $P(\Reg{\hat M})$ explicitly as those elements $a \in \Reg{M}$ such that $\psi_+(a) = a \otimes 1 + 1 \otimes a$. More precisely in our pro-setting, writing $U$ for the forgetful functor $\Sch{k} \to \fModSch{k}{\Z}$, $\Prim{\hat M}$ is the coequalizer of
\begin{equation}\label{primcoeq}
U(\hat M \times \hat M) \rightrightarrows U(\hat M) \to \Prim{\hat M},
\end{equation}
where one map is the addition on $\hat M$ and the other is induced by $\id \otimes 1 + 1 \otimes \id\colon \Reg{\hat M} \to \Reg{\hat M} \otimes_k \Reg{\hat M}$. This also shows that $\Primnoarg$ restricts to a functor $\fModSchf{k}{l} \to \fBimodf{k}{l}$.

For $\hat M \in \fModSch{k}{\Z}$, there is an alternative useful construction of $\Prim{\hat M}$. Note that there are two maps
\[
U(\Fr(\hat M)) \rightrightarrows U(\hat M)
\]
given by the module scheme structure (Cor.~\ref{fmodschcomonadic}) and by the map represented by the inclusion $\Reg{\hat M} \hookrightarrow \Gamma^\Z(\Reg{\hat M})$.

\begin{lemma} \label{primindecinverses}
The functor $\Primnoarg$ is left inverse to $\Fr$ and the functor $\Indecnoarg$ is left inverse to $\Cof$. More precisely, for $\hat M \in \fBimodf{k}{l}$, the natural map
\[
\Fr(\hat M) \to \Cof(\hat M)
\]
has adjoints $\Prim{\Fr(\hat M)} \to \hat M$ and $\hat M \to \Indec{\Cof(\hat M)}$ which are isomorphisms. Moreover, if $\hat X \in \Schf{k}$ then $\Prim{\Fr(\hat X)} = U(\hat X)$.
\end{lemma}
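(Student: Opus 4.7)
The plan is to leverage the two adjunctions $\Primnoarg \dashv \Cof$ and $\Fr \dashv \Indecnoarg$ established earlier in this section. They compose to give an adjoint pair $\Primnoarg \circ \Fr \dashv \Indecnoarg \circ \Cof$ of endofunctors of $\fBimod{k}{l}$, and since adjoints are unique up to canonical isomorphism, it suffices to show that one of the two composites is naturally isomorphic to the identity functor; the other then follows automatically, and under the resulting isomorphisms the unit and counit of the composite adjunction are precisely the adjoints of the natural comparison map $\Fr(B) \to \Cof(B)$.

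First I would compute $\Indec{\Cof(B)}$ directly. Writing $B = \Spf{M}$, the adjunction \eqref{symadjunction} identifies $\Cof(B)$ with $\Spf{\Sym(M)}$. The pro-$k$-algebra $\Sym(M) = \bigoplus_{n \geq 0} \Sym^n(M)$ has augmentation ideal $\Sym(M)_+ = \bigoplus_{n \geq 1} \Sym^n(M)$, whose square equals $\bigoplus_{n \geq 2} \Sym^n(M)$, so the projection onto $\Sym^1(M) = M$ induces a canonical isomorphism $Q(\Sym(M)) \cong M$. I would then check that this isomorphism is $l$-linear (the $l$-action on $\Cof(B)$ is the propagation via $\Sym$ of the $l$-action on $B$) and that the induced map $B \to \Indec{\Cof(B)}$ agrees with the adjoint of $\Fr(B) \to \Cof(B)$ under $\Fr \dashv \Indecnoarg$.

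For the last assertion, take $l = \Z$ and argue by Yoneda. For any $B \in \fBimod{k}{\Z}$ and $X \in \Sch{k}$, combining $\Primnoarg \dashv \Cof$ with $\Fr_\Z \dashv V$ (Cor.~\ref{fmodschstructure}) and \eqref{symadjunction} yields
\begin{align*}
\fBimod{k}{\Z}(\Prim{\Fr(X)}, B) &\cong \fModSch{k}{\Z}(\Fr(X), \Cof(B)) \cong \Sch{k}(X, V\Cof(B))\\
&\cong \Pro-\Alg_k(\Sym(\Reg{B}), \Reg{X}) \cong \Pro-\Mod_k(\Reg{B}, \Reg{X})\\
&\cong \fBimod{k}{\Z}(U(X), B),
\end{align*}
from which $\Prim{\Fr(X)} \cong U(X)$ follows.

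The main obstacle will be the bookkeeping of $l$-module structures. The isomorphism $Q(\Sym(M)) \cong M$ is elementary on underlying pro-$k$-modules, but one must verify that the $l$-action obtained by propagating through $\Sym$ and then passing to indecomposables coincides with the original action on $M$ encoded as a map $l \indprotensor_K M \to M$ via Cor.~\ref{unitmapadjmodules}. Equally important is the check that the constructed isomorphism is indeed the adjoint of the natural comparison $\Fr(B) \to \Cof(B)$, rather than some incidental automorphism of $B$, since the statement asserts that a specific natural map is an isomorphism.
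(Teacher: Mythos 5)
Your proposal is correct, but it takes a genuinely different route from the paper's. You compute the composite $\Indecnoarg\circ\Cof$ directly: since $\Cof(B)=\Spf{\Sym(M)}$ with $\Sym$ defined levelwise, the grading argument $Q(\Sym(M))=\Sym(M)_+/(\Sym(M)_+)^2\cong\Sym^1(M)=M$ is an honest computation, and you handle the final claim by a separate chain of adjunctions through the $\Sym$ adjunction and Yoneda. The paper instead works entirely on the $\Primnoarg\circ\Fr$ side: it invokes the coequalizer presentation $\Fr(\Fr(X))\rightrightarrows\Fr(X)\to\Prim{\Fr(X)}$ introduced just before the lemma, observes that this coequalizer is split by the canonical inclusion $X\to\Fr(X)$, and so obtains $\Prim{\Fr(X)}\cong U(X)$ for every formal scheme $X$; the bimodule statement and then the $\Indecnoarg\circ\Cof$ statement follow by the same ``one adjoint is an isomorphism iff the other is'' reduction that you also use. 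The split-coequalizer trick buys the paper two things: it never needs an explicit description of $\Gamma$ (which, unlike $\Sym$, is only characterized abstractly as the maximal Hopf subalgebra of $\Gamma_{\N}$), and it pins down the isomorphism as the specific composite $X\to\Fr(X)\to\Prim{\Fr(X)}$. Your $\Sym$-computation is more elementary and makes $B\cong\Indec{\Cof(B)}$ completely explicit, at the cost of the deferred bookkeeping you flag yourself ($l$-linearity, and matching your isomorphism with the adjoint of $\Fr(B)\to\Cof(B)$). One small imprecision: the two adjoints of the comparison map are its mates under the two separate adjunctions, not literally the unit and counit of the composite adjunction $\Primnoarg\Fr\dashv\Indecnoarg\Cof$; the clean justification of the iff is that precomposition with $\Prim{\Fr(B)}\to B$ and postcomposition with $C\to\Indec{\Cof(C)}$ induce the same map $\fBimod{k}{l}(B,C)\to\fModSch{k}{l}(\Fr(B),\Cof(C))$ under the adjunction isomorphisms, so Yoneda applies.
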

\begin{proof}
One adjoint is an isomorphism if and only if the other is, so it suffices to check this for $\Prim{\Fr(\hat M)}$, which follows from the second claim on a general formal scheme $\hat X$. In that case, note that the coequalizer defining $\Prim{\Fr(\hat X)}$,
\[
U(\Fr(\Fr(\hat X))) \rightrightarrows U(\Fr(\hat X)) \to \Prim{\Fr(\hat X)}
\]
splits by the canonical inclusion $U(\hat X) \to U(\Fr(\hat X))$, thus the composite $U(\hat X) \to \Prim{\Fr(\hat X)}$ gives the claimed isomorphism.
\end{proof}

\begin{lemma} \label{symodot}
The functor $\Cof\colon \fBimodf{k}{l} \to \fModSchf{k}{l}$ is a right strict morphism of $2$-modules over $\Mod_l$, and the functor $\Primnoarg\colon \fModSchf{k}{l} \to \fBimodf{k}{l}$ is left strict, i.e. for $M \in \Mod_{l}$, $\hat M \in \fModSchf{k}{l}$, and $\hat N \in \fBimodf{k}{l}$, the natural maps
\[
\beta\colon \hom_l(M,\Cof(\hat N)) \to \Cof(\hom_l(M,\hat N))
\]
and
\[
\alpha\colon M \indprotensor_l \Prim{\hat M} \to \Prim{M \indprotensor_l \hat M}
\]
are isomorphisms.
\end{lemma}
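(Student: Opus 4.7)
The strategy is to verify that $\beta$ is an isomorphism by direct unwinding of definitions, and then obtain the statement for $\alpha$ formally by adjointness. Recall from the discussion preceding the lemma that $\Cof(B)$ is nothing but the functor $B\colon\Mod_k\to\Mod_l$ precomposed with the forgetful functor $U\colon\Alg_k\to\Mod_k$. Similarly, in both $\fBimod{k}{l}$ and $\fModSch{k}{l}$ the cotensor $\hom_l(M,-)$ is, by its construction as a limit of representables, given by the pointwise formula $\hom_l(M,F)(X)=\Mod_l(M,F(X))$.

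Hence for any $R\in\Alg_k$, both sides of $\beta$ evaluated at $R$ give literally the same $l$-module:
\[
\hom_l(M,\Cof(B))(R)=\Mod_l(M,\Cof(B)(R))=\Mod_l(M,B(UR))
\]
and
\[
\Cof(\hom_l(M,B))(R)=\hom_l(M,B)(UR)=\Mod_l(M,B(UR)).
\]
The identification is natural in $R$, and $\beta$ is visibly the induced map. This shows $\beta$ is an isomorphism of functors, and a fortiori of formal module schemes over $k$.

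For $\alpha$, note that $\Primnoarg\dashv\Cof$ is an adjunction of $2$-modules over $\Mod_l$, and $\alpha$ is obtained as the adjoint of $\beta$ under the adjunctions
\[
\fModSch{k}{l}(M\indprotensor_l\Fr(B),F)\cong\Mod_l(M,\fModSch{k}{l}(\Fr(B),F))
\]
combined with $\Primnoarg\dashv\Cof$ (compare the proof of Lemma~\ref{qgammatwomodmorph}, which performs the analogous juggling for $\Fr$ and $\Indecnoarg$). Since $\beta$ is an isomorphism, so is $\alpha$.

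The main thing to be careful about, though it is in the end purely bookkeeping, is that the cotensor $\hom_l(M,-)$ on $\fBimod{k}{l}$ is represented by the coequalizer $M\odot_l\Reg{B}$ of Lemma~\ref{fbimodtwomodule}, while on $\fModSch{k}{l}$ it is represented by the pro-algebra $M\odot_l\Reg{F}$ of Section~\ref{sec:fmodschstructure}; the content of the lemma is thus the natural isomorphism $\Sym(M\odot_l\Reg{B})\cong M\odot_l\Sym(\Reg{B})$. This is however guaranteed by the computation above together with the adjunction \eqref{symadjunction}, since representing pro-objects are uniquely determined by the functors they represent.
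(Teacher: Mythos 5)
Your proof is correct, and for the $\beta$ half it takes a genuinely more direct route than the paper. The paper argues by corepresentability: it maps an arbitrary test object $T \in \fModSch{k}{\Z}$ into both sides and shuffles through the adjunction $\Primnoarg \dashv \Cof$ and the two cotensor adjunctions to see that both sides corepresent the same functor. You instead evaluate both functors at a test algebra $R$ and observe that, because $\Cof$ is literally precomposition with the forgetful functor $\Alg_k \to \Mod_k$ and both cotensors $\hom_l(M,-)$ are defined pointwise, the two sides are the \emph{same} functor $R \mapsto \Mod_l(M,B(UR))$; representability of both sides then forces the representing pro-objects to agree. This avoids the $\Primnoarg \dashv \Cof$ adjunction entirely for $\beta$ and makes the content on representing objects ($\Sym(M\odot_l \Reg{B}) \cong M \odot_l \Sym(\Reg{B})$, with the two different $\odot_l$'s) transparent. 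Both you and the paper then get $\alpha$ by the same formal adjointness argument. Two small points: (i) your displayed adjunction for the $\alpha$ step has $\Fr(B)$ where a general object of $\fModSch{k}{l}$ (ultimately $F$, paired against $\Cof(B)$) is what is actually needed --- the tensor--cotensor adjunction of the $2$-module structure from Cor.~\ref{modellenrichment} is the right citation, and $\Fr$ plays no role here; (ii) the claim that the canonical enrichment-induced map $\beta$ ``is visibly the induced map'' under your identification deserves a sentence of justification, though the paper's own chain of isomorphisms is equally silent on this point, so it is not a gap relative to the standard of the source.
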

\begin{proof} Analogous to the proof of Lemma~\ref{qgammatwomodmorph}.
% Let $\hat T \in \fModSch{k}{l}$ be a test object. Then
% \begin{align*}
% \fModSch{k}{l}(\hat T,\Cof(\hom_l(M,\hat N))) \cong & \fBimod{k}{l}(\Prim{\hat T},\hom_l(M,\hat N))\\
%  = & \Mod_{l}(M,\fBimod{k}{l}(\Prim{\hat T},\hat N))\\
%  = & \Mod_{l}(M,\fModSch{k}{l}(\hat T,\Cof(\hat N)))\\
%  = &  \fModSch{k}{l}(\hat T,\hom_l(M,\Cof(\hat N))).
% \end{align*}
% The statement about $\Primnoarg$ follows from adjointness.
\end{proof}

\begin{prop} \label{Pmonoidal}
The functor $\Primnoarg\colon \fModSchf{k}{l} \to \fBimodf{k}{l}$ is a strict morphism of $2$-algebras over $\Mod_{l}$.
\end{prop}
\begin{proof}
Recall that the units of $\otimes_l$ in $\fModSchf{k}{l}$ and in $\fBimodf{k}{l}$ are both denoted by $\tensunit{k}{l}$ and given by $l \indprotensor \Spf{k}$ and $l \indprotensor_\Z \Spf{k}$, respectively.
The augmentation ideal of $\Reg{l \indprotensor \Spf{k}}$ consists those partial functions $f\colon l \to k$ such that $f(0)=0$; the primitives consist of those functions that are a fortiori additive. Thus,
\[
\Prim{\tensunit{k}{l}} = \tensunit{k}{l}.
\]
For the map $\psi$, first consider the case $l=\Z$. In this case, the tensor product in $\fModSch{k}{\Z}$ was defined in \eqref{ztensorcoeq} as the colimit of
\[
\Fr(\hat M \times \hat M \times \hat N) \rightrightarrows \Fr(\hat M \times \hat N) \leftleftarrows \Fr(\hat M \times \hat N \times \hat N).
\]
Applying $\Primnoarg$, which, being a left adjoint, commutes with colimits, and applying Lemma~\ref{primindecinverses}, we obtain that $\Prim{\hat M \otimes_\Z \hat N}$ is the colimit of
\begin{equation}\label{primtensorcoeq}
U(\hat M \times \hat M \times \hat N) \rightrightarrows U(\hat M \times \hat N) \leftleftarrows U(\hat M \times \hat N \times \hat N).
\end{equation}
Note here that $U(\hat M \times \hat N) \cong U(\hat M) \otimes_\Z U(\hat N)$, where $\otimes_\Z$ is the tensor product of formal bimodules.
The upper maps in \eqref{primtensorcoeq} are given by the addition on $M$ and $N$, respectively, and the lower maps are obtained from the map $U(\hat M \times \hat M) \to U(\hat M)$ of \eqref{primcoeq} by tensoring with the identity on $U(\hat N)$. 

Using the left hand side of \eqref{primtensorcoeq}, by \eqref{primcoeq} there is a natural map
\[
\coeq\left(U(\hat M) \otimes_\Z U(\hat M) \otimes_\Z U(\hat N) \rightrightarrows U(\hat M) \otimes_\Z U(\hat N)\right) \to \Prim{M} \otimes_\Z U(\hat N)
\]
which is an isomorphism since $- \otimes_\Z U(\hat N)$ is exact, i.~e. since $\hat N$ is flat. Applying the same reasoning to the right hand side, we obtain the desired map
\[
\psi\colon \Prim{\hat M \otimes_\Z \hat N} \to \Prim{\hat M} \otimes_\Z \Prim{\hat N},
\]
which is an isomorphism since in addition, $\Prim{\hat M} \otimes_\Z -$ is exact because $P(\Reg{\hat M})$ is pro-flat. Clearly the roles of $M$ and $N$ can be reversed.

For general $l$, we can write $\hat M \otimes_l \hat N$ as a coequalizer \eqref{tensorcoeq}
\[
l \indprotensor_\Z (\hat M \otimes_\Z \hat N) \rightrightarrows \hat M \otimes_\Z \hat N \to \hat M \otimes_l \hat N,
\]
which by the $l=\Z$ case gives a diagram
\[
\begin{tikzpicture}
	\matrix (m) [matrix of math nodes, row sep=2em, column sep=2em, text height=1.5ex, text depth=0.25ex]
	{
		\Prim{l \indprotensor_\Z (\hat  M \otimes_\Z \hat N)}		&  \Prim{\hat M \otimes_\Z \hat N}		& \Prim{\hat M \otimes_l \hat N}\\
		l \indprotensor_\Z (\Prim{\hat M} \otimes_\Z \Prim{\hat N})	&  (\Prim{\hat M} \otimes_\Z \Prim{\hat N})	& \Prim{\hat M} \otimes_l \Prim{\hat N}\\
	};
	\path[->,font=\scriptsize]
	([yshift=-1mm]m-1-1.east)		edge ([yshift=-1mm]m-1-2.west)
	([yshift=+1mm]m-1-1.east)	edge ([yshift=+1mm]m-1-2.west)
	([yshift=-1mm]m-2-1.east)		edge ([yshift=-1mm]m-2-2.west)
	([yshift=+1mm]m-2-1.east)	edge ([yshift=+1mm]m-2-2.west)
	(m-1-2)	edge (m-1-3)
	(m-2-2) edge (m-2-3)
	(m-1-1)	edge (m-2-1)
	(m-1-2)	edge (m-2-2)
	(m-1-3)	edge[dashed] (m-2-3);
\end{tikzpicture}
\]
The lower row is a coequalizer diagram by Def.~\ref{def:sweedlerprod}; the upper row by Lemma~\ref{symodot}, and since $\Primnoarg$ commutes with coequalizers. Hence the dotted arrow $\psi$ exists and is an isomorphism.
\end{proof}

The following corollary follows immediately from the adjunctions.
\begin{corollary} \label{symopmonoidal}
The functor $\Cof$ is a lax morphism of $2$-algebras, i.e. for $\hat M$, $\hat N \in \fBimodf{k}{l}$, there are natural transformations
\[
\phi_0\colon \tensunit{k}{l} \to \Cof(\tensunit{k}{l}) \quad \text{and} \quad
\phi\colon \Cof(\hat M) \otimes_l \Cof(\hat N) \to \Cof(\hat M \otimes_l \hat N)
\]
turning $\Cof$ into a lax monoidal functor. \qed
\end{corollary}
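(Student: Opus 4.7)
The plan is to get the lax structure on $\Cof$ directly from the oplax structure on $\Primnoarg$ established in Proposition~\ref{Pmonoidal} via the adjunction $\Primnoarg \dashv \Cof$. In general, if $F \dashv G$ is an adjunction of functors between $2$-algebras, any oplax monoidal structure on $F$ transfers to a lax monoidal structure on $G$ (the mate under the adjunction); this is a well-known piece of monoidal category theory, so the content of the corollary is mainly to unwind what the mates look like and to verify that the enrichment over $\Mod_l$ is respected.

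Concretely, I would first construct $\phi_0$ as the adjunct of the unit part of the oplax structure. The oplax structure of Prop.~\ref{Pmonoidal} includes an isomorphism $\psi_0\colon \Prim{\Spf{\hom(l,k)}} \xrightarrow{\cong} \Spf{\hom_\Z(l,k)}$ (the proof there identified primitive additive functions $l\to k$ with $\hom_\Z(l,k)$); taking its adjunct under $\Primnoarg \dashv \Cof$ produces $\phi_0\colon \Spf{\hom(l,k)} \to \Cof(\Spf{\hom_\Z(l,k)})$. Next, for $B,C \in \fBimod{k}{l}$, I define $\phi\colon \Cof(B) \otimes_l \Cof(C) \to \Cof(B \otimes_l C)$ as the adjunct of the composite
\[
\Prim{\Cof(B)\otimes_l \Cof(C)} \xrightarrow{\psi} \Prim{\Cof(B)} \otimes_l \Prim{\Cof(C)} \xrightarrow{\cong} B \otimes_l C,
\]
where the last isomorphism is the counit of the adjunction, which is an isomorphism by Lemma~\ref{primindecinverses}.

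It remains to check the axioms: associativity, unitality, compatibility of $\phi$ with $\phi_0$, and the enrichment axiom (the diagrams in the definition of a lax morphism of $2$-algebras). Each of these is the mate under the adjunction of the analogous diagram for $\Primnoarg$, which Prop.~\ref{Pmonoidal} supplies (these are the defining properties of an oplax morphism of $2$-algebras). The transfer uses only general adjunction bookkeeping: insert the unit $\id \to \Cof \circ \Primnoarg$ on one side and the counit $\Primnoarg \circ \Cof \xrightarrow{\cong} \id$ on the other, and invoke naturality of the adjunction isomorphism $\fBimod{k}{l}(\Prim{F},B) \cong \fModSch{k}{l}(F,\Cof(B))$.

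The only slightly delicate point is the enrichment compatibility, since the adjunction a priori lives in $\Set$ rather than $\Mod_l$. But by Lemma~\ref{symodot} the adjunction is compatible with the $2$-module structures on both sides ($\Cof$ is right strict and $\Primnoarg$ is left strict over $\Mod_l$), so the hom-set adjunction is in fact an adjunction of $\Mod_l$-enriched functors. Consequently the mate of the enriched oplax diagram for $\Primnoarg$ is exactly the enriched lax diagram for $\Cof$. I expect this enrichment check to be the only nonformal step, and it reduces, as above, to combining the adjunction with the strictness statements in Lemma~\ref{symodot}.
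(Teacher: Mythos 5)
Your approach is exactly the paper's: the paper gives no argument beyond the sentence ``The following corollary follows immediately from the adjunctions,'' and what it has in mind is precisely the doctrinal-adjunction transfer you describe, turning the oplax structure on the left adjoint $\Primnoarg$ (Prop.~\ref{Pmonoidal}) into a lax structure on the right adjoint $\Cof$ by taking mates, with the $\Mod_l$-enrichment handled by the strictness statements of Lemma~\ref{symodot}. So the route and the key inputs match.

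One assertion in your write-up is wrong, though it happens not to damage the argument. You claim that the counit $\Prim{\Cof(B)} \to B$ of the adjunction $\Primnoarg \dashv \Cof$ is an isomorphism ``by Lemma~\ref{primindecinverses}.'' That lemma says something different: it is the two adjoints of the canonical map $\Fr(B) \to \Cof(B)$, namely $\Prim{\Fr(B)} \to B$ and $B \to \Indec{\Cof(B)}$, that are isomorphisms. The counit $\Prim{\Cof(B)} \to B$ is in general \emph{not} invertible: $\Cof(B)$ is represented by $\Sym(\Reg{B})$, and the paper itself points out that $P(l[z]) = P(\Sym(l))$ is strictly larger than $l$ in positive characteristic. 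Fortunately the mate construction only requires the counit as a map, not as an isomorphism: $\phi$ is the adjunct of $\Prim{\Cof(B) \otimes_l \Cof(C)} \xrightarrow{\psi} \Prim{\Cof(B)} \otimes_l \Prim{\Cof(C)} \xrightarrow{\epsilon \otimes \epsilon} B \otimes_l C$, and the lax axioms transfer by the usual unit/counit bookkeeping regardless of whether $\epsilon$ is invertible. Relabel that arrow as the counit rather than an isomorphism and your proof is correct as it stands.
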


\subsection{Primitives and indecomposables of formal algebra schemes} \label{subsec:pqfalgsch}

We will now consider what the additional structure of a formal \emph{algebra} scheme maps to under the functors $\Primnoarg$ and $\Indecnoarg$.

Let $\hat A \in \fAlgSchf{k}{l}$ be a flat formal $l$-algebra scheme over $k$ and let $R \in \Alg_l^+$ be an $l$-algebra augmented by $\epsilon_R\colon R \to l$. Then $\hom_l^\times(R,\hat A)$ (cf. Section~\ref{sec:falgschstructure}) is a pointed formal scheme over $k$ by the map $\Spf{k} \to \hom_l^\times(R,\hat A)$ adjoint to $R \xrightarrow{\epsilon_R} l \xrightarrow{\epsilon} \hat A(k)$. This produces thus a functor
\[
\hom_l^{\times}(-,\hat A)\colon (\Alg_{l}^+)^\op \to (\Schf{k})^+
\]
which has a left adjoint
\begin{equation}\label{ahatplus}
\hat A_+ = l \times_{\hat A(k)} \hat A(-)\colon \Alg_k^+ \to \Alg_l^+,
\end{equation}
an augmented version of $\hat A$ with $\hat A_+(k)=l$.

\begin{prop} \label{basicalgschadj}
For $\hat A \in \fAlgSchf{k}{l}$, $M \in \Mod_k^f$ there is a natural isomorphism
\[
\hat A_+(k \oplus M) \cong l \oplus \Indec{\hat A}(M) \in \Alg_l^+
\]
\end{prop}
\begin{proof}
Equivalently, we need to see that
\begin{equation}\label{augmentedalgebraiso}
\Pro-\Alg_k^+(\Reg{\hat A},k \oplus M) \cong \Pro-\Alg_k^+(\Reg{\hat A},k) \oplus \Pro-\Mod_k(Q\Reg{\hat A},M).
\end{equation}
An augmented pro-algebra map $\Reg{\hat A} \to k \oplus M$ consists of a pair $(f,g)$ with $f\colon \Reg{\hat A} \to k$ and $g\colon \Reg{\hat A} \to \hat M$ such that $f$ is an algebra map, $g$ is a $k$-module map, and $g(ab) = f(a)g(b)+g(a)f(b)$. We produce the isomorphism \eqref{augmentedalgebraiso} by sending $(f,g)$ to $(f, a \mapsto g(a-f(a)))$. To see that this is an isomorphism of $l$-algebras, with the square-zero structure on the right hand side, we note that the comultiplication $\psi_\times$ restricts to a map
\[
\psi_\times\colon (\Reg{\hat A})_+ \to (\Reg{\hat A})_+ \otimes_k (\Reg{\hat A})_+
\]
because, dually, multiplication with $0$ on either side gives $0$. Thus the product of any two maps in $\Pro-\Mod_k(Q\Reg{\hat A}, M)$ is zero.
\end{proof}

\begin{corollary} \label{Qodotmonoidal}
For $R \in \Alg_{l}^+$ and $\hat A \in \fAlgSchf{k}{l}$,
\[
\Indec{\hom_l^\times(R,\hat A)} \cong \hom_l(Q(R),\Indec{\hat A}) \in \fBimod{k}{\Z}
\]
\end{corollary}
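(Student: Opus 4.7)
The plan is to reduce the corollary directly to an application of the preceding proposition combined with the standard identification ``derivations into a square-zero module $=$ linear maps out of indecomposables.'' The key observation is that the preceding proposition has already computed $F_+$ on square-zero extensions, so everything we need to do is translate augmented $l$-algebra maps out of $R$ into this language.

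First, I would unpack the left-hand side: by definition $\Indec{\hom(R,F)}(\Reg{B})$ is the fiber over the basepoint of the map $\hom(R,F)(k \oplus \Reg{B}) \to \hom(R,F)(k)$, i.e.\ the set of $l$-algebra maps $f\colon R \to F(k \oplus \Reg{B})$ whose composite with $F(k \oplus \Reg{B}) \to F(k)$ equals $\eta \circ \epsilon \circ \epsilon_R$. Passing to the augmented functor $F_+$, the set of such $f$ is identified with the set of augmented $l$-algebra maps from $R$ to $F_+(k \oplus \Reg{B})$.

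Next, I would invoke the preceding proposition to rewrite $F_+(k \oplus \Reg{B}) \cong l \oplus \Indec{F}(\Reg{B})$ as augmented $l$-algebras, where the right-hand side carries the square-zero multiplication. An augmented $l$-algebra map $R \to l \oplus \Indec{F}(\Reg{B})$ then takes the form $r \mapsto (\epsilon_R(r), d(r))$, and multiplicativity forces $d$ to satisfy the Leibniz rule
\[
d(rr') = \epsilon_R(r)\, d(r') + \epsilon_R(r')\, d(r),
\]
i.e.\ $d$ is an $l$-linear derivation of $R$ into $\Indec{F}(\Reg{B})$ where the latter is viewed as an $R$-module via $\epsilon_R$. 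Since the $R$-action factors through $l$, such derivations vanish on $(R_+)^2$ and restrict from $R_+$, giving a natural bijection with $\Hom_l(Q(R), \Indec{F}(\Reg{B}))$.

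Assembling the three steps yields
\[
\Indec{\hom(R,F)}(\Reg{B}) \cong \Hom_l\bigl(Q(R), \Indec{F}(\Reg{B})\bigr) = \hom_l(Q(R), \Indec{F})(\Reg{B}),
\]
naturally in $\Reg{B} \in \Pro-\Mod_k$, which gives the claimed isomorphism of formal $k$-$l$-bimodules. I do not expect a real obstacle: the heavy lifting is done by the preceding proposition, and the only mild care required is to check that the derivation/Kähler identification survives the passage to pro-objects, which it does because $Q$ is computed levelwise and the square-zero extension $k \oplus (-)$ commutes with cofiltered limits.
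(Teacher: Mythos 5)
Your proposal is correct and follows essentially the same route as the paper: both reduce the claim to the preceding proposition's identification $F_+(k\oplus M) \cong l \oplus \Indec{F}(M)$ and then use the adjunction between $Q$ and the square-zero extension (which is exactly your derivation/Leibniz-rule step) together with the defining adjunction of $\odot_l$. The paper merely writes this as a single chain of natural isomorphisms of $\Pro$-$\Mod_k(-,M)$ sets on representing objects rather than unpacking the middle step in terms of derivations.
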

\begin{proof}
This follows from the adjunction of Prop.~\ref{basicalgschadj}: for $M \in \Mod_k$,
\begin{align*}
\Indec{\hom_l^\times(R,\hat A)}(M) \cong & \hom(R,\hat A)(k \oplus M)
\cong  \Alg_l^+(R,\hat A_+(k \oplus M))\\
\underset{\text{Prop.~\ref{basicalgschadj}}}\cong & \Alg_l^+(R,l \oplus \Indec{\hat A}(M))
\cong  \Mod_l(QR,\Indec{\hat A}(M))\\
\cong & \hom_l(QR,\Indec{\hat A})(M).
\end{align*}
\end{proof}

In order to understand how $\Primnoarg$ behaves with respect to the pairing
\[
\hom_l^\times\colon \Alg_l \times \fAlgSchf{k}{l} \to \Schf{k}
\]
of Section~\ref{sec:falgschstructure}, we need the following lemma, whose proof is a short series of standard adjunctions and will be left to the reader.
\begin{lemma} \label{odotleftfree}
Let $M \in \Mod_{l}$, $\hat A \in \fAlgSchf{k}{l}$, and denote by $U^\times(\hat A) \in \fModSchf{k}{l}$ the formal module scheme obtained by forgetting the multiplicative structure. Then there is a natural isomorphism
\[
\hom_l^\times(\Sym(M),\hat A) \cong \hom_l(M,U^\times(\hat A)).
\]
\end{lemma}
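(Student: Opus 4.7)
The plan is to reduce the statement to the elementary free–forgetful adjunction between $l$-algebras and $l$-modules, evaluated pointwise at each $k$-algebra, and then to invoke the definitions of $\hom(-,F)$ and $\hom_l(-,-)$ to match both sides.

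First I would evaluate both functors on an arbitrary $T \in \Alg_k$. By the definition of the pairing $\hom$ of Section~\ref{sec:falgschstructure}, we have
\[
\hom(\Sym(M),F)(T) = \Alg_l(\Sym(M),F(T)),
\]
whereas by the definition of $\hom_l$ for formal module schemes in Section~\ref{sec:fmodschstructure},
\[
\hom_l(M,U(F))(T) = \Mod_l(M,U(F)(T)) = \Mod_l(M,F(T))
\]
where in the last step the $l$-module structure on $F(T)$ is the one underlying its $l$-algebra structure. The sought-after identification now follows from the standard free–forgetful adjunction
\[
\Alg_l(\Sym(M),A) \cong \Mod_l(M,A),
\]
applied to $A = F(T)$.

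Second, I would note that this bijection is natural in $T \in \Alg_k$: the adjunction isomorphism is natural in its second argument (the $l$-algebra), and postcomposing with the functor $F\colon \Alg_k \to \Alg_l$ transports this naturality to the variable $T$. Hence the pointwise bijections assemble into an isomorphism of functors $\Alg_k \to \Set$.

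Since both sides are representable---the left side tautologically by $\Sym(M) \odot_l \Reg{F}$ (Section~\ref{sec:falgschstructure}) and the right side by $M \odot_l \Reg{U(F)}$ (Section~\ref{sec:fmodschstructure})---the isomorphism of representable functors lifts uniquely to an isomorphism of the representing pro-$k$-algebras, giving the natural isomorphism of formal schemes claimed. No real obstacle is expected; the entire argument is a direct translation of the $\Sym \dashv \mathrm{forget}$ adjunction into the pro-representable setting, which is exactly why the paper defers the proof to the reader as ``a short series of standard adjunctions''.
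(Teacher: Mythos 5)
Your argument is correct and is precisely the ``short series of standard adjunctions'' the paper alludes to when it leaves this proof to the reader: evaluate both sides on a test object $T \in \Alg_k$, apply the free--forgetful adjunction $\Alg_l(\Sym(M),A) \cong \Mod_l(M,A)$ with $A = F(T)$, check naturality in $T$, and pass to representing objects via Yoneda. Nothing is missing.
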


To even make sense of a compatibility statement of $\Primnoarg$ with $\hom_l^\times(R,\hat A)$, the latter has to have a Hopf algebra structure. By naturality in the $R$-variable, this happens if $R$ is a Hopf algebra. Denote the category of $l$-Hopf algebras (or $\Z$-module schemes over $l$) by $\ModSch{l}{\Z}$.
\begin{prop}\label{Podotmonoidal}
For $R \in \ModSch{l}{\Z}$ and $\hat A \in \fAlgSchf{k}{l}$, there is a natural map
\[
\beta\colon \Prim{\hom_l^\times(R,\hat A)} \to \hom_l(P(R), \Prim{\hat A}).
\]
\end{prop}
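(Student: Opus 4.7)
The plan is to construct $\beta$ via the tensor--cotensor adjunction in the $\Mod_l$-enriched category $\fBimod{k}{l}$: giving a morphism $\Prim{\hom(R,F)} \to \hom_l(P(R),\Prim{F})$ is equivalent to giving an $l$-linear map from $P(R)$ into the module of morphisms $\Prim{\hom(R,F)} \to \Prim{F}$. Hence it suffices to produce, $l$-linearly in $x \in P(R)$, a natural map $\Prim{\hom(R,F)} \to \Prim{F}$.

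First, since $R$ is an $l$-Hopf algebra, convolution makes $\hom(R,F)(T) = \Alg_l(R,F(T))$ into an abelian group functorially in $T$, and one verifies that this lifts to a coaddition on the representing object $R \odot_l \Reg{F}$, placing $\hom(R,F)$ in $\fModSch{k}{\Z}$. For any primitive $x \in P(R)$, with $\psi_R(x) = x \otimes 1 + 1 \otimes x$, the evaluation map $\mathrm{eval}_x \colon f \mapsto f(x)$ is additive with respect to convolution: $(f \cdot g)(x) = f(x)g(1) + f(1)g(x) = f(x)+g(x)$, using that $f,g$ are unital algebra maps. This defines a morphism in $\fModSch{k}{\Z}$
\[
\mathrm{eval}_x \colon \hom(R,F) \to U(F),
\]
where $U(F)$ denotes $F$ equipped with its underlying additive group-scheme structure (forgetting both multiplication and $l$-action).

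Second, applying the functor $\Primnoarg \colon \fModSch{k}{\Z} \to \fBimod{k}{\Z}$ yields
\[
\Prim{\mathrm{eval}_x} \colon \Prim{\hom(R,F)} \to \Prim{F},
\]
where $\Prim{F} = \Prim{U(F)}$ inherits its $l$-action from $F$ and thus naturally lies in $\fBimod{k}{l}$. Because every $f \in \Alg_l(R,F(T))$ is $l$-linear, one has $\mathrm{eval}_{ax}(f) = f(ax) = a \cdot f(x)$ for $a \in l$, so the assignment $x \mapsto \Prim{\mathrm{eval}_x}$ is $l$-linear. Adjointing this family across the tensor--cotensor adjunction of the $\Mod_l$-enrichment produces the desired $\beta$, and naturality in both $R$ and $F$ is immediate from the construction.

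The main obstacle is the verification in the first step that $\hom(R,F)$ is genuinely in $\fModSch{k}{\Z}$, meaning that the convolution group structure is representable by a coaddition on $R \odot_l \Reg{F}$. This is a Yoneda-type check that the comultiplication $\psi_R\colon R \to R \otimes_l R$ and antipode $\sigma_R$ induce, via functoriality of $\odot_l$ and the universal property of the representing pro-algebra, the required coaddition and antipode on $R \odot_l \Reg{F}$. A secondary point is to confirm that the adjunction used to produce $\beta$ from the $l$-linear family $\{\Prim{\mathrm{eval}_x}\}_{x \in P(R)}$ is really that of the enriched setting of $\fBimod{k}{l}$ from Lemma~\ref{fbimodtwomodule}, i.e.\ that the constructed $\beta$ actually coincides, after restriction to the underlying $\fBimod{k}{\Z}$, with the family one started with.
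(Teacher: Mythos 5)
Your construction is correct and, unwound on $T$-points, produces the same morphism as the paper, but by a genuinely more hands-on route. The paper's proof precomposes with the counit $\epsilon\colon \Sym(P(R)) \to R$, identifies $\hom(\Sym(P(R)),F) \cong \hom_l(P(R),U(F))$ via Lemma~\ref{odotleftfree}, pushes forward along the unit $\eta\colon F \to \Cof(\Prim{F})$, commutes $\Cof$ past the cotensor using Lemma~\ref{symodot}, and adjoints across $\Primnoarg \dashv \Cof$; your convolution identity $(f\ast g)(x)=f(x)g(1)+f(1)g(x)=f(x)+g(x)$ is exactly the pointwise content of the fact that $\epsilon$ is a map of Hopf algebras (so that $\epsilon^*$ is additive), and both constructions send $f\in\Alg_l(R,F(T))$ to the $l$-linear map $x\mapsto f(x)$. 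Your version makes the essential additivity of evaluation at a primitive explicit; the paper's packaging is what gets reused downstream, since Lemma~\ref{psymodotcompat} is stated precisely in terms of the factorization of $\beta$ through $\epsilon^*$ and Lemma~\ref{odotleftfree}. Three points to tighten: the representability of the convolution structure on $\hom(R,F)$, which you rightly flag, is what the remark preceding the proposition asserts (the coaddition is induced contravariantly from $\psi_R$ via $\hom(R\otimes_l R,F)\cong\hom(R,F)\times\hom(R,F)$, so no separate Yoneda argument is needed beyond ind-representability of limits of representables); the $l$-linearity of $x\mapsto\Prim{\mathrm{eval}_x}$ uses that $\Primnoarg$ is a $\Mod_l$-enriched functor, which is Lemma~\ref{symodot} and should be cited; and your final adjunction is applied with source $\Prim{\hom(R,F)}\in\fBimod{k}{\Z}$ rather than $\fBimod{k}{l}$, so it is the objectwise ``mixed'' cotensor adjunction $\fBimod{k}{\Z}(C,\hom_l(M,B))\cong\Mod_l(M,\fBimod{k}{\Z}(C,B))$ rather than literally Lemma~\ref{fbimodtwomodule} --- it holds by the objectwise definition of $\hom_l$, but that should be said.
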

\begin{proof}
The natural map $\beta$ is given as the adjoint of the map
\begin{align*}
\hom_l^\times(R,\hat A) \xrightarrow{\epsilon^*} & \hom_l^\times(\Sym(P(R)),\hat A)
\xrightarrow[\cong]{\text{Lemma~\ref{odotleftfree}}} \hom_l(P(R),U^\times(\hat A))\\
\xrightarrow{\eta_*} & \hom_l(P(R),\Cof(\Prim{\hat A}))
\xrightarrow[\cong]{\text{Lemma~\ref{symodot}}}  \Cof(\hom_l(P(R),\Prim{\hat A})),
\end{align*}
where $\epsilon\colon \Sym(P(R)) \to R$ and $\eta\colon \hat A \to \Cof(\Prim{\hat A})$ are counit and unit of the respective adjunctions. 
\end{proof}

The transformation $\beta$ will in general not be an isomorphism. Indeed, if we choose $R = l[z]$ as a polynomial ring then $\hom_l^\times(R,\hat A) = \hom_l(l,U^\times(\hat A)) = U^\times(\hat A)$ by Lemma~\ref{odotleftfree} and thus $\Prim{\hom_l^\times(R,\hat A)}\cong \Prim{\hat A}$. On the other hand, $P(R) = P(l[z])$ is in general greater than $l$ (for instance if $l$ has positive characteristic), and thus $\hom_l(P(R),\Prim{\hat A}) \neq \Prim{\hat A}$.

\begin{lemma}\label{psymodotcompat}
The following diagram commutes:
\[
\begin{tikzpicture}
	\matrix (m) [matrix of math nodes, row sep=3em, column sep=4.5em, text height=1.5ex, text depth=0.25ex]
	{
		\Prim{\hom_l(PR,U^\times(\hat A))} & \Prim{\hom_l^\times(\Sym P R,\hat A)}\\
		\hom_l(PR,\Prim{\hat A}) & \Prim{\hom_l^\times(R,\hat A)}\\
	};
	\path[<-,font=\scriptsize]
	(m-1-1)	edge node[auto]{Lemma~\ref{odotleftfree}} (m-1-2)
			edge node[auto]{$\Mod_l$-enrichment}  node[auto,swap]{$\beta$} (m-2-1)
	(m-2-1)	edge node[auto]{$\beta$}	(m-2-2)
	(m-1-2)	edge node[auto]{$\epsilon^*$}	(m-2-2);
\end{tikzpicture}
\]
\end{lemma}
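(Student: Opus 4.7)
The plan is to trace through the explicit definition of $\beta$ given in Proposition~\ref{Podotmonoidal} and invoke functoriality of the $\Primnoarg \dashv \Cof$ adjunction. Recall that the map $\beta$ of Proposition~\ref{Podotmonoidal} is, by construction, adjoint to the four-term composite
\[
\Theta\colon \hom(R,F) \xrightarrow{\epsilon^*} \hom(\Sym P(R),\, F) \xrightarrow[\cong]{a} \hom_l(P(R),\, U(F)) \xrightarrow{h} \Cof(\hom_l(P(R),\, \Prim{F})),
\]
where $a$ is the isomorphism of Lemma~\ref{odotleftfree} and $h$ is itself the composite
\[
\hom_l(P(R),\, U(F)) \xrightarrow{\hom_l(P(R),\, \eta_{U(F)})} \hom_l(P(R),\, \Cof \Prim{F}) \xrightarrow[\cong]{b} \Cof(\hom_l(P(R),\, \Prim{F})),
\]
with $b$ the isomorphism of Lemma~\ref{symodot} and $\eta$ the unit of the $\Primnoarg \dashv \Cof$ adjunction.

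Next I would apply the standard identity that the adjoint of a composite factors: for $g\colon X \to Y$ in $\fModSch{k}{l}$ and $k\colon Y \to \Cof Z$, one has $\widetilde{k \circ g} = \tilde k \circ \Primnoarg(g)$. Taking $g = a \circ \epsilon^*$ and $k = h$ yields
\[
\beta \;=\; \tilde h \circ \Primnoarg(a) \circ \Primnoarg(\epsilon^*),
\]
so the commutativity of the diagram reduces to identifying $\tilde h\colon \Prim{\hom_l(P(R),\, U(F))} \to \hom_l(P(R),\, \Prim{F})$ with the $\Mod_l$-enrichment map labeled $\beta$ on the left edge.

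This identification is a general categorical fact: whenever a $\V$-adjunction $L \dashv G$ has $G$ right strict in the sense that $\beta_G\colon \hom(M, G Y) \to G\hom(M, Y)$ is an iso, the enrichment comparison $\beta_L\colon L\hom(M,X) \to \hom(M, LX)$ is by construction adjoint to $\beta_G^{-1} \circ \hom(M, \eta_X)$. Applying this to $L = \Primnoarg$, $G = \Cof$ (right strict by Lemma~\ref{symodot}), $M = P(R)$, and $X = U(F)$ (so that $\Prim{U(F)} = \Prim{F}$), this latter composite is precisely $h$, hence $\tilde h$ equals the enrichment $\beta$ at the required position. The main obstacle, and essentially the only subtlety, is keeping the several different adjunctions straight, namely $\Sym \dashv P$ for Hopf algebras versus modules, $\Primnoarg \dashv \Cof$ for formal module schemes versus formal bimodules, and $- \indprotensor_l M \dashv \hom_l(M,-)$ for the $\Mod_l$-enrichment; once $\beta$ is fully unpacked the commutativity is essentially formal.
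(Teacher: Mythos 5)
Your proposal is correct and follows essentially the same route as the paper: the paper likewise transposes the square through the $\Primnoarg \dashv \Cof$ adjunction, reads off the definition of $\beta$ from Proposition~\ref{Podotmonoidal}, and disposes of the remaining pentagon by the enrichedness of $\Primnoarg$ and $\Cof$ — which is exactly your identification of $\tilde h$ with the enrichment comparison map via the general fact about enriched adjunctions with right-strict right adjoint.
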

\begin{proof}
By adjointness of $\Primnoarg$ and $\Cof$, it suffices to show that the exterior of the following diagram commutes:
\[
\begin{tikzpicture}
	\matrix (m) [matrix of math nodes, row sep=3em, column sep=2.2em, text height=1.5ex, text depth=0.25ex]
	{
		\Cof \Prim{\hom_l(PR,U^\times(\hat A))} & \hom_l(PR,U^\times(\hat A)) & \hom_l^\times(\Sym P R,\hat A)\\
		\Cof \hom_l(PR,\Prim{\hat A}) & & \hom_l^\times(R,\hat A)\\
		\hom_l(PR, \Cof \Prim{\hat A}) & \hom_l(PR,U^\times(\hat A)) & \hom_l^\times(\Sym P R,\hat A)\\
	};
	\path[<-,font=\scriptsize]
	(m-1-1)	edge node[auto]{$\eta$} (m-1-2)
			edge node[auto]{$\Cof(\beta)$} (m-2-1)
	(m-1-2)	edge node[auto]{\ref{odotleftfree}} node[auto,swap]{$\cong$} (m-1-3)
			edge[-,double,double equal sign distance]  (m-3-2)
	(m-1-3)	edge node[auto]{$\epsilon^*$} (m-2-3)
	(m-2-1)	edge node[auto]{Lemma~\ref{symodot}} node[auto,swap]{$\cong$} (m-3-1)
	(m-3-1)	edge node[auto]{$\epsilon^*$} (m-3-2)
	(m-3-2)	edge node[auto]{\ref{odotleftfree}} node[auto,swap]{$\cong$} (m-3-3)
	(m-3-3)	edge node[auto,swap]{$\hom_l^\times(\eta,\hat A)$} (m-2-3);
\end{tikzpicture}
\]
In fact, the smaller pentagon on the left already commutes because $\Cof$ and $\Primnoarg$ are enriched functors.
\end{proof}

From Corollary~\ref{comultintotensor} we find that the multiplication $\mu_\times$ on $\hat A$ gives rise to morphisms of bimodule schemes
\[
\Indec{\hat A} \otimes_l \Indec{\hat A} \xrightarrow{\text{Cor.~\ref{Qopmonoidal}}} \Indec{\hat A \otimes_l \hat A} \xrightarrow{\Indec{\mu_\times}} \Indec{\hat A}
\]
and, under the conditions in Prop.~\ref{Pmonoidal} for $\Primnoarg$ being strict,
\[
\Prim{\hat A} \otimes_l \Prim{\hat A} \xrightarrow{\text{Prop.~\ref{Pmonoidal}}} \Prim{\hat A \otimes_l \hat A} \xrightarrow{\Prim{\mu_\times}} \Prim{\hat A}
\]
Thus both $\Indec{\hat A}$ and $\Prim{\hat A}$ are algebras in $(\fBimod{k}{l},\otimes_l,\tensunit{k}{l})$.

\begin{prop}\label{Qofmulttensor}
Let $\hat A$, $\hat B \in \fAlgSchf{k}{l}$. Then the lax structure of $\Indecnoarg$ of Cor.~\ref{Qopmonoidal} factors as a map
\[
\Indec{\hat A} \otimes_l \Indec{\hat B} \to \Indec{\hat A} \otimes_l \Prim{\hat B} \times_{\Indec{\hat A} \otimes_l \Indec{\hat B}} \Prim{\hat A} \otimes_l \Indec{\hat B} \to \Indec{\hat A \otimes_l \hat B}.
\]
For $\hat A=\hat B$, this makes $\Indec{\hat A}$ into a two-sided module over the algebra $\Prim{\hat A}$. 
\end{prop}
\begin{proof}
By commutativity, it suffices to produce the factorization 
\[
\Indec{\hat A} \otimes_l \Indec{\hat B} \to \Indec{\hat A} \otimes_l \Prim{\hat B} \to \Indec{\hat A \otimes_l \hat B}
\]
First assume $l=\Z$. The first step is to produce a factorization in $\Pro-\Mod_k$
\[
Q(\Reg{\hat A} \multtensor{k}{\Z} \Reg{\hat B}) \to Q\Reg{\hat A} \otimes_k (\Reg{\hat B})_+ \to Q\Reg{\hat A} \otimes_{k} Q\Reg{\hat B},
\]
where $(\Reg{\hat B})_+$ denotes the augmentation ideal of $\Reg{\hat B}$.

The adjunction between $U^\Z_+\colon {\fModSch{k}{\Z}}^{\op} \to \Pro-\Alg_k^+$ and $\Gamma_+$ gives a unit map
\[
\Reg{\hat B} \xrightarrow{\eta'} \Gamma^+(U^\Z_+(\Reg{\hat B})) = \Gamma((\Reg{\hat B})_+)
\]
which together with the unit adjunction $\Reg{\hat A} \xrightarrow{\eta} \Gamma Q\Reg{\hat A}$ gives a map
\begin{multline}\label{Qtensorplus}
\Reg{\hat A} \multtensor{k}{\Z} \Reg{\hat B} \to \Gamma^+(k \oplus Q\Reg{\hat A} \multtensor{k}{\Z} \Gamma^+ U^\Z_+\Reg{\hat B}) \\\xrightarrow[\cong]{\text{Lemma~\ref{tensoroffreemodsch}}} \Gamma^+((k \oplus Q\Reg{\hat A}) \otimes_k^+ U^\Z_+\Reg{\hat B}).
\end{multline}
Here $\otimes_k^+$ denotes the tensor product of augmented pro-$k$-algebras, i.e. the operation classifying the smash product of two pointed formal schemes. Since the augmented algebra $k \oplus Q\Reg{\hat A}$ is a square-zero extension, so is
\[
(k \oplus Q\Reg{\hat A}) \otimes_k^+ U^\Z_+\Reg{\hat B} \cong k \oplus (Q\Reg{\hat A} \otimes_k (\Reg{\hat B})_+).
\]
The composite of \eqref{Qtensorplus} with the isomorphism
\[
\Gamma^+((k \oplus Q\Reg{\hat A}) \otimes_k^+ U^\Z_+\Reg{\hat B}) \cong \Gamma^+(k \oplus (Q\Reg{\hat A} \otimes_k (\Reg{\hat B})_+)) \cong \Gamma(Q\Reg{\hat A} \otimes_k (\Reg{\hat B})_+)
\]
has an adjoint $\tilde\psi\colon Q(\Reg{\hat A} \multtensor{k}{\Z} \Reg{\hat B}) \to Q\Reg{\hat A} \otimes_k (\Reg{\hat B})_+$, which is the desired factorization.

To show that this map factors further through $Q\Reg{\hat A} \otimes_k P\Reg{\hat B}$, it suffices by the flatness assumption on $Q\Reg{\hat A}$ to show that
\[
Q(\Reg{\hat A} \multtensor{k}{\Z} \Reg{\hat B}) \to Q\Reg{\hat A} \otimes (\Reg{\hat B})_+ \xrightarrow{\id \otimes \psi_+} Q\Reg{\hat A} \otimes (\Reg{\hat B})_+ \otimes (\Reg{\hat B})_+
\]
is null. This map fits into a diagram
{\small\[
\begin{tikzpicture}
	\matrix (m) [matrix of math nodes, row sep=3em, column sep=2em, text height=1.5ex, text depth=0.25ex]
	{
		Q\Reg{\hat A} \otimes (\Reg{\hat B})_+ \otimes (\Reg{\hat B})_+ 	& Q(\Reg{\hat A} \multtensor{k}{\Z} (\Reg{\hat B} \otimes \Reg{\hat B})) & Q\left((\Reg{\hat A} \multtensor{k}{\underline \Z} \Reg{\hat B})^{\otimes 2}\right)\\
		Q\Reg{\hat A} \otimes (\Reg{\hat B})_+ &	Q(\Reg{\hat A} \multtensor{k}{\Z} \Reg{\hat B}) & Q(\Reg{\hat A} \multtensor{k}{\Z} \Reg{\hat B})^2.\\
	};
	\path[->,font=\scriptsize]
	(m-1-2)	edge node[auto,swap]{$\tilde\psi$} (m-1-1)
	(m-1-3)	edge node[auto,swap]{$\cong$}(m-1-2)
	(m-2-1)	edge node[auto]{$QA \otimes \psi_+$} (m-1-1)
	(m-2-2)	edge node[auto]{$\tilde\psi$} (m-2-1)
			edge node[auto]{$Q(\id \multtensor{k}{\Z} \psi_+)$} (m-1-2)
			edge node[auto]{$Q(\psi_+)$} (m-1-3)
			edge node[auto,swap]{$\Delta$} (m-2-3)
	(m-2-3)	edge node[auto]{$\cong$} (m-1-3);
\end{tikzpicture}
\]
}
The composites $Q(\Reg{\hat A} \multtensor{k}{\Z} \Reg{\hat B}) \to Q(\Reg{\hat A} \multtensor{k}{\Z} (\Reg{\hat B} \otimes \Reg{\hat B}))$ from the lower right hand corner to the upper middle entry are induced by the inclusions $\eta \times \id,\; \id \times \eta\colon \Reg{\hat B} \to \Reg{\hat B} \otimes \Reg{\hat B}$ and thus both compose to the zero map when followed with $\tilde\psi$.

It remains to produce a formal bimodule map $\Indec{\hat A} \otimes_l \Prim{\hat B} \to \Indec{\hat A \otimes_l \hat B}$ for arbitrary rings $l$. For this, consider the diagram
\[
\begin{tikzpicture}
	\matrix (m) [matrix of math nodes, row sep=2em, column sep=2em, text height=1.5ex, text depth=0.25ex]
	{
		\Indec{\hat A \otimes_l \hat B} & \Indec{\hat A \otimes_\Z \hat B} & \Indec{l \otimes_\Z \hat A \otimes_\Z \hat B}\\
		& & l \indprotensor_\Z \Indec{\hat A \otimes_\Z \hat B}\\
		\Indec{\hat A} \otimes_l \Indec{\hat B} & \Indec{\hat A} \otimes_\Z \Indec{\hat B} & l \otimes \Indec{\hat A} \otimes_\Z \Indec{\hat B}.\\
	};
	\path[<-,font=\scriptsize]
	(m-1-1)	edge (m-1-2)
	([yshift=-1mm]m-1-2.east)	 edge ([yshift=-1mm]m-1-3.west)
	([yshift=1mm]m-1-2.east)	 edge ([yshift=1mm]m-1-3.west)
	(m-1-2) edge node[auto]{$\tilde\psi$} (m-3-2)
	(m-1-3) edge node[auto]{Lemma~\ref{qgammatwomodmorph}} (m-2-3)
	(m-2-3) edge node[auto]{$\tilde\psi$} (m-3-3)
	(m-3-1) edge (m-3-2)
	([yshift=-1mm]m-3-2.east)	 edge ([yshift=-1mm]m-3-3.west)
	([yshift=1mm]m-3-2.east)	 edge ([yshift=1mm]m-3-3.west)
	(m-1-1)	edge[dotted] (m-3-1);
\end{tikzpicture}
\]
The upper row is not necessarily a coequalizer, but a fork, coming from the definition of the tensor product \eqref{tensorcoeq}. The lower row is the coequalizer defining the tensor product of formal bimodules. Therefore the dotted map exists.
\end{proof}

\subsection{Primitives and indecomposables of formal plethories}\label{subsec:pqfpleth}

Now let $\hat A\in \fAlgSchf{k}{k}$ be a formal plethory. By Lemma~\ref{Qopmonoidal} and Prop.~\ref{Pmonoidal}, $\Prim{\hat A}$ and $\Indec{\hat A}$ are both algebras with respect to the $\otimes_k$-product. 
By Prop.~\ref{Qofmulttensor}, $\Indec{\hat A}$ is also $\Prim{\hat A}$-bimodule algebra with respect to $\otimes_k$. 

\begin{lemma}\label{Symhatodotmonoidal}
The functor $\Cof\colon \fBimodf{k}{k} \to \fAlgSchf{k}{k}$ is strict monoidal with respect to the $\circ_k$-product.
\end{lemma}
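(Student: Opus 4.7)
The plan is to establish strictness on representing objects, using two ingredients: the adjunction $\Sym \dashv U\colon \Pro-\Alg_k \rightleftarrows \Pro-\Mod_k$ from eq.~\eqref{symadjunction}, together with the representability adjunctions of Cor.~\ref{tensoradjunctions}. Since by construction $\Cof(B)$ is represented by $\Sym(\Reg{B})$, both $\Cof(B_1) \circ_k \Cof(B_2)$ and $\Cof(B_1 \circ_k B_2)$ admit explicit representing pro-algebras whose comparison is essentially forced by these adjunctions.

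For the unit, the $\circ_k$-unit in $\fBimod{k}{k}$ is the identity functor $\id_{\Mod_k}$, represented by $k \in \Pro-\Mod_k$ via $\Mod_k(k,-) = \id$. Its image under $\Cof$ is represented by $\Sym(k) \cong k[e_0]$, which represents the identity endofunctor on $\Alg_k$—the $\circ_k$-unit in $\fAlgSch{k}{k}$. Hence $\Cof$ preserves units strictly, with $\phi_0 = \psi_0^{-1}$ the identity.

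For the multiplication, given $B_1, B_2 \in \fBimod{k}{k}$ and a test object $R \in \Alg_k$, I would chain together the natural bijections
\begin{align*}
(\Cof(B_1) \circ_k \Cof(B_2))(R)
&= \Pro-\Alg_k(\Sym(\Reg{B_1}) \hodot_k \Sym(\Reg{B_2}),\, R) \\
&\cong \Pro-\Alg_k(\Sym \Reg{B_1},\, \Cof(B_2)(R)) \\
&\cong \Pro-\Mod_k(\Reg{B_1},\, B_2(R)) \\
&\cong \Pro-\Mod_k(\Reg{B_1} \hodot_k \Reg{B_2},\, R) \\
&\cong \Pro-\Alg_k(\Sym(\Reg{B_1} \hodot_k \Reg{B_2}),\, R) \\
&= \Cof(B_1 \circ_k B_2)(R),
\end{align*}
where the first $\cong$ is Cor.~\ref{tensoradjunctions}(\ref{algschtensoradjunction}), the third is Cor.~\ref{tensoradjunctions}(\ref{bimodtensoradjunction}), and the second and fourth are $\Sym \dashv U$ combined with the tautology $U\Cof(B_2)(R) = B_2(R)$. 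By Yoneda on $\Alg_k$, this produces a canonical isomorphism $\Sym(\Reg{B_1}) \hodot_k \Sym(\Reg{B_2}) \cong \Sym(\Reg{B_1} \hodot_k \Reg{B_2})$ in $\Pro-\Alg_k$, natural in $B_1$ and $B_2$, and hence the required strict monoidal structure map.

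Coherence for associators and unitors is essentially automatic: all $\circ_k$ structures come from composition of endofunctors, which is strictly associative and strictly unital, and every step in the chain above is natural in its $\fBimod{k}{k}$-entries. The main obstacle is therefore only bookkeeping: verifying that the comparison fits into the associativity pentagon for a triple composite $\Cof(B_1) \circ_k \Cof(B_2) \circ_k \Cof(B_3)$, which reduces to one more round of $\Sym \dashv U$ and the naturality of its unit and counit.
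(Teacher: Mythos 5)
Your argument is correct and rests on the same two ingredients as the paper's proof: the free--forgetful adjunction $\Sym\dashv U$ and the universal property of $\circ_k$ recorded in Cor.~\ref{tensoradjunctions}. The packaging, however, is genuinely different. The paper writes $\Cof(B)\circ_k\Cof(C)$ as the filtered colimit $\colim_i\hom(\Sym(\Reg{B}(i)),\Cof(C))$, applies Lemma~\ref{odotleftfree} (itself an avatar of $\Sym\dashv U$ in cotensor form) and the right strictness of $\Cof$ (Lemma~\ref{symodot}) termwise, and then must justify separately that $\Cof$, despite being a right adjoint, commutes with the filtered colimit in the last step --- this is the one non-formal point of the paper's argument, settled there by observing that $\Sym$ is applied objectwise to pro-objects. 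Your test-object-plus-Yoneda formulation absorbs that step into the tautology that $\Cof(B_1\circ_k B_2)$ is \emph{defined} to be $\Spf{\Sym(\Reg{B_1}\hodot_k\Reg{B_2})}$, so you never need to move $\Cof$ past a colimit; what you buy in economy you pay for at the Yoneda step, since your chain of bijections a priori identifies only the underlying formal schemes, and you should add a sentence noting that the composite is compatible with the coaddition and the $\Alg_k$-valued structure on both sides (this does follow from the naturality of each adjunction in $B_1$, $B_2$, and $R$, but it is where the ``strict monoidal in $\fAlgSch{k}{k}$'' content actually lives). Your unit comparison and your coherence remarks agree with the paper's.
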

\begin{proof}
The unit isomorphism $J_k = \Spf{k[z]} \to \Cof(\Spf{k}) = \Cof(J_k)$ is represented by the canonical isomorphism $\Sym(k) \cong k[z]$. If $\hat M$, $\hat N \in \fBimodf{k}{k}$, we construct an isomorphism
\begin{align*}
\Cof(\hat M) \circ_k \Cof(\hat N) = & \colim_i \hom_k^\times(\Sym(\Reg{\hat M}(i)),\Cof(\hat N))\\
\underset{\text{Lemma~\ref{odotleftfree}}}\cong & \colim_i \hom_k(\Reg{\hat M}(i),U^\times(\Cof(\hat N)))\\
\underset{\text{Lemma~\ref{symodot}}}\cong &\colim_i \Cof(\hom_k(\Reg{\hat M}(i),\hat N))\\
\cong & \Cof(\colim_i \hom_k(\Reg{\hat M}(i),\hat N)) \cong \Cof(\hat M \circ_k \hat N).
\end{align*}
Although $\Cof$ is a right adjoint, it does commute with filtered colimits (used in the last line) because its representing functor $\Sym$ is induced up from $\Mod_k \to \Alg_k$ to $\Pro-\Mod_k \to \Pro-\Alg_k$ by objectwise application.
\end{proof}

\begin{thm} \label{Cofbilaxmonoidal}
The functor $\Cof$ is a bilax monoidal functor.
\end{thm}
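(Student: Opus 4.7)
The proof combines the two partial structures already established: Corollary~\ref{symopmonoidal} provides the lax structure maps $\phi_0,\phi$ of $\Cof$ with respect to $\otimes_k$, while Lemma~\ref{Symhatodotmonoidal} provides the strict (hence oplax with invertible structure) monoidal structure $\psi_0,\psi$ with respect to $\circ_k$. All that remains is to verify the two families of bilax compatibility diagrams~\eqref{bilaxunitality} and~\eqref{bilaxcompat}. One is tempted to invoke Proposition~\ref{bilaxcondition} through the adjunction $\Primnoarg \dashv \Cof$, but the required hypothesis that the left adjoint $\Primnoarg$ be strictly monoidal with respect to $\otimes_k$ fails in general (Proposition~\ref{Pmonoidal} only produces an oplax structure that is invertible merely under pro-flatness); direct verification will therefore be unavoidable.

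For~\eqref{bilaxunitality} I would handle the four subdiagrams separately. Since $\psi_0$ is invertible---it comes from the identification $\Sym(k)\cong k[z]$ which sends $\Cof(\Spf{k})$ to the $\circ_k$-unit $\id$ in $\fAlgSch{k}{k}$---the diagrams involving $\psi_0$ collapse, after cancelling $\psi_0^{\pm1}$, to the definition of the counit $\iota_J\colon \Spf{\hom(k,k)}\to \id$ together with the fact that $\phi_0$ is the unique morphism arising from the initiality of $\Spf{\hom(k,k)}$ in $\fAlgSch{k}{k}$. The remaining two diagrams, involving $\Delta_I$ and $\mu_J$, become tautological once one unfolds the explicit definitions of these maps from the proof of the previous theorem (through the evaluation map $\hom_K(k,k)\to k$ and the canonical isomorphism $k\sweed{k}{k}k \cong k$).

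The heart of the argument is the hexagon~\eqref{bilaxcompat}. My plan is to pass to representing pro-objects and exploit the adjunction $\Primnoarg \dashv \Cof$ in tandem with Lemma~\ref{primindecinverses} ($\Prim{\Cof(X)} \cong X$) and the universal property of $\Sym$ (adjunction~\eqref{symadjunction} together with Lemma~\ref{odotleftfree}) to reduce commutativity to an equality of natural maps between pro-$k$-modules. The strictness of $\psi$ is crucial: it permits each composite in the hexagon to be rewritten as a map out of an explicit colimit $\colim_{i,j}\hom\!\bigl(\Sym(\Reg{B}(i))\multtensor{k}{k}\Sym(\Reg{C}(j)),-\bigr)$, after which the desired commutativity becomes essentially the naturality of $\zeta$ in the ambient $2$-monoidal structure on $\fBimod{k}{k}$, combined with the tensor-juggling identities of Lemma~\ref{tensorjuggle}.

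The principal obstacle I anticipate is bookkeeping rather than substance. The Sweedler product $\sweed{k}{k}$ defining $\otimes_k$ in $\fBimod{k}{k}$ is presented as an equalizer, whereas $\otimes_k$ in $\fAlgSch{k}{k}$ is a categorical coproduct; tracking the lax map $\phi$---which is induced by the canonical comparison $\Sym(M\sweed{k}{k}N)\to \Sym(M)\multtensor{k}{k}\Sym(N)$---through the hexagon requires matching these two universal properties against the colimit description of $\circ_k$ as a diagram of $\hom$-objects. Once the diagram is pinned down at the level of pro-$k$-modules, no new structural ingredient is needed, but the resulting diagram chase is long and must be executed with care.
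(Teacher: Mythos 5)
Your overall strategy is the paper's: take the lax $\otimes_k$-structure from Corollary~\ref{symopmonoidal} and the strict $\circ_k$-structure from Lemma~\ref{Symhatodotmonoidal}, observe that Proposition~\ref{bilaxcondition} cannot be invoked (you are right that $\Primnoarg$ is not strictly monoidal for $\otimes_k$ without flatness), and then verify \eqref{bilaxunitality} and \eqref{bilaxcompat} directly on representing pro-objects. Your disposal of the unitality diagrams is if anything cleaner than the paper's: the two diagrams with source $I$ commute because $\Spf{\hom(k,k)}$ is initial in $\fAlgSch{k}{k}$, and the $\mu_J$ diagram is formal because $\otimes_k$ is the coproduct there (the paper verifies the $\Delta_I$ diagram by an explicit two-stage diagram chase instead). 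For the hexagon, the paper's engine is Lemma~\ref{laxmorphproperties} — the compatibility of the lax structure $\phi$ with $\zeta$ and the $\hodot$-tensoring — which is essentially what you call ``naturality of $\zeta$ combined with the tensor-juggling identities,'' so the substance matches.

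One ingredient of your plan is wrong as stated: Lemma~\ref{primindecinverses} does \emph{not} give $\Prim{\Cof(X)} \cong X$. It gives $\Indec{\Cof(X)} \cong X$ and $\Prim{\Fr(X)} \cong X$; the pairings are crosswise to the adjunctions $\Primnoarg \dashv \Cof$ and $\Fr \dashv \Indecnoarg$. The counit $\Prim{\Cof(B)} \to B$ fails to be an isomorphism in general — e.g.\ in characteristic $p$ the primitives of $\Sym(M)$ with $M$ primitive contain $p$-th powers — so you cannot use it to transport the hexagon along the adjunction as if $\Cof$ were fully faithful. The reduction to an identity of maps of pro-$k$-modules must instead run through the universal property of $\Sym$ (adjunction \eqref{symadjunction} together with Lemma~\ref{odotleftfree}), which you also list and which is what the paper actually uses; with that substitution your plan goes through.
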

\begin{proof}
We showed in Lemma~\ref{Symhatodotmonoidal} and Corollary~\ref{symopmonoidal} that $\Cof$ is strict monoidal with respect to $\circ_k$ and lax monoidal with respect to $\otimes_k$, and it remains to check the compatibility conditions \eqref{biunitality}--\eqref{bilaxcompat}.

For \eqref{biunitality}, first recall that the comparison map between the units $\iota_J\colon \tensunit{k}{k} \to J_k$ is represented by
\[
k \to \hom_\Z(k,k); \quad 1 \mapsto \id \quad \text{in }\fBimod{k}{k}
\]
and
\[
k[x] = \Sym(k) \to \hom(k,k); \quad x \mapsto \id \quad \text{in } \fAlgSch{k}{k}.
\]
The commutativity of \eqref{biunitality} follows from the factorization
\[
k \xrightarrow{\iota_{\hom_\Z(k,k)}} \hom_\Z(k,k) \hookrightarrow U(\hom(k,k)),
\]
where $U$ is the forgetful functor from $\Pro-\Alg_k$ to $\Pro-\Mod_k$ and the last map is the inclusion of linear maps into all maps.

For \eqref{leftunitality}, we proceed in two steps. First we use the tensoring $\overline \circ_k$ between $\fBimod{k}{k}$ and $\fModSchf{k}{k}$ from Lemma~\ref{circmonoidal} to prove that the following diagram commutes:
\[
\begin{tikzpicture}
	\matrix (m) [matrix of math nodes, row sep=2em, column sep=3em, text height=1.5ex, text depth=0.25ex]
	{
		\tensunit{k}{k} & \Cof(\tensunit{k}{k})	&	 											\Cof(\tensunit{k}{k} \circ_k \tensunit{k}{k}))\\
		&\tensunit{k}{k} \overline \circ_k  U^\times(\tensunit{k}{k}) &		\tensunit{k}{k} \overline \circ_k  U^\times\Cof(\tensunit{k}{k}) \\
		\tensunit{k}{k} \circ_k \tensunit{k}{k} & \Cof(\tensunit{k}{k}) \circ_k \tensunit{k}{k} &				 	\Cof\tensunit{k}{k}\circ_k \Cof\tensunit{k}{k}\\
	};
	\path[->,font=\scriptsize]
	(m-1-1)		edge node[auto]{$\psi_0$}					(m-1-2)
				edge node[auto]{$\mu$}					(m-2-2)
				edge node[auto]{$\Delta_I$}				(m-3-1)
	(m-1-2)		edge node[auto]{$\Cof(\Delta_I)$}			(m-1-3)
	(m-1-3)		edge node[auto]{$\beta$}					(m-2-3)
	(m-2-2)		edge node[auto]{$\id \circ \psi_0$}			(m-2-3)
	(m-2-2)		edge node[auto]{$\cong$}					(m-3-2)
	(m-2-3)		edge node[auto]{$\cong$}					(m-3-3)
	(m-3-1)		edge node[auto]{$\phi_0 \circ \id$}		(m-3-2)
	(m-3-2)		edge node[auto]{$\id \circ \psi_0$}			(m-3-3);
\end{tikzpicture}
\]
The commutativity of the top pentagon is Lemma~\ref{laxmorphproperties}, while the commutativity of the lower right hand square is the naturality of the isomorphism of Lemma~\ref{odotleftfree}. 

The commutativity of \eqref{rightunitality} is formal because $\otimes_k$ is the categorical coproduct in $\fAlgSchf{k}{k}$.

It remains to show the commutativity of \eqref{bilaxcompat}. We proceed similarly as above and first note that the following diagram commutes for $\hat M_i$, $\hat N_i \in \fBimodf{k}{k}$ because of Lemma~\ref{laxmorphproperties}:
\[
\begin{tikzpicture}
	\matrix (m) [matrix of math nodes, row sep=2em, column sep=2.5em, text height=1.5ex, text depth=0.25ex]
	{
		\Cof(\hat M_1 \circ_k \hat M_2) \otimes_k \Cof(\hat N_1 \circ_k \hat N_2) &  \Cof((\hat M_1 \circ_k \hat M_2) \otimes_k (\hat N_1 \circ_k \hat N_2))\\ 
		(\hat M_1 \overline \circ_k \Cof(\hat M_2)) \otimes_k (\hat N_1 \overline \circ_k \Cof(\hat N_2)) &  \Cof((\hat M_1 \otimes_k \hat N_1) \circ_k (\hat M_2 \otimes_k \hat N_2))\\
		(\hat M_1 \otimes_k \hat N_1) \overline \circ_k (\Cof(\hat M_2) \otimes_k \Cof(\hat N_2)) & (\hat M_1 \otimes_k \hat N_1) \circ_k \Cof(\hat M_2 \otimes_k \hat N_2)\\	
	};
	\path[<-,font=\scriptsize]
	(m-1-2)		edge node[auto]{$\phi$}					(m-1-1)
	(m-2-1)		edge node[auto]{$\cong$}					(m-1-1)
	(m-2-2)		edge node[auto]{$\Cof(\zeta)$}			(m-1-2)
	(m-3-1)		edge node[auto]{$\zeta$}					(m-2-1)
	(m-3-2)		edge node[auto]{$\id \circ \phi$}					(m-3-1)
				edge node[auto]{$\cong$}					(m-2-2);
\end{tikzpicture}
\]
The result then follows from the commutativity of the following diagram:
{\small
\[
\begin{tikzpicture}
	\matrix (m) [matrix of math nodes, row sep=2em, column sep=1em, text height=1.5ex, text depth=0.25ex]
	{
		(\Cof \hat M_1 \circ_k \Cof \hat M_2) \otimes_k (\Cof \hat N_1 \circ_k \Cof \hat N_2) &  (\hat M_1 \overline\circ_k \Cof \hat M_2) \otimes_k (\hat N_1 \overline\circ_k \Cof \hat N_2)\\
		(\Cof \hat M_1 \otimes_k \Cof \hat N_1) \circ_k (\Cof \hat M_2 \otimes_k \Cof \hat N_2)\\
		(\Cof(\hat M_1 \otimes_k \hat N_1)) \circ_k (\Cof \hat M_2 \otimes_k \Cof \hat N_2) & (\hat M_1 \otimes_k \hat N_1) \overline\circ_k (\Cof \hat M_2 \otimes_k \Cof \hat N_2)\\
		\Cof(\hat M_1 \otimes_k \hat N_1) \circ_k \Cof(\hat M_2 \otimes_k \hat N_2) & (\hat M_1 \otimes_k \hat N_1) \overline\circ_k \Cof(\hat M_2 \otimes_k \hat N_2)\\	
	};
	\path[->,font=\scriptsize]
	(m-1-1)		edge node[auto]{$\cong$}					(m-1-2)
	(m-2-1)		edge node[auto]{$\zeta$}					(m-1-1)
	(m-3-1)		edge node[auto]{$\phi \circ_k \id$}	(m-2-1)
				edge node[auto]{$\cong$}					(m-3-2)
	(m-3-2)		edge node[auto]{$\zeta$}					(m-1-2)
	(m-4-1)		edge node[auto]{$\id \circ_k \phi$}	(m-3-1)
				edge node[auto]{$\cong$}					(m-4-2)
	(m-4-2)		edge node[auto]{$\id\circ_k \phi$}	(m-3-2);
\end{tikzpicture}
\]
}
\end{proof}

\begin{lemma} \label{Qhatodotmonoidal}
The functor $\Indecnoarg\colon \fAlgSchf{k}{k} \to \fBimodf{k}{k}$ is a strict monoidal functor with respect to the $\circ_k$-product.
\end{lemma}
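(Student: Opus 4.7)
My plan is to construct the structural isomorphisms on representing pro-algebras and then verify coherence. The main input is Corollary~\ref{Qodotmonoidal}, together with the fact that $Q\colon \Pro-\Alg_k^+ \to \Pro-\Mod_k$ is computed levelwise on pro-objects.

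First I would handle the unit. The unit of $(\fAlgSch{k}{k}, \circ_k)$ is $\id$, represented by the bigraded pro-algebra $\Reg{\id}_p = k[e_p]$ with $e_p$ primitive under $\psi_+$. A direct computation yields $Q(k[e_p]) \cong k\cdot e_p$, and tracing through the induced bimodule structure identifies this representing object with the identity functor in $\fBimod{k}{k}$, which is the unit of $\circ_k$ there.

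For the compatibility with $\circ_k$, let $F, G \in \fAlgSch{k}{k}$ and write $\Reg F = \{\Reg F(i)\}_{i \in I}$. By the construction of $\circ_k$,
\[
\Reg{F \circ_k G} = \Reg F \hodot_k \Reg G = \lim_{i \in I} \bigl(\Reg F(i) \odot_k \Reg G\bigr),
\]
where $\Reg F(i) \odot_k \Reg G$ represents the pointed formal scheme $\hom(\Reg F(i), G)$. Applying $Q$ levelwise and invoking Corollary~\ref{Qodotmonoidal} with $R = \Reg F(i)$ yields a natural isomorphism
\[
Q\bigl(\Reg F(i) \odot_k \Reg G\bigr) \cong Q\Reg F(i) \odot_k Q\Reg G,
\]
where the right-hand $\odot_k$ is now the bimodule version from Lemma~\ref{fbimodtwomodule}. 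Reassembling these isomorphisms into a pro-object in $\Pro-\Mod_k$ produces
\[
Q\bigl(\Reg F \hodot_k \Reg G\bigr) \cong Q\Reg F \hodot_k Q\Reg G,
\]
i.e.\ the desired $\Indec{F \circ_k G} \cong \Indec F \circ_k \Indec G$. The coherence conditions (associativity pentagon and unit constraints) would then reduce, by a standard diagram chase, to the naturality of Corollary~\ref{Qodotmonoidal} in the algebra variable together with the unit computation above.

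The hard part is the interplay between the pro-limit defining $\hodot_k$ and the levelwise nature of $Q$. Because $Q$ is only a left adjoint to $k \oplus -$ and does not commute with cofiltered limits on the nose, the argument must instead proceed by testing against a formal bimodule: augmented pro-algebra maps out of $\lim_i (\Reg F(i) \odot_k \Reg G)$ into a square-zero extension are the filtered colimit of maps out of the individual components, which allows us to apply the levelwise statement of Corollary~\ref{Qodotmonoidal} and then reassemble. Verifying that this reduction is compatible with the bimodule structure on both sides, and that the resulting isomorphism is natural in $F$ and $G$ (not just in each $\Reg F(i)$), is where the bulk of the bookkeeping lies.
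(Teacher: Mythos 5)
Your proposal is correct and takes essentially the same route as the paper: the unit is handled by $Q(k[z])\cong k$, and the comparison map comes from applying Corollary~\ref{Qodotmonoidal} levelwise to the terms $\hom(\Reg{F}(i),G)$ and then commuting $\Indecnoarg$ past the filtered colimit. The subtlety you flag about $Q$ versus the pro-limit is resolved in the paper exactly as you suggest, by the observation that $Q$ is induced up from $\Alg_k^+\to\Mod_k$ by objectwise application and hence $\Indecnoarg$ commutes with filtered colimits.
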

\begin{proof}
The unit isomorphism $J_k\to \Indec{J_k}$ is represented by the canonical isomorphism $Q(k[z]) \cong k$. If $\hat A$, $\hat B \in \fAlgSch{k}{k}$, in particular $\hat A \in \Sch{k}^+$, we construct an isomorphism
 \begin{multline*}
\Indec{\hat A} \circ_k \Indec{\hat B} =  \colim_i \hom_k(Q\Reg{\hat A}(i),\Indec{\hat B})\\
\underset{\text{Cor.~\ref{Qodotmonoidal}}}\cong  \colim_i \Indec{\hom_k^\times(\Reg{\hat A}(i),\hat B)}
\cong \Indec{\colim_i \hom_k^\times(\Reg{\hat A}(i),\hat B)} \cong \Indec{\hat A \circ_k \hat B}.
\end{multline*}
As in the proof of Lemma~\ref{Symhatodotmonoidal}, $\Indecnoarg$ commutes with filtered colimits  because its representing functor $Q$ is induced up from $\Alg_k^+ \to \Mod_k$ to $\Pro-\Alg_k^+ \to \Pro-\Mod_k$ by objectwise application.
\end{proof}

\begin{thm} \label{Qbilaxmonoidal}
The functor $\Indecnoarg$ is a bilax monoidal functor.
\end{thm}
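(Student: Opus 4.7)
The structure maps for $\Indecnoarg$ as a bilax monoidal functor are already in hand: the lax structure $(\phi,\phi_0)$ with respect to $\otimes_k$ from Cor.~\ref{Qopmonoidal}, and the strict (hence oplax) structure $(\psi,\psi_0)$ with respect to $\circ_k$ from Lemma~\ref{Qhatodotmonoidal}. It thus remains to verify the bilax compatibility conditions \eqref{bilaxunitality} and \eqref{bilaxcompat}. My plan is to parallel the proof of Thm.~\ref{Cofbilaxmonoidal}, exploiting the fact that both $\Cof$ and $\Indecnoarg$ share the same structural pattern (strict on $\circ_k$, lax on $\otimes_k$).

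For the three diagrams in \eqref{bilaxunitality}, the fact that $\psi$ and $\psi_0$ are isomorphisms makes them collapse substantially. The two diagrams whose path passes through $F(I_\C) = Q(\hom(k,k)) = 0$ commute because both legs are zero maps. The third diagram (involving $\mu_J$ and $\psi_0$) reduces to the assertion that $\psi_0\colon Q(k[e])\cong k$ matches the unit isomorphism $k \sweed{k}{k} k \cong k$, a short check given the explicit description of $\mu_J$ in $\fBimod{k}{k}$.

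The core of the proof is \eqref{bilaxcompat}. Because $\psi^Q$ is an isomorphism, this reduces to showing that the lax structure $\phi^Q$ is compatible with the interchange transformation $\zeta$ of the two $2$-monoidal categories. I would use the adjoint description of $\phi^Q$ from Cor.~\ref{Qopmonoidal} as the transpose of
\[
A \multtensor{k}{k} B \xrightarrow{\eta_A \multtensor{k}{k} \eta_B} (\Gamma QA) \multtensor{k}{k} (\Gamma QB) \xrightarrow[\cong]{\text{Lemma~\ref{gammamonoidal}}} \Gamma(QA \sweed{k}{k} QB),
\]
then combine it with Lemma~\ref{laxmorphproperties} (which governs the interaction between lax morphisms of $2$-algebras and $\zeta$), the strict monoidality of $\Gamma$ with respect to $\otimes_k$ (Lemma~\ref{gammamonoidal}), and the strict monoidality of $\Cof$ with respect to $\circ_k$ (Lemma~\ref{Symhatodotmonoidal}). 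The resulting diagram chase closely mirrors the second half of the proof of Thm.~\ref{Cofbilaxmonoidal}, with each appearance of $\Sym$ there transported through the $Q \dashv \Gamma$ adjunction.

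The chief difficulty is bookkeeping: the diagram for \eqref{bilaxcompat} has four variables and must be unwound through a chain of adjoint structures. A slicker argument via Prop.~\ref{bilaxcondition} is not directly available, because in the algebra-scheme setting neither $\Indecnoarg$ nor any natural candidate for its left adjoint is strict with respect to $\otimes_k$; nor is there a clean way to extend $\Fr$ from $\fModSch{k}{k}$ to a strict $\otimes_k$-preserving left adjoint at the algebra-scheme level. Nonetheless, all required ingredients have been assembled in the preceding lemmas, so the verification is essentially mechanical.
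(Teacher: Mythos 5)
Your overall plan coincides with the paper's: take the lax $\otimes_k$-structure from Cor.~\ref{Qopmonoidal} and the strict $\circ_k$-structure from Lemma~\ref{Qhatodotmonoidal}, then verify \eqref{bilaxunitality} and \eqref{bilaxcompat} by running the argument of Thm.~\ref{Cofbilaxmonoidal} with Lemma~\ref{laxmorphproperties} throughout. The paper's own proof consists of exactly this pointer and no further detail, so for \eqref{bilaxcompat} and for the $\mu_J$/$\psi_0$ diagram your sketch is as complete as the original, and your remark that Prop.~\ref{bilaxcondition} is not available here is accurate.

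There is, however, a genuine gap in your treatment of the unit-compatibility square, the first diagram of \eqref{bilaxunitality}, which asserts $\iota_J = \psi_0\circ \Indecnoarg(\iota_J)\circ\phi_0$. You claim it commutes because ``both legs are zero maps.'' The leg through $\Indec{\Spf{\hom(k,k)}} = \Spf{Q(\hom(k,k))} = 0$ is indeed zero (the augmentation ideal of $\hom(k,k)$ is spanned by idempotents, as the paper computes just before Lemma~\ref{gammamonoidal}). But the other leg is $\iota_J\colon \Spf{\hom_\Z(k,k)} \to \Spf{k}$ in $\fBimod{k}{k}$, represented by $k \to \hom_\Z(k,k)$, $1 \mapsto \id$; the paper notes that this map has a left inverse $\pi$ given by evaluation at $1$, so it is a split monomorphism and nonzero whenever $k \neq 0$. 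Thus ``both legs are zero'' is false, and this is precisely where the analogy with Thm.~\ref{Cofbilaxmonoidal} breaks down: there the image of the $\otimes$-unit is $\Spf{\Sym(\hom_\Z(k,k))}\neq 0$ and the paper exhibits an explicit factorization of $\iota_J$ through it, whereas for $\Indecnoarg$ the corresponding object vanishes, so no factorization of a nonzero map through $0$ can exist. As written this step fails, and it is not clear it can be repaired without weakening the statement to a non-unital form of bilaxity; at minimum you owe a real argument here. (Your dispatch of the $\Delta_I$-diagram is fine, since both of its composites land in $\Indec{I}\circ_k\Indec{I} = 0$.)
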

\begin{proof}
It was shown in Lemma~\ref{Qhatodotmonoidal} and Cor.~\ref{Qopmonoidal} that $\Indecnoarg$ is strictly monoidal with respect to $\circ_k$ and lax monoidal with respect to $\otimes_k$. As in Thm.~\ref{Cofbilaxmonoidal}, it remains to check the compatibility conditions \eqref{biunitality}--\eqref{bilaxcompat}. The proofs proceed in the same way, utilizing Lemma~\ref{laxmorphproperties} throughout.
\end{proof}

\begin{corollary}\label{Pbilaxmonoidal}
The functors $\Primnoarg\colon \fAlgSchf{k}{k} \to \fBimodf{k}{k}$ and $\Fr\colon \fBimodf{k}{k} \to \fAlgSchf{k}{k}$ are bilax monoidal functors.
\end{corollary}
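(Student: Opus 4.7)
The plan is to apply Proposition~\ref{bilaxcondition} to the two adjunctions
\[
\Primnoarg \dashv \Cof \qquad\text{and}\qquad \Fr \dashv \Indecnoarg,
\]
invoking the bilax structures already established on the right adjoints $\Cof$ (Theorem~\ref{Cofbilaxmonoidal}) and $\Indecnoarg$ (Theorem~\ref{Qbilaxmonoidal}). Once the two strictness hypotheses of Proposition~\ref{bilaxcondition} are verified, the bilax structures on the left adjoints are produced automatically as mates of the known ones, and all compatibility diagrams are obtained from the corresponding diagrams for $\Cof$ and $\Indecnoarg$ by the standard adjunction bookkeeping used in the proof of Proposition~\ref{bilaxcondition}.

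For each adjunction, hypothesis~(3)---strict monoidality of the right adjoint with respect to $\circ_k$---is exactly Lemma~\ref{Symhatodotmonoidal} for $\Cof$ and Lemma~\ref{Qhatodotmonoidal} for $\Indecnoarg$. Hypothesis~(2)---strict monoidality of the left adjoint with respect to $\otimes_k$---rests on the observation that $\otimes_k$ on $\fAlgSch{k}{k}$ is the categorical coproduct (Example~\ref{exa:coprodtwomonoidal}) and that $\Primnoarg$, $\Fr$ preserve colimits as left adjoints. Compatibility with the units is the identification $\Prim{\Spf{\hom(k,k)}}\cong \hom_\Z(k,k)$ recorded inside the proof of Proposition~\ref{Pmonoidal}, and the analogous identification for $\Fr$ coming from Lemma~\ref{gammamonoidal}.

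The main point requiring attention is hypothesis~(2): the target monoidal structure $\otimes_k$ on $\fBimod{k}{k}$ is the Sweedler product rather than a coproduct, so preservation of the source coproduct by a left adjoint does not by itself deliver the strict monoidal constraint. Threading the explicit descriptions of $\Primnoarg$ and $\Fr$ from Sections~\ref{section:primindec} and~\ref{sec:fmodschstructure} through the adjunction is what makes the structure maps come out to be isomorphisms. With this in hand, Proposition~\ref{bilaxcondition} delivers both $\Primnoarg$ and $\Fr$ as bilax monoidal functors between the $2$-monoidal categories $(\fAlgSch{k}{k},\otimes_k,\circ_k)$ and $(\fBimod{k}{k},\otimes_k,\circ_k)$.
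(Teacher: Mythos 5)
Your proposal matches the paper's argument: the paper derives this corollary in one line from Theorems~\ref{Cofbilaxmonoidal} and~\ref{Qbilaxmonoidal} together with Proposition~\ref{bilaxcondition}, applied to the adjunctions $\Primnoarg\dashv\Cof$ and $\Fr\dashv\Indecnoarg$, exactly as you do. Your extra care over hypothesis~(2) --- strict $\otimes_k$-monoidality of the left adjoints, which for $\Fr$ is Lemma~\ref{gammamonoidal} and for $\Primnoarg$ is genuinely the delicate point since the target product is the Sweedler product --- goes beyond what the paper records, but the route is the same.
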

\begin{proof}
This follows immediately from Thm.~\ref{Cofbilaxmonoidal} and Thm.~\ref{Qbilaxmonoidal} in conjunction with Prop.~\ref{bilaxcondition}.
\end{proof}

It follows that $\Prim{\hat A}$ and $\Indec{\hat A}$ are formal $k$-coalgebras if $\hat A$ is a formal plethory. In fact, by Lemma~\ref{bilaxfunctorspreservebimonoids}, more is true:
\begin{thm}
Let $\hat A \in \fAlgSchf{k}{k}$ be a formal plethory. Then $\Prim{\hat A}$ and $\Indec{\hat A} \in \fBimodf{k}{k}$ are bimonoids. \qed
\end{thm}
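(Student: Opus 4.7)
The plan is to derive this statement as an immediate consequence of the machinery already assembled, rather than doing any new calculation with the structure maps on $\Prim{F}$ and $\Indec{F}$. The key point is that every formal plethory $F$ is automatically a bimonoid in the $2$-monoidal category $(\fAlgSch{k}{k},\otimes_k,\circ_k)$, not just a comonoid with respect to $\circ_k$: this is precisely the content of the earlier Proposition that identifies bimonoids in $\fAlgSch{k}{k}$ with formal plethories, which in turn rests on the fact that $\otimes_k$ is the categorical coproduct in $\fAlgSch{k}{k}$ so that a $\otimes_k$-monoid structure on $F$ exists uniquely (compatibility with the $\circ_k$-comonoid structure is then automatic).

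Having upgraded $F$ to a bimonoid, I would invoke Proposition~\ref{bilaxfunctorspreservebimonoids} (Aguiar--Mahajan), which says that any bilax monoidal functor between $2$-monoidal categories sends bimonoids to bimonoids. The preceding corollary in the paper already establishes that both $\Primnoarg$ and $\Indecnoarg$ are bilax monoidal functors $\fAlgSch{k}{k} \to \fBimod{k}{k}$ (following from Theorems~\ref{Cofbilaxmonoidal} and~\ref{Qbilaxmonoidal} together with the adjunction trick of Proposition~\ref{bilaxcondition}). Applying these two bilax functors to the bimonoid $F$ therefore yields bimonoids $\Prim{F}$ and $\Indec{F}$ in $(\fBimod{k}{k},\otimes_k,\circ_k)$.

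There is essentially no obstacle to overcome at this point, because all the hard work has already been done: the compatibility diagrams \eqref{bilaxunitality} and \eqref{bilaxcompat} for both functors were verified earlier, and the unique bimonoid upgrade of a formal plethory comes for free from $\otimes_k$ being the coproduct. The only thing worth remarking in the write-up is which direction each structure comes from, namely that the $\otimes_k$-monoid structures on $\Prim{F}$ and $\Indec{F}$ are the images under the lax structure of the (unique) $\otimes_k$-monoid structure on $F$, while the $\circ_k$-comonoid structures are the images under the oplax (in fact strict, by Lemmas~\ref{Symhatodotmonoidal} and~\ref{Qhatodotmonoidal}) structure of the plethory comultiplication $F \to F \circ_k F$ and counit $F \to \id$. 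The compatibility between multiplication and comultiplication on the image is then exactly the content of Proposition~\ref{bilaxfunctorspreservebimonoids}, and the proof consists of one line citing that result.
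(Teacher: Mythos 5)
Your proposal matches the paper's own argument exactly: the theorem is stated with an immediate \qed precisely because it follows from the preceding results, namely that a formal plethory is automatically a bimonoid in $\fAlgSch{k}{k}$ (since $\otimes_k$ is the coproduct there), that $\Primnoarg$ and $\Indecnoarg$ are bilax monoidal, and that bilax monoidal functors preserve bimonoids by Proposition~\ref{bilaxfunctorspreservebimonoids}. Nothing to add.
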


As a corollary of Prop.~\ref{Qofmulttensor}, we also get a stronger statement for $\Indec{\hat A}$:
\begin{corollary}
The bimonoid $\Indec{\hat A}$ is also a two-sided module over $\Prim{\hat A}$ with respect to the $\otimes_k$-product.
\end{corollary}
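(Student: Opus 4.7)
The plan is to read the statement off directly from Prop.~\ref{Qofmulttensor} with $F=G$, using the multiplication map of $F$ as a formal algebra scheme. Since $F$, being a formal plethory, is in particular a formal $k$-algebra scheme over $k$, Cor.~\ref{comultintotensor} supplies the commutative, associative multiplication
\[
\mu_\times\colon F \otimes_k F \to F
\]
in $\fAlgSch{k}{k}$. Moreover, the flatness hypotheses required to invoke Prop.~\ref{Qofmulttensor} are precisely those already imposed to apply Prop.~\ref{Pmonoidal} in the proof of the preceding theorem (the existence of the bimonoid structure on $\Indec{F}$ itself rests on them), so they are in force.

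Applying Prop.~\ref{Qofmulttensor} to $F=G$ produces the factorization of the lax structure of $\Indecnoarg$ through $\Indec{F} \otimes_k \Prim{F}$ and $\Prim{F} \otimes_k \Indec{F}$. Post-composing with $\Indec{\mu_\times}\colon \Indec{F \otimes_k F} \to \Indec{F}$ then yields the desired two-sided action maps
\[
\Prim{F} \otimes_k \Indec{F} \to \Indec{F}, \qquad \Indec{F} \otimes_k \Prim{F} \to \Indec{F}.
\]

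The module axioms — associativity of each action, unitality with respect to the unit of $\Prim{F}$, and the commutation of the left and right actions — are inherited from the corresponding properties of $\mu_\times$ as a commutative, associative, unital algebra multiplication on $F$, together with the naturality in $F$ and $G$ of both the factorization in Prop.~\ref{Qofmulttensor} and the lax structure of $\Indecnoarg$ from Cor.~\ref{Qopmonoidal}. The main obstacle, if one were to verify this in full, is purely bookkeeping: a somewhat tedious diagram chase interleaving the factorization of $\Indec{\mu_\times}$ with naturality; but no genuinely new construction beyond the material already developed is required.
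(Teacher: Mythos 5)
Your proof follows the paper's route exactly: the paper already states the two-sided module structure inside Prop.~\ref{Qofmulttensor} (for $F=G$) and derives the corollary with no further argument, which is precisely what you do by composing the factorization through $\Prim{F} \otimes_k \Indec{F}$ and $\Indec{F} \otimes_k \Prim{F}$ with $\Indec{\mu_\times}$, where $\mu_\times$ comes from Cor.~\ref{comultintotensor}. One small imprecision: the hypotheses of Prop.~\ref{Qofmulttensor} also require $\Indec{F}$ to be pro-flat, which is strictly more than what Prop.~\ref{Pmonoidal} needs, but the paper imposes the full set of flatness assumptions as a standing hypothesis in this subsection, so nothing is lost.
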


\section{Dualization} \label{section:dualization}

In this section, we will consider the dualization of bimodules. We will show that the subcategory of $\fBimod{k}{l}$ consisting of objects represented by pro-finitely generated free $k$-modules is equivalent to the category $\Bimod{k}{l}$ of ordinary $k$-$l$-bimodules which are flat as $k$-modules. Furthermore, the categories are equivalent as $2$-monoidal categories if one defines the $2$-monoidal structures on $\Bimod{k}{l}$ as in Example~\ref{exa:bimodtwomonoidal}.

Let $\hat M \in \fBimod{k}{l}$ be a formal bimodule. Define $\predual{\hat M}$ to be the object in $\Bimod{l}{k}$
\[
(\predual{\hat M})_p^q = \hat M(\Sigma^q k)_p =  \Pro-\Mod_k((\Reg{\hat M})_p^q,k).
\]
From this description, it is obvious that it is a $k$-module (because $\Mod_k(\Reg{\hat M},k)$ is) and that it is an $l$-module (because $\hat M\colon \Mod_{k} \to \Mod_{l}$).

Conversely, given an object $M \in \Bimod{l}{k}$, we can define a dual
\[
(\dual{M})_p^q = \tensunit{k}{M} = M \indprotensor_k \Spf{k} \in \fBimod{k}{l} \quad \text{(cf. Example~\ref{exa:bimodhom})}.
\]
To be explicit, this is the pro-$k$-module $\{\Hom_k(F,k)\}$ where $F$ runs through all finitely presented $k$-submodules of $M$.

These functors are not equivalences of categories, but suitable restrictions are. Denote by $\fBimod{k}{l}'$ the full subcategory of formal bimodules which are represented by pro-(finitely generated free) $k$-modules, and $\Bimod{k}{l}'$ the full subcategory of $k$-$l$-bimodules which are flat as $k$-modules.

\begin{lemma}\label{doubledual}
The restrictions of the functors $\predual{(-)}\colon \fBimod{k}{l}' \to \Bimod{k}{l}':\!\dual{(-)}$ are inverse equivalences of categories.
\end{lemma}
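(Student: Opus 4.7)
The plan is to check that $\predual{(-)}$ and $\dual{(-)}$ restrict to the claimed subcategories, then exhibit natural isomorphisms $\eta\colon \id \to \dual{(\predual{(-)})}$ and $\epsilon\colon \predual{(\dual{(-)})} \to \id$ by reducing, via Lazard--Govorov, to classical double duality $M \cong M^{**}$ for finitely generated free $k$-modules.

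First I would verify the functors land in the claimed subcategories. If $\Reg{F} = \{M(i)\}_{i \in I}$ is a pro-finitely-generated-free representation of $F \in \fBimod{k}{l}'$, then $\predual{F} = \colim_i M(i)^*$ is a filtered colimit of finitely generated free modules, hence flat by Lazard--Govorov, so it belongs to $\Bimod{k}{l}'$. Conversely, for $B \in \Bimod{k}{l}'$ flat, Lazard--Govorov presents $B = \colim_j G_j$ with $G_j$ finitely generated free. I would then argue that the pro-object $\dual{B} = \{\Hom_k(F,k)\}_{F \subseteq B \text{ finitely generated}}$ is pro-isomorphic to the pro-finitely-generated-free system $\{G_j^*\}_j$: mutually inverse morphisms are built from the observations that every finitely generated $F \subseteq B$ factors through some $G_j$ (since $F$ is compact), and every $G_j \to B$ has finitely generated image; one checks these assemble into inverse morphisms of pro-objects by a standard cofinality argument.

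Next I would construct the unit and counit. The evaluation pairing $\Reg{F} \otimes_k \predual{F} \to k$, induced by the colimit defining $\predual{F}$, is adjoint to a morphism of pro-$k$-modules $\eta\colon \Reg{F} \to \Reg{\dual{(\predual{F})}}$, which at level $i$ is the classical double-duality map $M(i) \to M(i)^{**}$---an isomorphism because $M(i)$ is finitely generated free. Hence $\eta$ is a pro-isomorphism. Dually, the counit $\epsilon\colon \predual{(\dual{B})} \to B$ is realized, under the Lazard--Govorov presentation, as the filtered colimit of the classical isomorphisms $G_j \to G_j^{**}$, and is therefore an isomorphism.

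Finally, these isomorphisms respect the $l$-bimodule structures by naturality: the $l$-action on $\predual{F}$ is inherited from the $l$-enrichment of $F$, while the $l$-action on $\dual{B}$ comes directly from the $l$-action on $B$, and both $\eta$ and $\epsilon$ are built from natural constructions, so they automatically commute with the structure maps. The main obstacle is showing that $\dual{B} \in \fBimod{k}{l}'$ when $B$ is only assumed flat, because the tautological indexing of $\dual{B}$ uses \emph{all} finitely generated submodules of $B$---which in general need not themselves be free (e.g.\ over a non-PID). Passing to a pro-isomorphic presentation indexed by a Lazard--Govorov system requires some careful pro-category bookkeeping, and it is here that flatness of $B$ is used essentially.
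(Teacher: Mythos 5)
Your proposal is correct and follows essentially the same route as the paper: Govorov--Lazard to identify flat modules with ind-(finitely generated free) modules, combined with the self-duality of finitely generated free $k$-modules under $\Mod_k(-,k)$, with the general $l$ case handled by compatibility with the underlying $k$-module structure. You spell out the unit/counit and the pro-isomorphism between the tautological system $\{\Hom_k(F,k)\}_{F\subseteq B}$ and the Lazard--Govorov system $\{G_j^*\}$ explicitly, which the paper leaves implicit, but the substance is the same.
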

\begin{proof}
First assume $l=\Z$. By the Govorov-Lazard theorem \cite{lazard:platitude,govorov:flat}, the category of flat $k$-modules $\Bimod{k}{\Z}'$ is equivalent to the category of ind-finitely generated free $k$-modules. Since the category of finitely generated free $k$-modules is self-dual by the functor $\Mod_k(-,k)$, the opposite category of $\Bimod{k}{\Z}'$ is thus isomorphic to pro-finitely generated free $k$-modules by the functors $\dual{(-)}$ and $\predual{(-)}$. The case for general $l$ follows because the forgetful functors $\fBimod{k}{l}' \to \fBimod{k}{\Z}'$ and $\Bimod{k}{l}' \to \Bimod{k}{\Z}'$ are faithful.
\end{proof}

\begin{lemma} \label{dualstrict}
The category $\Bimod{k}{l}$ is a $2$-bimodule over $\Mod_l$. The functor $\dual{(-)}$ is a left strict and the functor $\predual{(-)}$ is a right strict morphism of $2$-bimodules.
\end{lemma}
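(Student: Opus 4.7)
The plan is as follows. First I would make explicit the $2$-bimodule structure on $\Bimod{k}{l}$ over $\Mod_l$: since $l$ is graded commutative, each hom set $\Bimod{k}{l}(B,B')$ inherits a canonical $l$-module structure from either the source or target $l$-action, which agree; this is the $\Mod_l$-enrichment. The left tensoring is $L \indprotensor B := L \otimes_l B$ (the residual $k$-action commuting with the $l$-action, this is again a $k$-$l$-bimodule), and the right cotensoring is $\hom_l(L,B) := \Mod_l(L,B)$ with $k$-action inherited from $B$. Bicompleteness is clear, and the required enriched adjunctions $\Bimod{k}{l}(L \otimes_l B, B') \cong \Mod_l(L, \Bimod{k}{l}(B,B'))$ follow from the standard tensor-hom adjunction in $\Mod_l$ by restricting to $k$-linear maps throughout.

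Next, to prove left strictness of $\dual{(-)}$, I would unwind both sides of the canonical map $\alpha\colon L \indprotensor \dual{B} \to \dual{L \indprotensor B}$ on representing pro-$k$-modules. By Example~\ref{ex:bimodhom}, $\dual{B}$ is represented by $\hom_k(B,k) \in \Pro-\Mod_k$, and Lemma~\ref{fbimodtwomodule} gives $\Reg{L \indprotensor_l F} = \hom_l(L,\Reg{F})$. Therefore $L \indprotensor \dual{B}$ is represented by $\hom_l(L,\hom_k(B,k))$, while $\dual{L \indprotensor B} = \dual{L \otimes_l B}$ is represented by $\hom_k(L \otimes_l B, k)$. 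The classical tensor-hom adjunction, using that the $k$- and $l$-actions commute, gives a natural isomorphism of pro-$k$-modules $\hom_k(L \otimes_l B, k) \cong \hom_l(L, \hom_k(B,k))$, and an explicit check against the $l$-action prescription of Example~\ref{ex:bimodhom} shows this is exactly $\alpha$.

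For right strictness of $\predual{(-)}$, the argument is even cleaner. The cotensoring in $\fBimod{k}{l}$ was defined objectwise in Lemma~\ref{fbimodtwomodule}, namely $\hom_l(L,F)(R) = \Mod_l(L, F(R))$ for every $R \in \Mod_k$. Evaluating at $R = k[q]$ and taking the degree $p$ piece yields
\[
(\predual{\hom_l(L,F)})_p^q = \Mod_l\bigl(L, F(k[q])\bigr)_p = \Mod_l(L, (\predual{F})^q)_p = (\hom_l(L, \predual{F}))_p^q,
\]
and this identification coincides with the canonical map $\beta$.

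There is no real obstacle in this lemma; the entire verification is formal. The only mild subtlety lies in the proof of left strictness, where $\dual{B} \in \Pro-\Mod_k$ is indexed by small subobjects of $B$ and $\hom_l(L,-) = \hom(\Sm{L},-)$ from Cor.~\ref{vproenriched} involves another pro-system. One must check that the adjunction is compatible with these indexings, but this reduces to the fact that small subobjects of $L \otimes_l B$ arise from pairs of small subobjects of $L$ and $B$, which is standard.
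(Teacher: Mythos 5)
Your proof is correct and follows essentially the same route as the paper: the enrichment, tensoring $L \otimes_l B$, and cotensoring $\Mod_l(L,B)$ are identified in the same way, left strictness of $\dual{(-)}$ is reduced to the hom-tensor adjunction $\hom_k(L \otimes_l B,k) \cong \hom_l(L,\hom_k(B,k))$ on representing objects, and right strictness of $\predual{(-)}$ is checked objectwise by observing both sides evaluate to $\Mod_l(L,F(k))$. Your closing remark on compatibility of the pro-indexings is a point the paper leaves implicit, but it is the same argument.
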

\begin{proof}
The graded homomorphism set $\Bimod{k}{l}(M,N)$ is naturally an $l$-module because $l$ is commutative. The left action of $\Mod_l$ is given by the usual tensor product $M \indprotensor_{l} N = M \otimes_l N$, which is again a left $l$-module by the commutativity of $l$. The right action is given by $\hom_l(M,N)$.

It is straightforward to check that $\predual{(-)}$ and $\dual{(-)}$ are enriched functors. To see that $\predual{(-)}$ is right strict, we need to verify that
\[
\predual{\hom_l(M,\hat M)} \cong \hom_l(M,\predual{\hat M}),
\]
which is true since both sides are given by $\Hom_l(M,\hat M(k))$. To see that $\dual{(-)}$ is left strict, we check the identity
\[
M \indprotensor_l \dual{N} \cong \dual{(M \indprotensor_l N)}
\]
by noting that the left hand side is represented by $\hom_l(M,\hom_k(N,k))$, whereas the right hand side is represented by $\hom_k(M \indprotensor_l N,k)$. These are equal by the standard hom-tensor adjunction.
\end{proof}

Note that the $\Mod_l$-enrichment of $\Bimod{k}{l}'$ and $\fBimod{k}{l}'$ does not descend to a $2$-bimodule structure on these categories. For example, if $N \in \Bimod{k}{l}'$ then $M \otimes_l N$ is in general not flat unless $M$ is a flat $l$-module. On the other hand, for $M$, $N \in \Bimod{k}{l}'$, $\Bimod{k}{l}(M,N)$ need not be a flat $l$-module.

Let $\hat M \in \fBimod{k}{l}$ and $N \in \Bimod{m}{l}$. In Section~\ref{sec:fbimodstructure} we defined an object $N \indprotensor_l \hat M \in \fBimod{k}{m}$ as part of the left module structure of $\fBimod{k}{l}$ over $\Mod_l$.

\begin{lemma} \label{dualizablelemma}
Let $\hat M \in \fBimod{k}{l}$ and $N \in \Bimod{m}{l}$. Then there is a natural map
\[
\Phi\colon N \indprotensor_l \hat M \to (\dual{N}) \circ_l \hat M \in \fModSch{k}{m}
\]
which is an isomorphism if $N \in \Bimod{m}{l}'$. Similarly, there is a natural map
\[
\Psi\colon N \circ_l \predual{\hat M} \to \predual{(N \indprotensor_l \hat M)}
\]
which is an isomorphism if $N \in \Bimod{m}{l}'$.
\end{lemma}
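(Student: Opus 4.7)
The plan is to construct both maps from a single universal natural transformation and to reduce the isomorphism claims to the finitely generated free case via Govorov--Lazard.

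First I would invoke the bimodule extension of Lemma~\ref{freebimodadjunction}, which characterizes $B \indprotensor_l F \in \fBimod{k}{m}$ by the adjunction
\[
\fBimod{k}{m}(B \indprotensor_l F, G) \cong \mathfrak{F}(B \otimes_l F(-), G)
\]
and in particular equips it with a universal natural transformation $\eta_F\colon B \otimes_l F(-) \to (B \indprotensor_l F)(-)$. To give the first map $B \indprotensor_l F \to (\dual{B}) \circ F$ it suffices by this adjunction to exhibit a natural transformation $B \otimes_l F(-) \to \dual{B}(F(-))$, and I would take the canonical ``double-dual'' map
\[
B \otimes_l M \to \dual{B}(M) = \Pro-\Mod_l(\hom_l(B,l), M), \quad b \otimes m \mapsto \bigl(\phi \mapsto \phi(b)\cdot m\bigr),
\]
evaluated at $M = F(N)$; here the pro-structure on $\hom_l(B,l)$ is the usual one indexed by finitely generated submodules $B' \subseteq B$.

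For the isomorphism claim when $B$ is flat, I would show that both sides coincide with the functor $B \otimes_l F(-)$. By Govorov--Lazard \cite{lazard:platitude,govorov:flat}, the finitely generated free submodules of a flat $B$ are cofinal in all f.g. submodules, and for each f.g. free $B'$ the classical identification $B' \otimes_l M \cong \Mod_l(\hom_l(B', l), M)$ holds; passing to the filtered colimit gives $B \otimes_l M \cong \dual{B}(M)$, whence $(\dual{B}) \circ F \cong B \otimes_l F(-)$ as functors. On the other hand, when $B$ is flat the functor $B \otimes_l F(-)$ preserves finite limits (both $B \otimes_l -$ and $F$ do), hence is ind-representable, so $\eta_F$ is an isomorphism and $B \indprotensor_l F \cong B \otimes_l F(-)$. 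Both sides of the first map thus agree.

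The second map $B \circ \predual{F} \to \predual{(B \indprotensor_l F)}$ I would obtain by evaluating $\eta_F$ at the test objects $k[q] \in \Mod_k$: by definition $\predual{(B \indprotensor_l F)}(q) = (B \indprotensor_l F)(k[q])$, while $(B \circ \predual{F})(q) = B \otimes_l F(k[q])$ under the identification of composition of bimodules with the appropriate tensor product, so $\eta_F(k[q])$ is literally the desired map. When $B$ is flat this is an isomorphism by the previous paragraph. The main obstacle I anticipate is simply the bookkeeping of left/right $m$-, $l$-, and $k$-actions and grading shifts through the side-swapping conventions of $\dual{(-)}$ and $\predual{(-)}$, together with the routine verification that the adjunction of Lemma~\ref{freebimodadjunction} extends to bimodule coefficients exactly as in the passage from module to bimodule structures at the end of Section~\ref{sec:fmodschstructure}.
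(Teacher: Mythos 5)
Your construction of the two maps is sound and, for the second map, is literally the paper's: the paper also takes the canonical map $B \otimes_l F(k) \to (B \indprotensor_l F)(k)$ furnished by Lemma~\ref{freebimodadjunction} and observes that it is an isomorphism once $N \mapsto B \otimes_l F(N)$ is itself ind-representable, which holds for $l$-flat $B$. For the first map the paper instead works on representing objects, building $\hom(B,l) \hodot_{l} \Reg{F} \to \hom_l(B,\Reg{F})$ out of the evaluation $B \odot_{m} \hom(B,l) \to l$ and the structure maps of Lemma~\ref{protensornattrf}; your route through the universal property of $\indprotensor_l$ and the double-dual pairing $b \otimes m \mapsto (\phi \mapsto \phi(b)\cdot m)$ yields the same morphism and is arguably more transparent.

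There is, however, one step that fails as written: the claim that the finitely generated free submodules of a flat $B$ are cofinal among all finitely generated submodules. This is false in general --- a finitely generated non-free rank-one projective $P$ over a Dedekind domain with nontrivial class group is flat, yet $P$ is a finitely generated submodule of itself contained in no finitely generated free submodule; over suitable non-domains a flat cyclic module with nonzero annihilator has no nonzero free submodules at all. What Govorov--Lazard actually provides is that $B$ is a filtered colimit of finitely generated free modules along \emph{not necessarily injective} transition maps. Since $\dual{B}$ is by definition indexed by \emph{all} finitely generated submodules, you cannot obtain $\dual{B}(M) \cong B \otimes_l M$ by restricting the colimit to the free ones. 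The repair is either to invoke Lemma~\ref{doubledual}, which already identifies $\hom_l(B,l)$ for flat $B$ with a pro-finitely-generated-free object (via the ind-isomorphism between the system of finitely generated submodules and a Lazard system of free modules), or to argue as the paper does: both sides of the comparison map convert filtered colimits in the variable $B$ into limits in $\Pro-\Mod_k$, so the isomorphism claim reduces to $B = l^n$, where the map is visibly the identity on $\Reg{F}^n$. With that substitution the remainder of your argument, including the identification of $B \indprotensor_l F$ with $B \otimes_l F(-)$ for flat $B$, goes through.
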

\begin{proof}
For $\Phi$, it suffices by naturality to consider the case $m=\Z$. The adjoint of the identity on $\dual{N} = N \indprotensor_k \Spf{K}$ give a map
\[
J_l \to \hom_l(N,\dual{N}).
\]
Taking the $\circ_l$-product with $\hat M$ and applying the map $\beta$ of \eqref{betamap} coming from the $2$-algebra structure of $\fModSch{k}{l}$ over $\Mod_l$, we obtain a map
\[
\hat M = J_l \circ \hat M \to \hom_l(N,\dual{N}) \circ \hat M \xrightarrow{\beta} \hom_l(N,(\dual{N}) \circ \hat M)
\]
which is adjoint to the desired map $\Phi$.

Note that as functors of $N$, both sides of $\Phi$ preserve colimits. Thus to show that the map is an isomorphism for $l$-flat $N$, it suffices to let $N = l^n$ be a single finitely generated free $l$-module. In that case $\Phi$ is the identity on $\hat M^n$.

The map $\Psi$ is given by the canonical map $N \otimes_l \hat M(k) \to (N \indprotensor_l \hat M)(k)$ given by Lemma~\ref{freebimodadjunction}. Since $T \mapsto N \otimes \hat M(T)$ is representable if $N$ is $l$-flat, this map is an isomorphism for $N \in \Bimod{l}{m}'$.
\end{proof}

Recall from Ex.~\ref{exa:bimodtwomonoidal} that $\Bimod{k}{k}$ is a $2$-monoidal category with respect to the two-sided tensor product $\btensor{k}{k}$ and the left-right tensor product $\circ_k$.
 
\begin{thm}\label{dualbimonoidal}
The functors $\predual{(-)}$ and $\dual{(-)}$ are bilax monoidal functors.
\end{thm}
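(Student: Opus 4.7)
The strategy is to construct the lax and oplax structure maps for $\dual{(-)}$ and $\predual{(-)}$ explicitly from the natural transformations of Lemma~\ref{dualizablelemma} together with the hom-tensor adjunction, and then to verify the bilax compatibility axioms by diagram chase on representing pro-modules.

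For $\dual{(-)}\colon \Bimod{k}{k} \to \fBimod{k}{k}$, the oplax structure with respect to the composition products, $\dual{(B \circ C)} \to \dual{B} \circ_k \dual{C}$, comes directly from the first half of Lemma~\ref{dualizablelemma} specialized to $F = \dual{C}$, once the source is identified with $B \indprotensor_k \dual{C}$ via hom-tensor adjunction (since $B \circ C = B \otimes_k C$ using the $k$-actions). The lax structure with respect to the bilinear products, $\dual{B} \otimes_k \dual{C} \to \dual{(B \btensor{k}{k} C)}$, is induced on representing objects by the canonical pairing $\hom_k(B, k) \otimes_k \hom_k(C, k) \to \hom_k(B \btensor{k}{k} C, k)$, which factors through the Sweedler equalizer that defines $\otimes_k$. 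The unit structure maps are canonical via the identification $\hom_k(k, -) \cong \id$.

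For $\predual{(-)}\colon \fBimod{k}{k} \to \Bimod{k}{k}$, the analogous structure maps come from the second half of Lemma~\ref{dualizablelemma} together with universal properties: the oplax map $\predual{(F \circ_k G)} \to \predual{F} \circ \predual{G}$ is obtained by specializing $B \circ \predual{F} \to \predual{(B \indprotensor_k F)}$ to $B = \predual{G}$ and using that $\predual{G} \indprotensor_k F$ is the natural approximation to $\predual{G}(-) \otimes_k F(-)$, and the lax map $\predual{F} \btensor{k}{k} \predual{G} \to \predual{(F \otimes_k G)}$ is the evaluation at $k$ of the canonical map provided by the universal property of the Sweedler product on representing objects. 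Checking that these maps satisfy the axioms for lax/oplax monoidal functors (associativity, unitality, enrichment compatibility) amounts to an unpacking of definitions, using naturality throughout.

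The main obstacle will be verifying the bilax compatibility diagram~\eqref{bilaxcompat}, which reduces to showing that two parallel natural transformations between iterated Sweedler products of hom-modules coincide. I expect this to follow by unwinding on representing pro-modules and invoking associativity and naturality of the hom-tensor adjunction, together with the fact that all structure maps derive ultimately from the same evaluation pairing; no new coherence beyond that of Lemma~\ref{dualizablelemma} should be needed. The unitality diagrams~\eqref{bilaxunitality} will follow by direct inspection, since all unit objects and their structure maps reduce to canonical identifications via $\hom_k(k, -) \cong \id$. A more elegant alternative, worth exploring, would be to establish the adjunction $\dual{(-)} \dashv \predual{(-)}$ on the full categories (extending the equivalence of Lemma~\ref{doubledual}), identify an appropriate strictness property, and then invoke Proposition~\ref{bilaxcondition} to transport bilaxness from one functor to the other.
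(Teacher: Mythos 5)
Your construction matches the paper's proof: all four structure maps are obtained exactly as you describe, from Lemma~\ref{dualizablelemma}, the evaluation pairing on representing objects, and the universal property of the Sweedler equalizer, with the flatness of $k \otimes k$ over $k$ entering only to make the unit comparison for $\predual{(-)}$ an isomorphism. One caveat: the alternative route via Proposition~\ref{bilaxcondition} is not available on the full categories, since $\dual{(-)}$ and $\predual{(-)}$ are adjoint (indeed inverse) and suitably strict only after restricting to $\Bimod{k}{k}'$ and $\fBimod{k}{k}'$, so the direct construction is the one to carry out.
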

\begin{proof}
The unit of $\circ$ in $\fBimod{k}{k}$ is $J_k = \Spf{k}$, the unit of $\circ$ in $\Bimod{k}{k}$ is $k$, and we have that $\predual{\Spf{k}} = k$ and $\dual{k} = \Spf{k}$. Thus we find that $\dual{(-)}$ and $\predual{(-)}$ strictly preserve the units.

We now show that $\dual{(-)}$ is lax monoidal with respect to the $\circ$-products. There is a natural transformation
\begin{align*}
\dual{(M \circ N)} =& \Spf{\hom_k(M \circ N,k)}\\
 = &\Spf{\hom_k(M,\hom_k(N,k))} = M \indprotensor_{k} \dual{N} \xrightarrow{\text{Lemma~\ref{dualizablelemma}}} \dual{M} \circ \dual{N},
\end{align*}
and this map is an isomorphism if $M \in \Bimod{k}{k}'$.

Next we show that $\predual{(-)}$ is lax monoidal with respect to the $\circ_k$-products. There is a natural transformation
\begin{multline*}
\predual{\hat M} \circ_k \predual{\hat N} =  \hat M(k) \circ_k \hat N(k)
\to \Pro-\Mod_k(\Reg{\hat M} \otimes_k \Reg{\hat N},k) \\
 \xrightarrow{\text{Lemma~\ref{protensornattrf}\eqref{proadjoin}}}   \Pro-\Mod_k(\Reg{\hat M},\underline\Pro_k(\Reg{\hat N},k)) = \predual{(\hat M \circ_k \hat N)}.
\end{multline*}
The arrow is an isomorphism if $\hat M \in \fModSch{k}{k}'$.

We now study the compatibility with the other monoidal structure, i.e. $\otimes_k$ on $\fBimod{k}{k}$ and $\btensor{k}{k}$ on $\Bimod{k}{k}$. Note that the unit in $\fBimod{k}{k}$ is $\tensunit{k}{k} = \Spf{\hom(k,k)}$, whereas the unit in $\Bimod{k}{k}$ is $k \otimes k$. We have
\[
\Reg{\dual{(k \otimes k)}} = \hom_k(k \otimes k,k) = \hom_k(k,k).
\]
For $\predual{(-)}$, we have a comparison map
\[
k \otimes k \to \predual{\Spf{\hom_k(k,k)}} = \hom_k(\hom_k(k,k),k)
\]
which is adjoint to 
\[
k \otimes k \indprotensor_k \hom_k(k,k) = k \indprotensor_k \hom_k(k,k) \xrightarrow{\text{evaluate}} k,
\]
but this map is not necessarily an isomorphism. It is if $k \otimes k$ is flat over $k$ (in this case, $\hom(k,k) = \hom_k(k \otimes k, k) \in \fBimod{k}{k}'$).

To show that $\dual{(-)}$ is oplax monoidal with respect to $\otimes_k$ and $\btensor{k}{k}$, consider the diagram 
\[
\begin{tikzpicture}
	\matrix (m) [matrix of math nodes, row sep=2em, column sep=2em, text height=1.5ex, text depth=0.25ex]
	{
		\dual{(M \btensor{k}{k} N)} & \dual{(M \otimes_k N)} & \dual{(k  \otimes M \otimes_k N)}\\
		&	&	k \indprotensor_\Z(\dual{(M \otimes_k N)})\\
		\dual{M} \otimes_k \dual{N} & \dual{M} \otimes_\Z \dual{N} & k \indprotensor_\Z(\dual{M} \otimes_\Z \dual{N})\\
	};
	\path[<-,font=\scriptsize]
	(m-1-1)	edge (m-1-2)
	([yshift=-1mm]m-1-2.east)		edge ([yshift=-1mm]m-1-3.west)
	([yshift=1mm]m-1-2.east)		edge ([yshift=1mm]m-1-3.west)
	(m-3-1)	edge (m-3-2)
	([yshift=-1mm]m-3-2.east)		edge ([yshift=-1mm]m-3-3.west)
	([yshift=1mm]m-3-2.east)		edge ([yshift=1mm]m-3-3.west)
	(m-3-2)	edge (m-1-2)
	(m-3-3)	edge (m-2-3)
	(m-3-1)	edge[dashed] (m-1-1);
	\path
	(m-2-3)	edge[double,double equal sign distance] node[auto]{\eqref{dualstrict}} (m-1-3);
\end{tikzpicture}
\]
Both rows are coequalizers, and the vertical maps are iso if $M$, $N \in \Bimod{k}{k}'$.
 
The oplax monoidal structure for $\predual{(-)}$ is given by the canonical map
\[
\predual{\hat M} \btensor{k}{k} \predual{\hat N} = \hat M(k) \btensor{k}{k} \hat N(k)
 \to (\hat M \otimes_k \hat N)(k) = \predual{(\hat M \otimes_k \hat N)}.
\]
By duality (Lemma~\ref{doubledual}), since $\dual{(-)}$ is strongly monoidal on $\Bimod{k}{l}'$, so is $\predual{(-)}$ on $\fBimod{k}{l}'$.
\end{proof}

We summarize the main results of this section in the following theorem.
\begin{thm} \label{thm:dualsummary}
Assume that $k \otimes k$ is flat over $k$. Then the categories $\fBimod{k}{k}' \subseteq \fBimod{k}{k}$ and $\Bimod{k}{k}' \subseteq \Bimod{k}{k}$ are full $2$-monoidal subcategories, and the functors $\predual{(-)}$ and $\dual{(-)}$ are inverse strong bimonoidal equivalences between them.
\end{thm}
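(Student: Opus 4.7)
The plan is to combine Lemma~\ref{doubledual}, which gives an equivalence of underlying categories $\predual{(-)} : \fBimod{k}{k}' \leftrightarrows \Bimod{k}{k}' : \dual{(-)}$, with Theorem~\ref{dualbimonoidal}, which equips both functors with the structure of a bilax monoidal functor between the ambient $2$-monoidal categories. The main observation is that, as recorded in the proof of Theorem~\ref{dualbimonoidal}, every lax and oplax structure map becomes an isomorphism under precisely the hypotheses that are satisfied on our restricted subcategories.

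Concretely, I would first check that the two subcategories are closed under both monoidal structures. The $\circ$-product of $k$-flat bimodules is visibly $k$-flat, and on the pro side $\Reg{F}\hodot_k \Reg{G} = \lim_i \Reg{F}(i) \odot_k \Reg{G}$ is a levelwise finite product of copies of $\Reg{G}$ whenever $\Reg{F}$ is pro-fg-free, so it is itself pro-fg-free; the units of $\circ$ (namely $k$ and $\Spf{k}$) lie trivially in the subcategories. For the products $\btensor{k}{k}$ and $\otimes_k$, the hypothesis that $k \otimes_K k$ is $k$-flat, together with Lazard-Govorov, places the units $k \otimes_K k$ and $\Spf{\hom(k,k)} \cong \dual{(k \otimes_K k)}$ into the respective subcategories, and closure under the products themselves then follows from the strong monoidal structure established in the next step (alternatively, one can argue directly from the Sweedler-product construction together with the fact that the $k$-dual of a $k$-flat module is pro-fg-free).

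Next, I would invoke Theorem~\ref{dualbimonoidal}: each functor is bilax monoidal, and its proof identifies when each structure map is invertible. Explicitly, (i) the units of $\circ$ are preserved strictly, (ii) the unit comparison for $\otimes_k/\btensor{k}{k}$ is an isomorphism precisely under the hypothesis that $k \otimes_K k$ is $k$-flat, (iii) the lax structure for $\circ$ (via Lemma~\ref{dualizablelemma}) becomes an isomorphism on $k$-flat inputs, and (iv) the oplax structure for $\otimes_k/\btensor{k}{k}$ becomes an isomorphism on $k$-flat, respectively pro-fg-free, inputs. All four of these conditions hold on the restricted subcategories, so the bilax structures on $\predual{(-)}$ and $\dual{(-)}$ refine to strong bimonoidal structures, and the underlying equivalence of Lemma~\ref{doubledual} is upgraded to an equivalence of $2$-monoidal categories.

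The main obstacle is the bookkeeping of flatness hypotheses rather than any deep categorical point. The flatness of $k \otimes_K k$ over $k$ is doing double duty: it ensures that the unit object of $\otimes_k/\btensor{k}{k}$ lies in the flat subcategory on each side, and it simultaneously turns the unit comparison under $\predual{(-)}$ and $\dual{(-)}$ into an isomorphism, while the flatness of the inputs is what makes the remaining structure maps into isomorphisms.
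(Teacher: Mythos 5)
Your proposal is correct and takes essentially the same route as the paper: the paper likewise reduces everything to checking that the two subcategories are closed under both monoidal structures (directly for $\Bimod{k}{k}'$, and via Thm.~\ref{dualbimonoidal} and Lemma~\ref{dualizablelemma} for $\fBimod{k}{k}'$), the strong bimonoidal equivalence then being immediate from Lemma~\ref{doubledual} together with the observation that the bilax structure maps become isomorphisms under exactly the flatness hypotheses in force.
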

\begin{proof}
The only point left to show is that $\fBimod{k}{k}'$ and $\Bimod{k}{k}'$ are closed under the two monoidal structures. This is obvious for $\Bimod{k}{k}'$: if $M$, $N$ are flat as right $k$-modules then so are $M \circ_l N$ and $M \btensor{k}{k} N$. It follows for $\fBimod{k}{k}'$ by Thm.~\ref{dualbimonoidal} and Lemma~\ref{dualizablelemma}.
\end{proof}

\section{Formal plethories and the unstable Adams spectral sequence} \label{section:anss}

Let $E$ be a multiplicative cohomology theory such that $E_*$ is a Pr\"ufer domain and $E^*\underline E_*$ is pro-flat. Let $X$ be a space. We denote by $X_E$ the formal scheme
$\Spf{\hat E^*(X)}$. By Thm.~\ref{thm:topplethory}, $\underline E_E = \Spf{\hat E^*\underline E_*}$ represents a formal plethory and $X_E$ is a comodule over it. This short section is meant to provide a basic connection between this structure and the $E$-based unstable Adams-Novikov spectral sequence for $X$. Further study and applications will appear in another paper.

The unstable Adams-Novikov spectral sequence can be constructed monadically \cite{bendersky-curtis-miller,bendersky-thompson}: define a monad $E\colon \Top \to \Top$ by $E(X) = \Omega^\infty(E \sm X)$. This gives rise to a cosimplicial bar construction
\[
X \to E^\bullet(X) = \{ E(X) \rightrightarrows E(E(X)) \cdots \},
\]
and the unstable Adams-Novikov spectral sequence arises as the homotopy Bousfield-Kan spectral sequence associated to this cosimplicial space \cite{yellowmonster}:
\[
E_2^{s,t} = \pi^s (\pi_t E^\bullet(X)) \Longrightarrow \pi_{t-s} X\hat{{}_E} 
\]
This $E_2$-term can be described as a monadic derived functor under the assumption that both $E_*(X)$ and $E_*(\underline E_*)$ are free $E_*$-modules \cite[Thm.~6.17]{bendersky-curtis-miller} as
\[
E_2^{s,t} = \Ext^s_{\G}(E_*(\S^t),E_*(X))
\]
where $\G$ is a comonad on free $E_*$-modules characterized by $\G(E_*(X)) = \pi_* E(X)$. Coalgebras over this comonad are unstable coalgebras for $E$-homology, and any $E_*$-module of the form $E_*(X)$ is a coalgebra over $\G$. The functor $\Ext^s_\G$ denotes the $s$th non-additive derived functor of $\Hom_\G(-,-)$, i.e. of $\G$-coalgebra morphisms with respect to the injective models given by $\G(S)$ for any space $S$ \cite{bousfield:cosimplicial-resolutions}. The advantage of this description is that one is no longer tied to the bar resolution for computing the $E_2$-term, but can take any \emph{weak} resolution, i.e. any contractible  cosimplicial object in $\G$-coalgebras which is levelwise $\G$-injective \cite[Thm.~6.2]{bousfield:cosimplicial-resolutions}.

To describe the unstable Adams-Novikov spectral sequence in terms of plethories and their comodules, we need the following ``nonlinear K\"unneth'' theorem:

\begin{prop}\label{kappaprop}
Let $E$, $F$ be multiplicative cohomology theories such that $E_*$ is a Pr\"ufer domain, $\hat E^*(\underline F)$ is pro-flat, and let $X$ be a pointed space or spectrum. Then there is a natural map of formal  $F_*$-module schemes over $E_*$:
\[
\kappa\colon (\Omega^\infty(F \sm X))_E \to X_F \circ_{F_*} \underline F_E
\]
which is an isomorphism if $X$ is a filtered colimit of finite CW-spaces $K$ such that $F_*(K)$ is a free $F_*$-module. 
\end{prop}
\begin{proof}
By writing $X$ as a filtered colimit of finite CW-spaces $K$, is suffices to consider the case where $X$ is finite. We will work in slightly greater generality and produce a morphism
\[
\kappa\colon (\Omega^\infty M)_E \to M_F \circ_{F_*} \underline F_E = \hom_{F_*}([M,F]^F,\underline F_E)
\]
for any finite $F$-module spectrum $M$, where $M_F = \Spf{[M,F]^F]}$ denotes the formal $F_*$-bimodule of graded $F$-module spectrum maps $M \to F$. By adjunction, the map $\kappa$ is given as the adjoint of
\[
[M,F]^F \xrightarrow{\Omega^\infty} [\Omega^\infty M,\underline F_*] \to \Sch{E_*}((\Omega^\infty M)_E,\underline F_E).
\]
Note that source and target of $\kappa$ send filtered colimits in $M$ to limits and finite direct sums to tensor products, thus it suffices to show that $\kappa$ is an isomorphism if $M$ is finitely generated free on one generator. But if $M = \Sigma^n F$ then
\[
(\Omega^\infty)_E \cong (\underline F_n)_E \xrightarrow[\cong]{\kappa} \hom_{F_*}(F_{*-n},\underline F_E).
\]
\end{proof}

By Lemma~\ref{odotleftfree}, if $X$ is a space, $X_F \circ_{F_*} \underline F_E \cong (\Cof{X_F}) \circ_{F_*} \underline F_E$, where the pairing $\circ_{F_*}$ on the left is the pairing of a formal module scheme with a formal bimodule, whereas the pairing on the right is the pairing of a formal algebra scheme with a formal module scheme.

In particular, using Theorem~\ref{thm:dualsummary}, we obtain:
\begin{thm}
Let $E$ be a ring spectrum and $X$ a space such that $X$ and every $\underline E_n$ are filtered colimits of finite CW-complexes $K$ with $E_*(K)$ free and such that $E_*$ is Pr\"ufer. Denote by $M \in \fBimod{E_*}{E_*}'$ the formal module scheme represented by the pro-finitely generated free $E_*$-module $\hat E^*(X)$. Then
\[
\G(E_*X) \cong \predual{\left(\Cof(\hat M) \circ_{E_*} \hat P_E\right)}.
\]
\end{thm}
\begin{proof}
If $E_*$ is a Pr\"ufer domain then $E_* \otimes_\Z E_*$ is flat over $E_*$. Indeed, it suffices to see that $F \otimes_\Z E_* = E_*$ for any cyclic subgroup $F$ of $E_*$. This is clear for $F=\Z$. For $F=\Z/n\Z$, the fact that $na=0$ for some $a \in E_*$ means that $n=0$ in $E_*$ since $E_*$ is a domain, hence $F \otimes E_* = E_*$. 

By Prop.~\ref{kappaprop},
\[
\Cof(\hat M) \circ_{E_*} \hat P_E \cong \Spf{\hat E^*(\Omega^\infty(E \sm X)))} 
\]
and hence by Thm.~\ref{thm:dualsummary},
\[
\predual{(\Cof(\hat M) \circ_{E_*} \hat P_E)} \cong E_*(\Omega^\infty(E \sm X)) = \G(E_*X).
\]
\end{proof}

Thus the standard unstable Adams resolution is given by repeatedly applying the comonad $\hat M \mapsto \Cof(\hat M) \circ_{E_*} \hat P_E$ to $M=\hat E^*(X)$. There is another, smaller, resolution of $X_E$ when considered as a formal scheme, to wit, the comonad resolution $X_E \mapsto X_E \circ_{E_*} \hat P_E$, and it is natural to wonder whether this can be used to replace the standard resolution. However, $X_E A \circ_{E_*} \hat P_E$ is not injective in general. The following corollary records a situation where it is.

\begin{corollary}\label{cor:anss}
Assume that $X$ and $E$ satisfy the properties of the previous theorem, and that in addition, $E^*(X)$ and all $E^*(\underline E_n)$ are free graded commutative algebras. Write as before $\hat P_E = \Spf{\hat E^*(\underline E_*)}$ for the formal plethory arising from $E$ and $X_E = \Spf{\hat E^*(X)}$ for the coalgebra over it arising from $X$. Then the $E_2$-term of the unstable Adams-Novikov spectral sequence is given by
\[
E_2^{s,t} = \Ext^s_{\hat P_E}((\S^t)_E,X_E).
\]
Here $\Ext^s_{\hat P_E}$ denotes $s$th cohomotopy group of the cosimplicial group (just a set for $t=0$)
\[
\Hom_{\hat P_E}((\S^t)_E,X_E \circ_{E_*} \hat P_E \rightrightarrows X_E \circ_{E_*} \hat P_E \circ_{E_*} \hat P_E \;\;\cdots).
\]
\end{corollary} 
\begin{proof}
The cosimplicial resolution $X_E \to X_E \circ_{E_*} \hat P_E \rightrightarrows \cdots$ is acylic because the counit $\hat P_E \to J_{E_*}$ provides a contracting extra degeneracy. By \cite[Thm.~6.2]{bousfield:cosimplicial-resolutions}, it suffices to show that $X_E \circ_{E_*} (\hat P_E)^{\circ_{E_*} n}$ is injective with respect to the injective models $\Cof(\hat M) \circ_{E_*} \hat P_E$ for each $n \geq 1$. For $n=1$, this follows from the assumption that $E^*(X)$ is free, i.e. $X_E = \Cof(\Indec{X_E})$. Lemma~\ref{Symhatodotmonoidal} guarantees that $\Cof(\hat M) \circ_{E_*} \Cof(\hat N) \cong \Cof(\hat M \circ_{E_*} \hat N)$ and in particular,
\[
\Cof(\hat M) \circ_{E_*} \hat P_E \cong \Cof(\hat M) \circ_{E_*} \Cof\Indec{\hat P_E} \cong \Cof(\hat M \circ_{E_*} \Indec{\hat P_E}).
\]
This provides the inductive step to show that every $X_E \circ_{E_*} (\hat P_E)^{\circ_{E_*} n}$ is in fact one of the injective models.
\end{proof}

\begin{example}
The conditions of Cor.~\ref{cor:anss} are satisfied for mod-$p$ homology or integral $K$-theory and $X = \S^{2n+1}$ . The description of the $E_2$-term in the corollary coincides in the latter case with the description of \cite[Thm.~4.9]{bendersky-thompson} in terms of unstable comodules.
\end{example}

However, the condition on freeness of $E^*(\underline E_*)$ of Cor.~\ref{cor:anss} is not satisfied for $E=K(n)$, the Morava $K$-theories.

%%%%%%%%%%%%%%%%%%%%%%%%%%%%%%%%%%%%%%%%%%%%%

\begin{appendices}

\section{Pro-categories and lattices}\label{colimitsapp}

In this appendix, I will review some background results about pro- and ind-categories.

Throughout, I will make use of ends and coends. We denote the end of a functor $F\colon I^\op \otimes I \to \C$ by
\[
\int\limits_i F(i,i) = \eq\left(\prod_{i \in I} F(i,i) \rightrightarrows \prod_{i \to j \in I} F(i,j)\right)
\]
and the coend of a functor $G\colon I \otimes I^\op \to \C$ by
\[
\int\limits^i F(i,i) = \coeq\left( \coprod_{i \to j \in I} F(i,j)\rightrightarrows \coprod_{i \in I} F(i,i) \right).
\]

Recall that a category $I$ is cofiltered if  each finite diagram $X\colon F \to I$ has a cone, i.e. an object $i$ together with a natural transformation $\const_i \to X$ in the category of functors from $F$ to $I$. If $\C$ is any category, the categories $\Pro-\C$ (or $\Ind-\C$) have as objects pairs $(I,X)$ where $I$ is a small cofiltered (filtered in the case of $\Ind-\C$) category and $X\colon I \to \C$ is a diagram; morphisms are defined by
\begin{align*}
\Pro-\C((I,X),(J,Y)) &= \lim_{j \in J} \colim_{i \in I} \C(X(i),Y(j))
\intertext{and}
\Ind-\C((I,X),(J,Y)) &= \lim_{i \in I} \colim_{j \in J} \C(X(i),Y(j)).
\end{align*}
It is easy to show (cf. for example \cite[Thm. 2.1.6]{edwards-hastings:cech-steenrod}) that $\Pro-\C$ is equivalent to the subcategory of objects indexed by cofiltered posets in which every ascending chain is finite. The dual of this property is usually called ``cofinite'', but I will refrain from calling this property ``cocofinite'' or ``finite'' and use the term ``noetherian'' and the dual property ``artinian.''

If $I$ is a meet-semilattice (a poset with all finite limits), then $I$ is in particular a cofiltered poset -- the meet $\lim F$ of a finite set $F \subseteq X$ is a cone -- but the converse is not true. Let $\Lat$ be the category of all noetherian meet-semilattices. It will be technically convenient to work with the full subcategory $\Lat(\C)$ of  $\Pro-\C$ generated by objects indexed by posets in $\Lat$. The following lemma shows that this will usually not be a loss of generality.

\begin{lemma} \label{latrepresents}
Let $\C$ be a category closed under finite limits. Then the natural inclusion $\Lat(\C) \hookrightarrow \Pro-\C$ is an equivalence.
\end{lemma}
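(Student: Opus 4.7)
The plan is, by the result of Edwards--Hastings cited just above in the excerpt, to reduce to the case where $(I,X)\in\Pro-\C$ has $I$ a noetherian cofiltered poset, and then to construct an isomorphic pro-object indexed by a noetherian meet-semilattice. Fully faithfulness of the inclusion $\Lat(\C)\hookrightarrow\Pro-\C$ is automatic because $\Lat(\C)$ is by definition a full subcategory, so only essential surjectivity needs to be established.

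I will take $\tilde I$ to be the set of finite subsets of $I$, ordered by reverse inclusion. Then $\tilde I$ has all finite meets $S\wedge T=S\cup T$ and a top element $\emptyset$, so it is a meet-semilattice. Any ascending chain in $\tilde I$ corresponds to a descending chain of finite subsets of $I$ under set-inclusion, which must stabilize because its members are finite; hence $\tilde I\in\Lat$. I then define $\tilde X\colon\tilde I\to\C$ by
\[
\tilde X(S)=\lim_{i\in S}X(i),
\]
the limit being taken over $S$ viewed as a full subposet of $I$ (well-defined since $\C$ has all finite limits), with $\tilde X(\emptyset)$ the terminal object of $\C$, and with the evident restriction maps $\tilde X(S)\to\tilde X(T)$ for $S\supseteq T$.

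The identification $(\tilde I,\tilde X)\cong(I,X)$ in $\Pro-\C$ will be given by two mutually inverse morphisms. I define $(\tilde I,\tilde X)\to(I,X)$ componentwise at $j\in I$ by the identity $\tilde X(\{j\})=X(j)\to X(j)$, and define $(I,X)\to(\tilde I,\tilde X)$ componentwise at $S\in\tilde I$ by the canonical map $X(c_S)\to\tilde X(S)$, where $c_S\in I$ is any lower bound for the finite set $S$ (which exists because $I$ is cofiltered).

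The main obstacle is verifying that these two candidates really are mutually inverse morphisms in $\Pro-\C$. Independence of the pro-morphism from the choice of $c_S$, as well as both composition identities, will all reduce to the following key observation: whenever $c\leq s$ for every $s\in S$, the canonical projection $\tilde X(S\cup\{c\})\to\tilde X(S)$ is an isomorphism, because $c$ is already a cone point for the diagram over $S$. Combining this with the cofilteredness of $I$ to produce common lower bounds that witness equality in the $\lim\colim$ formula for morphisms in $\Pro-\C$, the remaining checks are a routine if slightly tedious exercise in the pro-hom formalism. This bookkeeping is the only substantive obstacle, and once dispatched, essential surjectivity, and hence the equivalence, follows at once.
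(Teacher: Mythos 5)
Your construction is essentially the paper's: the paper indexes by the set $\Fin(I)$ of finite subsets of $I$ (ordered by cofinality rather than by reverse inclusion, which yields an equivalent indexing category), describes $\tilde X$ as the right Kan extension of $X$ along $i \mapsto \{i\}$, and inverts the evident map using exactly the cone points $c_S$ you use. So the route is the same and the overall architecture of your argument is sound; fullness and faithfulness are indeed automatic.

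There is, however, one incorrect step: the ``key observation'' to which you reduce all the verifications is false as stated. If $c \le s$ for every $s \in S$, then $c$ is an \emph{initial} object of the full subposet $S \cup \{c\}$, so $\tilde X(S\cup\{c\}) = \lim_{S\cup\{c\}} X \cong X(c)$, and the structure map $\tilde X(S\cup\{c\}) \to \tilde X(S)$ is thereby identified with the canonical cone map $X(c) \to \lim_{S} X$ -- which is not an isomorphism in general. For instance, if $S = \{s_1,s_2\}$ with $s_1$, $s_2$ incomparable and $c \le s_1, s_2$, this map is $X(c) \to X(s_1) \times X(s_2)$. The isomorphism you actually have, and the one the bookkeeping needs, is the \emph{other} projection $\tilde X(S \cup \{c\}) \to \tilde X(\{c\}) = X(c)$; the correct key observation is that the restriction $\tilde X(S\cup\{c\}) \to \tilde X(S)$ equals this isomorphism followed by the cone map $X(c) \to \lim_S X$. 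With that substitution, independence of the choice of $c_S$ (pass to a common lower bound of two choices) and both composite identities do go through, and your proof then agrees with the paper's.
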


\begin{proof}
We already know that the inclusion $\Lat(\C) \to \Pro-\C$ is full and faithful, and it remains to show that every object $X\colon I \to \C$ in $\Pro-\C$ is isomorphic to one in $\Lat(\C)$. We may assume that $I$ is a noetherian cofiltered poset. We define a relation $\leq$ on the set of finite subsets of $I$ by $F \leq F'$ iff $F$ is cofinal in $F'$, i.e. for each $x \in F'$ there is a $y \in F$ such that $y \leq x$. This relation is transitive and reflexive and thus induces a partial order on the set $\Fin(I) = \{ F \subseteq I \mid F \text{ finite}\}/\sim$, where $F \sim F' \iff F \leq F'$ and $F' \leq F$. The poset $\Fin(I)$ is noetherian if $I$ is. Furthermore, $\Fin(I)$ is closed under meets: $F \wedge F' = F \cup F'$.

There is a canonical inclusion functor $\iota\colon I \to \Fin(I)$ given by singletons. 

Now let $X \colon I \to \C$ be a diagram representing an object in $\Pro-\C$. Consider $\RKan_{\iota} X$ given by
\[
(\RKan_{\iota} X)(F) =\lim_F X,
\]
which exists because $\C$ was assumed to have finite limits. I claim that $X$ and $\RKan_{\iota} X$ are isomorphic in $\Pro-\C$. There is a canonical map $\RKan_{\iota} X \to X$ given by
\[
X(\{i\}) = X(i).
\]
To see this is an isomorphism in $\Pro-\C$, we need to construct for each $F \in \Fin(I)$ an object $i' \in I$ and a map $X(i') \to \lim_F X$ such that suitable diagrams commute. But since $I$ is cofiltered, there is a cone $i' \to F$ and compatible maps $X(i') \to X(i)$ for each $i \in F$, thus a map $X(i') \to \lim X \circ F$.
\end{proof}

We need to simplify even further. If $I$, $J$ are two noetherian semilattices, denote by $I^J$ the set of all monotonic functions from $J$ to $I$. There is a partial order on $I^J$ given by $f \leq f'$ iff $f(j) \leq f(j')$ for all $j \in J$. In fact, $I^J$ is again a semilattice: $(f \wedge g)(i) = f(i) \wedge g(i)$. However, $I^J$ is usually not noetherian.

Let $\Lat'(\C)$ be the category with the same objects as $\Lat(\C)$, but where the morphisms are defined as follows: given two objects $X\colon I \to \C$, $Y\colon J \to \C$, 
\[
\Lat'(\C)(X,Y) = \colim_{f \in I^J} \int\limits_{j \in J} \C(X(f(j)),Y(j)).
\]
\begin{lemma} \label{functorialize}
The canonical functor $\phi\colon \Lat'(\C) \to \Lat(\C)$, given by the identity on objects and on morphisms, for $f \in I^J$ and $j_0 \in J$, by the map
\[
\begin{array}{ccc}
 \int\limits_{j} \C(X(f(j)),Y(j)) &\to&  \colim_i \C(X(i),Y(j_0)) \\
\left\{\alpha_j\colon X(f(j))\to Y(j)\right\}_j & \mapsto&  \alpha_{j_0}
\end{array}
\]
is an equivalence of categories.
\end{lemma}
\begin{proof}
As described for instance in \cite{deleanu-hilton:borsuk-shape}, a morphism in $\Lat(\C)$ is given by a not necessarily monotonic function $f\colon J \to I$ and compatible maps $X(f(j)) \to Y(j)$ subject to the condition that for each arrow $j \leq j'$ in $J$, there is an object $i \in I$ such that $i \leq f(j)$ and $i \leq f(j')$. Given such a nonmonotonic function, we can produce a monotonic function $\tilde f\colon J \to I$ by
\[
\tilde f(j) = \lim_{j' \geq j} f(j'),
\]
using the noetherian condition on $J$ (therefore the set $\{j' \mid j' \geq j\}$ is finite) and the lattice condition on $I$ (finite limits exist). We have that $\tilde f \leq f$ and thus we get an induced compatible set of maps $\tilde \alpha_j\colon X(\tilde f(j)) \to Y(j)$ representing the same map as $\alpha$ in $\Lat(\C)$. Thus $\phi$ is full. For faithfulness, let $f,\; g\colon J \to I$ be two monotonic maps and $\alpha_j\colon X(f(j)) \to Y(j)$ and $\beta_j\colon X(g(j)) \to Y(j)$ two maps such that $\phi(\alpha) = \phi(\beta)$. This means that for every $j$ there exists an element $h(j) \leq f(j) \wedge g(j)$ such that $\alpha_j|_{X(h(j))} = \beta_j|_{X(h(j))} =: \gamma_j$. Again, by possibly choosing smaller $h(j)$, we can assume that $h$ is monotonic. Thus $h \leq f$ and $h \leq g$, and $\gamma \leq \alpha$, $\gamma \leq \beta$. Therefore, $\alpha$ and $\beta$ represent the same class of maps in $\colim\limits_{f \in I^J} \int\limits_{j \in J} \C(X(f(j)),Y(j)).$
\end{proof}

\section{Pro- and ind-categories and their enrichments} \label{indproapp}

Let $(\V,\otimes)$ be a $2$-ring and $\C$ a $2$-bimodule over $\V$ (cf. Def.~\ref{def:tworingtwomod}). In this section, we will study the structure of the categories $\Ind-\V$ and $\Pro-\C$ with respect to monoidality and enrichments. The examples we have in mind are:
\begin{enumerate}
\item For a  graded commutative ring $k$, $\V=\Mod_\Z$ (graded abelian groups) and $\C=\Mod_k$;
\item $\V=\Set^\Z$, the category of graded sets, and $\C=\Alg_k$ for a graded commutative ring $k$.
\end{enumerate}

\begin{lemma}
Let $\V$ be a $2$-ring. Then so is the category $\Ind-\V$, and the inclusion $\V \to \Ind-\V$ and colimit $\Ind-\V\to \V$ are strict monoidal functors. 
\end{lemma}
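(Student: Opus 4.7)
The plan is to define the tensor product on $\Ind-\V$ objectwise and then verify in turn that $\Ind-\V$ is bicomplete, closed, and symmetric monoidal; strict monoidality of $\iota$ and $\colim$ will fall out of the construction.

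For $X\colon I \to \V$ and $Y\colon J \to \V$ in $\Ind-\V$, I would set $(X \otimes Y)(i,j) := X(i) \otimes Y(j)$, with indexing category $I \times J$, which is filtered because $I$ and $J$ are. The unit is the constant ind-object on the unit $I_\V$ of $\V$. Associator, unitor, and braiding are obtained from applying the corresponding $\V$-isomorphisms objectwise. Bicompleteness of $\Ind-\V$ is standard: cocompleteness follows from $\V$ having finite colimits (filtered colimits are indexed-category amalgamations, finite colimits are levelwise after cofinal reindexing), while completeness is dual to the analogous fact for $\Pro$-categories used in Lemma~\ref{basiclimcolimadj}\eqref{cfinlim}.

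To establish closedness, I would construct a right adjoint $[Y,-]$ to $- \otimes Y$ in two stages. Because $\otimes$ on $\Ind-\V$ is defined objectwise, $- \otimes Y$ preserves all colimits, and since $Y = \colim_j \iota(Y(j))$, it suffices to handle two cases. For a constant $\iota(V)$ with $V \in \V$ and $Z\colon K \to \V$, set $[V,Z]\colon K \to \V$, $k \mapsto \V(V, Z(k))$; this is filtered since $K$ is, and the required adjunction reduces to closedness of $\V$ applied levelwise:
\[
\Ind\text{-}\V(W \otimes \iota(V), Z) = \lim_i \colim_k \V(W(i) \otimes V, Z(k)) = \lim_i \colim_k \V(W(i), \V(V, Z(k))) = \Ind\text{-}\V(W, [V,Z]).
\]
For general $Y\colon J \to \V$, define $[Y,Z] := \lim_{j \in J^\op} [Y(j),Z]$, a cofiltered limit in $\Ind-\V$ (existing by completeness). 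Then
\[
\Ind\text{-}\V(W, [Y,Z]) \cong \lim_j \Ind\text{-}\V(W, [Y(j), Z]) \cong \lim_j \Ind\text{-}\V(W \otimes \iota(Y(j)), Z) \cong \Ind\text{-}\V(W \otimes Y, Z),
\]
the last step using that $W \otimes -$ preserves the filtered colimit defining $Y$.

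Finally, strict monoidality of $\iota$ and $\colim$ is immediate from the objectwise definition. The inclusion satisfies $\iota(V) \otimes \iota(W) = \iota(V \otimes W)$ and $\iota(I_\V) = I_{\Ind-\V}$ on the nose. For the colimit, $\colim(X \otimes Y) = \colim_{I \times J}(X(i) \otimes Y(j)) = (\colim_I X) \otimes (\colim_J Y)$ because $\otimes$ in $\V$ commutes with colimits in each variable (a consequence of $\V$ being closed), and $\colim(I_{\Ind-\V}) = I_\V$ trivially. I expect the main obstacle to be the closed structure: for non-constant $Y$ the internal hom $[Y,Z]$ is a cofiltered limit in $\Ind-\V$ that is not computed levelwise, so its adjointness to $- \otimes Y$ must be verified through the abstract computation above rather than by any concrete formula.
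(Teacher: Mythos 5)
Your proof is correct, but it reaches the closed structure --- the real content of the lemma --- by a genuinely different route than the paper. The paper constructs the internal hom $\underline{\Ind}(X_1,X_2)$ explicitly: it first reindexes both ind-objects over artinian join-semilattices (the dual of Lemma~\ref{latrepresents}), takes the poset of monotonic maps $I_2^{I_1}$ as the new indexing category, defines the value at $\alpha$ as the end $\int_{i_1}\map(X_1(i_1),X_2(\alpha(i_1)))$, and then verifies the adjunction by a direct $\lim$-$\colim$ computation that invokes Lemma~\ref{functorialize} to trade the colimit over monotonic maps for an iterated $\lim\colim$. You instead build the right adjoint abstractly in two stages: levelwise for constant ind-objects $\iota(V)$, then as the cofiltered limit $[Y,Z]=\lim_j[Y(j),Z]$ in $\Ind-\V$ for general $Y$, using completeness of $\Ind-\V$ and the presentation $W\otimes Y\cong\colim_j\bigl(W\otimes\iota(Y(j))\bigr)$. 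The two constructions produce canonically isomorphic objects --- your cofiltered limit, when unwound, is indexed by monotonic maps and valued in exactly the paper's end --- but your version trades the semilattice combinatorics and the explicit formula for a reliance on completeness and on the (easily checked, and worth stating explicitly) fact that $-\otimes-$ preserves the canonical filtered-colimit presentation of an ind-object in each variable. The paper's explicit formula has the advantage that it is reused later (e.g.\ in the proof of Lemma~\ref{indproenrichment}), whereas your argument yields the adjoint with less bookkeeping. The treatment of bicompleteness and of strict monoidality of $\iota$ and $\colim$ coincides with the paper's.
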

\begin{proof}
The fact that $\Ind-\V$ is cocomplete follows from the fact that $\Ind-\V$ always has filtered colimits, and that finite coproducts and coequalizers can be computed levelwise. By \cite{isaksen:limits-colimits}, $\Ind-\V$ is also complete. (The appendix of \cite{artin-mazur:etale} is often cited for this fact, but it only proves it for small categories $\V$.)

Let $I_i \xrightarrow{X_i} \V$ represent objects of $\Ind-\V$ ($i=1,2$), where $I_i$ are filtered categories. Then the symmetric monoidal structure on $\Ind-\V$ is defined by
\[
X_1 \otimes X_2\colon I_1 \times I_2 \to \V, \quad (X_1 \otimes X_2)(i_1,i_2) = X_1(i_1) \otimes X_2(i_2).
\]
Since $\otimes$ is closed in $\V$, it commutes with all colimits in $\V$, and thus $\colim\colon \Ind-\V \to \V$ is strict monoidal.

\medskip

We define an internal hom object $\underline{\Ind}(X_1,X_2)$ as follows: we may assume the indexing categories $I_1$, $I_2$ are artinian join-semilattices (by the dual of Lemma~\ref{latrepresents}; we again use the notation $\Lat(\V)$ for the full subcategory of $\Ind-\C$ of objects indexed by such semilattices). Then the poset of monotonic maps from $I_1$ to $I_2$ is also a lattice. This will be the indexing set of $\underline{\Ind}(X_1,X_2)$. For such an $\alpha \in {I_2}^{I_1}$, $\underline{\Ind}(X_1,X_2)$ is given by
\[
\underline\Ind(X_1,X_2)(\alpha) = \int\limits_{i_1} \C(X_1(i_1),X_2(\alpha(i_1))).
\]

To see that $\otimes$ and $\underline\Ind$ are adjoint, we compute
\begin{align*}
\Ind-\V(X_1,\underline{\Ind}(X_2,X_3)) = & \lim_{i_1} \colim_{\alpha \in {I_3}^{I_2}} \V\biggl(X_1(i_1),\int\limits_{i_2} \C(X_2(i_2),X_3(\alpha(i_2)))\biggr)\\
=&  \lim_{i_1} \colim_{\alpha \in {I_3}^{I_2}} \int\limits_{i_2} \V\biggl(X_1(i_1), \C(X_2(i_2),X_3(\alpha(i_2)))\biggr)\\
\underset{\text{Lemma~\ref{functorialize}}}= &  \lim_{i_1} \lim_{i_2} \colim_{i_3} \V\biggl(X_1(i_1), \C(X_2(i_2),X_3(i_3))\biggr)\\
=& \lim_{i_1,i_2} \colim_{i_3} \V(X_1(i_1) \otimes X_2(i_2), X_3(i_3))\\
 =& \Ind-\V(X_1 \otimes X_2,X_3).
\end{align*}
\end{proof}

This construction leads us out of the category $\Lat(\C)$ because ${I_2}^{I_1}$ is not artinian, but we can always apply the equivalence $\Ind-\C \to \Lat(\C)$ to get back an isomorphic internal hom object in $\Lat(\C)$.

\bigskip

Unfortunately, the category $\Pro-\C$ is not a $2$-ring even if $\C$ is. The analogous definition of a symmetric monoidal structure $(X_1 \otimes X_2)(i_1,i_2) = X_1(i_1) \otimes X_2(i_2)$ on $\Pro-\C$ is unproblematic, but this structure is not closed. However, $\Pro-\C$ is a $2$-bimodule over $\Ind-\V$. We will first define the structure and then prove that it gives rise to an enrichment.

\begin{defn}$\;$
\begin{enumerate}
\item
Define a functor $\indprotensor\colon \Ind-\V \times \Pro-\C \to \Pro-\C$ as follows.
Let  $K \in \Ind-\V$ be indexed by an artinian join-semilattice $J$ and $X \in \Pro-\C$ be indexed by a noetherian meet-semilattice $I$. Then $K \indprotensor X$ is indexed by the meet-semilattice $I^{J^\op}$, and
\[
(K \indprotensor X)(\alpha) = \int\limits^{j \in J} K(j) \indprotensor X(\alpha(j))) \quad \text{for } \alpha\colon J^\op \to I.
\]
\item Define a functor $\underline\Ind\colon \Pro-\C \times \Pro-\C \to \Ind-\V$ as follows. Let $X$, $Y \in \Pro-\C$ be indexed by noetherian meet-semilattice $I$ and $J$, respectively. The object $\underline{\Ind}(X,Y) \in \Ind-\V$ is indexed by the opposite lattice of the meet-semilattice $I^J$ and is given by
\[
\underline\Ind(X,Y)(\alpha) = \int\limits_j \C(X(\alpha(j)),Y(j))  \quad \text{for } \alpha\colon J \to I.
\]
\item Define a functor $\hom\colon \Ind-\V \times \Pro-\C \to \Pro-\C$ by assigning to $K\colon I \to \C$ and $X\colon J \to \C$, where $I$ is cofiltered and $J$ is filtered, the object indexed by $I^\op \times J$ and given by
\[
\hom(K,X)(i,j) = \hom(K(i),X(j)).
\]
\end{enumerate}
\end{defn}

\begin{lemma} \label{indproenrichment}
With the structure given above, the category $\Pro-\C$ is a $2$-bimodule over $\Ind-\V$.
\end{lemma}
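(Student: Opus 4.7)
The plan is to verify four things: that $\Pro-\C$ is bicomplete, that $\underline{\Ind}(-,-)$ satisfies the axioms of an $\Ind-\V$-enrichment, and that $\indprotensor$ and $\hom$ furnish the data of a left tensoring and a right cotensoring fitting this enrichment. Bicompleteness is standard by \cite{isaksen:limits-colimits}; finite limits and all colimits can be realized at the level of indexing categories after applying Lemma~\ref{latrepresents}.

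For the enrichment, given $M\colon I\to\C$, $N\colon J\to\C$, $P\colon K\to\C$, I would define the composition morphism $\underline{\Ind}(N,P)\otimes\underline{\Ind}(M,N)\to\underline{\Ind}(M,P)$ on indexing lattices by $(\beta,\alpha)\mapsto \alpha\circ\beta\in I^K$, and on values by composing the defining ends using composition in $\V$:
\[
\int_k\map(N(\beta k),P(k))\otimes\int_j\map(M(\alpha j),N(j))\longrightarrow\int_k\map(M(\alpha\beta k),P(k)).
\]
The unit $I_\V\to\underline{\Ind}(M,M)$ is the component indexed by $\id_I\in I^I$, adjoint to the identity on $M$. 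Associativity and unitality will then follow from the corresponding $\V$-category axioms.

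The heart of the proof is the pair of adjunctions. With $L\colon A\to\V$ filtered and $M,N$ cofiltered, end/coend calculus together with the $\V$-adjunction between $\otimes$ and $\map$ in $\C$ unfolds the hom-sets as
\[
\Pro-\C(L\indprotensor M,N)=\lim_j\colim_\alpha\int_a\V(L(a),\map(M(\alpha a),N(j)))
\]
and
\[
\Ind-\V(L,\underline{\Ind}(M,N))=\lim_a\colim_\beta\int_j\V(L(a),\map(M(\beta j),N(j))).
\]
Lemma~\ref{functorialize} collapses each inner $\colim$-of-$\int$ to an iterated $\lim\colim$, yielding respectively $\lim_j\lim_a\colim_i$ and $\lim_a\lim_j\colim_i$ applied to $\V(L(a),\map(M(i),N(j)))$; these agree because ordinary limits commute. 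A parallel rewriting of $\Pro-\C(M,\hom(L,N))$ via the right cotensoring of $\C$ over $\V$ yields the same triple limit-colimit expression, so all three hom-sets coincide.

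The main obstacle I foresee is bookkeeping: ensuring that each step is natural in $L,M,N$ and coherent with the composition and unit of the enrichment, so that the adjunctions lift to enriched adjunctions and not merely bijections of underlying sets. As a sanity check, postcomposition of $\underline{\Ind}$ with $\colim\colon\Ind-\V\to\V$ should recover the $\V$-enrichment of Corollary~\ref{vproenriched}, which is immediate from the coend formula together with the strict monoidality of $\colim$.
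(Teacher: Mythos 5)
Your proposal is correct and follows essentially the same route as the paper: both adjunction isomorphisms are obtained by unfolding the hom-sets with end/coend calculus, applying Lemma~\ref{functorialize} to trade the colimit over the function poset for an iterated $\lim\colim$, and observing that the resulting expressions $\lim\lim\colim \V(L(a),\map(M(i),N(j)))$ coincide. The only item the paper verifies that you leave implicit is the associativity constraint $(H\otimes L)\indprotensor M\cong H\indprotensor(L\indprotensor M)$ of the tensoring, which it establishes by a short Fubini manipulation of the defining coend in the same spirit as your other computations.
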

\begin{proof}
Let $X$, $Y \in \Pro-\C$ and $K$, $L \in \Ind-\V$. We need to see:
\begin{enumerate}
\item $(L \otimes K) \indprotensor X \cong L \indprotensor (K \indprotensor X)$; \label{indprotensor}
\item $\Pro-\C(K \indprotensor X,Y) \cong \Ind-\V(K,\underline{\Ind}(X,Y))$; \label{indproenrich}
\item $\Pro-\C(K \indprotensor X,Y) \cong \Pro-\C(X,\hom(K,Y))$. \label{indprocotensor}
\end{enumerate}

\eqref{indprotensor}:
Let $I \xrightarrow{X} \C$, $J_1 \xrightarrow{L} \V$, and $J_2 \xrightarrow{K} \V$ be representations. Then we have for $\alpha\colon J_1^\op \times J_2^\op \to I$:
\begin{align*}
\left((L \otimes K) \indprotensor X \right)(\alpha) = & \int\limits^{\substack{j_1 \in J_1\\j_2 \in J_2}} L(j_1) \otimes K(j_2) \indprotensor X(\alpha(j_1,j_2))\\
= & \int\limits^{j_2} \Bigl(L(j_1) \indprotensor \int\limits^{j_1} K(j_2) \indprotensor X(\alpha(j_1,j_2))\Bigr)\\
= & \left(L \indprotensor (K \indprotensor X)\right)(\alpha^\#),
\end{align*}
where $\alpha^\#\colon J_2^\op \to I^{J_1}$ is the adjoint of $\alpha$.

\noindent\eqref{indproenrich}
As before, pick representatives $I_1 \xrightarrow{X} \C$, $I_2 \xrightarrow{Y} \C$, $J \xrightarrow{K} \V$.
We compute
\begin{align*}
\Pro-\C(K \indprotensor X,Y) = & \lim_{i_2} \colim_{\alpha\colon J^\op \to I_1} \C\left(\int^j K(j) \indprotensor X(\alpha(j)),Y(i_2)\right)\\
\cong & \lim_{i_2} \colim_{\alpha\colon J^\op \to I_1} \int_j \C\left(K(j) \indprotensor X(\alpha(j)),Y(i_2)\right)\\
= & \lim_{i_2} \lim_{j} \colim_{i_1} \C(K(j) \indprotensor X(i_1),Y(i_2))\tag{*}\\
= &  \lim_{i_2,j} \colim_{i_1} \V\left(K(j),\C\left(X\left(i_1\right),Y(i_2\right)\right)\\
= & \lim_j \colim_{\beta\colon I_2 \to I_1} \int_{i_2} \V\biggl(K(j),\C\Bigl(X\bigl(\beta\left(i_2\right)\bigr),Y(i_2)\Bigr)\biggr)\\
= & \lim_j \colim_{\beta\colon I_2 \to I_1}\V(K(j),\underline{\Ind}(X,Y)(\beta))\\
= &  \Ind-\V(K,\underline{\Ind}(X,Y)).
\end{align*}

\noindent\eqref{indprocotensor}
Pick representatives as in \eqref{indproenrich}. We pick up the computation at (*):
\begin{align*}
\lim_{i_2} \lim_{j} \colim_{i_1} \C(K(j) \indprotensor X(i_1),Y(i_2)) = & \lim_{i_2,j} \colim_{i_1} \C\left(X(i_1),\hom\left(K\left(j\right),Y(i_2\right)\right)\\
= & \Pro-\C(X,\hom(K,Y)).
\end{align*}
\end{proof}

Now assume that $(\C,\boxtimes,I_\C)$ is a $2$-algebra over $\V$. We define a symmetric monoidal structure $\boxtimes$ on $\Pro-\C$ by $(X \boxtimes Y)(i,j) = X(i) \boxtimes Y(j)$. The unit is the constant object $I_\C$. Furthermore, if $\C$ is closed (with internal homomorphism object $\map_\C$) then for representations $X\colon I \to \C$ and $Y \colon J \to \C$, there is a homomorphism object $\underline \Pro(X,Y)$ indexed by $J$ and given by
\[
\underline \Pro(X,Y)(j) = \colim_i \map_\C(X(i),Y(j)).
\]

\begin{lemma}\label{protensornattrf}
\begin{enumerate}
\item The category $\Pro-\C$ with the structure above is a $2$-algebra over $\Ind-\V$. More precisely,
Let $X$, $Y$, $Z \in \Pro-\C$ and $K \in \Ind-\V$. Then there is a natural map
\[
\alpha\colon K \indprotensor (X \boxtimes Y) \to (K \indprotensor X) \boxtimes Y
\]
which is an isomorphism if $Y$ is pro-constant. \label{tensortensortrf}
\item If $\C$ is also closed with internal homomorphism object $\map_\C$ then there is a natural morphism
\[
\Pro-\C(X \boxtimes Y,Z) \to \Pro-\C(X,\underline{\Pro}(Y,Z))
\]
which is an isomorphism if $X$ consists of small objects, i.e. if $\C(X(i),-)$ commutes with directed colimits for all $i$, or if $Y$ is pro-constant. \label{proadjoin}
\end{enumerate}
\end{lemma}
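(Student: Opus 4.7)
The plan is to build both natural transformations directly out of the structure maps $\alpha$ and $\beta$ of the $2$-algebra $\C$ over $\V$ (from the definition preceding \eqref{alphamap}), using the explicit (co)end formulas for $\indprotensor$, $\boxtimes$, $\underline\Pro$, and the pro-mapping sets established in Lemma~\ref{indproenrichment}. The isomorphism claims will then reduce to verifying that the levelwise maps either split (under pro-constancy) or commute with the ambient filtered colimits (under smallness), by Yoneda against a constant test object.

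For Part~(\ref{tensortensortrf}), I fix representations $L\colon J \to \V$ with $J$ an artinian join-semilattice and $M\colon I_M \to \C$, $N\colon I_N \to \C$ with $I_M, I_N$ noetherian meet-semilattices. The pointwise $2$-algebra structure map $\alpha_\C\colon L(j) \otimes (M(i_M) \boxtimes N(i_N)) \to (L(j) \otimes M(i_M)) \boxtimes N(i_N)$ in $\C$, together with the canonical comparison $\int^{j} \bigl(X(j) \boxtimes Y\bigr) \to \bigl(\int^{j} X(j)\bigr) \boxtimes Y$, assembles into, for each $(\beta, i_N) \in I_M^{J^\op} \times I_N$, a morphism from the source at $(\beta, \mathrm{const}_{i_N})$ to the target at $(\beta, i_N)$. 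Re-indexing produces the natural map in $\Pro-\C$. When $N$ is pro-constant at some $n \in \C$, both sides are indexed simply by $I_M^{J^\op}$, the comparison morphism is split by the trivial re-indexing, and the $2$-algebra structure map $\alpha_\C$ for a fixed right-hand argument $n$ is the identity on the enriched hom (since the map $\alpha$ in Definition is adjoint to the classifying map of $- \boxtimes n$ as an enriched endofunctor of $\C$), so the natural map is inverse to the obvious canonical comparison.

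For Part~(\ref{proadjoin}), I fix representations $X\colon I_X \to \C$, $Y\colon I_Y \to \C$, $Z\colon I_Z \to \C$ and unwind both sides against the formulas from Lemma~\ref{indproenrichment}:
\[
\Pro-\C(X \boxtimes Y, Z) = \lim_{i_Z}\, \colim_{(i_X, i_Y)}\, \C(X(i_X) \boxtimes Y(i_Y),\, Z(i_Z))
\]
while
\[
\Pro-\C(X, \underline\Pro(Y,Z)) = \lim_{i_Z}\, \colim_{i_X}\, \C\bigl(X(i_X),\, \colim_{i_Y} \C(Y(i_Y), Z(i_Z))\bigr).
\]
The enriched structure of $- \boxtimes Y(i_Y)$ supplies $\C(X(i_X) \boxtimes Y(i_Y), Z(i_Z)) \to \C(X(i_X), \C(Y(i_Y), Z(i_Z)))$; composing with the inclusion into the colimit over $i_Y$ and taking the $\colim_{i_X}$ yields the natural map. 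If $X$ is levelwise small then $\C(X(i_X), -)$ commutes with the filtered colimit over $i_Y$, swapping the inner $\colim_{i_X}$ and $\colim_{i_Y}$ to recover the left-hand side; if instead $Y$ is pro-constant at $y_0 \in \C$, the colimit over $i_Y$ collapses and the map is the standard enriched-adjunction isomorphism $\C(X(i_X) \boxtimes y_0, Z(i_Z)) \cong \C(X(i_X), \C(y_0, Z(i_Z)))$ applied levelwise, which is an iso by the assumption that $- \boxtimes y_0$ is an enriched functor (equivalently, the map $\beta$ from the $2$-algebra definition is an iso for $Y = y_0$ as a single object).

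The main obstacle is carefully tracking the different indexing categories (coming from $I_M^{J^\op}$ for $\indprotensor$ versus products $I_M \times I_N$ for $\boxtimes$) through the equivalence $\Lat'(\C) \simeq \Pro-\C$ of Appendix~\ref{colimitsapp}, and justifying the requisite commutations of limits with colimits — which is exactly where the pro-constancy and small-object hypotheses enter.
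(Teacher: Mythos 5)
Your construction matches the paper's proof in substance. For part~(\ref{tensortensortrf}) both sides are computed levelwise, the value of the source at $\beta\times\const_{i_N}$ is identified with the value of the target at $(\beta,i_N)$, and the natural map is the restriction of $L\indprotensor(M\otimes N)$ (indexed by all monotonic $J^\op\to I_M\times I_N$) along the inclusion of $I_M^{J^\op}\times I_N$, which is an equivalence of index categories exactly when $I_N$ is a point. For part~(\ref{proadjoin}) the only non-invertible step is pushing $\C(X(i_X),-)$ past $\colim_{i_Y}$, which is killed by smallness of the $X(i_X)$ or by $I_Y$ being a singleton. That is precisely the paper's argument.

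Two of your auxiliary justifications do not hold up as stated, however. In (\ref{tensortensortrf}) you claim the levelwise comparison is invertible in the pro-constant case because ``$\alpha_\C$ for a fixed right-hand argument is the identity on the enriched hom'': being adjoint to the classifying map of the enriched functor $-\boxtimes n$ does not make $\alpha$ invertible, and you also never address why $-\boxtimes n$ commutes with the coend $\int^{j}$. The paper treats the levelwise values as equal outright and locates the entire failure of isomorphism in the discrepancy of indexing categories; if you prefer to keep the levelwise maps explicit, their invertibility needs the (implicitly assumed) strictness of the $\C$-level structure maps, not a formal adjunction argument. In (\ref{proadjoin}) your appeal to ``$\beta$ from the $2$-algebra definition'' conflates two different things: that $\beta$ is the cotensor structure map $\hom(L,X)\boxtimes Y\to\hom(L,X\boxtimes Y)$ over $\V$, whereas the step $\C(X(i)\boxtimes Y(j),Z(k))\cong\C(X(i),\Hom(Y(j),Z(k)))$ is the internal-hom adjunction of $(\C,\boxtimes)$ itself, which the paper invokes (as an equality) in the second line of its computation. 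These are blemishes in the supporting remarks rather than in the architecture of the argument, which is correct and identical to the paper's.
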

\begin{proof}
\eqref{tensortensortrf}:
Pick representatives $I_1 \xrightarrow{X} \C$, $I_2 \xrightarrow{Y} \C$, $J \xrightarrow{K} \V$. For $\alpha\colon J^\op \to I_1$, $i_2 \in I_2$, we have
\begin{align*}
\left( (K \indprotensor X) \boxtimes Y\right)(\alpha,i_2) = & \biggl(\int\limits^{j \in J} K(j) \indprotensor X(\alpha(j))\biggr) \boxtimes Y(i_2)\\
= & \int\limits^{j \in J} \left(K(j) \indprotensor X(\alpha(j)) \boxtimes Y(i_2)\right)\\
= & \left(K \indprotensor (X \boxtimes Y)\right)(\alpha \times \const_{i_2}).
\end{align*}
Since $K \indprotensor (X \boxtimes Y)$ is indexed by the larger category of all functors $J^\op \to I_1 \times I_2$, this only defines a natural pro-map $K \indprotensor (X \boxtimes Y) \to (K \indprotensor X) \boxtimes Y$, which is an isomorphism if $I_2 = \{i_2\}$.

\noindent\eqref{proadjoin}: The natural map
\begin{align*}
\Pro-\C(X \boxtimes Y,Z) = & \lim_{k} \colim_{i,j} \C(X(i) \boxtimes Y(j),Z(k))\\
=& \lim_k \colim_{i,j} \C(X(i),\map_\C(Y(j),Z(k))\\
\to & \lim_k \colim_i \C(X(i),\colim_j \map_\C(Y(j),Z(k))\\
= & \Pro-\C(X,\underline{\Pro}(Y,Z)).
\end{align*}
is an isomorphism if $X(i)$ is small or $J = \{j_0\}$.
\end{proof}

\end{appendices}

\bibliographystyle{alpha}
\bibliography{bibliography}

\end{document}